\pgfplotsset{compat=1.15}
\def\II{{\mathds 1}}
\def\R{{\mathbb R}}
\def\P{{\mathbb P}}
\def\T{{\mathbb T}}
\def\CCCC{{\mathscr C}}
\def\KK{{\mathcal K}}
\def\QQ{{\mathcal Q}}
\def\PPP{{\mathbf P}}
\def\III{{\mathcal I}}
\def\FF{{\mathcal F}}
\def\EE{{\mathcal E}}
\def\BB{{\mathcal B}}
\def\NN{{\mathcal N}}
\def\KK{{\mathcal K}}
\def\MM{{\mathcal M}}
\def\TT{{\mathcal T}}
\def\F{{\mathbb F}}
\def\D{{\mathbb D}}
\def\N{{\mathbb N}}
\def\E{{\mathbb E}}
\def\Q{{\mathbb Q}}
\def\P{{\mathbb P}}
\def\KK{{\mathcal K}}
\def\XX{{\mathcal X}}
\def\MM{{\mathcal M}}
\def\Supp{{\rm{Supp \,}}}
\def\Fit{{\rm{Fit}}}
\def\Diam{{\rm{Diam}}}
\def\Hess{{\rm{Hess}}}
\def\Fit{{\rm{Fit}}}
\def\id{{\rm{id}}}
\def\XX{{\mathcal{X}}}
\def\III{{\mathcal{I}}}
\def\LL{{\mathcal{L}}}
\newcommand{\dd}{\mathrm{d}}
\newcounter{subsubsubsection}[subsubsection]
\renewcommand\thesubsubsubsection{\@roman\c@subsubsubsection}
\newcommand\subsubsubsection{\@startsection{subsubsubsection}{4}{\z@}%
                                     {-3.25ex\@plus -1ex \@minus -.2ex}%
                                     {1.5ex \@plus .2ex}%
                                     {\normalfont\small\bfseries}}
\newcommand*\l@subsubsubsection{\@dottedtocline{3}{5.2em}{1em}}
\newcommand*{\subsubsubsectionmark}[1]{}
\definecolor{jaune clair}{rgb}{0, 0.9,0.5}
\definecolor{gris clair}{gray}{0.75}
\definecolor{Mon_rouge}{rgb}{1.,0.,0.}
\definecolor{Mon_violet}{rgb}{0.4,0.,0.6}
\definecolor{Mon_violet2}{rgb}{0.498039,0.,1.}
\definecolor{Mon_orange}{rgb}{1.,0.4980392156862745,0.}
\definecolor{Mon_vert}{rgb}{0.,0.8,0.} 
\definecolor{Mon_vert2}{rgb}{0.,0.4,0.2}
\definecolor{Mon_vert3}{rgb}{0.2,0.6,0.}
\definecolor{Mon_vert4}{rgb}{0.,0.39215686274509803,0.}
\definecolor{Mon_bleu}{rgb}{0.,0.,0.8}
\definecolor{Mon_bleu_coalescence}{rgb}{0.49,0.49,1}
\definecolor{Mon_bleu_doux}{rgb}{0.49,0.49,1.}
\definecolor{bcduew}{rgb}{0.7372549019607844,0.8313725490196079,0.9019607843137255}
\definecolor{erfvff}{rgb}{0.8823529411764706,0.9607843137254902,1.} 
\definecolor{ebfvff}{rgb}{0.9215686274509803,0.9607843137254902,1.} 
\definecolor{qqzzqq}{rgb}{0.0,0.6,0.0} 
\definecolor{qqqqff}{rgb}{0.0,0.0,1.0}
\definecolor{qqqqff}{rgb}{0,0,1}
\definecolor{ffqqqq}{rgb}{1,0,0}
\definecolor{xdxdff}{rgb}{0.49019607843137253,0.49019607843137253,1}
\definecolor{ffxfqq}{rgb}{1,0.4980392156862745,0}
\newtheorem{commun}{Commun}[section]
\newtheorem{Thm}[commun]{Theorem}
\newtheorem{Prop}[commun]{Proposition}
\newtheorem{Lem}[commun]{Lemma}
\newtheorem{Cor}[commun]{Corollary}
\newtheorem{Rem}[commun]{Remark}
\definecolor{Mon_rouge}{rgb}{1.,0.,0.}
\definecolor{Mon_violet}{rgb}{0.4,0.,0.6}
\definecolor{Mon_orange}{rgb}{1.,0.4980392156862745,0.}
\definecolor{Mon_vert}{rgb}{0.,0.8,0.}
\definecolor{Mon_bleu}{rgb}{0.,0.,0.8}
\definecolor{bleu clair}{rgb}{0, 0,0.8}
\definecolor{gris clair}{gray}{0.75}
\begin{document}

\pagestyle{fancy}
\renewcommand{\rightmark}{}
\renewcommand{\leftmark}{}
\cfoot{\textbf{\thepage/\pageref{LastPage}}}

\vspace{0.5cm}

\begin{center}
\LARGE \textbf{Convergence of population processes with small and frequent mutations to the canonical equation of adaptive dynamics} \\ \vspace{0.25cm}
\large \textsc{Nicolas Champagnat}$^{\textsc{*}}$, 
\qquad \textsc{Vincent Hass}$^{\textsc{\dag}}$\footnote{Corresponding author at: IUT Nord Franche-Comt\'{e}, D\'{e}partement G\'{e}nie Civil et Construction Durable, 19 Avenue du Mar\'{e}chal Juin, 90 016 Belfort, Cedex, France. \\
 \textit{E-mail addresses:} nicolas.champagnat@inria.fr (Nicolas Champagnat), vincent.hass@univ-fcomte.fr (Vincent Hass).} \\ \vspace{0.5cm} 
\textsc{\today} \\  \vspace{0.5cm}
\textit{{}$^{\textsc{*}}$ Universit\'{e} de Lorraine, CNRS, Inria, IECL, F-54000 Nancy, France} \vspace{0.15cm} \\
\textit{{}$^{\textsc{\dag}}$ Universit\'{e} de Franche-Comt\'{e}, IUT Nord Franche-Comt\'{e}, F-90000 Belfort, France}
\end{center}

\begin{center}
\rule{10cm}{.5pt} 
\end{center}

\textbf{Abstract.} In this article, a stochastic individual-based model describing Darwinian evolution of asexual, phenotypic trait-structured population, is studied. We consider a large population with constant population size characterised by a resampling rate modeling competition pressure driving selection and a mutation rate where mutations occur during life. In this model, the population state at fixed time is given as a measure on the space of phenotypes and the evolution of the population is described by a continuous time, measure-valued \textsc{Markov} process. We investigate the asymptotic behaviour of the system, where mutations are frequent, in the double simultaneous limit of large population $(K\to+\infty
)$ and small mutational effects $({\color{black}\sigma_{K}} \to 0)$ proving convergence to an ODE known as the canonical equation of adaptive dynamics. This result holds only for a certain range of {\color{black}$\sigma_{K}$} parameters (as a function of $K$) which must be small enough but not too small either. The canonical equation describes the evolution in time of the dominant trait in the population driven by a fitness gradient. This result is based on an slow-fast asymptotic analysis. We use an averaging method, inspired by \textsc{Kurtz} {\color{blue}\cite{Kurtz}}, which exploits a martingale approach and compactness-uniqueness arguments. The contribution of the fast component, which converges to the centered \textsc{Fleming-Viot} process, is obtained by averaging according to its invariant measure, recently characterised in {\color{blue} \cite{Champagnat_Hass_FVr_2022}}. \vspace{0.2cm}

\textbf{Keywords.} Adaptive dynamics, Canonical equation, Individual-based model, Measure-valued \textsc{Markov} process, Slow-fast asymptotic analysis, Averaging method, Centered \textsc{Fleming-Viot} process. \vspace{0.2cm}

\textbf{MSC subject classification.} Primary 60B10, 60G44, 60G57, 92D10, 92D25, 92D40; Secondary 
60F10, 60G10, 60J35, 60J60, 60J68.

\begin{center}
\rule{10cm}{.5pt} 
\end{center}

\normalsize

\section{Introduction}

In this article we study, at the individual level and in the interplay between ecology and Darwinian evolution, a population model, structured by a  $1-$dimensional quantitative phenotypic trait. The Darwinian evolution is based on three basic mechanisms. Firstly, \emph{heredity} which allows the transmission of the individual phenotypic characteristics from one generation to another. Secondly, a source of variation in the individual phenotypic characteristics: in our case it is only \emph{mutations}. Finally, a \emph{selection} mechanism which can result from interaction between individuals in the population such as competition. Our model is an \emph{individual-based model} (in short, IBM) which involves a finite and asexual population with constant population size in which each individual's birth, death and mutation events are described. IBMs were first introduced in ecology as a tool to describe local interactions or complex phenomena at the individual level {\color{blue} \cite{Oborny_1994, Bolker_Pacala_1997, Bolker_Pacala_1999, Doebeli_Dieckmann_2003, Grimm_Railsback_2005, DeAngelis_individual-based_2018}}. 
 Ecological studies using IBMs are mainly numerical and the models are mostly posed in discrete space as systems of interacting particles {\color{blue} \cite{Oborny_1994}} and more rarely in continuous space {\color{blue} \cite{Bolker_Pacala_1997, Bolker_Pacala_1999, Doebeli_Dieckmann_2003}}. 
  Many IBMs (with non-constant population size) have been proposed and {\color{black}studied} in the context of Darwinian {\color{black}evolution} by the biology community {\color{blue} \cite{Metz, Dieckmann_Law_2000, Ferriere_Bronstein_Rinaldi_Law_Gauduchon_2002}}  and the mathematical community {\color{blue} \cite{Champagnat, Champagnat_Ferriere_Meleard_2006, Bans}}.  Others are dispersal models in spatially structured populations where the trait is viewed as a spatial location and mutations as dispersal {\color{blue} \cite{Bolker_Pacala_1997, Bolker_Pacala_1999, Law_Dieckmann_2002, Fournier}}. Other models, structured in age, were developed in {\color{blue} \cite{Charlesworth_1994}} and studied mathematically in {\color{blue}  \cite{Oelschlager_1990, Tran_2008}}, or structured in age and traits in {\color{blue} \cite{Tran_Ferriere_2009, Meleard_Tran_2009, Meleard_Tran}}.  \\ 
 \indent We consider an IBM with fixed population size, so that births and deaths occur simultaneously in so-called \emph{resampling} (or swap phenomenon) events. The mutation and resampling rate of an individual depends on its phenotype. When a mutation occurs, the new mutant trait is close to its parent's one yielding a slow variation of the trait. In population genetics, the \textsc{Wright-Fisher} model (and its extensions with selection, mutation or immigration), \textsc{Cannings} model or the \textsc{Moran} model are interested in the evolution of allele frequencies according to various mechanisms {\color{blue} \cite{Etheridge}}. In {\color{blue} \cite{fleming_no_1979, dawson_wandering_1982, Dawson, Etheridge_2}}, the authors construct the \textsc{Fleming-Viot} process as a scaling limit of large population from the \textsc{Moran} process.  Several extensions exist, including frequency-dependent selection, recombination, other reproduction mechanisms {\color{blue} \cite{Ethier_Kurtz_1993, Gonzalez-Casanova_Smadi_2020}}. In particular, the last article provides a bridge between population genetics and eco-evolutionary models. Others, based on \textsc{Kermack-McKendrick}'s model, are interested in epidemiological questions {\color{blue} \cite{Murray_mathematical_2004, Nepomuceno_Takahashi_Aguirre_2016}}. \\

The aim of this article is to describe the evolutionary dynamics of the dominant trait, at the population level, on a long time scale, as a solution to an ordinary differential equation (in short, ODE) called \emph{Canonical Equation of the Adaptive Dynamics}  (in short, CEAD). Canonical equations are well-known tools in evolutionary biology, used to predict the evolutionary fate of ecological communities. More precisely, such equations describe the evolution of dominant traits in a biological population as driven by mutations and a fitness gradient which describes the strength of selection that pushes the population to locally increase its fitness {\color{blue} \cite{Wright_1931, Lande_1979, Roughgarden_1983,  Dieckmann_Law_1996}}.  
\indent Fitness measures the selective value of a given individual in a given environment including the population under consideration itself. This individual can be any (fictitious) mutant individual that can be born in the population at the time under consideration. The way to construct a fitness landscape depends on the ecological context {\color{blue} \cite{Metz_Nisbet_Geritz_1992}}. For continuous time homogeneous \textsc{Markov} models as studied below, it is the instantaneous growth rate (birth rate minus death rate) of the individual considered in the environment considered. If we further assume that the population constituting the environment is in a stationary state, then we can consider that the fitness of a given mutant individual in this population governs the possibility of invasion of the descendants of this mutant in the population. \\ 
\indent The canonical equation has been studied and derived in different contexts such as game theory {\color{blue} \cite{Hofbauer_Sigmund_1988}} and quantitative genetics  {\color{blue} \cite{Lande_1979}}. In the branch of evolutionary biology called \emph{adaptive dynamics} {\color{blue} \cite{Hofbauer_Sigmund_1990, Nowak_Sigmund_1990, Marrow_Law_Cannings_1992}}, 
the CEAD has been introduced heuristically in {\color{blue} \cite{Dieckmann_Law_1996}}. The theory of adaptive dynamics studies the links between ecology and Darwinian evolution, more precisely, it investigates the effects of the ecological aspects of population dynamics on the evolutionary process, and so describes the population dynamics on the phenotypic level instead of the genotypic level. The theory of adaptive dynamics is based on biological assumptions of \emph{rare} and \emph{small mutations} and of \emph{large population} under which the CEAD was proposed. \\ 
 \indent Two mathematical approaches were developed to give a proper mathematical justification of this theory: a \emph{deterministic} one, and a \emph{stochastic} one.  All these approaches are based on the use of IBMs and different combinations of the three previous biological assumptions  and consider a parameter scaling under which the population distribution over the trait space concentrates to \textsc{Dirac} masses, i.e. to subpopulations in which all the individuals have the same trait. The canonical equation corresponds to the motion of \textsc{Dirac} masses. Let us introduce three parameters corresponding to the different biological assumptions: $p$ for the mutation probability, $\sigma$ for the mutation size and $K$ for the population size. {\color{black} In the sequel, we denote $p_{K}:=p(K)$ and $\sigma_{K} := \sigma(K)$ to emphasise the dependence of $K$.} \\ 
 \indent In the deterministic approach, {\color{blue} \cite{Fournier}} first establishes that, under the asymptotic of large population ($K \to +\infty$), the IBM converges in law to a deterministic process which is a weak solution of a partial differential equation. Then by adding the small mutation assumption ($\sigma \to 0$) and an appropriate time scaling $1/\sigma$,  {\color{blue} \cite{Diekmann_Jabin_Mischler_Perthame, Perthame_Barles_2008, Lorz_Mirrahimi_Perthame_2011}} establish the convergence to a version of the canonical equation different from the one of {\color{blue} \cite{Dieckmann_Law_1996}} and described by a \textsc{Hamilton-Jacobi} equation with constraint. \\ 
 \indent The stochastic approach was developed in {\color{blue} \cite{Champagnat, Champagnat_PTRF_2011}}. In {\color{blue} \cite{Champagnat}} it is proved that the IBM converges, for finite dimensional distributions, under the double simultaneous asymptotic of large population ($K \to +\infty$) and rare mutation (${\color{black}p_{K}} \to 0$) to a stochastic process: the \emph{TSS} for \emph{Trait Substitution Sequence} introduced in {\color{blue} \cite[\color{black} Section 6.4]{Metz}}. This is a pure jump \textsc{Markov} process in the trait space where the population is at all times monomorphic and where the jumps describe the invasion and then the fixation of a mutant $y$ in a monomorphic resident population of trait $x$.  By adding the small mutation assumption ($\sigma \to 0$) to the TSS, its  convergence to the canonical equation proposed in {\color{blue} \cite{Dieckmann_Law_1996}} is  established in {\color{blue} \cite{Champagnat_PTRF_2011}}. The time scale involved in observing the canonical equation phenomenon is $1/K{\color{black}\sigma_{K}^{2}} {\color{black}p_{K}}$. Extensions of these results were obtained in {\color{blue} \cite{Champagnat_Jabin_Meleard_JMPA_2014}} for chemostat models, in {\color{blue} \cite{Ernande_Dieckmann_Heino_2004, Meleard_Tran_2009}} for age-structured populations models, in {\color{blue} \cite{Leman_2016}} for spatial-structured populations models and {\color{blue} \cite{Baar_Bovier_Champagnat_2017}} which studies the simultaneous application of the three limits 
 ($K \to +\infty, {\color{black}p_{K}} \to 0, {\color{black}\sigma_{K}} \to 0$) in order to determine precisely the range of parameters leading to the canonical equation. {\color{black} Note that another approach, studied in {\color{blue}\cite{Coquille_Kraut_Smadi_2021, Esser_Kraut_2021}}, consider models of large population ($K\to + \infty$) scaling with rare-but-not-too-rare mutations (with power-law mutation rates $p_{K} := K^{-\alpha} \to 0$). This specific mutational scale implies that negligible subpopulations of size $K^{\beta}$, $\beta \in (0,1)$ may have a strong contribution to evolution.  A similar scaling was also studied in {\color{blue}\cite{Durrett_Mayberry_2011, Bovier_Coquille_Smadi_2019, Champagnat_Meleard_Tran_2021, Blath_Paul_Tobias_2023}} 
 {\color{black} with fixed mutation size. Building on these works,{\color{blue}~\cite{Paul_2024}} recently
   proposed a derivation of a CEAD assuming small mutations.} 
 \\ 
\indent Despite their success, the proposed approaches are criticised by biologists {\color{blue}\cite{Waxman, Perthame_Gauduchon_2010}}. Among the biological assumptions of adaptive dynamics, the assumption of rare mutations is the most critised as unrealistic {\color{blue} \cite{Waxman}}. The rate of molecular mutation is relatively well known and generally involves several nucleotide substitutions per generation. The adaptive dynamics response is based on the fact that only non-synonymous mutations (changing phenotypes) producing viable individuals should be taken into account {\color{blue} \cite{Anfinsen_principles_1973} \cite[\color{black} Section 6.4]{Metz}}. Since only a small fraction of DNA codes for proteins and many mutations produce non-functional proteins, and thus non-viable individuals, it is not unreasonable to assume that mutations are rare, but probably not as rare as assumed in the stochastic approach {\color{blue} \cite{Champagnat, Champagnat_PTRF_2011, Baar_Bovier_Champagnat_2017}}. Another criticism of stochastic approaches is that the phenomenon of the canonical equation takes place on a too long evolutionary time scale. In order to solve these problems, we propose to apply a double simultaneous asymptotic of small mutations (${\color{black}\sigma_{K}} \to 0$) and large population ($K \to +\infty$), but \emph{frequent mutations} ($p\equiv 1$). After conveniently scaling the population state, this leads to a \emph{slow-fast dynamics} where the phenomenon of the canonical equation is visible on a time scale $1/K{\color{black}\sigma_{K}^{2}}$. So, there is some consistency between the previous stochastic approaches developed in {\color{blue} \cite{Champagnat_PTRF_2011, Baar_Bovier_Champagnat_2017}} and ours: it is exactly the same canonical equation. However in our situation, the CEAD is visible on a shorter evolutionary scale than theirs, and therefore is biologically more reasonable. {\color{black} Note however that the restrictions on $\sigma_K$ in~\cite{Baar_Bovier_Champagnat_2017} are in a completely different regime than ours ($\sigma_K\gg K^{-1/2}$, as opposed to $\sigma_K\ll K^{-2}$ here). Hence it is possible to choose $p_K$ in~\cite{Baar_Bovier_Champagnat_2017} in such a way that the CEAD appears on a faster time scale than in our work, where the fastest scale is in $K^3$.}\\

As the mutation size parameter is small, the population distribution tends to be close to a \textsc{Dirac} mass. Our aim is to describe the evolution of the support of this \textsc{Dirac} mass on a large time scale. The mean trait appears as the natural slow component. It will be proved to act on the time scale $1/K{\color{black}\sigma_{K}^{2}}$.  In our case, the fast component acts on the time scale $K$ and is given by the dynamics of the centered and dilated distribution of traits corresponding to a discrete version of the centered \textsc{Fleming-Viot} process {\color{blue} \cite{Champagnat_Hass_FVr_2022}}. Since the centered \textsc{Fleming-Viot} process is ergodic {\color{blue} \cite{Champagnat_Hass_FVr_2022}}, we expect the centered and dilated distribution of traits to stabilise on the slow time scale, hence the distribution of traits should stabilise to a \textsc{Dirac} mass. Therefore, the dynamics of the dominant trait corresponds to that of the mean trait. \\    
\indent The reason for not considering the same IBM as in {\color{blue} \cite{Champagnat, Champagnat_PTRF_2011, Baar_Bovier_Champagnat_2017}} is  because it involves three time scales, a slow one corresponding to the dynamics of the mean trait acting on the evolutionary time scale $1/K{\color{black}\sigma_{K}^{2}}$, a fast one corresponding to the dynamics of the centered, dilated distribution of traits acting on the evolutionary time scale $K$ and a very fast one corresponding to the population size dynamics acting on the ecological time scale $1$. For simplicity, we focus here on a model with constant population size to reduce the number of time scales to two. We expect our results to extend to general IBMs as in {\color{blue} \cite{Champagnat, Champagnat_PTRF_2011, Baar_Bovier_Champagnat_2017}} and we leave this for future works. \\

\indent To prove convergence in the framework of slow-fast dynamics (also called \emph{stochastic singular perturbations}), different techniques can be used. \\ 
\indent Firstly, the method of the \emph{perturbed test function} initially proposed by \textsc{Papanicolaou}, \textsc{Stroock} and \textsc{Varadhan}  in {\color{blue} \cite{Papanicolaou_1977}} identifies the generator of the limit process with a martingale approach whose idea is the following.  If we consider a family of stochastic processes $\left(\left(X^{\varepsilon}, Y^{\varepsilon}\right)\right)_{\varepsilon>0}$ of generator $L^{\varepsilon}$ where $X^{\varepsilon}$ is the \emph{slow component} and $Y^{\varepsilon}$ is the \emph{fast component} in the form $Y^{\varepsilon}(t) = Y\left(t/\varepsilon\right)$ where $Y$ is a \textsc{Markov} process, the slow-fast problem consists in identifying the limit process of $X^{\varepsilon}$ using ergodicity properties  of the fast dynamics. Assuming that the family $\left(X^{\varepsilon}\right)_{\varepsilon >0}$ is tight, we consider $X$ a limiting value. We can expect to characterise $X$ with a martingale problem derived from the martingale problem of $\left(X^{\varepsilon}, Y^{\varepsilon} \right)$ provided that the solution to this problem is unique.  We would then obtain the convergence of $X^{\varepsilon}$ to $X$ in law. However, for multiscale singular problems, the convergence of $L^{\varepsilon}\varphi\left(\left(X_{t}^{\varepsilon}, Y_{t}^{\varepsilon}\right)\right)$ when $\varepsilon \to 0$, for a test function $\varphi$ depending only on the $X$ component, does not (in general) take place because it contains diverging terms in $\varepsilon$. To overcome this difficulty, firstly the idea is to decompose the generator $L^{\varepsilon}$ in the following form $L^{\varepsilon} = \frac{1}{\varepsilon}L_{1} + L_{0}$ where $L_{1}$ is the infinitesimal generator of the \textsc{Markov} process $Y$ in the variable $y$ and $L_{0}$ the operator of the slow component depending on slow and fast variables. Secondly, the idea is to perturb the initial test function $\varphi(x)$ into a test function $\varphi^{\varepsilon}(x,y) := \varphi(x) + \varepsilon \varphi_{1}(x,y)$ such that 
\begin{equation}
\begin{aligned}
L^{\varepsilon}\varphi^{\varepsilon}  = \left[\frac{1}{\varepsilon}L_{1} + L_{0} \right]\left(\varphi + \varepsilon \varphi_{1} \right) &=   \left(L_{1}\varphi_{1} + L_{0}\varphi - \overline{L}_{0}\varphi \right) + \overline{L}_{0}\varphi + \varepsilon L_{0}\varphi_{1}
\end{aligned}
\label{Eq_L_epsilon_Intro}
\end{equation}
because of $L_{1}\varphi = 0$ and where $\overline{L}_{0}$ is the operator of the limit slow component \emph{averaged} by the unique invariant probability measure of the limit fast component $Y$. Provided that $\varphi_{1}$ solves the \textsc{Poisson} equation (with respect to $L_{1}$ and the variable $y$) 
\begin{equation}
L_{1}\varphi_{1} + L_{0}\varphi - \overline{L}_{0}\varphi = 0,
\label{Eq_Phi_1_Intro}
\end{equation}
we deduce that $L^{\varepsilon}\varphi^{\varepsilon} = \overline{L}_{0}\varphi  + O(\varepsilon)$.
 Then, $\varphi^{\varepsilon}(x,y) \to \varphi(x)$ and $L^{\varepsilon}\varphi^{\varepsilon}(x,y) \to \overline{L}_{0}\varphi(x)$ when $\varepsilon \to 0$ as expected.  \\
 \indent The perturbed test function method has been extended in {\color{blue} \cite{Blankenship_Papanicolaou_1978, Kushner_jump-diffusion_1979, Kushner_martingale_1980}} and in the books {\color{blue} \cite[\color{black} Section 6.3]{Fouque_Garnier_Papanicolaou_Solna_2007}}, {\color{blue} \cite{Kushner_approximation_1984}}. Many other references apply the perturbed test function strategy in various settings: in finance {\color{blue} \cite{Fouque_Papanicolaou_Sircar_Solna_2003}} in transport problems as in {\color{blue} \cite{Papanicolaou_1977}} or in {\color{blue} \cite{Costantini_Kurtz_2006}} where the tightness of the fast component is established using its occupation measure. Similar methods are used in homogenisation {\color{blue} \cite{Papanicolaou_1977, Bensoussan_Lions_Papanicolaou_2011, Pavliotis_multiscale_2008}}  (see also {\color{blue} \cite{Benoist_Bernardin_Chetrite_Chhaibi_Najdunel_Pellegrini_2021}} which exploits spectral and semi-group properties in addition), and in stochastic stability and control {\color{blue} \cite{Kushner_approximation_1984}}.   \\ 
 \indent Finally, an important method is the stochastic averaging, in a generic framework, that was   proposed by \textsc{Kurtz} in {\color{blue} \cite{Kurtz}}.  Many approaches {\color{blue} \cite{Ball_Kurtz_Popovic_Rempala, Meleard_Tran, Gupta, costa_piecewise_2016, Genadot_2019, Bonnet_Meleard_2021, Ballif_Clement_Yvinec_2022, Billiard_Ferriere_Meleard_Tran_2015}}  including ours, are based on it. The main idea consists in exploiting the \emph{occupation measure} $\Gamma^{\varepsilon}$ of the fast component $Y^{\varepsilon}$ which is formally defined for all $t \geqslant 0$, for any Borelian $B$ by \[\Gamma^{\varepsilon}\left([0, t] \times B \right) = \int_{0}^{t}{\II_{B}\left(Y^{\varepsilon}_{s}\right)\dd s},\] and to establish the convergence of the couple $\left(X^{\varepsilon}, \Gamma^{\varepsilon}\right)$ when $\varepsilon \to 0$ using \emph{compactness-uniqueness} arguments. Proceeding in this way allows us to escape the difficulties created by the fluctuations of the fast component and avoids to obtain tightness result of the ``slow-fast'' couple $\left(X^{\varepsilon}, Y^{\varepsilon} \right)$.  The proof can be divided into four steps. \\
 \indent The first one consists in establishing uniform tightness of the sequence of laws of the couple  $\left(X^{\varepsilon}, \Gamma^{\varepsilon}\right)$.  The second one consists in establishing a martingale problem for any limiting value of the family of laws of the couple. The third one is based on establishing the uniqueness of the limiting value of $\Gamma^{\varepsilon}$ expressed in terms of the limit $X$ of $X^{\varepsilon}$ which is given. The characterisation of the limiting value $\Gamma$ of $\Gamma^{\varepsilon}$ is usually based on ergodicity arguments. Finally, the last step is to establish the uniqueness of the martingale problem for the slow component limit $X$ when $\Gamma$ is given as above. \\
\indent The \textsc{Kurtz} approach seems to be better adapted to our situation than the perturbed test function method. {\color{black} Indeed, 
it is delicate to find the good class of test function $\varphi_{1}$ satisfying  (\ref{Eq_Phi_1_Intro}) and the computations are difficult because of the moment terms (see Section \ref{Sous-Section_2_3_Sketch_of_proof}).} In addition, the remainder terms generated by (\ref{Eq_L_epsilon_Intro}) are difficult to control and the fast component  $Y^{\varepsilon}$ does not take the form $Y(t/\varepsilon)$. However, \textsc{Kurtz}'s approach can neither be implemented directly because of difficulties inherent to our model described in Section \ref{Section_2_IBM}. Therefore, our result required a complete reworking of the classical arguments. In particular,  in our case, we do not have uniform moment estimates but only a uniform control, in probability, of the second moment of the fast variable up to a stopping time $\check{\tau}^{K}$ where the diameter of the support of the fast component becomes large. \\ 
 \indent This paper is organised as follows. In Section \ref{Section_2_IBM}, we define our trait structured IBM,  state the central theorem about the CEAD characterising the limit model which consists of an ODE ruling the dynamics of the limit slow component. We establish in this section the sketch and outline of the main proof and the difficulties encountered. We give also some prospects that let us believe that we can improve our main result by relaxing some assumptions. The rest of this paper is devoted to prove the central theorem of Section \ref{Section_2_IBM}. In Section \ref{Section_3_Gene_LENT_RAPIDE} some approximations of the infinitesimal generator of the slow-fast process are proved. In Section \ref{Section_4_Estimees_de_moments}, we establish estimates of moments, in particular of the sixth and second order moment up to time $\check{\tau}^{K}$.  We also prove the convergence of $\check{\tau}^{K}$ to $+\infty$ when $K \to +\infty$ exploiting coupling arguments between the moment of order $2$ and a certain biased random walk for which large deviations estimates are established. In Section \ref{Section_5_Tension_LENT-RAPIDE}, we prove the tightness of the couple ``slow-occupation measure fast'' in the torus case.  In Section \ref{Section_6_Caract_Gamma_Limite}, we establish, in the torus case, the uniqueness of the limit occupation measure  which is described by the unique invariant probability measure of the centered \textsc{Fleming-Viot} process. This result is used  in Section \ref{Section_7_Caract_Composante_LENTE} to characterise the limit slow component as the unique solution of the CEAD with values in the torus. Thanks to the non-explosion of this ODE and choosing the torus large enough we are able to conclude the proof of our main result given in Section \ref{Section_2_IBM}. 

 \section{A trait structured IBM\label{Section_2_IBM}}
Let us describe the microscopic model which models population evolution, in a Darwinian sense, at the individual level. 

\subsection{Parameters and assumptions of the model}
\label{sec:model}

 We consider a discrete population of constant size in continuous time in which the survival and reproductive capacity of each individual is characterised by a continuous quantitative phenotypic trait $x \in \R$, i.e. an overall characteristic subject to selection such as body size at maturity.  The individuals reproduce asexually during their lives, i.e. the reproduction scheme is assumed clonal simultaneously with death of another individual, with frequency-dependent rates. Each birth of an individual occurs simultaneously with the death of another individual in the population. \\
 
 We are interested in approximating the long-term dynamics of a large population. We assume that the number of individuals alive at each time $t \geqslant 0$ is always equal to $K$. Let us denote by $x_{1}(t), \cdots, x_{K}(t)$, the phenotypic trait values of these individuals at time $t$. {\color{black} A mutation occurs during life at the individual level at rate $1$: in this sense mutations are considered \emph{frequent}. Each mutation has an amplitude of the order of magnitude ${\color{black}\sigma_{K}} \in (0, + \infty)$. Small ${\color{black}\sigma_{K}}$ means mutations have a small phenotypic effect, i.e. evolution acts slowly on the individual phenotypic characteristics.} The state of the population at time $t$, can be described by the finite point measure on $\R$ rescaled by~$K$
\begin{equation*}
\nu_{t}^{K} := \frac{1}{K}\sum\limits_{i\, = \, 1}^{K}{\delta_{x_{i}(t)}} 
\label{Processus_Nu_t}
\end{equation*} 
where $\delta_{x}$ is the \textsc{Dirac} measure at $x$.  \\ 

For all $x, y \in \R$, let us introduce the following  biological parameters:
\begin{itemize}
\item $b(x,y) \in \R_{+}$ is the \emph{resampling} rate, i.e. the rate of simultaneous birth of an individual holding trait $y$ and death of an individual holding trait $x$ and it can be interpreted as modeling a competitive pressure driving selection.
\item $\theta(x) \in \R_{+}$ is the rate of mutation of an individual holding trait $x$.
\item $m(x, \dd h)$ is the mutation law of the scaled mutation step $h$, born from an individual with trait $x$, where the mutant trait is given by $y := x + {\color{black}\sigma_{K}} h \in \R$. It is a probability measure on~$\R$.
\end{itemize}

Let us also introduce the following notations, used throughout this paper:
\begin{itemize}
\item $\beta(x) := \frac{\theta(x)}{b(x,x)}$, the ratio between the mutation rate and the resampling rate in a monomorphic population with trait $x$, which can be interpreted (in the scaling limit considered below) as the mean number of mutations between two resampling events.
\item $\Fit(y,x) := b(x,y) - b(y,x)$ is the adaptive value or \emph{fitness} of a mutant individual with trait $y$ in the population of $(K-1)$ individuals of trait $x$. The fitness can be interpreted as the initial growth rate of a mutant individual $y$ in a resident monomorphic population with trait $x$. \\
\indent Indeed, the total birth rate of a mutant individual $y$ in this population is $b(x,y)(K-1)$ and the total death rate of a mutant individual $y$ in this population is $b(y,x)(K-1)$. Hence, the (initial) growth rate of the mutant population is $\Fit(y,x)(K-1)$. 
\end{itemize}

Let $\MM_{1}(\R)$ be the set of probability measures on $\R$, endowed with the topology of weak convergence making it a Polish space {\color{blue} \cite{Billingsley}}. For a measurable real bounded function $f$, and a measure $\nu \in \MM_{1}(\R)$, we denote $\left\langle f, \nu \right\rangle := \int_{\R}^{}{f(x)\nu(\dd x)}.$ We denote by $\id$ the identity function. We denote also $\N :=\{0, 1, 2, \cdots \}$ and $\N^{\star} := \N \setminus{\{0\}}$. If $\III$ is an interval of $\R$, then for all $\ell \in \N$, $q\in \N^{\star}$, we denote by $\CCCC^{\ell}(\III^{q},\R)$ the space of functions of class $\CCCC^{\ell}$ from $\III^{q}$ to $\R$ and by $\CCCC^{\ell}_{b}(\III^{q},\R)$ the space of functions of class $\CCCC^{\ell}(\III^{q},\R)$ with bounded derivatives. Finally, we define for all $K \in \N^{\star}$,
\begin{align*}
 \MM_{1,K}(\R) & := \left\{\left.\frac{1}{K}\sum\limits_{i\,= \, 1}^{K}{\delta_{x_{i}}} \right| \left(x_{i}\right)_{1\leqslant i \leqslant K} \in \R^{K} \right\} 
\end{align*}
the set of probability measures on $\R$ of $K$ atoms of mass $1/K$. \\

\textbf{Assumptions.} Let us denote by \textbf{(A)} the following two assumptions: \\

\indent \textbf{(A1)} The maps $b : \R^{2} \to \R_{+}$, and $\theta : \R \to \R_{+}$ are respectively in $\CCCC_{b}^{2}(\R^{2}, \R)$ and $\CCCC_{b}^{2}(\R, \R)$ and there exists $0 < \underline{b}, \overline{b}, \underline{\theta}, \overline{\theta} < \infty$ such that: 
\[ \underline{b} \leqslant b(\cdot, \cdot) \leqslant \overline{b},  \qquad {\rm{and}} \qquad \underline{\theta} \leqslant \theta(\cdot) \leqslant \overline{\theta}. \]   
 \indent \textbf{(A2)} \textbf{(a)} There exists $A_{m} \in (0, +\infty)$ such that the law $m(x, \dd h)$ is absolutely continuous with respect to the \textsc{Lebesgue} measure on $\R$ with density $m(x,h)$ centered and satisfies \[\forall x \in \R, \ \forall \left|h \right| \geqslant A_{m}, \quad m(x,h) = 0. \]
\indent \indent \textbf{(b)} For all $\alpha \in \R$, for all $\ell \in \N$, we denote by \[m_{\ell}\left(\alpha \right) := \int_{\R}^{}{\left|h\right|^{\ell}m\left(\alpha, h \right)\dd h} \] the $\ell^{\rm{th}}$ moment of the mutation law. We assume for $\ell \in \{2, 4, 6\}$ that $m_{\ell}$ is \textsc{Lipschitz} and there exists $\underline{m}_{\ell}, \overline{m}_{\ell} \in (0, +\infty)$ such that for all $\alpha \in \R$, $\underline{m}_{\ell} \leqslant m_{\ell}(\alpha) \leqslant \overline{m}_{\ell}$. \\

Let us now give the infinitesimal generator $L^{K}$ of the $\MM_{1,K}(\R)-$valued \textsc{Markov} process $\nu^{K} := \left(\nu_{t}^{K} \right)_{t\geqslant 0}$ describing the ecological dynamics of the population with resampling. The generator $L^{K}$ is defined for any bounded measurable map $\phi$ from $\MM_{1,K}(\R)$ to $\R$, by
\begin{equation}
		\begin{aligned}
			L^{K}\phi(\nu) & = K \int_{\R}^{}{\nu(\dd x)\int_{\R}^{}{\nu(\dd y)b(x,y)\left[\phi\left(\nu - \frac{\delta_{x}}{K} + \frac{\delta_{y}}{K} \right) - \phi(\nu) \right]}} \\
& \qquad +  K\int_{\R}^{}{\theta(x)\nu(\dd x)\int_{\R}^{}{m(x,h)\left[\phi\left(\nu - \frac{\delta_{x}}{K} + \frac{\delta_{x + {\color{black}\sigma_{K}} h}}{K} \right) - \phi(\nu) \right]\dd h} }.
		\end{aligned} \label{Generateur_initial}
\end{equation}

The first term describes the resampling event of one individual by another and the second term describes the effect of mutations over the lifetime of individuals. 

\subsection{Main result}

The main result of this article is the following theorem. For all $\nu \in \MM_{1}(\R)$, let us denote by $\Diam\left(\Supp \nu \right)$ the diameter of the support of $\nu$.
\begin{Thm} \label{Thm_CEAD_Jouet}
Assume {\rm \textbf{(A)}} and for all $K \in \N^{\star}$, ${\color{black}\sigma_{K}} \in (0, +\infty)$, for some $x_{0} \in \R$, for all $\ell \in \{1,2,3 \}$, 
\begin{equation}
\left\langle \id, \nu_{0}^{K} \right\rangle := x_{0}, \qquad {\rm and} \qquad \left\langle \left(\id - x_{0} \right)^{2\ell}, \nu_{0}^{K} \right\rangle \leqslant C_{2\ell}^{\star} K^{\ell}{\color{black}\sigma_{K}^{2\ell}}
\label{Hypothese_C_etoile}
\end{equation}
 for some $C_{2 \ell}^{\star}>0$. Assume that $K \to +\infty$, ${\color{black}\sigma_{K}} \to 0$ such that 
\begin{equation}
\exists \, \varepsilon>0, \quad \forall C >0,  \qquad K^{-C\log(K)} \ll {\color{black}\sigma_{K}} \ll  K^{-\left(2 + \varepsilon\right)}.
\label{Hypothese_Gamme_sigma}
\end{equation}

\indent Then, for the \textsc{Skorohod} topology, the sequence of the mean trait processes $$\left(\left(\left\langle \id, \nu_{t/K{\color{black}\sigma_{K}^{2}}}^{K} \right\rangle \right)_{t\geqslant 0}\right)_{K \in \N^{\star}}$$ converges in law in $\D\left(\R_{+}, \R\right)$ when $K \to +\infty$, ${\color{black}\sigma_{K}} \to 0$, to the unique deterministic process $\left(\zeta_{t}\right)_{t \geqslant 0} \in \CCCC^{0}\left(\R_{+}, \R \right)$ with initial condition $\zeta_{0} := x_{0}$, which is solution to 
the \emph{Canonical Equation of Adaptive Dynamics (CEAD)}: 
\begin{equation*}
\dot{y} = \partial_{1}\Fit(y,y) \times \frac{\beta(y)m_{2}(y)}{2}.
\label{Eq_CEAD_Jouet}
\end{equation*}
In addition, we have the following support concentration property : for all $T>0$, 
\begin{equation}
\sup_{t\in[0,T]}{\Diam \left(\Supp \nu^{K}_{t/K{\color{black}\sigma_{K}^{2}}} \right)} \leqslant \frac{1}{K}
\label{Eq_Thm_Concentration_Support}
\end{equation}
holds with probability which tends to $1$ when $K\to +\infty$. 
\end{Thm}
The canonical equation is composed of two terms. The first term:  the fitness gradient, describes the strength of selection that pushes the population to increase its adaptive value locally. The second term describes the effect of mutations. Note that (\ref{Eq_Thm_Concentration_Support}) describes, in some sense, when $K \to + \infty$, the convergence of the population distribution $\left(\nu_{t/K{\color{black}\sigma_{K}^{2}}}^{K} \right)$ to a \textsc{Dirac} mass. 

\begin{Rem} \begin{itemize}
\item[{\rm (i)}] We conjecture that the assumption ${\color{black}\sigma_{K}} \ll K^{-(2 + \varepsilon)}$ given by {\rm (\ref{Hypothese_Gamme_sigma})} is too restrictive and that it can be weakened by ${\color{black}\sigma_{K}} \ll K^{-(\frac{3}{2} + \varepsilon)}$. This conjecture is supported by the discussion in {\rm Section \ref{Sous-sous-section_Perspectives}} below. 
\item[{\rm(ii)}] We actually prove {\rm Theorem \ref{Thm_CEAD_Jouet}} (see {\rm Section \ref{Sous-sous-section_4_5_3_Attractive_domain_exit_estimates_for_random_walks}}) under a slightly weaker assumption than the left bound of {\rm (\ref{Hypothese_Gamme_sigma})}: for a universal constant $C_{0}$ depending only on $b, \theta, m$ and not on $K$, 
\[\exists \, \varepsilon >0, \qquad K^{-\varepsilon^{2} C_{0}\log(K)} \ll {\color{black}\sigma_{K}} \ll K^{-\left(2+\varepsilon\right)}. \]
\item[{\rm(iii)}] Assumption {\rm\textbf{(A2)(a)}} is only used in {\rm Section \ref{Sous_section_4_5_Sortie_domaine}} and could be weakened with some tail bounds (typically exponential) on $m(x,\cdot)$.
\end{itemize}
\end{Rem}

The aim of the rest of the paper is to prove Theorem \ref{Thm_CEAD_Jouet}. 

\subsection{Sketch and outline of the proof\label{Sous-Section_2_3_Sketch_of_proof}}
Note that, the convergence result of Theorem \ref{Thm_CEAD_Jouet} takes place on the time scale $1/K{\color{black}\sigma_{K}^{2}}$. The idea of the proof is based on slow-fast asymptotic analysis techniques developed by \textsc{Kurtz} {\color{blue} \cite{Kurtz}}. Let us begin by introducing the slow and fast dynamics involved in our model,  then the difficulties encountered and finally the outline of the proof.   

\subsubsection{Slow-fast asymptotic analysis\label{Sous-sous-section_2_3_1_Slow-fast_Heuristic}}
Let $a, \alpha \in \R$ and $\BB(\R)$ the \textsc{Borel} $\sigma-$field on $\R$. Let us define respectively by $\tau_{\alpha}$ and $h_{a}$, the translation operator of vector $\alpha$ and the homothety of ratio $a$. For all $x \in \R$, for all $A \in \BB(\R)$, \[\left(h_{a} \circ \tau_{\alpha} \right) \sharp\, \delta_{x}(A) = \delta_{a(x + \alpha)}(A) \qquad {\rm{and}} \qquad\left(\tau_{\alpha} \circ h_{a}  \right) \sharp \, \delta_{x}(A) = \delta_{a x + \alpha}(A).\]
Note that for all $K \in \N^{\star}$, any $\nu \in \MM_{1, K}(\R)$ has all its moments finite. We define for all $K \in \N^{\star}$, $\MM_{1, K}^{c}(\R)$ the set of centered probability measures of $\MM_{1, K}(\R)$.  From the population process $\left(\nu_{t/K{\color{black}\sigma_{K}^{2}}}^{K} \right)_{t\geqslant 0}$, 
we introduce two evolutionary dynamics:
\begin{itemize}
\item The slow dynamics, with value in $\R$, corresponds to that of the mean trait defined by 
\begin{equation*}
 z^{K}_{t} := \left\langle \id, \nu_{t/K{\color{black}\sigma_{K}^{2}}}^{K} \right\rangle. \label{Def_trait_moyen}
\end{equation*}
\item The fast dynamics, with value in $\MM_{1, K}^{c}(\R)$, corresponds to that of the centered and dilated distribution of traits defined by
\begin{equation}
\mu_{t}^{K} := \left(h_{\frac{1}{{\color{black}\sigma_{K}} \sqrt{K}}} \circ \tau_{-z_{t}^{K}} \right) \sharp \, \nu_{t/K{\color{black}\sigma_{K}^{2}}}^{K} = \frac{1}{K}\sum\limits_{i\, = \, 1}^{K}{\delta_{\frac{1}{{\color{black}\sigma_{K}} \sqrt{K}}\left(x_{i} (t)- z_{t}^{K} \right) }}. \label{Def_distribution_recentre_dilate}
\end{equation}
Note that $\nu_{t}^{K} = \left(\tau_{z_{tK{\color{black}\sigma_{K}^{2}}}^{K}} \circ h_{{\color{black}\sigma_{K}} \sqrt{K}} \right) \sharp \, \mu_{tK{\color{black}\sigma_{K}^{2}}}^{K} $.
\end{itemize}
For all $\ell \in \N$ and $\nu \in \MM_{1}(\R)$, we denote by $M_{\ell}\left(\nu\right) := \left\langle \left| \id \right|^{\ell}, \nu \right\rangle$ the $\ell^{\rm{th}}$ moment of the measure $\nu$. Let us consider for all $\ell \in \N^{\star}$, \[\MM_{1}^{c,\ell}(\R) := \left\{ \mu \in \MM_{1}(\R) \left| \phantom{1^{1^{1^{1}}}} \hspace{-0.65cm} \right. M_{\ell}\left(\mu \right) < \infty, \left\langle \id, \mu \right\rangle = 0\right\}\] and the convention $M_{\ell}\left(\mu \right) = +\infty$ if $\mu \notin \MM_{1}^{c, \ell}(\R)$. Let us denote by $\BB\left(E, \R \right)$ the set of \textsc{Borel} functions on the metric space $E$.\\ 

In the following, for all $K \in \N^{\star}$, we study the couple of processes $\left(\left(z^{K}_{t}, \mu^{K}_{t} \right)\right)_{t\geqslant 0}$. Denoting $\LL^{K}$ the infinitesimal generator of the process $\left(\left(z^{K}_{t}, \mu^{K}_{t} \right)\right)_{t\geqslant 0}$, computed in Proposition \ref{Prop_Generateur_Couple_Lent_Rapide}, we will check the following assertions, proved respectively in Propositions \ref{Prop_Generateur_Lent_Decomposition} and \ref{Prop_Generateur_Rapide_Decomposition}.
\begin{itemize}
\item[(i)] If we consider a function $f : (z, \mu) \mapsto f(z) \in \CCCC_{b}^{2}(\R, \R)$, the generator $\LL^{K}$ satisfies the following decomposition 
\begin{equation}
\LL^{K}f\left(z, \mu \right) =  \LL_{{\rm SLOW}}f\left(z,\mu\right) + O\left(\frac{1}{K^{2}} + \frac{M_{2}(\mu)}{K} + {\color{black}\sigma_{K}}\sqrt{K}M_{3}\left(\mu \right) \right)
\label{Decomposition_L_Phi_L_SLOW}
\end{equation}
where the operator $\LL_{\rm SLOW}$, of the limit slow component, is defined from $\CCCC^{1}_{b}(\R, \R)$ to $\BB\left(\R \times \MM_{1}^{c, 2}(\R), \R \right)$ by
\begin{equation}
\LL_{{\rm SLOW}}f\left(z, \mu\right) = f'(z)M_{2}\left(\mu \right)\partial_{1}\Fit(z,z).
\label{Generateur_LENT} 
\end{equation}
\item[(ii)] If we consider a function $F_{\varphi} : (z, \mu) \mapsto F_{\varphi}(\mu) := F\left(\left\langle \varphi, \mu \right\rangle \right)$ with $F \in \CCCC^{3}(\R, \R)$ and $\varphi \in \CCCC_{b}^{3}(\R, \R)$, the generator $\LL^{K}$ satisfies the following decomposition 
\begin{equation}
\LL^{K}F_{\varphi}\left(z, \mu \right) = \frac{\theta(z)m_{2}(z)}{K^{2}{\color{black}\sigma_{K}^{2}}} \left[\LL_{\rm FVc}^{\lambda(z)}F_{\varphi}(z, \mu) + O\left(\frac{1}{\sqrt{K}} + {\color{black}\sigma_{K}} K^{\frac{3}{2}}M_{2}\left(\mu \right) + \frac{M_{3}(\mu)}{K}\right) \right]
\label{Decomposition_L_Phi_L_FAST}
\end{equation}
where $\LL_{{\rm FVc}}^{\lambda(z)}$ is the generator of the \emph{centered \textsc{Fleming-Viot} process} with resampling rate $\lambda(z) := \frac{b(z,z)}{\theta(z)m_{2}(z)}$, as studied in {\color{blue} \cite{Champagnat_Hass_FVr_2022}} and whose definition is recalled below. For all $F \in \CCCC^{2}(\R, \R)$ and $g \in \CCCC_{b}^{2}\left(\R, \R \right)$, 
\begin{equation}
\begin{aligned}
\LL_{{\rm FVc}}^{\lambda}F_{g}\left(\mu\right) & := F'\left(\left\langle g, \mu \right\rangle \right)\left(\left\langle \frac{g''}{2}, \mu \right\rangle + \lambda \left[\left\langle g'', \mu  \right\rangle M_{2}(\mu) - 2\left\langle g' \times \id , \mu \right\rangle \right] \right) \\
& \hspace{-0.8cm} + \lambda F''\left(\left\langle g, \mu  \right\rangle \right)\left[ \left\langle g^{2}, \mu  \right\rangle - \left\langle g, \mu \right\rangle^{2} +  \left\langle g', \mu  \right\rangle^{2}M_{2}\left(\mu \right) - 2\left\langle g', \mu  \right\rangle\left\langle g \times \id, \mu   \right\rangle  \right].
\end{aligned} 
\label{Generateur_FVc}
\end{equation}
\end{itemize}

\noindent From (\ref{Decomposition_L_Phi_L_FAST}) and (\ref{Decomposition_L_Phi_L_SLOW}), note that the fast component $\left(\mu_{K{\color{black}\sigma_{K}^{2}}t}^{K} \right)_{t\geqslant 0} = \left(\left(h_{\frac{1}{{\color{black}\sigma_{K}}\sqrt{K}}}\circ \tau_{-\left\langle\id, \nu_{t}^{K} \right\rangle}\right) \sharp \, \nu_{t}^{K} \right)_{t\geqslant 0}$ moves on the evolutionary time scale $K$ whereas the slow component $\left(z_{K{\color{black}\sigma_{K}^{2}}t}^{K} \right)_{t\geqslant 0} = \left(\left\langle\id, \nu_{t}^{K} \right\rangle \right)_{t\geqslant 0}$ moves over a much longer evolutionary time scale $1/K{\color{black}\sigma_{K}^{2}}$. The different time scales involved in this model can be represented as in Figure \ref{Fig2}. It follows that the fast component will be the first to stabilise in its equilibrium state.  Note that the operator of the slow component (\ref{Generateur_LENT}) depends on the fast variable through the second-order moment $M_{2}\left(\mu\right)$. This is a standard difficulty in slow-fast analysis, usually solved by assuming that once the fast component is stabilised in its equilibrium state, $M_{2}\left(\mu \right)$ can be characterised in terms of the slow component leading to an autonomous slow dynamics in the limit $K \to +\infty$.  

 \definecolor{ffwwqq}{rgb}{1.0,0.0,0.0}
\definecolor{qqqqff}{rgb}{0.0,0.0,1.0}
\definecolor{qqzzqq}{rgb}{0.0,0.6,0.0}
\definecolor{ebfvff}{rgb}{0.9215686274509803,0.9607843137254902,1.}

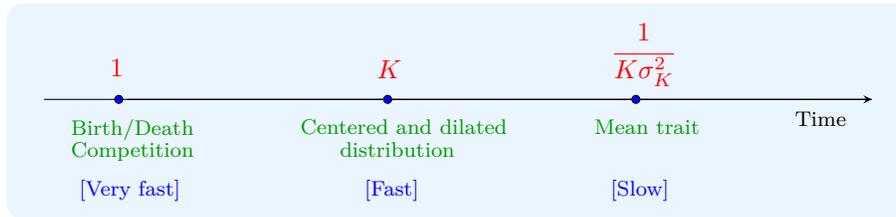
\begin{figure}[!h]
\begin{center}
\begin{minipage}{12cm} \centering
    \begin{tcolorbox}[arc=1ex, colback=ebfvff, colframe=ebfvff, left=3pt, right=3pt, top=3pt, bottom=2pt] 
    \begin{center}
\begin{tikzpicture}[line cap=round,line join=round,>=triangle 45,x=0.55cm,y=0.55cm]
\clip(-5.8,0.5) rectangle (14.2,4.85);
\draw [domain=-3.8:26] plot(\x,{(--75.0-0.0*\x)/25.0});
\draw [>=stealth,->] (-5.8,3.0) -- (14.2,3.0);
\draw (-4.45,4.2) node[anchor=north west] {\small $\color{ffwwqq}1$};
\draw (-5.4,2.76) node[anchor=north west] {\scriptsize \color{qqzzqq} \rm Birth/Death};
\draw (-5.4,2.21) node[anchor=north west] {\scriptsize \color{qqzzqq} \rm Competition}; 
\draw (-5.2,1.3) node[anchor=north west] {\scriptsize \color{blue} \rm [Very fast]};

\draw (2,4.2) node[anchor=north west] {\small $\color{ffwwqq}K$};
\draw (0.15,2.76) node[anchor=north west] {\scriptsize \color{qqzzqq} \rm Centered and dilated}; 
\draw (1.1,2.21) node[anchor=north west] {\scriptsize \color{qqzzqq} \rm distribution};
\draw (1.7,1.3) node[anchor=north west] {\scriptsize \color{blue} \rm [Fast]};

\draw (7.65,5.2) node[anchor=north west] {\small $\color{ffwwqq}\cfrac{1}{K{\sigma_{K}^{2}}}$};
\draw (7.25,2.76) node[anchor=north west] {\scriptsize \color{qqzzqq} \rm Mean trait}; 
\draw (7.65,1.3) node[anchor=north west] {\scriptsize \color{blue} \rm [Slow]};

\draw (12.1,2.96) node[anchor=north west] {\scriptsize \rm Time};

\begin{scriptsize}
\draw [fill=qqqqff] (-4.0,3.0) circle (1.5pt);
\draw [fill=qqqqff] (2.5,3.0) circle (1.5pt);
\draw [fill=qqqqff] (8.5,3.0) circle (1.5pt);
\end{scriptsize}
\end{tikzpicture}   
   \end{center}
\end{tcolorbox}
\caption{Different time scales involved in our individual-based model expected in the original time scale of the process $\left(\nu_{t}^{K} \right)_{t\geqslant 0}$.}
\label{Fig2} 
\end{minipage}
\end{center}
\end{figure}

 As the fast limit component is driven by a centered \textsc{Fleming-Viot} process, we expect that it will inherit ergodicity properties  as stated in  {\color{blue} \cite[\color{black} Section 4]{Champagnat_Hass_FVr_2022}}. This reference establishes the existence of a unique invariant probability measure $\pi^{\lambda}$ for the centered \textsc{Fleming-Viot} process and characterises it. In particular,  we expect that the averaging principle applied to the drift coefficient in (\ref{Generateur_LENT}) will lead to the averaged drift
 \[\partial_{1}\Fit\left(z_{t}, z_{t}\right) \int_{\MM_{1}(\R)}^{}{M_{2}\left(\mu \right)\pi^{\lambda\left(z_{t} \right)}(\dd \mu)} = \frac{\partial_{1}\Fit\left(z_{t}, z_{t}\right)}{2\lambda\left(z_{t} \right)} = \partial_{1}\Fit\left(z_{t}, z_{t}\right) \frac{\beta\left(z_{t}\right)m_{2}\left(z_{t}\right)}{2}\] where $\left(z_{t} \right)_{t  \geqslant 0}$ is the limit slow component (see {\color{blue} \cite[\color{black} Corollary 4.16]{Champagnat_Hass_FVr_2022}} for the computation of the integral). Formally, by replacing $M_{2}\left(\mu\right)$ by its mean value $\int_{\MM_{1}(\R)}^{}{M_{2}\left(\mu \right)\pi^{\lambda\left(z_{t} \right)}(\dd \mu)}$ in (\ref{Generateur_LENT}), we obtain that (see details in Section \ref{Section_7_Caract_Composante_LENTE})
 \[\LL_{\rm CEAD}f(z,\mu) = f'(z) \partial_{1}\Fit(z, z)\frac{\beta(z)m_{2}(z)}{2}  \]
 and we recognise the generator of the announced CEAD. 

{\color{black}
\subsubsection{An example\label{Sous-sous-section_2_3_2_Example}}

The goal of this section is to illustrate numerically the phenomenon of the CEAD. We have performed numerical simulations of the process $\nu^{K}$ for a simple model from the biological literature. The simulation is based 
on the \texttt{IBMPopSim} package developed by \textsc{Giorgi et al} in {\color{blue} \cite{Kaakai_Giorgi_Lemaire_2023}}. \\

The biological model is adapted from \textsc{Kisdi} {\color{blue}\cite{kisdi_evolutionary_1999}} for which the trait space is $\XX := [0,4]$ (and not $\R$ as previously) and the parameters are\!\! : 
\begin{equation}
\theta(x) := 1, \quad c(x,y) :=  1 - \frac{1}{1+1.2\exp(-4(x-y))}, \quad b(x,y) := (4 - y) \times c(x,y)
\label{Eq_Param_KISDI}
\end{equation}
 and $m\left(x,\dd h\right)$ is a Gaussian distribution 
 $\NN\left(x, {\color{black}\sigma_K^{2}} \right)$ conditional on the mutant trait remaining in~{\color{black}$\XX$}. \\

Let us recall that $b(x,y)$ is a resampling rate, i.e. the rate of simultaneous death of an individual with trait $x$ and birth of an individual with trait $y$ and it can be interpreted as modelling a competitive pressure driving selection. Here, it is composed of two terms which both have a biological interpretation. Note that the first term $c(x,y)$ depends only on $x-y$ and tends to $0$ when $x-y \to +\infty$ and to $1$ when $x-y \to -\infty$. So, this function models an \emph{asymmetric competition} in favour of high traits: $x$ feels little competition from a smaller trait $y$, but strong competition from a higher trait $y$. Since the second term $4-y$ only depends on the reproducing trait and is favourable to small traits, evolution is expected to select an optimal trait. \\ \indent {\color{black} We consider the previous model with three scaling parameters: $K$ (for the population size) and $\sigma_K$ (for the size of mutation) as in Section~\ref{sec:model}, and a parameter $p_K$ giving the probability of mutation at each birth event.
  This amouts to replace in Section~\ref{sec:model} the mutation kernel $m(x,h){\dd}h$ by $p_K m(x,h){\dd}h+(1-p_K)\delta_x({\dd}h)$.}  We performed simulations of \textsc{Kisdi}'s model, starting with a monomorphic initial condition of trait $1$ and varying the parameters $K, {\color{black}p_K}$ and {\color{black}$\sigma_K$}. Some of these simulations are presented in the following figures and show a wide variety of evolutionary behaviours. \\
\indent Note that, when $K$ and {\color{black}$\sigma_K$} are not too large and {\color{black}$p_K$} not too small, 
the population evolves according to a relatively stable general scenario as in Figures \ref{Fig08}, \ref{Fig09} and \ref{Fig10}\!\! : it initially remains concentrated around the trait value equal to $1$ that progressively moves towards a trait $x^{\star}$ close to $3.5$ corresponding to the optimal trait value of this population, in the sense that it is best adapted to survive and reproduce.  Once this point is reached, the population stabilises in its steady state. {\color{black} In other words, Figures \ref{Fig09} and \ref{Fig10} show the long-term behaviour of the mean trait of the population, i.e. the one of the slow component. Figure \ref{Fig11} shows the behaviour of the fast component by zooming in the initial dynamics of Figure \ref{Fig10}.}  Note that Figure \ref{Fig07} illustrates the approach developed by 
{\color{blue} \cite{Champagnat_PTRF_2011}}.

\begin{figure}[!h]
\centering		
\begin{minipage}[h]{.47\textwidth} \centering
\begin{center}
\includegraphics[scale = 0.45]{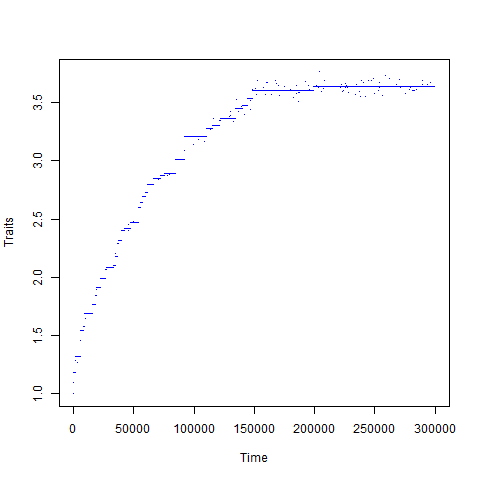}
\end{center}
\caption{Numerical simulation of the trait distribution of the microscopic model with parameters (\ref{Eq_Param_KISDI}) and $K = 300$, {\color{black}$p_K$} $= 0.00003$, {\color{black}$\sigma_K$} $= 0.05$.}
\label{Fig07}
\end{minipage}
\hspace{0.5cm}
\begin{minipage}[h]{.47\textwidth}\centering
\begin{center}
\includegraphics[scale = 0.45]{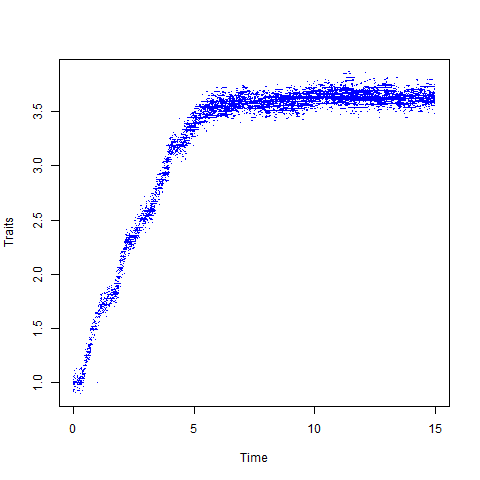} 
\end{center}
\caption{Numerical simulation of the trait distribution of the microscopic model with parameters (\ref{Eq_Param_KISDI}) and $K = 300$, {\color{black}$p_K$} $= 1$, {\color{black}$\sigma_K$} $= 0.05$.}
\label{Fig08}
\end{minipage}
\end{figure}

\begin{figure}[!h]
\centering		
\begin{minipage}[h]{.47\textwidth} \centering
\begin{center}
\includegraphics[scale = 0.45]{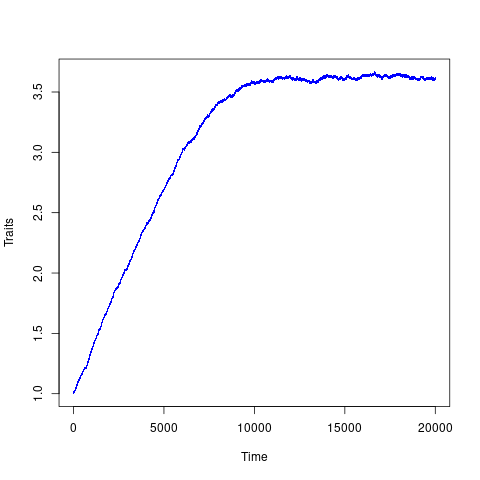}
\end{center}
\caption{Numerical simulation of the trait distribution of the microscopic model with parameters (\ref{Eq_Param_KISDI}) and $K = 300$, {\color{black}$p_K$} $= 1$, {\color{black}$\sigma_K$} $= 0.001$.}
\label{Fig09}
\end{minipage}
\hspace{0.5cm}
\begin{minipage}[h]{.47\textwidth}\centering
\begin{center}
\includegraphics[scale = 0.45]{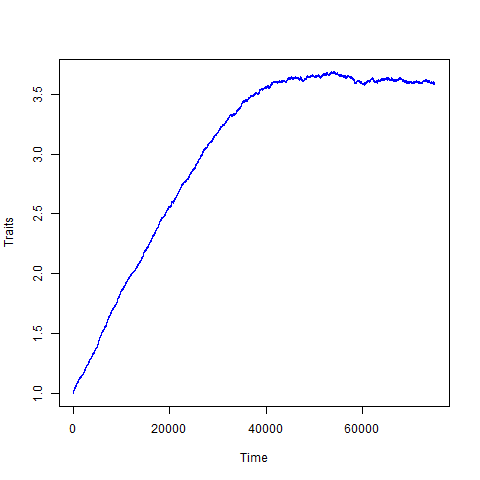} 
\end{center}
\caption{Numerical simulation of the trait distribution of the microscopic model with parameters (\ref{Eq_Param_KISDI}) and $K = 300$, {\color{black}$p_K$} $= 1$, {\color{black}$\sigma_K$} $= 0.0005$.}
\label{Fig10}
\end{minipage}
\end{figure}

\begin{figure}[!h]
\begin{center}
\includegraphics[scale = 0.45]{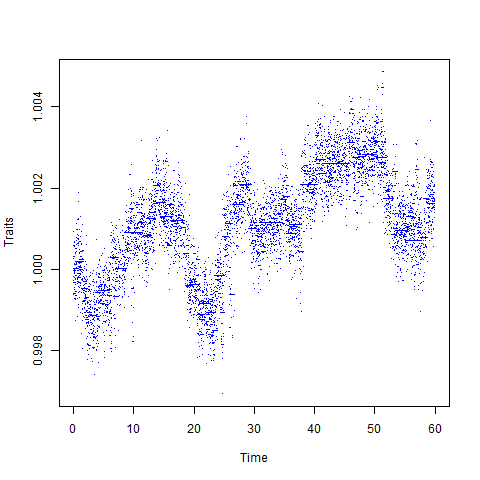}
\end{center}
\caption{Numerical simulation of the trait distribution of the microscopic model with parameters (\ref{Eq_Param_KISDI}) and $K = 300$, {\color{black}$p_K$} $= 1$, {\color{black}$\sigma_K$} $= 0.0005$.}
\label{Fig11}
\end{figure}

}
 
 \subsubsection{Difficulties encountered\label{Sous-sous-section_2_3_2_Difficultes}} 
The slow-fast analysis relies on a stochastic averaging result that exploits tightness arguments. The classical approach to prove tightness of sequences of laws {\color{blue} \cite{Bans, Fournier}} requires to have uniform pathwise estimates on the moments of the process. In our situation, we note the presence of moments in (\ref{Decomposition_L_Phi_L_SLOW}) and (\ref{Decomposition_L_Phi_L_FAST}). However, we could not establish control in expectation of $\sup_{0\leqslant t \leqslant T}{M_{2}\left(\mu_{t}^{K} \right)}$ for all $T \geqslant 0$ because of the long time scale which does not allow to exploit the martingale problem associated to the decomposition of $M_{2}\left(\mu_{t}^{K} \right)$. This is an important difference, for example with {\color{blue} \cite{Meleard_Tran}} which studies similar forms of processes (individual-based models). Instead, we will first use fine pathwise estimates and expectation bounds on $M_{2}\left(\mu_{t}^{K}\right)$ up to the stopping time $\tau^{K}$ defined for all $K \in \N^{\star}$ by 
\begin{equation}
 \tau^{K} := \check{\tau}^{K} \wedge \widehat{\tau}^{K}.
 \label{Temps_arret_check_chapeau}
 \end{equation}
where 
\begin{equation}
\widehat{\tau}^{K}  := \inf\left\{t\geqslant 0 \left| \phantom{1^{1^{1^{1}}}} \hspace{-0.7cm} \right.   M_{2}\left(\mu^{K }_{t}\right) \geqslant K^{\varepsilon}  \right\}
\label{Temps_arret_chapeau}
\end{equation}
with $\varepsilon>0$ given in Theorem \ref{Thm_CEAD_Jouet} and where
\begin{equation}
\check{\tau}^{K }  := \inf\left\{t\geqslant 0 \left| \phantom{1^{1^{1^{1}}}} \hspace{-0.7cm} \right.  \Diam\left(\Supp{\mu^{K}_{t}}\right) > \frac{1}{{\color{black}\sigma_{K}} K^{\frac{3 + \varepsilon}{2}}}\right\}
\label{Temps_arret_check}
\end{equation}
 which allows to control the diameter of the support of the centered and dilated distribution of the traits $\mu^{K}$. An important difficulty arises in Lemma \ref{Cor_moment_ordre_2}  from the presence of third order moment in the \textsc{Doob} semi-martingale decomposition of $M_{2}\left(\mu_{t}^{K} \right)$: 
  \begin{align*}
 M_{2}\left(\mu^{K }_{t} \right) & =   M_{2}\left(\mu_{0}^{K } \right) - \frac{1}{K^{2}{\color{black}\sigma_{K}^{2}}}\int_{0}^{t}{\left\{2 b\left(z_{s}^{K }, z_{s}^{K } \right) M_{2}\left(\mu_{s}^{K } \right) - \theta\left(z_{s}^{K} \right)m_{2}\left(z_{s}^{K} \right) \right\}\dd s } \\
& \hspace{1cm}   +  \frac{1}{K^{2}{\color{black}\sigma_{K}^{2}}} \int_{0}^{t}{O\left(\frac{1}{K} + {\color{black}\sigma_{K}} K M_{2}\left(\mu_{s}^{K} \right) + {\color{black}\sigma_{K}} K^{\frac{3}{2}}M_{3}\left(\mu_{s}^{K} \right) \right)\dd s} + M_{t}^{K, P_{\id^{2}, 1}}
\end{align*}
where $\left(M_{t}^{K, P_{\id^{2}, 1}} \right)_{t\geqslant 0}$ is a local-martingale. This leads to introduce the stopping time $\check{\tau}^{K}$ which guarantees that the error term in $M_{3}\left(\mu_{t}^{K} \right)$ remains under control. This leads to a new difficulty: establishing that $\check{\tau}^{K}$ tends to $+\infty$ in probability when $K \to +\infty$. To prove this, we use pathwise estimates on $M_{2}\left(\mu_{t}^{K} \right)$ up to time $\check{\tau}^{K}$ to estimate the different transitions of $M_{2}\left(\mu_{t}^{K} \right)$ between thresholds of the form $3^{\ell - 1}K^{\frac{\varepsilon}{2}}$ and $3^{\ell + 1}K^{\frac{\varepsilon}{2}}$  and to construct a coupling between these transitions and biased random walks. This allows us to use large deviations results on random walks and estimates on the exit from an attracting domain (see e.g. {\color{blue} \cite{Freidlin_Wentzell_1984, Dembo_Zeitouni_2010}}) to prove that the stopping time $\widehat{\tau}^{K}$, defined by (\ref{Temps_arret_chapeau}), converges to $+ \infty$ in probability when $K \to +\infty$. This in turn implies that $\check{\tau}^{K} \to +\infty$ in probability when $K \to +\infty$ (see Section \ref{Sous-sous-section_Perspectives} for a discussion of the method used for this step). \\
 \indent The implementation of the slow-fast method of \textsc{Kurtz} {\color{blue} \cite{Kurtz}} is done in two steps. To establish the tightness of the sequence of laws of the slow component, we exploit criteria developed by \textsc{Ethier-Kurtz} {\color{blue} \cite[\color{black} Theorems 3.9.1 and 3.9.4]{Ethier_markov_1986}} by restricting ourselves to the torus case. This strategy allows to overcome the difficulty related to the verification of the compact containment condition, on the real line, of the slow component stopped at the stopping time $\tau^{K}$.  The second step consists in characterising each accumulation point of the sequence of laws of the slow dynamics and the occupation measure of the fast dynamics. For this, we need  to check that any measure $\gamma$ in $\MM_{1}\left(\MM_{1}(\R) \right)$ satisfying 
    \begin{equation}
    \int_{\MM_{1}(\R)}^{}{\LL_{\rm FVc}^{\lambda} \phi(\mu) \gamma(\dd \mu) } = 0
    \label{Eq_Nullite_contre_gamma_difficulties_encoutered}
    \end{equation}
    for a certain class $\FF$ of functions $\phi$ must be $\pi^{\lambda}$. Equation (\ref{Decomposition_L_Phi_L_FAST}) suggests to take $\FF$ as the set of functions of the form $F_{\varphi}(\mu) := F\left(\left\langle \varphi, \mu \right\rangle \right)$. However, the last property seems hard to prove for this choice of $\FF$. Instead, we adapt a result of \textsc{Dawson} {\color{blue} \cite[\color{black} Theorem 2.7.1]{Dawson}} that applies to the set $\FF$ of so-called \emph{polynomial functions} of the form \[P_{f, n}(\mu) := \int_{\R}^{}{\cdots\int_{\R}^{}{f\left(x_{1}, \cdots, x_{n}\right)\mu(\dd x_{1})\cdots\mu(\dd x_{n})}}. \]
    However the duality property used by \textsc{Dawson} in {\color{blue} \cite{Dawson}} does not hold in our case. In {\color{blue} \cite{Champagnat_Hass_FVr_2022}}, it is proved only a weak duality relation involving stopping times. This difficulty will be solved by proving that the measure $\gamma$ in (\ref{Eq_Nullite_contre_gamma_difficulties_encoutered}) gives mass only to measures having its first four moments finite. This will allow us to characterise the limit fast component and hence the limit slow component on the torus, as solution to an ODE. Since this ODE is non-explosive, we choose the torus large enough to conclude the proof on the real line.
    
    \subsubsection{Outline of the proof} In Section \ref{Section_3_Gene_LENT_RAPIDE}, we begin by giving {\color{black} in Proposition \ref{Prop_Generateur_Couple_Lent_Rapide}} an approximation of the infinitesimal generator of the slow-fast process $\left(\left(z_{t}^{K}, \mu_{t}^{K}\right) \right)_{t\geqslant 0}$ for a class of test functions on $\R \times \MM_{1, K}^{c}(\R)$, large enough to be convergence determining and  we characterise martingales associated to our process.  We give also martingale problems of the slow-fast process for polynomials in $\mu$ {\color{black} where the main results are given in Lemma \ref{Lem_PB_Mg_Polynomes} for bounded test functions and Lemmas \ref{Cor_moment_ordre_2} and \ref{Cor_moment_ordre_6} for unbounded test functions}.  In Section \ref{Section_4_Estimees_de_moments}, we prove some moment estimates.  In Section \ref{Sous_Section_4_1_new_Moments_ordre_6}, we give estimates of the moment of order $6$. {\color{black} The most important result is given by Lemma \ref{Lem_Controle_M6_et_M4M2} because consequences are useful for all results in Section \ref{Section_4_Estimees_de_moments} as well as in Theorem \ref{Thm_Kurtz_adapte}.} In Section \ref{Sous_Section_4_3_Inegalite_M2}, we establish some inequalities on $M_{2}\left(\mu_{t}^{K} \right)$ and we control its bracket that we will use in Section \ref{Sous_Section_4_4_Convergence_tau} to prove 
 $\P-$a.s. that $M_{2}\left(\mu_{t}^{K} \right)$ takes superlinear (of the order of $K^{C\log(K)}$) long time before hitting $K^{\varepsilon}$. The fact that $\tau^{K} \to + \infty$ in probability when $K\to +\infty$ is proved in Section \ref{Sous_Section_4_4_Convergence_tau} {\color{black} using the technical Lemma \ref{Lem_Proba_sup_M2_K_epsilon}. Section \ref{Sous_section_4_5_Sortie_domaine} is dedicated to prove this lemma by constructing a coupling between the moment of order 2 and a biased random walk.} The rest of the proof deals with the compactness-uniqueness argument associated to our slow-fast problem. Firstly, in Section \ref{Section_5_Tension_LENT-RAPIDE}, we establish uniform tightness of the sequence of laws of $\left(\left(z^{K}, \Gamma^{K} \right)\right)_{K\in \N^{\star}}$ stopped at time $\check{\tau}^{K}$ in the torus case where $\Gamma^{K}$ designates the occupation measure of the process $\mu^{K}$. {\color{black} The main result of section is given by Theorem \ref{Thm_Kurtz_adapte}.}  In {\color{black} Proposition \ref{Prop_Caract_Gamma_Limite} of} Section \ref{Section_6_Caract_Gamma_Limite}, we identify and characterise in a unique way the limiting distribution of the fast component on the torus. {\color{black} Section \ref{Sous_Section_6_1_Proof_Main_result} is devoted to proving this result by exploiting the key Lemma \ref{Lem_proba_invariante} whose proof is given in Section \ref{Sous_Section_6_3_Proba_Invariante_Dawson}.} In {\color{black} Lemma \ref{Lem_Unicite_ODE} of} Section \ref{Sous_section_7_1}, we proceed similarly for the slow component. Thanks to  the uniqueness on the torus of the limit slow component, we deduce in Section \ref{Sous_section_7_2} the announced result of Theorem \ref{Thm_CEAD_Jouet}.
 
 \subsubsection{Prospects\label{Sous-sous-section_Perspectives}}

In the first part of this section, we explain the origin of exponent $2$ in the inequality ${\color{black}\sigma_{K}} \ll K^{-\left(2 + \varepsilon \right)}$ of Assumption (\ref{Hypothese_Gamme_sigma}). In the second part, it is explained how Assumption (\ref{Hypothese_Gamme_sigma}) may be improved into the assumption ${\color{black}\sigma_{K}} \ll K^{-\left(\frac{3}{2} + \varepsilon \right)}$ by using estimates for moments of order $2\ell.$ Finally, in the last part of this section, we explain the difficulties in obtaining these moment estimates. Note that the assumption ${\color{black}\sigma_{K}} \ll K^{-\frac{3}{2}}$ is the best we can expect because of the error term ${\color{black}\sigma_{K}} K^{\frac{3}{2}}$ in~(\ref{Decomposition_L_Phi_L_FAST}). \\

\textit{(a) Origin of the exponent $2$ in Assumption (\ref{Hypothese_Gamme_sigma}).} On the one hand, as for all $t \geqslant 0$ and $K \in \N^{\star}$, $\Diam\left( \Supp \mu_{t}^{K} \right)^{2} \leqslant \left(2 \max\left\{\left|x \right| \left| \phantom{1^{1^{1}}} \hspace{-0.6cm} \right. x \in \Supp \mu_{t}^{K} \right\} \right)^{2} \leqslant 4K M_{2}\left(\mu_{t}^{K}\right)$, we deduce that 
\begin{equation*}
\begin{aligned}
\P\left(\check{\tau}^{K} < \widehat{\tau}^{K} \wedge T \right) & \leqslant \P\left(\exists t < T\wedge \widehat{\tau}^{K}, \Diam\left(\Supp \mu_{t}^{K} \right) > \frac{1}{{\color{black}\sigma_{K}} K^{\frac{3+\varepsilon}{2}}}  \right) \\
& \leqslant  \P\left(\exists t < T\wedge \widehat{\tau}^{K},   M_{2}\left(\mu_{t}^{K} \right) > \frac{1}{4{\color{black}\sigma_{K}^{2}}K^{4+\varepsilon}} \right)
\end{aligned}
\label{Origine_exposant_2}
\end{equation*}
On the other hand, note that thanks to the definition (\ref{Temps_arret_chapeau}) of the stopping time $\widehat{\tau}^{K}$, \[\P\left(\exists t <T\wedge \widehat{\tau}^{K}, M_{2}\left(\mu_{t}^{K} \right)>K^{\varepsilon} \right) = 0.\]
Hence, Assumption (\ref{Hypothese_Gamme_sigma}) implies that $\P\left(\check{\tau}^{K} < \widehat{\tau}^{K} \wedge T \right) = 0$ for $K$ large enough. Using this relation, we prove in Section \ref{Sous_Section_4_4_Convergence_tau} that $\P\left(\tau^{K} < T \right) \leqslant \P\left(\widehat{\tau}^{K}  \leqslant T \wedge \check{\tau}^{K} \right) $. Hence, to establish that $\tau^{K} \to + \infty$ when $K \to + \infty$ in probability, we need to prove that \[\lim_{K\to + \infty}{\P\left(\widehat{\tau}^{K} \leqslant T \wedge \check{\tau}^{K} \right)} =  \lim_{K \to +\infty}{\P\left(\sup_{0\leqslant t \leqslant T \wedge \check{\tau}^{K}}{M_{2}\left(\mu_{t}^{K} \right) \geqslant K^{\varepsilon}} \right)} = 0\] as established in Section \ref{Sous_section_4_5_Sortie_domaine}. \\

\textit{(b) Improvement of Assumption (\ref{Hypothese_Gamme_sigma}) with moments of order $2\ell$.} Assume now that ${\color{black}\sigma_{K}} \ll K^{-\left(\frac{3}{2} + \varepsilon \right)} $. Then, there exists $\ell \in \N^{\star}$, $\widetilde{\varepsilon}>0$ such that 
\begin{equation}
\forall C >0, \qquad K^{-C\log(K)} \ll {\color{black}\sigma_{K}} \ll K^{-\left(\frac{3}{2} + \frac{1}{2\ell} + \frac{\ell + 1}{2\ell}\widetilde{\varepsilon} \right)} 
\label{Hypothese_Gamme_sigma_BIS}
\end{equation}
and we consider the stopping times 
\begin{align*}
\widehat{\tau}_{\rm bis}^{K} & := \inf\left\{t\geqslant 0 \left| \phantom{1^{1^{1^{1}}}} \hspace{-0.7cm} \right.   M_{2}\left(\mu^{K }_{t}\right) \geqslant K^{\widetilde{\varepsilon}}  \right\} 
\end{align*}
and 
\begin{align*}
\check{\tau}_{\rm bis}^{K } & := \inf\left\{t\geqslant 0 \left| \phantom{1^{1^{1^{1}}}} \hspace{-0.7cm} \right.  \Diam\left(\Supp{\mu^{K}_{t}}\right) > \frac{1}{{\color{black}\sigma_{K}} K^{\frac{3 + \widetilde{\varepsilon}}{2}}}\right\} .
\end{align*}
 Using now that for all $t \geqslant 0$, for all $K \in \N^{\star}$, for all $\ell \in \N^{\star}$, $\Diam\left( \Supp \mu_{t}^{K} \right)^{2\ell} \leqslant 2^{2\ell}  K M_{2 \ell}\left(\mu_{t}^{K}\right)$, we deduce that 
\begin{align*}
& \P\left(\exists t < T \wedge \widehat{\tau}_{\rm bis}^{K},  \Diam\left(\Supp \mu_{t}^{K} \right) > \frac{1}{{\color{black}\sigma_{K}} K^{\frac{3+\widetilde{\varepsilon}}{2}}} \right) \\ 
& \hspace{5cm} \leqslant \P\left(\exists t < T \wedge \widehat{\tau}_{\rm bis}^{K}, M_{2\ell}\left(\mu_{t}^{K} \right) > \frac{1}{2^{2\ell}{\color{black}\sigma_{K}^{2\ell}}K^{3\ell + 1 + \ell\widetilde{\varepsilon}}}  \right),
\end{align*}
which is zero for $K$ large enough by (\ref{Hypothese_Gamme_sigma_BIS}). As previously, if we can prove for all $T\geqslant 0$ that 
\begin{equation}
\lim_{K\to + \infty}{\P\left(\widehat{\tau}_{\rm bis}^{K} \leqslant T \wedge \check{\tau}_{\rm bis}^{K} \right)} =  \lim_{K\to + \infty}{\P\left(\sup_{0\leqslant t \leqslant T\wedge \check{\tau}_{\rm bis}^{K}}{M_{2\ell}\left( \mu_{t}^{K}\right)} \geqslant K^{\widetilde{\varepsilon}}\right)} = 0,
\label{Perspective_Sup_M2ell}
\end{equation} 
then, we can conclude as before. \\

\textit{(c) Difficulties encountered.} The main difficulty consists in proving (\ref{Perspective_Sup_M2ell}). For this we could seek for an extension of Lemma \ref{Lem_Controle_M6_et_M4M2} to the framework of moments of order $2\ell$. Lemma \ref{Lem_Controle_M6_et_M4M2} relies on the inequality for all $t \geqslant 0$, $K$ large enough,  
\begin{align*}
& \frac{3}{4}M_{6}\left(\mu_{t\wedge \check{\tau}^{K}} \right) + 3 M_{4}\left(\mu_{t\wedge \check{\tau}^{K}}^{K} \right)M_{2}\left(\mu_{t\wedge \check{\tau}^{K}}^{K} \right) + \frac{3}{2}M_{2}^{3}\left(\mu_{t\wedge \check{\tau}^{K}}^{K} \right) \\
&  \leqslant \frac{3}{4}M_{6}\left(\mu_{0}^{K} \right) + 7 M_{4}\left(\mu_{0}^{K} \right)M_{2}\left(\mu_{0}^{K} \right) + \frac{3}{2}M_{2}^{3}\left(\mu_{t\wedge \check{\tau}^{K}} \right)  \\
&  - \frac{C_{1}}{K^{2}{\color{black}\sigma_{K}^{2}}}\int_{0}^{t\wedge \check{\tau}^{K}}{\left(\frac{3}{4}M_{6}\left(\mu_{t\wedge \check{\tau}^{K}} \right) + 3 M_{4}\left(\mu_{s}^{K} \right)M_{2}\left(\mu_{s}^{K} \right) + \frac{3}{2}M_{2}^{3}\left(\mu_{s}^{K} \right) - C_{2} \right)\dd s} + {\rm Mart}_{t\wedge\check{\tau}^{K}}
\end{align*}
for some constants $C_{1}, C_{2} >0$ and where $\left({\rm Mart}_{t\wedge\check{\tau}^{K}} \right)_{t\geqslant 0}$ is a martingale. However a similar calculation does not seem to be successful 
for moments of order 8 or more.

\section{Infinitesimal generator approximation\label{Section_3_Gene_LENT_RAPIDE}}

Let us begin this section by giving the generator of the couple of process $\left(z^{K}, \mu^{K} \right)$. Then, we give an approximation of the previous formula for a class of test functions which is convergence determining. Finally, we give an extension to this result in the case of polynomials in $\mu$.

\subsection{Generators and martingales}

For all $K \in \N^{\star}$, let us introduce the filtration $\left(\FF_{t}^{K} \right)_{t \geqslant 0}$ defined by $\FF_{t}^{K} := \sigma\left(z_{s}^{K}, \mu_{s}^{K}\left| \phantom{1^{1^{1}}} \hspace{-0.55cm} \right.  s\leqslant t \right)$.

\begin{Prop}
The infinitesimal generator of the $\R \, {\color{black}\times} \, \MM_{1, K}^{c}(\R)-$valued \textsc{Markov} process $\left(z^{K}, \mu^{K} \right)$, defined for all bounded measurable function $\Phi$ from $\R \times\MM_{1, K}^{c}(\R)$ to $\R$, is given by
\begin{align*}
\LL^{K }\Phi\left(z, \mu\right) & = \frac{K}{K{\color{black}\sigma_{K}^{2}}} \int_{\R}^{}{\mu(\dd x)\int_{\R}^{}{\mu(\dd y)b\left({\color{black}\sigma_{K}} \sqrt{K} x + z, {\color{black}\sigma_{K}}\sqrt{K} y + z \right)}} \\
& \hspace{1.5cm} \times \left[\Phi\left(z + \frac{{\color{black}\sigma_{K}} \sqrt{K}}{K}(y-x), \tau_{-\frac{y-x}{K}} \sharp \, \left[\mu - \frac{\delta_{x}}{K} + \frac{\delta_{y}}{K} \right] \right) - \Phi\left(z, \mu\right) \right] \\
& \quad + \frac{K}{K{\color{black}\sigma_{K}^{2}}} \int_{\R}^{}{\mu(\dd x)\theta\left({\color{black}\sigma_{K}} \sqrt{K} x + z \right) \int_{\R}^{}{ m\left({\color{black}\sigma_{K}} \sqrt{K} x + z, h \right)}} \\
& \hspace{1.5cm} \times \left[\Phi\left(z + \frac{{\color{black}\sigma_{K}} \sqrt{K}h}{K^{\frac{3}{2}}}, \tau_{-\frac{h}{K^{3/2}}} \sharp \, \left[\mu - \frac{\delta_{x}}{K} + \frac{\delta_{x + \frac{h}{\sqrt{K}}}}{K} \right] \right) - \Phi\left(z, \mu\right) \right]\dd h.
\end{align*}
Moreover, for all $K \in \N^{\star}$, for all bounded measurable function $\Phi$ from $\R \times \MM_{1, K}^{c}(\R)$ to $\R$, the following process $\left(M_{t}^{K, \Phi}\right)_{t\geqslant 0}$ defined by 
\begin{equation}
M_{t}^{K, \Phi} := \Phi\left(z_{t}^{K}, \mu_{t}^{K} \right) -\Phi\left(z_{0}^{K}, \mu_{0}^{K} \right) - \int_{0}^{t}{\LL^{K}\Phi\left(z_{s}^{K}, \mu_{s}^{K} \right)\dd s} 
\label{PB_Mg_LENT_Approx}
\end{equation}
is a $\left(\FF_{t}^{K } \right)_{t\geqslant 0}-$martingale, square integrable, with quadratic variation:
\begin{align*}
\left\langle M_{t}^{K, \Phi} \right\rangle_{t}& = \frac{K}{K{\color{black}\sigma_{K}^{2}}} \int_{0}^{t}{\dd s\int_{\R}^{}{{\color{black}\mu_{s}^{K}}(\dd x)\int_{\R}^{}{{\color{black}\mu_{s}^{K}}(\dd y)b\left({\color{black}\sigma_{K}} \sqrt{K} x + {\color{black}z_{s}^{K}}, {\color{black}\sigma_{K}}\sqrt{K} y + {\color{black}z_{s}^{K}} \right)}}} \\
& \hspace{1.5cm} \times \left[\Phi\left({\color{black}z_{s}^{K}} + \frac{{\color{black}\sigma_{K}} \sqrt{K}}{K}(y-x), \tau_{-\frac{y-x}{K}} \sharp \, \left[{\color{black}\mu_{s}^{K}} - \frac{\delta_{x}}{K} + \frac{\delta_{y}}{K} \right] \right) - \Phi\left({\color{black}z_{s}^{K}}, {\color{black}\mu_{s}^{K}}\right) \right]^{2} \\
& \quad + \frac{K}{K{\color{black}\sigma_{K}^{2}}} \int_{0}^{t}{\dd s\int_{\R}^{}{{\color{black}\mu_{s}^{K}}(\dd x)\theta\left({\color{black}\sigma_{K}} \sqrt{K} x + {\color{black}z_{s}^{K}} \right) \int_{\R}^{}{ m\left({\color{black}\sigma_{K}} \sqrt{K} x + {\color{black}z_{s}^{K}}, h \right)}}} \\
& \hspace{1.5cm} \times \left[\Phi\left({\color{black}\mu_{s}^{K}} + \frac{{\color{black}\sigma_{K}} \sqrt{K}h}{K^{\frac{3}{2}}}, \tau_{-\frac{h}{K^{3/2}}} \sharp \, \left[{\color{black}\mu_{s}^{K}} - \frac{\delta_{x}}{K} + \frac{\delta_{x + \frac{h}{\sqrt{K}}}}{K} \right] \right) - \Phi\left({\color{black}\mu_{s}^{K}}, {\color{black}\mu_{s}^{K}}\right) \right]^{2}\dd h.
\end{align*}
\label{Prop_Generateur_Couple_Lent_Rapide}
\end{Prop}

Note that the factor $1/K{\color{black}\sigma_{K}^{2}}$ corresponds to the time scaling of ${\color{black}\nu_{t}^{K}}$ used to define ${\color{black}z_{t}^{K}}$ and ${\color{black}\mu_{t}^{K}}$. 

\begin{proof}  \textit{Step 1. About the generators.} On the one hand, from (\ref{Generateur_initial}), note that for all $\Phi : \R \times \MM_{1, K}^{c}(\R, \R) \to \R$ and $\phi : \MM_{1, K}(\R) \to \R$ such that $\phi(\nu) = \Phi\left(\left\langle \id, \nu \right\rangle, \left(h_{\frac{1}{{\color{black}\sigma_{K}}\sqrt{K}}} \circ \tau_{-\left\langle \id, \nu \right\rangle} \right)  \sharp \, \nu \right) $, 
\begin{equation}
\LL^{K}\Phi\left(z, \mu \right) = \frac{1}{K{\color{black}\sigma_{K}^{2}}} \times L^{K}\phi\left(\left(\tau_{z} \circ h_{{\color{black}\sigma_{K}}\sqrt{K}}\right) \sharp \ \mu \right).
\label{Eq_passage_generateurs_L_ronf_L_droit}
\end{equation}
On the other hand, note that for all $\nu \in \MM_{1,K}(\R)$, $u, v \in \R$,  $\psi : \R \to \R$ and $\Psi : \MM_{1, K}(\R) \to \R$ measurable bounded functions, we have
\begin{equation}
\begin{aligned}
& \int_{\R}^{}{\psi(x)\Psi\left(\nu + \frac{\delta_{x + {\color{black}\sigma_{K}} \sqrt{K} u}}{K} - \frac{\delta_{v}}{K} \right) \nu(\dd x)} \\ 
& \hspace{1cm} = \int_{\R}^{}{\psi\left({\color{black}\sigma_{K}} \sqrt{K} y + \left\langle \id, \nu \right\rangle \right)}  \Psi\left(\left(\tau_{\left\langle \id, \nu \right\rangle} \circ h_{{\color{black}\sigma_{K}}\sqrt{K}} \right) \sharp \, \left[\mu + \frac{\delta_{y + u}}{K} - \frac{\delta_{v}}{K}\right]\right)\mu(\dd y).
\end{aligned} 
\label{Eq_transformation_mu_nu}
\end{equation}
Finally, by noting that for all $u \in \R$ the mean trait of $\left(\tau_{\left\langle \id, \nu \right\rangle} \circ h_{{\color{black}\sigma_{K}} \sqrt{K}} \right) \sharp \, \left[\mu - \frac{\delta_{x}}{K} + \frac{\delta_{u}}{K} \right]$ is $$\left\langle \id, \nu \right\rangle + \frac{{\color{black}\sigma_{K}} \sqrt{K}}{K}(u-x)$$ and the centered and dilated distribution of traits of $\left(\tau_{\left\langle \id, \nu \right\rangle} \circ h_{{\color{black}\sigma_{K}} \sqrt{K}} \right) \sharp \, \left[\mu - \frac{\delta_{x}}{K} + \frac{\delta_{u}}{K} \right]$ is $$\tau_{-\frac{u-x}{K}} \sharp \, \left[ \mu - \frac{\delta_{x}}{K} + \frac{\delta_{u}}{K} \right],$$ the announced result follows from (\ref{Eq_passage_generateurs_L_ronf_L_droit}) and (\ref{Eq_transformation_mu_nu}). \\

\textit{Step 2. About the martingales.} The martingale property follows from classical arguments since $\Phi$ and $\LL^{K}\Phi$ are bounded {\color{blue} \cite[\color{black} Chapter 4]{Ethier_markov_1986}}. To compute the bracket,  we proceed according to the following classical method (see e.g. {\color{blue} \cite{Fournier}}). We apply (\ref{PB_Mg_LENT_Approx}) replacing $\Phi$ by $\Phi^{2}$ to obtain the martingale $M^{K, \Phi^{2}}$. Then, we apply the \textsc{Itô} formula to compute $\Phi^{2}\left(z_{t}^{K}, \mu_{t}^{K}  \right)$ from (\ref{PB_Mg_LENT_Approx}). We deduce that 
\begin{align*}
{\rm Mart}_{t} & := \Phi^{2}\left(z_{t}^{K}, \mu_{t}^{K} \right) - \Phi^{2}\left(z_{0}^{K}, \mu_{0}^{K} \right) - 2\int_{0}^{t}{{\color{black}\Phi\left(z_{s}^{K}, \mu_{s}^{K} \right)}\LL^{K}\Phi\left(z_{s}^{K}, \mu_{s}^{K} \right)\dd s} - \left\langle M^{K, \Phi}\right\rangle_{t}
\end{align*}
is a martingale.  Hence, the martingale $M_{t}^{K, \Phi^{2}} - {\rm Mart}_{t}$ has finite variation, so it is null {\color{blue} \cite[\color{black} Theorem 4.1]{LeGall}}, leading to 
\[\left\langle M^{K, \Phi}\right\rangle_{t} = \int_{0}^{t}{\left[\LL^{K}\Phi^{2}\left(z_{s}^{K}, \mu_{s}^{K} \right) - 2\Phi\left(z_{s}^{K}, \mu_{s}^{K} \right)\LL^{K}\Phi\left(z_{s}^{K}, \mu_{s}^{K} \right) \right]\dd s} \]
then the announced conclusion. \qedhere
\end{proof}

\subsection{Asymptotic expansions} 
Recall that, a set $S \subset \CCCC^{0}_{b}\left(\R, \R \right)$ is called $\MM_{1, K}^{c}\left(\R \right)-$ \emph{convergence determining} if whenever $\left(P_{n} \right)_{n \in \N} \in  \MM_{1,K}^{c}\left(\R \right)^{\N}, P \in \MM_{1,K}^{c}(\R)$ and  $$\lim_{n\to + \infty}{\left\langle f, P_{n} \right\rangle} = \left\langle f, P \right\rangle$$ for all $f \in S$, we have that $\left(P_{n} \right)_{n\in \N}$ converges weakly to $P$ {\color{blue} \cite[\color{black} Chapter 3, Section 4, p.112]{Ethier_markov_1986}}. As developed in {\color{blue} \cite[\color{black} Theorem 3.2.6]{Dawson}}, the class  of functions on $\MM_{1,K}^{c}(\R)$, 
\[\F_{b}^{2} := \left\{F_{\varphi} \left| \phantom{1^{1^{1^{1}}}} \hspace{-0.7cm} \right. F_{\varphi}\left(\mu\right) := F\left( \left\langle \varphi, \mu \right\rangle\right), F \in \CCCC^{2}\left(\R, \R\right), \varphi \in \CCCC^{2}_{b}(\R, \R) \right\}\] 
is $\MM_{1, K}^{c}\left(\R \right)-$convergence determining. \\

In the following sections, we prove the assertions (i) and (ii) of Section \ref{Sous-sous-section_2_3_1_Slow-fast_Heuristic}.

\subsubsection{Slow component} 
\begin{Prop}
For all $f \in \CCCC_{b}^{2}(\R, \R)$, understood as a function of $(z, \mu)$ which depends only on $z$, the infinitesimal generator $\LL^{K}$ defined in {\rm Proposition \ref{Prop_Generateur_Couple_Lent_Rapide}}, satisfies the following decomposition 
\[\LL^{K}f\left(z, \mu \right) =  \LL_{{\rm SLOW}}f\left(z,\mu\right) + O\left(\frac{1}{K^{2}} + \frac{M_{2}(\mu)}{K}  + {\color{black}\sigma_{K}}\sqrt{K}M_{3}\left(\mu \right) \right)\] where the operator $\LL_{\rm SLOW}$ is given by {\rm (\ref{Generateur_LENT})}.
\label{Prop_Generateur_Lent_Decomposition}
\end{Prop}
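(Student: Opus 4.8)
The plan is to Taylor-expand the test function $f(z + \delta)$ around $z$ inside the generator $\LL^K$ from Proposition \ref{Prop_Generateur_Couple_Lent_Rapide}, keeping terms up to the order that survives in the limit and bounding the rest. Since $f$ depends only on $z$, the maps $\Phi(z,\mu) = f(z)$ evaluated at the post-jump states produce increments $f(z + \eta) - f(z)$ with $\eta = \frac{\sigma\sqrt K}{K}(y-x)$ for resampling jumps and $\eta = \frac{\sigma\sqrt K}{K^{3/2}} h = \frac{\sigma h}{K}$ for mutation jumps. By a second-order Taylor expansion, $f(z+\eta) - f(z) = f'(z)\eta + \frac12 f''(z)\eta^2 + O(\eta^3)$, and after multiplying by the jump rates ($\frac{1}{\sigma^2}$ times $b$, respectively $\frac{1}{\sigma^2}$ times $\theta$) and integrating against $\mu(\dd x)\mu(\dd y)$, respectively $\mu(\dd x)m(\cdots,h)\dd h$, the surviving term will be the first-order one from the resampling part.

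First I would treat the resampling contribution. Writing $b(\sigma\sqrt K x + z, \sigma\sqrt K y + z)$ and Taylor-expanding $b$ in its two arguments around $(z,z)$, the leading piece is $b(z,z)$ plus $\sigma\sqrt K(\partial_1 b(z,z) x + \partial_2 b(z,z) y)$ plus a remainder of order $\sigma^2 K M_2(\mu)$-type terms when integrated. The first-order-in-$\eta$ term contributes $\frac{1}{\sigma^2}\cdot f'(z)\cdot\frac{\sigma}{\sqrt K}\int\int (y-x)\, b(\sigma\sqrt K x+z,\sigma\sqrt K y+z)\,\mu(\dd x)\mu(\dd y)$. Since $\mu$ is centered, $\int(y-x)\,b(z,z)\,\mu\mu = 0$, so the leading nonzero term comes from pairing $(y-x)$ with the first-order expansion of $b$: this gives $\frac{1}{\sigma^2}\cdot f'(z)\cdot\frac{\sigma}{\sqrt K}\cdot\sigma\sqrt K\cdot\big(\partial_1 b(z,z) - \partial_2 b(z,z)\big)\,M_2(\mu)$ after using $\int\int(y-x)x\,\mu\mu = -M_2(\mu)$ and $\int\int(y-x)y\,\mu\mu = M_2(\mu)$ (again by centering), which equals $f'(z) M_2(\mu)\,\partial_1\Fit(z,z)$ because $\partial_1\Fit(z,z) = \partial_1 b(z,z) - \partial_2 b(z,z)$. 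That is exactly $\LL_{\rm SLOW}f(z,\mu)$ from (\ref{Generateur_LENT}). The second-order-in-$\eta$ term of the resampling part is $\frac{1}{\sigma^2}\cdot\frac12 f''(z)\cdot\frac{\sigma^2}{K}\int\int(y-x)^2 b(\cdots)\mu\mu = O(M_2(\mu)/K)$, and the cubic remainder is $O(\sigma\sqrt K M_3(\mu))$ after the rate cancellation.

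Then I would handle the mutation contribution, which should be entirely in the error: each mutation increment $\eta = \sigma h/K$ gives $f'(z)\eta = O(\sigma/K)$, and multiplied by the rate $\frac{1}{\sigma^2}\theta(\sigma\sqrt K x+z)$ and integrated against $m$ this is $O(1/(\sigma K))$ — wait, this needs care: the first moment of $m$ vanishes by Assumption \textbf{(A2)(a)}, so the $f'(z)\eta$ term integrates to zero up to the $x$-dependence of $\theta$ and $m$, leaving $O(\sigma\sqrt K\cdot\sigma/K\cdot\frac{1}{\sigma^2}) = O(1/(\sqrt K))$-type contributions absorbed into $O(M_2(\mu)/K + \sigma\sqrt K M_3(\mu))$ after more careful bookkeeping; the $f''$ term is $\frac{1}{\sigma^2}\cdot\frac{\sigma^2}{K^2}\cdot\frac12 f''(z)\int\theta\, m_2(\cdots)$-type, which is $O(1/K^2)$. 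So the mutation part falls entirely into the announced error $O(1/K^2 + M_2(\mu)/K + \sigma\sqrt K M_3(\mu))$. Throughout, boundedness of $b,\theta$ and their derivatives (Assumption \textbf{(A1)}), the moment bounds on $m$ (Assumption \textbf{(A2)(b)}), and the boundedness of $f', f''$ ($f\in\CCCC^2_b$) justify all remainder estimates; the elementary inequality $M_1(\mu)^2 \leqslant M_2(\mu)$ and $M_2(\mu) \leqslant 1 + M_3(\mu)$ keep lower moments controlled by $M_2$ and $M_3$.

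The main obstacle is the careful bookkeeping of which cross-terms survive: one must track the interplay between the $\sigma\sqrt K$ scaling inside the arguments of $b$ and the $\sigma/\sqrt K$ (resp. $\sigma/K$) scaling of the jump amplitudes, and verify that the prefactor $\frac{1}{\sigma^2}$ exactly cancels against $\sigma\sqrt K\cdot\frac{\sigma}{\sqrt K}$ to leave an $O(1)$ term, while every other combination genuinely produces a factor of $\sigma\sqrt K$, $1/K$ or $1/K^2$ times a moment. A second delicate point is that the post-jump measure in $\LL^K$ is not just $\mu - \delta_x/K + \delta_y/K$ but its re-centering $\tau_{-\eta}\sharp[\cdots]$; however, since $f$ does not depend on $\mu$, this re-centering is invisible here and can be ignored — a simplification worth stating explicitly. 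I would organize the proof as: (i) expand the resampling increment to third order and isolate the $f'(z)M_2(\mu)\partial_1\Fit(z,z)$ term using centering of $\mu$; (ii) bound the resampling remainder; (iii) bound the full mutation contribution; (iv) collect error terms.
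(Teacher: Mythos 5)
Your proposal is correct and follows essentially the same route as the paper: split $\LL^{K}f$ into the resampling and mutation contributions, Taylor-expand $f$ in the jump amplitude and $b$ (resp. $\theta\, m$) around $z$, use the centering of $\mu$ to isolate $f'(z)M_{2}(\mu)\partial_{1}\Fit(z,z)$, and bound the remainders by $M_{2}$ and $M_{3}$; the observation that the re-centering $\tau_{-\cdot}\sharp$ is invisible because $f$ does not depend on $\mu$ is exactly what the paper uses implicitly. Two small slips to fix: with your (correct) identities $\iint (y-x)x\,\mu\,\mu=-M_{2}(\mu)$ and $\iint(y-x)y\,\mu\,\mu=M_{2}(\mu)$, the surviving coefficient is $\left(\partial_{2}b(z,z)-\partial_{1}b(z,z)\right)M_{2}(\mu)$, and since $\Fit(y,x)=b(x,y)-b(y,x)$ one has $\partial_{1}\Fit(z,z)=\partial_{2}b(z,z)-\partial_{1}b(z,z)$ — you wrote both with the opposite sign, and the two errors cancel to give the right answer. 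For the mutation part, the first-order term is not merely ``absorbed after bookkeeping'': it vanishes identically because Assumption \textbf{(A2)(a)} gives $\int h\,m(\alpha,h)\,\dd h=0$ for \emph{every} $\alpha$, hence in particular at $\alpha=\sigma\sqrt{K}x+z$; this matters, since a genuine $O(1/\sqrt{K})$ contribution could not be absorbed into $O\left(\frac{1}{K^{2}}+\frac{M_{2}(\mu)}{K}+\sigma\sqrt{K}M_{3}(\mu)\right)$, and what remains of the mutation term is the second-order piece of size $O\left(\overline{\theta}\,\overline{m}_{2}/K^{2}\right)$, as in the paper.
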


\begin{proof}
 From Proposition \ref{Prop_Generateur_Couple_Lent_Rapide} for the choice of test functions $\Phi(z, \mu) := f(z)$ we obtain that $\LL^{K}f(z, \mu) = \textbf{\rm{\textbf{(A)}}}^{K} + \textbf{\rm{\textbf{(B)}}}^{K}$ where
\begin{align*}
\textbf{\rm{\textbf{(A)}}}^{K} & := \frac{K}{K{\color{black}\sigma_{K}^{2}}}\int_{\R}^{}{\mu(\dd x)\int_{\R}^{}{\mu(\dd y)b\left({\color{black}\sigma_{K}}\sqrt{K} x + z, {\color{black}\sigma_{K}}\sqrt{K} y +z \right)}}, \\
& \hspace{6cm} \times \left[f\left(z + \frac{{\color{black}\sigma_{K}} \sqrt{K}}{K}(y-x) \right) -f(z) \right] \\
\textbf{\rm{\textbf{(B)}}}^{K} & :=  \frac{K}{K{\color{black}\sigma_{K}^{2}}} \int_{\R}^{}{\mu(\dd x)\theta\left({\color{black}\sigma_{K}}\sqrt{K}x + z \right)\int_{\R}^{}{m\left({\color{black}\sigma_{K}} \sqrt{K}x + z, h \right)}} \\ 
& \hspace{6cm} \times \left[f\left(z + \frac{{\color{black}\sigma_{K}} \sqrt{K}}{K^{\frac{3}{2}}}h \right) - f(z) \right]\dd h.
\end{align*}

Now, we want to decompose and study $\textbf{\rm{\textbf{(A)}}}^{K}$. Denoting $\textbf{\rm{\textbf{(C)}}}^{K}_{x,y}(z) := f\left(z + \frac{{\color{black}\sigma_{K}} \sqrt{K}}{K}(y-x) \right)$ $-f(z)$, from \textsc{Taylor}'s formula, note that $\textbf{\rm{\textbf{(A)}}}^{K} = \textbf{\rm{\textbf{(A)}}}^{K}_{1} + \textbf{\rm{\textbf{(A)}}}^{K}_{2} + \textbf{\rm{\textbf{(A)}}}^{K}_{3}$ where 
\begin{align*}
\textbf{\rm{\textbf{(A)}}}^{K}_{1} & := \frac{K}{K{\color{black}\sigma_{K}^{2}}}\int_{\R}^{}{\mu(\dd x)\int_{\R}^{}{\mu(\dd y)b\left(z, z \right)\textbf{\rm{\textbf{(C)}}}^{K}_{x,y}(z)}}, \\
\textbf{\rm{\textbf{(A)}}}^{K}_{2} & := \frac{K}{K{\color{black}\sigma_{K}^{2}}} \times {\color{black}\sigma_{K}}\sqrt{K}\int_{\R}^{}{\mu(\dd x)\int_{\R}^{}{\mu(\dd y)\left(x\partial_{1}b\left(z, z \right) +y\partial_{2}b(z,z) \right)\textbf{\rm{\textbf{(C)}}}^{K}_{x,y}(z)}}, \\
\textbf{\rm{\textbf{(A)}}}^{K}_{3} & := \frac{K}{K{\color{black}\sigma_{K}^{2}}}  \int_{\R}^{}{\mu(\dd x)\int_{\R}^{}{\mu(\dd y)\left[b\left({\color{black}\sigma_{K}}\sqrt{K} x + z, {\color{black}\sigma_{K}}\sqrt{K} y +z \right) - b(z,z) \right.}}, \\
& \hspace{6cm} -\left. {\color{black}\sigma_{K}}\sqrt{K}\left(x\partial_{1}b\left(z, z \right) +y\partial_{2}b(z,z) \right) \right]\textbf{\rm{\textbf{(C)}}}^{K}_{x,y}(z).
\end{align*}
Using that $\textbf{\rm{\textbf{(C)}}}^{K}_{x,y}(z) = f'(z)\frac{{\color{black}\sigma_{K}}\sqrt{K}}{K}(x-y)  + O\left(\frac{{\color{black}\sigma_{K}^{2}}}{K}\left|y-x \right|^{2} \right)$, we deduce that
\[\textbf{\rm{\textbf{(A)}}}^{K}_{1}  = O\left(\frac{M_{2}\left(\mu \right)}{K} \right), \ \
\textbf{\rm{\textbf{(A)}}}^{K}_{2}  = f'(z) \left(\partial_{2}b(z,z) - \partial_{1}b(z,z) \right)M_{2}\left(\mu \right) + O\left(\frac{{\color{black}\sigma_{K}}\sqrt{K} M_{3}\left(\mu \right)}{K} \right), \]
and noting that $\left|\textbf{\rm{\textbf{(C)}}}^{K}_{x,y} \right| \leqslant C_{1}\frac{{\color{black}\sigma_{K}}\sqrt{K}}{K}\left|y-x \right|$ for some constant $C_{1} >0$ and 
\begin{multline*}
 \left|b\left({\color{black}\sigma_{K}}\sqrt{K} x + z, {\color{black}\sigma_{K}}\sqrt{K} y +z \right) - b(z,z) - {\color{black}\sigma_{K}}\sqrt{K}\left(x\partial_{1}b\left(z, z \right) +y\partial_{2}b(z,z) \right)\right| \\ 
\leqslant C_{2}{\color{black}\sigma_{K}^{2}}K\left(x^{2} + y^{2} \right)
\end{multline*}
 for some constant $C_{2} >0$, we deduce that $\textbf{\rm{\textbf{(A)}}}^{K}_{3} = O\left({\color{black}\sigma_{K}}\sqrt{K}M_{3}\left(\mu \right) \right)$. Therefore,
\[\textbf{\rm{\textbf{(A)}}}^{K} = f'(z) \left(\partial_{2}b(z,z) - \partial_{1}b(z,z) \right)M_{2}\left(\mu \right) + O\left(\frac{M_{2}\left(\mu \right)}{K} + {\color{black}\sigma_{K}}\sqrt{K} M_{3}\left(\mu \right)\right).\] 
As previously, we obtain that
\[\textbf{\rm{\textbf{(B)}}}^{K} = O\left(\frac{\overline{\theta}\overline{m}_{2}}{K^{2}} \right).\] 
and the announced result follows. 
\qedhere
\end{proof}

\subsubsection{Fast component}
\begin{Prop}
For all $F \in \CCCC^{3}(\R, \R)$ and $\varphi \in \CCCC_{b}^{3}(\R, \R)$, the infinitesimal generator $\LL^{K}$ defined in {\rm Proposition \ref{Prop_Generateur_Couple_Lent_Rapide}}, satisfies the following decomposition 
\[ \LL^{K}F_{\varphi}\left(z, \mu \right) = \frac{\theta(z)m_{2}(z)}{K^{2}{\color{black}\sigma_{K}^{2}}} \left[\LL_{\rm FVc}^{\lambda(z)}F_{\varphi}(z, \mu) + O\left(\frac{1}{\sqrt{K}} + {\color{black}\sigma_{K}} K^{\frac{3}{2}}M_{2}\left(\mu \right) + \frac{M_{3}(\mu)}{K}\right) \right] \]
where the operator $\LL_{{\rm FVc}}^{\lambda(z)}$ is given by {\rm(\ref{Generateur_FVc})} and $F_{\varphi}$ is understood as a function of $\left(z, \mu\right)$ which depends only on $\mu$.
\label{Prop_Generateur_Rapide_Decomposition}
\end{Prop}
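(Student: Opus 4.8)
The plan is to apply Proposition~\ref{Prop_Generateur_Couple_Lent_Rapide} to the test function $\Phi(z,\mu):=F_{\varphi}(\mu)=F(\langle\varphi,\mu\rangle)$, which depends only on $\mu$ and is bounded since $\langle\varphi,\cdot\rangle$ takes values in a compact, and then to \textsc{Taylor}-expand at three levels — in the small translation appearing in each jump, in $F$, and in $b,\theta,m_{2}$ — carefully tracking the terms killed by the centering $\langle\id,\mu\rangle=0$. Set $G:=\langle\varphi,\mu\rangle$. Using $\langle\varphi,\tau_{-c}\sharp\rho\rangle=\langle\varphi(\cdot-c),\rho\rangle$, the increment of $F_{\varphi}$ along a resampling event $(x,y)$ is $F(G+\Delta)-F(G)$ where, with $c:=\tfrac{y-x}{K}$,
\[\Delta=\int_{\R}\big(\varphi(u-c)-\varphi(u)\big)\mu(\dd u)+\frac1K\big(\varphi(y-c)-\varphi(x-c)\big)=\Delta_{0}+\Delta_{1}+O\!\left(\frac{|y-x|^{3}}{K^{3}}\right),\]
with $\Delta_{0}:=\tfrac1K[(\varphi(y)-\varphi(x))-(y-x)\langle\varphi',\mu\rangle]$, $\Delta_{1}:=\tfrac1{K^{2}}[-(y-x)(\varphi'(y)-\varphi'(x))+\tfrac12(y-x)^{2}\langle\varphi'',\mu\rangle]$ and the remainder controlled by $\|\varphi'''\|_{\infty}$. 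Along a mutation event one similarly gets $\Delta'=\tfrac{h}{K^{3/2}}[\varphi'(x)-\langle\varphi',\mu\rangle]+\tfrac{h^{2}}{2K^{2}}\varphi''(x)+O(|h|^{3}/K^{5/2}+h^{2}/K^{3})$.

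Next I would write $F(G+\Delta)-F(G)=F'(G)\Delta+\tfrac12F''(G)\Delta^{2}+O(\Delta^{3})$ — legitimate since $G$ and $G+\Delta$ remain in a fixed compact and $F\in\CCCC^{3}$ — and likewise for $\Delta'$; expand $b(\sigma\sqrt{K}x+z,\sigma\sqrt{K}y+z)=b(z,z)+\sigma\sqrt{K}(x\partial_{1}b(z,z)+y\partial_{2}b(z,z))+O(\sigma^{2}K(x^{2}+y^{2}))$ via \textbf{(A1)}; and handle the mutation kernel \emph{without} differentiating $m$ in its first variable (not assumed), using only $\int_{\R}m(\alpha,h)h^{2}\,\dd h=m_{2}(\alpha)$, the centering $\int_{\R}m(\alpha,h)h\,\dd h=0$ and the \textsc{Lipschitz} property of $m_{2}$ from \textbf{(A2)(b)}, so that $\int_{\R}m(\sigma\sqrt{K}x+z,h)\Delta'\,\dd h=\tfrac{m_{2}(z)+O(\sigma\sqrt{K}|x|)}{2K^{2}}\varphi''(x)+O(K^{-5/2})$. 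Integrating against $\mu$, the centering $\langle\id,\mu\rangle=0$ kills the first-order contribution of $\Delta_{0}$ against the constant kernel $b(z,z)$ and all odd $m$-moments, and what survives reconstructs $\tfrac{\theta(z)m_{2}(z)}{K^{2}\sigma^{2}}\LL_{\rm FVc}^{\lambda(z)}F_{\varphi}(\mu)$ term by term: the mutation part yields the ``Laplacian'' piece $\tfrac12F'(G)\langle\varphi'',\mu\rangle$, while the resampling part — using $\int_{\R}\int_{\R}(y-x)^{2}\mu(\dd x)\mu(\dd y)=2M_{2}(\mu)$, $\int_{\R}\int_{\R}(y-x)(\varphi'(y)-\varphi'(x))\mu(\dd x)\mu(\dd y)=2\langle\id\varphi',\mu\rangle$, $\int_{\R}\int_{\R}(\varphi(y)-\varphi(x))(y-x)\mu(\dd x)\mu(\dd y)=2\langle\varphi\,\id,\mu\rangle$ and $b(z,z)=\theta(z)m_{2}(z)\lambda(z)$ — yields the $F'$-drift $\lambda(z)[\langle\varphi'',\mu\rangle M_{2}(\mu)-2\langle\varphi'\,\id,\mu\rangle]$ from $\Delta_{1}$ and the full $F''$-term $\lambda(z)[\langle\varphi^{2},\mu\rangle-\langle\varphi,\mu\rangle^{2}+\langle\varphi',\mu\rangle^{2}M_{2}(\mu)-2\langle\varphi',\mu\rangle\langle\varphi\,\id,\mu\rangle]$ from $\Delta_{0}^{2}$, matching (\ref{Generateur_FVc}).

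It then remains to verify that every discarded term equals $\tfrac{\theta(z)m_{2}(z)}{K^{2}\sigma^{2}}$ times a constant (depending only on $b,\theta,m,F,\varphi$) multiple of $\tfrac1{\sqrt{K}}+\sigma K^{3/2}M_{2}(\mu)+\tfrac{M_{3}(\mu)}{K}$. The error sources are the $O(|y-x|^{3}/K^{3})$ and $O(|h|^{3}/K^{5/2})$ \textsc{Taylor} remainders of $\varphi$, the cubic remainders of $F$, the $O(\sigma^{2}K(x^{2}+y^{2}))$ remainder of $b$, the $O(\sigma\sqrt{K}|x|)$ \textsc{Lipschitz} corrections to $\theta$ and $m_{2}$, and the $F''$-contribution of the mutation part (a factor $1/K$ smaller). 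Bounding these with $\underline{b}\le b\le\overline{b}$, $\underline{\theta}\le\theta\le\overline{\theta}$, $\underline{m}_{\ell}\le m_{\ell}\le\overline{m}_{\ell}$ from \textbf{(A1)}--\textbf{(A2)}, the inclusion $\Supp m(\alpha,\cdot)\subseteq[-A_{m},A_{m}]$ (so all moments of $m(\alpha,\cdot)$ are $\le A_{m}^{\ell}$), the moment inequalities $M_{1}(\mu)\le\sqrt{M_{2}(\mu)}\le\tfrac12(1+M_{2}(\mu))$ and $M_{2}(\mu)^{3/2}\le M_{3}(\mu)$, and the standing smallness of $\sigma$ relative to $K$ (in particular $K^{2}\sigma\to0$ under (\ref{Hypothese_Gamme_sigma})), one checks that each term fits into the claimed remainder.

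The main obstacle is the bookkeeping around the \emph{state-dependent translation} $\tau_{-(y-x)/K}$ (resp.\ $\tau_{-h/K^{3/2}}$) in the jump operator of Proposition~\ref{Prop_Generateur_Couple_Lent_Rapide}: a change of a single atom shifts the whole centered and dilated measure, so $\varphi$ must be \textsc{Taylor}-expanded against \emph{all} of $\mu$ around the shifted argument, not just against the jumping atom. This is precisely what produces the extra terms $\langle\id\varphi',\mu\rangle$, $\langle\varphi\,\id,\mu\rangle$ and $\langle\varphi',\mu\rangle^{2}M_{2}(\mu)$ in (\ref{Generateur_FVc}); organising the cross-products between this translation expansion, the expansion of $F$ and the expansion of $b$ — and verifying that no first-order piece against $b(z,z)$ escapes the centering cancellation — is where care is needed, the rest being routine calculus.
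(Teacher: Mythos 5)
Your proposal is correct and follows essentially the same route as the paper: you apply the generator of Proposition \ref{Prop_Generateur_Couple_Lent_Rapide} to $F_{\varphi}$, split the increment of $\left\langle \varphi, \mu \right\rangle$ into the leading term (your $\Delta_{0}$, the paper's $\textbf{(E)}^{K}_{x,y}$) plus the order-$K^{-2}$ correction, \textsc{Taylor}-expand $F$, $b$, $\theta m_{2}$, use the centering of $\mu$ and of $m(\alpha,\cdot)$ to identify the terms of $\LL_{\rm FVc}^{\lambda(z)}$, and absorb the remainders via the \textsc{H\"older} bounds and $\sigma K^{3/2} \ll K^{-1/2}$ from (\ref{Hypothese_Gamme_sigma}), exactly as in the paper's proof (your only cosmetic deviations are expanding $b$ to first order where a \textsc{Lipschitz} bound suffices, and attributing to the centering of $\mu$ a cancellation of $\Delta_{0}$ that actually follows from the $x \leftrightarrow y$ antisymmetry).
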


\begin{proof} Noting that \begin{align*}
\textbf{\rm{\textbf{(D)}}}^{K}_{x,y} & := \left\langle \varphi, \tau_{-\frac{y-x}{K}} \sharp \, \left[\mu - \frac{\delta_{x}}{K} + \frac{\delta_{y}}{K} \right] \right\rangle - \left\langle \varphi, \mu \right\rangle \\
& \phantom{:}= \left\langle \varphi \circ \tau_{-\frac{y-x}{K}}, \mu \right\rangle + \frac{1}{K}\left\{\varphi\left(y - \frac{y-x}{K} \right) - \varphi\left(x - \frac{y-x}{K} \right) \right\} - \left\langle \varphi, \mu \right\rangle,
\end{align*} 
setting $\textbf{\rm{\textbf{(C)}}}^{K}_{x,y} := F\left(\left\langle\varphi, \mu \right\rangle + \textbf{\rm{\textbf{(D)}}}^{K}_{x,y} \right) - F\left(\left\langle \varphi, \mu \right\rangle \right)$ and from Proposition \ref{Prop_Generateur_Couple_Lent_Rapide} we obtain that $\LL^{K}F_{\varphi}(z, \mu) = \textbf{\rm{\textbf{(A)}}}^{K} + \textbf{\rm{\textbf{(B)}}}^{K}$ where
\begin{align*}
\textbf{\rm{\textbf{(A)}}}^{K} & := \frac{K}{K{\color{black}\sigma_{K}^{2}}}\int_{\R}^{}{\mu(\dd x)\int_{\R}^{}{\mu(\dd y)b\left({\color{black}\sigma_{K}}\sqrt{K} x + z, {\color{black}\sigma_{K}}\sqrt{K} y +z \right)\textbf{\rm{\textbf{(C)}}}^{K}_{x,y}}}, \\
\textbf{\rm{\textbf{(B)}}}^{K} & :=  \frac{K}{K{\color{black}\sigma_{K}^{2}}} \int_{\R}^{}{\mu(\dd x)\theta\left({\color{black}\sigma_{K}}\sqrt{K}x + z \right)\int_{\R}^{}{m\left({\color{black}\sigma_{K}} \sqrt{K}x + z, h \right)\textbf{\rm{\textbf{(C)}}}^{K}_{x,x+h/\sqrt{K}}\,\dd h} }.
\end{align*}

\textit{Step 1. Decomposition and study of $\textbf{\rm{\textbf{(A)}}}^{K}$.} From \textsc{Taylor}'s formula, we obtain that $\textbf{\rm{\textbf{(A)}}}^{K} = \textbf{\rm{\textbf{(A)}}}^{K}_{1} + \textbf{\rm{\textbf{(A)}}}^{K}_{2}$ where
\begin{align*}
\textbf{\rm{\textbf{(A)}}}^{K}_{1} & := \frac{K}{K{\color{black}\sigma_{K}^{2}}}\int_{\R}^{}{\mu(\dd x)\int_{\R}^{}{\mu(\dd y)b\left(z, z \right)\textbf{\rm{\textbf{(C)}}}^{K}_{x,y}}}, \\
\textbf{\rm{\textbf{(A)}}}^{K}_{2} & := \frac{K}{K{\color{black}\sigma_{K}^{2}}}\int_{\R}^{}{\mu(\dd x)\int_{\R}^{}{\mu(\dd y)\left[b\left({\color{black}\sigma_{K}}\sqrt{K} x + z, {\color{black}\sigma_{K}}\sqrt{K} y +z \right) - b(z,z)\right]\textbf{\rm{\textbf{(C)}}}^{K}_{x,y}}}.
\end{align*}
Denoting $\textbf{\rm{\textbf{(E)}}}^{K}_{x,y} := \frac{1}{K}\left(\varphi(y) - \varphi(x) - (y-x)\left\langle \varphi', \mu \right\rangle \right)$ and from \textsc{Taylor}'s formula again, we obtain that $\textbf{\rm{\textbf{(A)}}}^{K}_{1} = \textbf{\rm{\textbf{(A)}}}^{K}_{11} + \textbf{\rm{\textbf{(A)}}}^{K}_{12} + \textbf{\rm{\textbf{(A)}}}^{K}_{13} + \textbf{\rm{\textbf{(A)}}}^{K}_{14}$ where 
\begin{align*}
\textbf{\rm{\textbf{(A)}}}^{K}_{11} & := \frac{K}{K{\color{black}\sigma_{K}^{2}}}\int_{\R}^{}{\mu(\dd x)\int_{\R}^{}{\mu(\dd y)b\left(z, z \right)F'\left(\left\langle \varphi, \mu \right\rangle \right)\textbf{\rm{\textbf{(D)}}}^{K}_{x,y}}}, \\
\textbf{\rm{\textbf{(A)}}}^{K}_{12} & := \frac{K}{2K{\color{black}\sigma_{K}^{2}}}\int_{\R}^{}{\mu(\dd x)\int_{\R}^{}{\mu(\dd y)b\left(z, z \right)F''\left(\left\langle \varphi, \mu \right\rangle \right)\left[\textbf{\rm{\textbf{(E)}}}^{K}_{x,y}\right]^{2}}}, \\
\textbf{\rm{\textbf{(A)}}}^{K}_{13} & := \frac{K}{2K{\color{black}\sigma_{K}^{2}}}\int_{\R}^{}{\mu(\dd x)\int_{\R}^{}{\mu(\dd y)b\left(z, z \right)F''\left(\left\langle \varphi, \mu \right\rangle \right)\left[\left[\textbf{\rm{\textbf{(D)}}}^{K}_{x,y}\right]^{2} - \left[\textbf{\rm{\textbf{(E)}}}^{K}_{x,y}\right]^{2}\right]}}, \\
\textbf{\rm{\textbf{(A)}}}^{K}_{14} & := \frac{K}{K{\color{black}\sigma_{K}^{2}}}\int_{\R}^{}{\mu(\dd x)\int_{\R}^{}{\mu(\dd y)b\left(z, z \right)}} \\
& \hspace{3cm} \times \left(\textbf{\rm{\textbf{(C)}}}^{K}_{x,y} - F'\left(\left\langle \varphi, \mu \right\rangle \right)\textbf{\rm{\textbf{(D)}}}^{K}_{x,y} - \frac{F''\left(\left\langle \varphi, \mu\right\rangle \right)}{2}\left[\textbf{\rm{\textbf{(D)}}}^{K}_{x,y} \right]^{2} \right)
\end{align*}

From \textsc{Taylor}'s formula again and the centered condition of $\mu$, we obtain that 
\begin{align*}
\textbf{\rm{\textbf{(A)}}}^{K}_{11} & = \frac{Kb\left(z, z \right)}{K{\color{black}\sigma_{K}^{2}}}\int_{\R}^{}{\mu(\dd x)\int_{\R}^{}{\mu(\dd y)F'\left(\left\langle \varphi, \mu \right\rangle \right)\left[\textbf{\rm{\textbf{(E)}}}^{K}_{x,y} - \frac{1}{K^{2}}(y-x)\left(\varphi'(y) - \varphi'(x) \right) \right.}} \\
& \hspace{5.5cm} + \left. \frac{(y-x)^{2}\left\langle \varphi'', \mu \right\rangle}{2K^{2}} + O\left(\frac{\left|y-x\right|^{2} + \left|y-x\right|^{3}}{K^{3}}\right) \right] \\
& \hspace{-1cm} = \frac{b(z,z)}{K^{2}{\color{black}\sigma_{K}^{2}}} F'\left(\left\langle \varphi, \mu \right\rangle \right)\left[-2\left\langle \varphi' \times \id, \mu \right\rangle + \left\langle \varphi'', \mu \right\rangle M_{2}\left(\mu \right)  \right] + \frac{1}{K^{2}{\color{black}\sigma_{K}^{2}}} \times O\left(\frac{M_{2}\left(\mu \right) + M_{3}\left(\mu \right)}{K} \right)
\end{align*}
In a straightforward way, we obtain that 
\begin{align*}
\textbf{\rm{\textbf{(A)}}}^{K}_{12}  & = \frac{b(z,z)}{K^{2}{\color{black}\sigma_{K}^{2}}}F''\left(\left\langle \varphi, \mu \right\rangle \right)\left[ \left\langle \varphi^{2}, \mu \right\rangle - \left\langle \varphi, \mu \right\rangle^{2} + M_{2}\left(\mu \right)\left\langle \varphi', \mu\right\rangle^{2} - 2\left\langle \varphi \times \id, \mu\right\rangle \left\langle \varphi', \mu \right\rangle  \right]. 
\end{align*}
As $\varphi$ is bounded and so $F$ too, we deduce that there exists two constants $C_{1}, C_{2} >0$ such that
\begin{align*}
\left|\left[\textbf{\rm{\textbf{(D)}}}^{K}_{x,y}\right]^{2} - \left[\textbf{\rm{\textbf{(E)}}}^{K}_{x,y}\right]^{2}\right| & =\left|\textbf{\rm{\textbf{(D)}}}^{K}_{x,y} - \textbf{\rm{\textbf{(E)}}}^{K}_{x,y}\right|\left|\textbf{\rm{\textbf{(D)}}}^{K}_{x,y} + \textbf{\rm{\textbf{(E)}}}^{K}_{x,y}\right|\\
 & \leqslant  C_{1} \frac{|y-x| + (y-x)^{2}}{K^{2}} \left(\frac{1}{K} + \frac{\left|y-x \right|}{K} \right), \end{align*}
 \begin{align*}
 \left|\textbf{\rm{\textbf{(C)}}}^{K}_{x,y} - F'\left(\left\langle \varphi, \mu \right\rangle \right)\textbf{\rm{\textbf{(D)}}}^{K}_{x,y} - \frac{F''\left(\left\langle \varphi, \mu\right\rangle \right)}{2}\left[\textbf{\rm{\textbf{(D)}}}^{K}_{x,y} \right]^{2} \right| &\leqslant C_{2}\frac{1 + \left|y-x \right|^{3}}{K^{3}}.
\end{align*}
Hence, \[\textbf{\rm{\textbf{(A)}}}^{K}_{13} = \frac{1}{K^{2}{\color{black}\sigma_{K}^{2}}} \times O\left(\frac{1 + M_{3}(\mu)}{K} \right) \quad  \quad {\rm and} \quad \quad \textbf{\rm{\textbf{(A)}}}^{K}_{14} = \frac{1}{K^{2}{\color{black}\sigma_{K}^{2}}} \times O\left(\frac{1 + M_{3}(\mu)}{K} \right). \] Finally, as $\left|b\left({\color{black}\sigma_{K}}\sqrt{K}x + z, {\color{black}\sigma_{K}}\sqrt{K}y + z \right) - b(z,z) \right| \leqslant C_{3}{\color{black}\sigma_{K}}\sqrt{K}\left(\left| x\right| + \left| y\right|\right)$ and $\left| \textbf{\rm{\textbf{(C)}}}^{K}_{x,y} \right| \leqslant C_{4}\frac{1+\left|y-x\right|}{K}$ for some constants $C_{3}, C_{4} >0$, we deduce that \[\textbf{\rm{\textbf{(A)}}}^{K}_{2} = \frac{1}{K^{2}{\color{black}\sigma_{K}^{2}}} \times O\left({\color{black}\sigma_{K}} K^{\frac{3}{2}} \left[M_{1}(\mu) +  M_{2}(\mu) \right]\right).\]
Therefore, using \textsc{H\"{o}lder}'s inequalities to bound  $M_{1}(\mu) \leqslant \sqrt{M_{2}\left(\mu \right)} \leqslant 1+ M_{2}(\mu)$ and $M_{2}(\mu) \leqslant M_{3}^{2/3}(\mu) \leqslant 1 + M_{3}(\mu)$ 
we have that 
\begin{align*}
\textbf{\rm{\textbf{(A)}}}^{K} & = \frac{b(z,z)}{K^{2}{\color{black}\sigma_{K}^{2}}} F'\left(\left\langle \varphi, \mu \right\rangle \right)\left[-2\left\langle \varphi' \times \id, \mu \right\rangle + \left\langle \varphi'', \mu \right\rangle M_{2}\left(\mu \right)  \right] \\
& \hspace{0.35cm} +\frac{b(z,z)}{K^{2}{\color{black}\sigma_{K}^{2}}} F''\left(\left\langle \varphi, \mu \right\rangle \right)\left[ \left\langle \varphi^{2}, \mu \right\rangle - \left\langle \varphi, \mu \right\rangle^{2} + M_{2}\left(\mu \right)\left\langle \varphi', \mu\right\rangle^{2} - 2\left\langle \varphi \times \id, \mu\right\rangle \left\langle \varphi', \mu \right\rangle  \right] \\
& \hspace{0.35cm} + \frac{1}{K^{2}{\color{black}\sigma_{K}^{2}}} \times O\left(\frac{1}{K} + {\color{black}\sigma_{K}} K^{\frac{3}{2}}\left[1 + M_{2}\left(\mu \right)\right] +  \frac{M_{3}\left(\mu \right)}{K} \right).
\end{align*}

\textit{Step 2. Conclusion.} In similar way to Step 1, we obtain that 
\[\textbf{\rm{\textbf{(B)}}}^{K} = \frac{\theta(z)m_{2}(z)}{K^{2}{\color{black}\sigma_{K}^{2}}}F'\left(\left\langle \varphi, \mu \right\rangle \right) \left\langle \frac{\varphi''}{2}, \mu \right\rangle + \frac{1}{K^{2}{\color{black}\sigma_{K}^{2}}}\times O \left(\frac{1}{\sqrt{K}} + {\color{black}\sigma_{K}} K M_{1}(\mu) \right).\]
Since, by Assumption (\ref{Hypothese_Gamme_sigma}), ${\color{black}\sigma_{K}} K^{\frac{3}{2}} \ll \frac{1}{\sqrt{K}}$, the announced result follows.  \qedhere
\end{proof}

\subsection{Generators and martingales in the case of polynomials in $\mu$}

\subsubsection{For bounded test functions}

In this section, we extend in Lemma \ref{Lem_PB_Mg_Polynomes} the result of Proposition \ref{Prop_Generateur_Couple_Lent_Rapide} 
to test functions of the form 
\begin{equation}
P_{f, n}\left(\mu \right) := \left\langle f, \mu^{n} \right\rangle := \int_{\R}^{}{\cdots \int_{\R}^{}{f\left(x_{1}, \cdots, x_{n} \right)\mu\left(\dd x_{1} \right)\cdots \mu\left(\dd x_{n} \right)} }
\label{Eq_fonctions_test_F_f_n}
\end{equation}
with $n \in \N^{\star}$, $\mu \in \MM_{1, K}^{c}(\R)$, $f \in \CCCC^{3}_{b}\left(\R^{n}, \R \right)$ and where $\mu^{n}$ is the $n-$fold product measure of $\mu$. In Lemmas \ref{Cor_moment_ordre_2} and \ref{Cor_moment_ordre_6} we extend this result to the case of specific unbounded test functions.  \\

For all $n\in \N^{\star}$, we denote by $\bm{1} \in \R^{n}$, the vector whose coordinates are all $1$ and by $\Delta$ the Laplacian operator on $\R^{n}$. Let us introduce, for all $n \in \N^{\star}$, $\lambda >0$ and $f \in \CCCC_{b}^{2}\left(\R^{n}, \R \right)$, the operator $B_{\lambda}^{(n)}$ defined by 
\begin{equation}
B_{\lambda}^{(n)}f(x) := \frac{1}{2}\Delta f(x) - 2\lambda \left(\nabla f(x)\cdot\bm{1}  \right)(x\cdot\bm{1}), \qquad x \in \R^{n}. 
\label{Operateur_B_n}
\end{equation}
Let us consider for all $n \in \N^{\star}$ for all $i, j \in \left\{1, \cdots, n \right\}$, 
\begin{itemize}
\item $\Phi_{i,j} : \CCCC^{2}_{b}(\R^{n}, \R) \longrightarrow \CCCC^{2}_{b}(\R^{n-1}, \R)$, with $i \neq j$, is the function obtained from $f$ by inserting the variable $x_{i}$ between $x_{j-1}$ and $x_{j}$ when $i<j$ and by inserting the variable $x_{i-1}$ between $x_{j-1}$ and $x_{j}$ when $i>j$: 
\begin{equation}
\begin{aligned}
 \Phi_{i,j}f\left(x_{1}, \cdots, x_{n-1}\right)  &  = f\left(x_{1}, \cdots,  x_{j-1}, x_{i}, x_{j}, x_{j+1}, \cdots, x_{n-1} \right) & \qquad i<j \\
 \Phi_{i,j}f\left(x_{1}, \cdots, x_{n-1}\right)  &  = f\left(x_{1}, \cdots,  x_{j-1}, x_{i-1}, x_{j}, x_{j+1}, \cdots, x_{n-1} \right) & \qquad i>j
\end{aligned}  \ .
 \label{Expression_Phi_ij}
\end{equation}
\item $K_{i,j} : \CCCC^{2}_{b}(\R^{n}, \R) \longrightarrow \CCCC^{2}(\R^{n+1}, \R)$ is defined as 
\begin{equation}
 K_{i,j}f(x_{1}, \cdots, x_{n}, x_{n+1}) := \partial_{ij}^{2}f(x_{1}, \cdots, x_{n})x_{n+1}^{2}.  \label{Operateur_K_ij}
\end{equation}
\end{itemize}

We recall from {\color{blue} \cite[\color{black} Definition 2.8]{Champagnat_Hass_FVr_2022}} that the extended generator (in the sense of \textsc{Dynkin}, see (\ref{PB_Mg_FVc_Polynomes}) below) $\LL_{\rm{FVc}}^{\lambda}$ of the centered \textsc{Fleming-Viot} process with resampling rate $\lambda$ is defined for any $n \in \N^{\star}$, for any test functions $f \in \CCCC_{b}^{2}\left(\R^{n}, \R\right)$ by 
\begin{equation}
\LL_{\rm{FVc}}^{\lambda}P_{f, n}\left(\mu \right) = \left\langle B_{\lambda}^{(n)}f, \mu^{n} \right\rangle  + \lambda\sum_{\substack{i, j \, = \, 1 \\ i \, \neq \, j}}^{n}{\left[\left\langle \Phi_{i,j}f, \mu^{n-1} \right\rangle - \left\langle f, \mu^{n} \right\rangle \right]} + \lambda \sum_{i, j\, = \, 1}^{n}{\left\langle K_{i,j}f, \mu^{n+1} \right\rangle}.
\label{Def_Gene_FVc_polynome}
\end{equation}
From {\color{blue} \cite[\color{black} Definition 2.8]{Champagnat_Hass_FVr_2022}} and denoting 
\[\widetilde{\Omega} := \left\{X \in \CCCC^{0}\left(\left[0, + \infty \right), \MM_{1}^{c, 2}(\R) \right) \left| \phantom{1^{1^{1^{1}}}} \hspace{-0.6cm} \right. \forall T >0, \ \sup_{0\leqslant t \leqslant T}{M_{2}\left(X_{t} \right)} < \infty \right\},\]
 we recall that a probability measure $\P_{\mu}$ on $\widetilde{\Omega}$ is said to solve the \emph{centered \textsc{Fleming-Viot} martingale problem for polynomials} with initial condition $\mu \in \MM_{1}^{c, 2}(\R)$, if the canonical process $\left(X_{t}\right)_{t\geqslant 0}$ on $\widetilde{\Omega}$ satisfies $\P_{\mu}\left(X_{0} = \mu \right) = 1$ and, for all $n \in \N^{\star}$ and $f \in \CCCC_{b}^{2}\left(\R^{n}, \R\right)$, 
 \begin{equation}
 \widehat{M}_{t}^{P_{f,n}} :=  P_{f, n}\left(X_{t} \right) -  P_{f, n}\left(X_{0} \right) - \int_{0}^{t}{\LL_{\rm FVc}^{\lambda}P_{f, n}\left(X_{s} \right)\dd s}
 \label{PB_Mg_FVc_Polynomes}
 \end{equation}
is a $\P_{\mu}-$martingale. 
\begin{Lem}
The infinitesimal generator $\LL^{K }$ of the $\R \,  {\color{black}\times} \, \MM_{1, K}^{c}(\R)-$valued \textsc{Markov} process $\left(z^{K}, \mu^{K} \right)$ given by {\rm Proposition \ref{Prop_Generateur_Couple_Lent_Rapide}}, satisfies for all $n \in \N^{\star}$ and $f \in \CCCC_{b}^{3}\left(\R^{n}, \R \right)$, the following relations:
\begin{align*}
\LL^{K}P_{f, n}(z, \mu) & = \frac{\theta(z)m_{2}(z)}{K^{2}{\color{black}\sigma_{K}^{2}}}\LL_{\rm{FVc}}^{\lambda(z)}P_{f, n}\left(\mu \right) + \frac{1}{K^{2}{\color{black}\sigma_{K}^{2}}} \times O\left(\frac{1}{\sqrt{K}} + {\color{black}\sigma_{K}} K^{\frac{3}{2}} M_{2}\left(\mu\right) + \frac{M_{3}\left(\mu \right)}{K} \right) 
\end{align*}
where $\lambda(z) := \frac{b(z,z)}{\theta(z)m_{2}(z)}$. Moreover, for all $K, n \in \N^{\star}$ and test functions $f \in \CCCC_{b}^{3}\left(\R^{n}, \R \right)$, the process $\left(M_{t}^{K, P_{f, n}} \right)_{t\geqslant 0}$ defined by 
\begin{equation*}
M_{t}^{K, P_{f, n}} := P_{f, n}\left(\mu_{t}^{K} \right) - P_{f, n}\left(\mu_{0}^{K} \right) - \int_{0}^{t}{\LL^{K}P_{f, n}\left(z_{s}^{K}, \mu_{s}^{K} \right)\dd s }
\label{Eq_Mg_Polynome}
\end{equation*}
is a square integrale martingale started at $0$.
\label{Lem_PB_Mg_Polynomes}
\end{Lem}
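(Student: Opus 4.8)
The plan is to treat the two assertions separately: the martingale statement is an immediate consequence of Proposition \ref{Prop_Generateur_Couple_Lent_Rapide}, while the asymptotic expansion requires a \textsc{Taylor} expansion in the spirit of the proof of Proposition \ref{Prop_Generateur_Rapide_Decomposition}, adapted to polynomial test functions. For the martingale part, observe that $\Phi : (z, \mu) \mapsto P_{f, n}(\mu)$ is a bounded measurable function on $\R \times \MM_{1, K}^{c}(\R)$ (indeed $|P_{f, n}(\mu)| \leqslant \|f\|_{\infty}$ since $\mu^{\otimes n}$ is a probability measure), that every elementary increment occurring in $\LL^{K}\Phi$ is a difference of two values of $P_{f, n}$ on $\MM_{1, K}(\R)$, hence bounded by $2\|f\|_{\infty}$, and that the total jump rates are bounded by $\overline{b}/\sigma^{2}$ and $\overline{\theta}/\sigma^{2}$; so $\LL^{K}\Phi$ is bounded as well, and Proposition \ref{Prop_Generateur_Couple_Lent_Rapide} applied to this $\Phi$ gives directly that $(M_{t}^{K, P_{f, n}})_{t \geqslant 0}$ is a square integrable $(\FF_{t}^{K})_{t \geqslant 0}$-martingale, null at $0$ by construction.

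For the expansion I would start from the exact formula of Proposition \ref{Prop_Generateur_Couple_Lent_Rapide} with $\Phi(z, \mu) = P_{f, n}(\mu)$ and split it into its resampling part and its mutation part. In both cases the elementary increment reads $P_{f, n}(\tau_{-c} \sharp \eta) - P_{f, n}(\mu)$, with $\eta = \mu - \tfrac{\delta_{x'}}{K} + \tfrac{\delta_{y'}}{K}$ a signed measure of total mass $1$, where $(c, x', y') = (\tfrac{y - x}{K}, x, y)$ for resampling and $(c, x', y') = (\tfrac{h}{K^{3/2}}, x, x + \tfrac{h}{\sqrt{K}})$ for mutation. Writing $P_{f, n}(\tau_{-c} \sharp \eta) = \langle f(\cdot - c \bm{1}), \eta^{\otimes n} \rangle$, I would run two nested expansions: the multilinear expansion $\eta^{\otimes n} = \mu^{\otimes n} + \tfrac{1}{K}\sum_{k}(\text{one tensor slot carrying } \delta_{y'} - \delta_{x'}) + \tfrac{1}{K^{2}}\sum_{k < l}(\text{two slots carrying } \delta_{y'} - \delta_{x'}) + O(1/K^{3})$, and the \textsc{Taylor} expansion of $f(\cdot - c \bm{1})$ to second order in $c$, the third order remainder being controlled by $C\|f\|_{\CCCC^{3}}(1 + |\cdot|^{3})/K^{3}$ exactly as in the proof of Proposition \ref{Prop_Generateur_Rapide_Decomposition} --- this is where the hypothesis $f \in \CCCC_{b}^{3}$ is used.

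Next, expanding $b(\sigma\sqrt{K}x + z, \sigma\sqrt{K}y + z) = b(z, z) + O(\sigma\sqrt{K}(|x| + |y|))$ and likewise for $\theta$ and $m$, integrating against $\mu(\dd x)\mu(\dd y)$, resp. $\mu(\dd x)\, m(\sigma\sqrt{K}x + z, h)\, \dd h$, and using $\langle \id, \mu \rangle = 0$ to cancel the order-$c$ contribution of $\mu^{\otimes n}$ and the relevant cross terms, the surviving leading contributions reassemble exactly the three families of (\ref{Def_Gene_FVc_polynome}) with rate $\lambda(z)$: the second-order-in-$h$ part of the mutation term gives $\theta(z)m_{2}(z)\langle \tfrac{1}{2}\Delta f, \mu^{n}\rangle$; in the resampling term, the cross term between the translation and a one-slot modification gives $-2 b(z, z)\langle (\nabla f \cdot \bm{1})(\id \cdot \bm{1}), \mu^{n}\rangle$, the two-slot coincidence terms give $b(z, z)\sum_{i \neq j}[\langle \Phi_{i, j}f, \mu^{n-1}\rangle - \langle f, \mu^{n}\rangle]$, and the second-order-in-$c$ part of the translation gives $b(z, z)\sum_{i, j}\langle K_{i, j}f, \mu^{n+1}\rangle$; all carry the common factor $1/(K^{2}\sigma^{2})$, and $\theta(z)m_{2}(z)\lambda(z) = b(z, z)$ collapses the sum to $\tfrac{\theta(z)m_{2}(z)}{K^{2}\sigma^{2}}\LL_{\rm FVc}^{\lambda(z)}P_{f, n}(\mu)$. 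The residual terms are gathered into the error just as in Proposition \ref{Prop_Generateur_Rapide_Decomposition}: the variation of $b$ against the $O(1/K)$-sized increment produces $\tfrac{1}{K^{2}\sigma^{2}}O(\sigma K^{3/2} M_{1}(\mu))$, the third order \textsc{Taylor} remainders produce $\tfrac{1}{K^{2}\sigma^{2}}O(M_{3}(\mu)/K)$, and the $O(1/K^{3})$ multilinear terms together with the translations hidden in the one-slot terms produce $\tfrac{1}{K^{2}\sigma^{2}}O(1/\sqrt{K} + M_{2}(\mu)/K)$; bounding $M_{1} \leqslant 1 + M_{2}$ and $M_{2} \leqslant 1 + M_{3}$ by \textsc{H\"older} and using $\sigma K^{3/2} \ll 1/\sqrt{K}$ from (\ref{Hypothese_Gamme_sigma}) reduces everything to the announced remainder.

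The step I expect to be the main obstacle is not a single estimate but the combinatorial bookkeeping of the double expansion: one must pair each of the numerous contributions of (\textsc{Taylor})$\times$(multilinear) with the correct family $B_{\lambda}^{(n)}$, $\Phi_{i, j}$ or $K_{i, j}$ of (\ref{Def_Gene_FVc_polynome}), keep track of the exact power of $K$ so that each surviving term lands at the single order $1/(K^{2}\sigma^{2})$, and --- most delicately --- identify correctly every contribution that vanishes through the centering constraint $\langle \id, \mu \rangle = 0$; mismanaging this accounting is the natural pitfall.
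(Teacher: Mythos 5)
Your proposal is correct and follows essentially the same route as the paper's proof in Section \ref{Sous_Section_6_2_PB_Mg_polynomes}: start from Proposition \ref{Prop_Generateur_Couple_Lent_Rapide} with $\Phi(z,\mu)=P_{f,n}(\mu)$, perform the combined multilinear expansion of $\left[\mu-\frac{\delta_{x}}{K}+\frac{\delta_{y}}{K}\right]^{n}$ and the \textsc{Taylor} expansion in the small translation/displacement, use the centering of $\mu$ and of $m(\cdot,h)$ to kill the first-order terms, and identify the surviving contributions with the three families of (\ref{Def_Gene_FVc_polynome}) (your attribution of the $\Delta f$, $(\nabla f\cdot\bm{1})(\id\cdot\bm{1})$, $\Phi_{i,j}$ and $K_{i,j}$ terms to the mutation second order, the translation--one-slot cross term, the two-slot coincidences and the second-order translation, respectively, matches the paper's terms $\textbf{(B)}_{1}^{K}$, $\textbf{(A)}_{2}^{K}$, $\textbf{(A)}_{1}^{K}$, $\textbf{(A)}_{3}^{K}$), with the same error bookkeeping via Assumption (\ref{Hypothese_Gamme_sigma}) and the same boundedness argument for the martingale part.
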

The proof of this result is given in Section \ref{Sous_Section_6_2_PB_Mg_polynomes}.

\subsubsection{For some unbounded test functions}

The following results are particular extensions of Lemma \ref{Lem_PB_Mg_Polynomes} when the test functions are no longer bounded but the processes are stopped at time $\check{\tau}^{K}$ Then, we give in Lemmas \ref{Cor_moment_ordre_2} and \ref{Cor_moment_ordre_6} \textsc{Doob}'s semi-martingale decomposition of moments of order $2$ and those whose degree is $6$. 

\begin{Prop} For all $n \in \N^{\star}$, for all $f \in \CCCC^{3}\left(\R^{n}, \R\right) $, the process $\left(M_{t\wedge \check{\tau}^{K}}^{K, P_{f, n}} \right)_{t\geqslant 0}$ defined by 
\begin{equation*}
\begin{aligned}
M_{t\wedge \check{\tau}^{K}}^{K, P_{f, n}} &:= P_{f, n}\left(\mu_{t\wedge \check{\tau}^{K}}^{K} \right) - P_{f, n}\left(\mu_{0}^{K} \right) - \frac{K}{K{\color{black}\sigma_{K}^{2}}}\int_{0}^{t\wedge \check{\tau}^{K}}{\int_{\R}^{}{\mu(\dd x)\int_{\R}^{}{\mu(\dd y)}} } \\
& \hspace{0.5cm} \times b\left({\color{black}\sigma_{K}} \sqrt{K} x + z_{s}^{K}, {\color{black}\sigma_{K}} \sqrt{K} x + z_{s}^{K} \right)\left[P_{f \circ \tau_{-\frac{y-x}{K}}, n}\left(\mu - \frac{\delta_{x}}{K} + \frac{\delta_{y}}{K} \right) - P_{f, n}(\mu) \right]\dd s \\
& \hspace{1.5cm} + \frac{K}{K{\color{black}\sigma_{K}^{2}}}\int_{0}^{t\wedge \check{\tau}^{K}}{\int_{\R}^{}{\mu(\dd x)\theta\left({\color{black}\sigma_{K}} \sqrt{K} x + z_{s}^{K} \right)\int_{\R}^{}{m\left({\color{black}\sigma_{K}}\sqrt{K} x  + z_{s}^{K}, h \right) \dd h}}} \\ 
& \hspace{4.5cm} \times \left[P_{f \circ \tau_{-\frac{h}{K^{\frac{3}{2}}}}, n}\left(\mu - \frac{\delta_{x}}{K} + \frac{\delta_{x+\frac{h}{\sqrt{K}}}}{K} \right) - P_{f, n}(\mu) \right]\dd s
\end{aligned}
\label{Martingale_f_NON_bornee_n}
\end{equation*}
is a bounded martingale.
\label{Prop_Martingale_f_NON_bornee_n}
\end{Prop}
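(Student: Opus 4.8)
The plan is to reduce this statement to Lemma~\ref{Lem_PB_Mg_Polynomes} by a localisation argument. The key point is that, up to the stopping time $\check{\tau}^{K}$, the centred empirical measure $\mu^{K}_{t}$ takes its values in a fixed (albeit $K$-dependent) compact subset of $\MM_{1,K}^{c}(\R)$ on which $f$ and its first and second derivatives are bounded; so one may replace $f$ by a function of $\CCCC_{b}^{3}(\R^{n},\R)$ without altering the stopped process, and then invoke Lemma~\ref{Lem_PB_Mg_Polynomes}.

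The first step is the support localisation. Set $D_{K}:=1/(\sigma K^{(3+\varepsilon)/2})$. By definition of $\check{\tau}^{K}$ one has $\Diam(\Supp\mu^{K}_{t})\leqslant D_{K}$ for all $t<\check{\tau}^{K}$, and since $\langle\id,\mu^{K}_{t}\rangle=0$ this forces $\Supp\mu^{K}_{t}\subseteq[-D_{K},D_{K}]$. A resampling jump removes an atom and re-adds one inside the current support, then translates, hence never increases $\Diam(\Supp\mu^{K})$; a mutation jump moves a single atom by at most $A_{m}/\sqrt{K}$ (Assumption~\textbf{(A2)(a)}) and then translates. Therefore the jump of $\mu^{K}$ occurring exactly at $\check{\tau}^{K}$ is necessarily a mutation jump, whence $\Diam(\Supp\mu^{K}_{\check{\tau}^{K}})\leqslant D_{K}+A_{m}/\sqrt{K}$ and $\Supp\mu^{K}_{\check{\tau}^{K}}\subseteq[-R_{K},R_{K}]$ with $R_{K}:=D_{K}+A_{m}/\sqrt{K}+2D_{K}/K+A_{m}/K^{3/2}<\infty$. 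Moreover, for $t<\check{\tau}^{K}$, $x,y\in\Supp\mu^{K}_{t}$ and $|h|\leqslant A_{m}$, all atoms of $\tau_{-(y-x)/K}\sharp[\mu^{K}_{t}-\delta_{x}/K+\delta_{y}/K]$ and of $\tau_{-h/K^{3/2}}\sharp[\mu^{K}_{t}-\delta_{x}/K+\delta_{x+h/\sqrt{K}}/K]$ also lie in $[-R_{K},R_{K}]$.

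Next, fix $\chi_{K}\in\CCCC_{b}^{3}(\R,\R)$ with $0\leqslant\chi_{K}\leqslant1$, compact support and $\chi_{K}\equiv1$ on $[-R_{K},R_{K}]$, and put $\widetilde{f}(x_{1},\dots,x_{n}):=f(x_{1},\dots,x_{n})\prod_{i=1}^{n}\chi_{K}(x_{i})$, so that $\widetilde{f}\in\CCCC_{b}^{3}(\R^{n},\R)$ and $\widetilde{f}=f$ on $[-R_{K},R_{K}]^{n}$. By Lemma~\ref{Lem_PB_Mg_Polynomes}, $(M^{K,P_{\widetilde{f},n}}_{t})_{t\geqslant0}$ is a square-integrable martingale; since $t\mapsto\mu^{K}_{t}$ is right-continuous, $t\mapsto\Diam(\Supp\mu^{K}_{t})$ is right-continuous too, so $\check{\tau}^{K}$ is an $(\FF^{K}_{t})$-stopping time and $(M^{K,P_{\widetilde{f},n}}_{t\wedge\check{\tau}^{K}})_{t\geqslant0}$ is again a square-integrable martingale by optional stopping. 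It then remains to identify this process with the one in the statement. On $\{\check{\tau}^{K}=0\}$ both reduce to $0$. On $\{\check{\tau}^{K}>0\}$ one has $\Supp\mu^{K}_{0}\subseteq[-D_{K},D_{K}]$ and, by the first step, $\Supp\mu^{K}_{t\wedge\check{\tau}^{K}}\subseteq[-R_{K},R_{K}]$, so the boundary terms $P_{\cdot,n}(\mu^{K}_{0})$ and $P_{\cdot,n}(\mu^{K}_{t\wedge\check{\tau}^{K}})$ coincide for $f$ and $\widetilde{f}$; and for almost every $s\leqslant t\wedge\check{\tau}^{K}$ (namely all $s<\check{\tau}^{K}$), every measure entering the jump-by-jump expression of $\LL^{K}P_{\cdot,n}(z^{K}_{s},\mu^{K}_{s})$ — which is exactly the drift term written in the statement, with $\LL^{K}$ as in Proposition~\ref{Prop_Generateur_Couple_Lent_Rapide} — is supported in $[-R_{K},R_{K}]$ by the first step, so $\LL^{K}P_{\widetilde{f},n}(z^{K}_{s},\mu^{K}_{s})=\LL^{K}P_{f,n}(z^{K}_{s},\mu^{K}_{s})$; in particular the compensator integral is a.s.\ finite, which for unbounded $f$ is not obvious a priori. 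Finally, boundedness on compacts is immediate: $\sup_{[-R_{K},R_{K}]^{n}}(|f|+|\nabla f|)<\infty$, hence $|\LL^{K}P_{f,n}(z,\mu)|\leqslant C_{K}$ for all $z\in\R$ and all $\mu\in\MM_{1,K}^{c}(\R)$ supported in $[-D_{K},D_{K}]$, for a finite constant $C_{K}$ (the jump transitions perturb a degree-$n$ polynomial functional by $O(1/K)$ and the total jump rates are bounded by $\overline{b}/\sigma^{2}$ and $\overline{\theta}/\sigma^{2}$), so that $\sup_{t\leqslant T}|M^{K,P_{f,n}}_{t\wedge\check{\tau}^{K}}|\leqslant 2\sup_{[-R_{K},R_{K}]^{n}}|f|+C_{K}T$ for every $T>0$.

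The only genuinely delicate point is the support control at the stopping time itself in the first step: one has to combine the non-expansiveness of resampling jumps for $\Diam(\Supp\mu^{K})$ with the boundedness of mutation steps (Assumption~\textbf{(A2)(a)}) to exclude an uncontrolled overshoot of the diameter at $\check{\tau}^{K}$; everything else is routine bookkeeping.
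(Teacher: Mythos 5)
Your proof is correct and follows essentially the same route as the paper, which simply invokes the stopping time $\check{\tau}^{K}$ for boundedness and then appeals to Lemma \ref{Lem_PB_Mg_Polynomes}. Your additional details — the control of the overshoot of $\Diam\left(\Supp \mu^{K}\right)$ at $\check{\tau}^{K}$, the smooth truncation $\widetilde{f}$ allowing the application of Lemma \ref{Lem_PB_Mg_Polynomes} to an unbounded $f \in \CCCC^{3}\left(\R^{n}, \R\right)$, and the identification of the stopped processes via optional stopping — merely make explicit what the paper's one-line argument leaves implicit.
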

\begin{proof}
Thanks to the stopping time $\check{\tau}^{K}$, $\left(M_{t\wedge\check{\tau}^{K}}^{K, P_{f, n}} \right)_{t\geqslant 0}$ is bounded. Hence the martingality of this process follows from Lemma \ref{Lem_PB_Mg_Polynomes}. \qedhere
\end{proof}

\textit{(a) Moment of order $2$.} The following result will be useful in Section \ref{Section_4_Estimees_de_moments}. 

\begin{Lem} For all $K \in \N^{\star}$, the process $\left(M_{t\wedge \check{\tau}^{K}}^{K, P_{\id^{2}, 1}}\right)_{t\geqslant 0}$ defined in {\rm Proposition \ref{Martingale_f_NON_bornee_n}} satisfies for all $t \geqslant 0$ 
\begin{align*}
M_{t \wedge \check{\tau}^{K}}^{K, P_{\id^{2}, 1}} & = M_{2}\left(\mu^{K }_{t\wedge \check{\tau}^{K}} \right) - M_{2}\left(\mu_{0}^{K } \right) \\ 
& \hspace{1cm} + \frac{1}{K^{2}{\color{black}\sigma_{K}^{2}}}\int_{0}^{t\wedge \check{\tau}^{K}}{\left\{2 b\left(z_{s}^{K }, z_{s}^{K } \right) M_{2}\left(\mu_{s}^{K } \right) - \theta\left(z_{s}^{K} \right)m_{2}\left(z_{s}^{K} \right) \right\}\dd s } \\
& \hspace{1cm}   +  \frac{1}{K^{2}{\color{black}\sigma_{K}^{2}}} \int_{0}^{t\wedge \check{\tau}^{K}}{O\left(\frac{1}{K} + {\color{black}\sigma_{K}} \sqrt{K} M_{1}\left(\mu_{s}^{K} \right) + {\color{black}\sigma_{K}} K^{\frac{3}{2}}M_{3}\left(\mu_{s}^{K} \right) \right)\dd s} 
\end{align*} is a square integrable martingale with quadratic variation:
\begin{equation}
\begin{aligned}
\left\langle M^{K, P_{\id^{2}, 1}}\right\rangle_{t\wedge \check{\tau}^{K}} & = \frac{2}{K^{2}{\color{black}\sigma_{K}^{2}}}\int_{0}^{t\wedge \check{\tau}^{K}}{b\left(z_{s}^{K }, z_{s}^{K } \right)\left\{M_{4}\left(\mu_{s}^{K } \right) - M_{2}^{2}\left(\mu_{s}^{K } \right)  \right\} \dd s} \\ 
& \hspace{-2.5cm} + \frac{1}{K^{2}{\color{black}\sigma_{K}^{2}}}\int_{0}^{t\wedge \check{\tau}^{K}}{O\left(\frac{{\color{black}\sigma_{K}}\sqrt{K}}{K^{2}} + \frac{M_{2}\left(\mu_{s}^{K} \right)}{K} + \frac{{\color{black}\sigma_{K}}\sqrt{K}M_{3}\left(\mu_{s}^{K} \right)}{K} + {\color{black}\sigma_{K}}\sqrt{K}M_{5}\left(\mu_{s}^{K} \right)  \right)\dd s}.
\label{Eq_Variation_Quad_M2}
\end{aligned}
\end{equation}
\label{Cor_moment_ordre_2}
\end{Lem}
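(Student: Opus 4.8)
The plan is to read everything off Proposition~\ref{Prop_Martingale_f_NON_bornee_n}. Apply it with $n=1$ and $f=\id^{2}$, so that $P_{f,1}(\mu)=\langle\id^{2},\mu\rangle=M_{2}(\mu)$. Stopped at $\check{\tau}^{K}$ the integrand $P_{f,1}$ is bounded, hence $\big(M^{K,P_{\id^{2},1}}_{t\wedge\check{\tau}^{K}}\big)_{t\geqslant0}$ is automatically a bounded, hence square integrable, martingale. The only thing left to prove for the first assertion is therefore that the two generator-type integrands written in Proposition~\ref{Prop_Martingale_f_NON_bornee_n} coincide, up to the announced $O(\cdot)$, with $-\frac{1}{K^{2}\sigma^{2}}\big\{2b(z^{K}_{s},z^{K}_{s})M_{2}(\mu^{K}_{s})-\theta(z^{K}_{s})m_{2}(z^{K}_{s})\big\}$. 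This is an asymptotic expansion of the compensator, entirely parallel to the one carried out in the proof of Proposition~\ref{Prop_Generateur_Rapide_Decomposition}.

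I would start by computing the two jump increments. Because $\langle\id,\mu\rangle=0$, a resampling jump moving the $\mu$-coordinate from $x$ to $y$ changes $M_{2}$ by $\frac{y^{2}-x^{2}}{K}-\frac{(y-x)^{2}}{K^{2}}$ (the post-jump value is $M_{2}$ of the recentred measure $\mu-\frac{\delta_{x}}{K}+\frac{\delta_{y}}{K}$ shifted by $\frac{y-x}{K}$), and a mutation jump with scaled step $h$ changes it by $\frac{2xh}{K^{3/2}}+\frac{h^{2}}{K^{2}}-\frac{h^{2}}{K^{3}}$. Inserting these into the compensator and Taylor expanding $b(\sigma\sqrt{K}x+z,\sigma\sqrt{K}y+z)=b(z,z)+O\big(\sigma\sqrt{K}(|x|+|y|)\big)$ and $\theta(\sigma\sqrt{K}x+z)m_{\ell}(\sigma\sqrt{K}x+z)=\theta(z)m_{\ell}(z)+O(\sigma\sqrt{K}|x|)$, two cancellations do all the work: $\int\!\!\int(y^{2}-x^{2})\,\mu(\dd x)\mu(\dd y)=0$ removes the leading resampling term, and $\int h\,m(x,h)\,\dd h=0$ removes the $\frac{2xh}{K^{3/2}}$ mutation term. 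What survives is $-\frac{2b(z,z)M_{2}(\mu)}{K^{2}\sigma^{2}}$ — from the $-\frac{(y-x)^{2}}{K^{2}}$ piece against $b(z,z)$, using $\int\!\!\int(y-x)^{2}\,\mu\otimes\mu=2M_{2}(\mu)$ — together with $+\frac{\theta(z)m_{2}(z)}{K^{2}\sigma^{2}}$ from the $\frac{h^{2}}{K^{2}}$ piece; the first-order Taylor term of $b$ only produces the signed moment $\langle\id^{3},\mu\rangle$, bounded by $M_{3}(\mu)$, and every remaining piece collapses into $\frac{1}{K^{2}\sigma^{2}}O\big(\frac{1}{K}+\sigma\sqrt{K}M_{1}(\mu)+\sigma K^{3/2}M_{3}(\mu)\big)$. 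Rearranging signs gives the stated decomposition of $M^{K,P_{\id^{2},1}}_{t\wedge\check{\tau}^{K}}$.

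For the quadratic variation (\ref{Eq_Variation_Quad_M2}) I would use that $\mu^{K}$ is pure jump, so $\langle M^{K,P_{\id^{2},1}}\rangle_{t\wedge\check{\tau}^{K}}$ is the predictable compensator of $\sum_{s\leqslant t\wedge\check{\tau}^{K}}\big(\Delta M^{K,P_{\id^{2},1}}_{s}\big)^{2}$, i.e. the formula of Proposition~\ref{Prop_Generateur_Couple_Lent_Rapide} with $\Phi=P_{\id^{2},1}$, stopped at $\check{\tau}^{K}$; the unboundedness of $\id^{2}$ is harmless because, exactly as in the proof of Proposition~\ref{Prop_Generateur_Couple_Lent_Rapide}, one applies Proposition~\ref{Prop_Martingale_f_NON_bornee_n} to $P_{g,2}$ with $g(x_{1},x_{2})=x_{1}^{2}x_{2}^{2}$ (which equals $(P_{\id^{2},1})^{2}$) and subtracts. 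Then one squares the two jump increments above and integrates them against the resampling and mutation rates: the $b(z,z)$ part of $\frac{(y^{2}-x^{2})^{2}}{K^{2}}$ gives the main term $\frac{2}{K^{2}\sigma^{2}}b(z,z)\big(M_{4}(\mu)-M_{2}^{2}(\mu)\big)$ via $\int\!\!\int(y^{2}-x^{2})^{2}\,\mu\otimes\mu=2\big(M_{4}(\mu)-M_{2}^{2}(\mu)\big)$, the cross term $-\frac{2(y^{2}-x^{2})(y-x)^{2}}{K^{3}}$ integrates to $0$ by centering, and the residual contributions (the $b-b(z,z)$ remainder, which carries $M_{5}$; the $\frac{(y-x)^{4}}{K^{4}}$ term; and the mutation-side $\frac{4x^{2}h^{2}}{K^{3}}+\cdots$ terms, which produce $\frac{M_{2}}{K}$ and $\frac{\sigma\sqrt{K}M_{3}}{K}$ after expanding $\theta m_{2},\theta m_{4}$) assemble into the stated $O(\cdot)$; here one uses the crude bounds $|m_{\ell}(\alpha)|=\big|\int h^{\ell}m(\alpha,h)\dd h\big|\leqslant A_{m}^{\ell-2}\overline{m}_{2}$ from Assumption \textbf{(A2)} for the odd $\ell$ appearing.

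The main obstacle is not conceptual but the bookkeeping of orders of magnitude: several residual terms carry moments $M_{4}(\mu^{K}_{s}),M_{5}(\mu^{K}_{s})$ that are a priori not small, and forcing them inside the claimed $O(\cdot)$ requires using, on $\{s<\check{\tau}^{K}\}$, the support bound $\Diam(\Supp\mu^{K}_{s})\leqslant\frac{1}{\sigma K^{(3+\varepsilon)/2}}$ from (\ref{Temps_arret_check}) — which, since $\mu^{K}_{s}$ is centred, yields $|x|\leqslant D:=\frac{1}{\sigma K^{(3+\varepsilon)/2}}$ on its support, hence $M_{2\ell}(\mu^{K}_{s})\leqslant D^{2(\ell-1)}M_{2}(\mu^{K}_{s})$ — together with the size restrictions on $\sigma$ from (\ref{Hypothese_Gamme_sigma}) (in particular $\sigma K^{3/2}\ll1$), in the same spirit as in the proof of Proposition~\ref{Prop_Generateur_Rapide_Decomposition}. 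Once the expansion is established, the martingale and square integrability properties are, as noted, immediate from Proposition~\ref{Prop_Martingale_f_NON_bornee_n}, everything being bounded up to $\check{\tau}^{K}$.
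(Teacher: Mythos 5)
Your proposal is correct and follows essentially the same route as the paper: the same explicit jump increments $\frac{y^{2}-x^{2}}{K}-\frac{(y-x)^{2}}{K^{2}}$ and $\frac{2xh}{K^{3/2}}+\frac{h^{2}}{K^{2}}-\frac{h^{2}}{K^{3}}$, Taylor expansion of $b$ and $\theta m$ around $z$ with the centering cancellations producing $-2b(z,z)M_{2}+\theta(z)m_{2}(z)$ and the stated errors, and the quadratic variation obtained via the $\LL^{K}\Phi^{2}-2\Phi\LL^{K}\Phi$ device (your $P_{g,2}$ with $g(x_{1},x_{2})=x_{1}^{2}x_{2}^{2}$ is exactly how the paper's reference to Proposition \ref{Prop_Generateur_Couple_Lent_Rapide} is made rigorous for the stopped, unbounded test function), with the main term $2b(z,z)\left(M_{4}-M_{2}^{2}\right)$ from $\int\!\!\int(y^{2}-x^{2})^{2}\mu\otimes\mu$. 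Your additional remarks (antisymmetry killing the cross term, the support-diameter bound before $\check{\tau}^{K}$ to absorb higher moments) only spell out what the paper compresses into ``Taylor's formula and straightforward computations''.
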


\begin{proof} \textit{Step 1. Approximation of the \textsc{Doob} decomposition.} Note that for all $x, y \in \R$, 
\[P_{\id^{2}\circ \tau_{-\frac{y-x}{K}}, 1}\left(\mu - \frac{\delta_{x}}{K} + \frac{\delta_{y}}{K} \right) - P_{\id^{2}, 1}(\mu) =  \frac{1}{K}\left(y^{2} - x^{2} \right) - \frac{1}{K^{2}}(y-x)^{2}.\]
From \textsc{Taylor}'s formula, it follows that
\begin{align*}
& \frac{K}{K{\color{black}\sigma_{K}^{2}}}\int_{0}^{t}{\dd s\int_{\R}^{}{\mu(\dd x)\int_{\R}^{}{\mu(\dd y)b\left({\color{black}\sigma_{K}} \sqrt{K} x + z_{s}^{K}, {\color{black}\sigma_{K}}\sqrt{K}y + z_{s}^{K} \right)} } } \\
& \hspace{4.55cm} \times \left[P_{\id^{2}\circ \tau_{-\frac{y-x}{K}}, 1}\left(\mu - \frac{\delta_{x}}{K} + \frac{\delta_{y}}{K} \right) - P_{\id^{2}, 1}(\mu) \right] \\
& \hspace{1cm} = \frac{1}{K^{2}{\color{black}\sigma_{K}^{2}}}\int_{0}^{t}{\left[-2b\left(z_{s}^{K}, z_{s}^{K} \right)M_{2}\left(\mu_{s}^{K} \right) + O\left({\color{black}\sigma_{K}} K^{\frac{3}{2}}M_{3}\left(\mu_{s}^{K} \right) \right) \right]\dd s} 
\end{align*}
and 
\begin{align*}
&\frac{K}{K{\color{black}\sigma_{K}^{2}}}\int_{0}^{t}{\dd s\int_{\R}^{}{\mu(\dd x)\int_{\R}^{}{ m\left({\color{black}\sigma_{K}} \sqrt{K} x + z_{s}^{K}, h \right)\theta\left({\color{black}\sigma_{K}}\sqrt{K}x + z_{s}^{K} \right)} } }\\ 
& \hspace{4.5cm} \times \left[P_{\id^{2}\circ \tau_{-\frac{h}{K^{3/2}}}, 1}\left(\mu - \frac{\delta_{x}}{K} + \frac{\delta_{x + \frac{h}{\sqrt{K}}}}{K} \right) - P_{\id^{2}, 1}(\mu) \right]\dd h \\
& \hspace{1cm} = \frac{1}{K^{2}{\color{black}\sigma_{K}^{2}}}\int_{0}^{t}{\left[\theta\left(z_{s}^{K} \right)m_{2}\left(z_{s}^{K} \right)  + O\left(\frac{1}{K} + {\color{black}\sigma_{K}}\sqrt{K}M_{1}\left(\mu_{s}^{K} \right)  \right) \right]\dd s} . 
\end{align*}
We deduce the first announced result. \\

\textit{Step 2. Quadratic variation.} In similar way to the proof of Proposition \ref{Prop_Generateur_Couple_Lent_Rapide}, we have 
\begin{align*}
& \left\langle M^{K, P_{\id^{2}, 1}}\right\rangle_{t} = \frac{K}{K{\color{black}\sigma_{K}^{2}}}\int_{0}^{t}{\dd s \int_{\R}^{}{\mu_{s}^{K}(\dd x)\int_{\R}^{}{\mu_{s}^{K}(\dd y)b\left({\color{black}\sigma_{K}} \sqrt{K} x + z_{s}^{K}, {\color{black}\sigma_{K}}\sqrt{K}y + z_{s}^{K} \right)} } } \\
& \hspace{6cm} \times \left[\frac{1}{K}\left(y^{2} - x^{2} \right) - \frac{1}{K^{2}}(y-x)^{2} \right]^{2}  \\
& \hspace{1.5cm} + \frac{K}{K{\color{black}\sigma_{K}^{2}}}\int_{0}^{t}{\dd s \int_{\R}^{}{\mu_{s}^{K}(\dd x)\int_{\R}^{}{m\left({\color{black}\sigma_{K}} \sqrt{K} x + z_{s}^{K}, h \right)\theta\left({\color{black}\sigma_{K}}\sqrt{K}x + z_{s}^{K} \right)} } } \\
& \hspace{6cm} \times \left[\frac{h^{2}}{K^{2}} - \frac{h^{2}}{K^{3}} + \frac{2xh}{K^{\frac{3}{2}}} \right]^{2} \dd h.
\end{align*}
The announced result follows from \textsc{Taylor}'s formula and straightforward computations.~\qedhere
\end{proof}

\textit{(b) Moments of degree $6$.} Let us denote for all $\ell \in \N$,  $\widetilde{M}_{\ell}(\mu) := \left\langle \id^{\ell}, \mu \right\rangle$.

\begin{Lem} Let us consider the functions $f, g : \R^{2} \to \R$ and $h : \R^{3} \to \R$ respectively defined by $f(u,v) := u^{4}v^{2}$, $g(u,v) := u^{3}v^{3}$ and $h(u,v,w) := u^{2}v^{2}w^{2}$.  For all $K \in \N^{\star}$, 
the processes $\left(M_{t\wedge \check{\tau}^{K}}^{K, P_{\id^{6}, 1}}\right)_{t\geqslant 0}$, $\left(M_{t\wedge \check{\tau}^{K}}^{K, P_{f, 2}}\right)_{t\geqslant 0}$, $\left(M_{t\wedge \check{\tau}^{K}}^{K, P_{g, 2}}\right)_{t\geqslant 0}$ and $\left(M_{t\wedge \check{\tau}^{K}}^{K, P_{h, 3}}\right)_{t\geqslant 0}$ satisfiy for all $t \geqslant 0$
\begin{equation*}
\begin{aligned}
\bullet \ M_{t\wedge\check{\tau}^{K}}^{K, P_{\id^{6}, 1}} & = M_{6}\left(\mu_{t \wedge \check{\tau}^{K}}^{K} \right) - M_{6}\left(\mu_{0}^{K} \right) + \frac{3}{K^{2}{\color{black}\sigma_{K}^{2}}}\int_{0}^{t\wedge\check{\tau}^{K}}{\left\{b\left(z_{s}^{K}, z_{s}^{K}\right)\left[4M_{6}\left(\mu_{s}^{K} \right)  \right.  \right.} \\
& \hspace{2.75cm}  \left. \left. -10M_{4}\left(\mu_{s}^{K} \right)M_{2}\left(\mu_{s}^{K} \right)\right] - 5\theta\left(z_{s}^{K} \right)m_{2}\left(z_{s}^{K} \right)M_{4}\left(\mu_{s}^{K} \right)\right\}\dd s \\
& \hspace{-1.25cm} + \frac{1}{K^{2}{\color{black}\sigma_{K}^{2}}}\int_{0}^{t\wedge \check{\tau}^{K}}{O\left(\frac{M_{3}\left(\mu_{s}^{K} \right)}{\sqrt{K}} + \frac{1 + M_{4}\left(\mu_{s}^{K} \right)}{K} + {\color{black}\sigma_{K}}\sqrt{K}M_{5}\left(\mu_{s}^{K} \right) \right.}, \\
& \hspace{6.85cm} \left. + \frac{ M_{6}\left(\mu_{s}^{K}\right)}{K^{2}} + {\color{black}\sigma_{K}} K^{\frac{3}{2}}M_{7}\left(\mu_{s}^{K} \right) \right)\dd s \\
\bullet \ M_{t\wedge \check{\tau}^{K}}^{K, P_{f, 2}} & =  M_{4}\left(\mu_{t\wedge \check{\tau}^{K}}^{K} \right)M_{2}\left( \mu_{t\wedge \check{\tau}^{K}}^{K}\right) - M_{4}\left(\mu_{0}^{K} \right)M_{2}\left( \mu_{0}^{K}\right) \\
& \hspace{-0.5cm}+\frac{1}{K^{2}{\color{black}\sigma_{K}^{2}}}\int_{0}^{t\wedge \check{\tau}^{K}}{\left\{2b\left(z_{s}^{K}, z_{s}^{K} \right)\left[6 M_{4}\left(\mu_{s}^{K} \right)M_{2}\left(\mu_{s}^{K} \right) -   M_{6}\left(\mu_{s}^{K} \right) + 4\widetilde{M}_{3}^{2}\left(\mu_{s}^{K} \right) \right. \right.}   \\
& \hspace{1.85cm} \left.\left. - 6 M_{2}^{3}\left(\mu_{s}^{K} \right) \right] - \theta\left(z_{s}^{K} \right)m_{2}\left(z_{s}^{K} \right)\left[6M_{2}^{2}\left(\mu_{s}^{K} \right) + M_{4}\left(\mu_{s}^{K} \right) \right] \right\}\dd s \\
& \hspace{-0.5cm} + \frac{1}{K^{2}{\color{black}\sigma_{K}^{2}}}\int_{0}^{t\wedge \check{\tau}^{K}}{O\left(\frac{M_{6}\left(\mu_{s}^{K} \right)}{K^{3}} + {\color{black}\sigma_{K}} K^{\frac{3}{2}}M_{7}\left(\mu_{s}^{K} \right) \right)\dd s}, \\ 
\bullet \ M_{t\wedge \check{\tau}^{K}}^{K, P_{g, 2}} & = \widetilde{M}_{3}^{2}\left(\mu_{t\wedge \check{\tau}^{K}}^{K} \right) - \widetilde{M}_{3}^{2}\left(\mu_{0}^{K} \right) + \frac{1}{K^{2}{\color{black}\sigma_{K}^{2}}}\int_{0}^{t\wedge\check{\tau}^{K}}{\left\{b\left(z_{s}^{K}, z_{s}^{K} \right)\left[-2M_{6}\left(\mu_{s}^{K} \right)   \right. \right.} \\
& \hspace{2.5cm} + \left.\left. 14\widetilde{M}_{3}^{2}\left(\mu_{s}^{K} \right) + 12M_{4}\left(\mu_{s}^{K} \right)M_{2}\left(\mu_{s}^{K} \right) - 18M_{2}^{3}\left(\mu_{s}^{K} \right)\right] \right\}\dd s \\
& \hspace{-0.5cm} + \frac{1}{K^{2}{\color{black}\sigma_{K}^{2}}}\int_{0}^{t\wedge \check{\tau}^{K}}{O\left(\frac{M_{6}\left(\mu_{s}^{K} \right)}{K^{3}} + {\color{black}\sigma_{K}} K^{\frac{3}{2}}M_{7}\left(\mu_{s}^{K} \right) \right)\dd s}, 
\end{aligned}
\end{equation*}
\begin{equation*}
\begin{aligned}
\bullet \ M_{t\wedge \check{\tau}^{K}}^{K, P_{h, 3}} & = M_{2}^{3}\left(\mu_{t\wedge \check{\tau}^{K}}^{K} \right) - M_{2}^{3}\left(\mu_{0}^{K} \right) + \frac{1}{K^{2}{\color{black}\sigma_{K}^{2}}}\int_{0}^{t\wedge \check{\tau}^{K}}{\left\{b\left(z_{s}^{K}, z_{s}^{K} \right)\left[12 M_{2}^{3}\left(\mu_{s}^{K} \right) \right. \right.} \\
& \hspace{1.5cm} - \left. \left. 6M_{4}\left(\mu_{s}^{K} \right)M_{2}\left(\mu_{s}^{K} \right)  \right] - 3\theta\left(z_{s}^{K} \right)m_{2}\left(z_{s}^{K} \right)M_{4}\left(\mu_{s}^{K} \right)\right\}\dd s \\
& \hspace{-0.5cm} + \frac{1}{K^{2}{\color{black}\sigma_{K}^{2}}}\int_{0}^{t\wedge \check{\tau}^{K}}{O\left(\frac{M_{6}\left(\mu_{s}^{K} \right)}{K^{3}} + {\color{black}\sigma_{K}} K^{\frac{3}{2}}M_{7}\left(\mu_{s}^{K} \right) \right)\dd s}.
\end{aligned}
\label{Martingale_id_id6}
\end{equation*}
\label{Cor_moment_ordre_6}
\end{Lem}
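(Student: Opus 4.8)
The four quantities in the statement are all of the form $P_{\psi, n}(\mu)$ for a suitable pair $(\psi, n)$: since every $\mu \in \MM_{1, K}^{c}(\R)$ is centered with finitely many atoms, one has $P_{\id^{6}, 1}(\mu) = M_{6}(\mu)$, $P_{f, 2}(\mu) = M_{4}(\mu) M_{2}(\mu)$, $P_{g, 2}(\mu) = \widetilde{M}_{3}^{2}(\mu)$ and $P_{h, 3}(\mu) = M_{2}^{3}(\mu)$. The plan is therefore to apply Proposition \ref{Prop_Martingale_f_NON_bornee_n} to each of these four test functions, which at once gives that $M^{K, P_{\psi, n}}_{t \wedge \check{\tau}^{K}}$ is a bounded (hence square integrable) martingale started at $0$, and then to rewrite the compensator (the deterministic $\dd s$-integral appearing in Proposition \ref{Prop_Martingale_f_NON_bornee_n}) in the announced closed form, up to the stated $O(\cdot)$ error terms.

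Computing the compensator is the core of the proof, and it proceeds exactly as in the proof of Lemma \ref{Cor_moment_ordre_2}, only with more terms. For the resampling part, I would first expand the polynomial increment $P_{\psi \circ \tau_{-(y-x)/K}, n}\bigl(\mu - \delta_{x}/K + \delta_{y}/K\bigr) - P_{\psi, n}(\mu)$ in powers of $1/K$, by expanding both the $n$-fold product measure $\bigl(\mu - \delta_{x}/K + \delta_{y}/K\bigr)^{n}$ and the translation $\psi \circ \tau_{-(y-x)/K}$ via Taylor's formula; for the mutation part, proceed likewise with the recentring $\tau_{-h/K^{3/2}}$ and the displaced atom $\delta_{x + h/\sqrt{K}}/K$. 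One then integrates against $\mu(\dd x)\mu(\dd y)$ (resp. against $\mu(\dd x)\, m(\sigma\sqrt{K} x + z, h)\,\dd h$), Taylor-expands $b(\sigma\sqrt{K} x + z, \sigma\sqrt{K} y + z)$ around $b(z,z)$ (resp. $\theta(\sigma\sqrt{K} x + z)$ around $\theta(z)$ and $m(\sigma\sqrt{K} x + z, h)$ around $m(z, h)$), and uses repeatedly the two centring identities $\langle \id, \mu \rangle = 0$ and $\int h\, m(z, h)\,\dd h = m_{1}(z) = 0$, which annihilate the half-integer powers of $K$ — in particular the a priori largest contributions. The surviving leading terms reorganise into the stated combinations of $b(z,z)$, $\theta(z) m_{2}(z)$ and the moments $M_{2}, M_{4}, M_{6}, \widetilde{M}_{3}$; all remaining contributions are gathered into the $O(\cdot)$, the various moments that appear being homogenised through Hölder's inequality (e.g. $M_{5} \leqslant M_{6}^{5/6} \leqslant 1 + M_{6}$, $M_{3} \leqslant 1 + M_{4}$), exactly as in the proofs of Propositions \ref{Prop_Generateur_Lent_Decomposition} and \ref{Prop_Generateur_Rapide_Decomposition}, while the subdominant error terms are discarded thanks to $\sigma K^{3/2} \ll 1/\sqrt{K}$ from (\ref{Hypothese_Gamme_sigma}).

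I expect the only genuine difficulty to be the bookkeeping: carrying the combined Taylor/polynomial expansions far enough, for degree-$6$ polynomials in $\mu$ this time, to extract every leading numerical coefficient correctly while certifying that each discarded term is of the claimed order. Two points deserve special care. First, the recentring translations $\tau_{-(y-x)/K}$ and $\tau_{-h/K^{3/2}}$ produce cross-terms of the same order as the naive moment increments, so they cannot be dropped — this is what generates, for instance, the coefficient $10$ in the $M_{6}$ decomposition and the coefficient $12$ in the $M_{2}^{3}$ decomposition. Second, for the three product-type functionals $M_{4} M_{2}$, $\widetilde{M}_{3}^{2}$ and $M_{2}^{3}$ it is convenient to organise the computation as a discrete It\^o-type expansion, expanding $\Psi(M_{\ell}(\mu) + \varepsilon)$ to second order in the jump increment $\varepsilon$, the second-order term reproducing the bracket/covariation contributions already met through (\ref{Eq_Variation_Quad_M2}); one must then verify that the residual terms coming from the non-infinitesimal size of the jumps are indeed absorbed into $O\bigl(M_{6}/K^{3} + \sigma K^{3/2} M_{7}\bigr)$. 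Once this is done, assembling the four decompositions and identifying the error terms is routine, and the martingality and square integrability follow directly from Proposition \ref{Prop_Martingale_f_NON_bornee_n}.
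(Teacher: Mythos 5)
Your proposal is correct and follows essentially the same route as the paper, whose proof of this lemma is literally that the approximations are ``obtained in a similar way to the proof of Lemma~\ref{Cor_moment_ordre_2}'', i.e.\ the same Taylor-expansion bookkeeping of the resampling and mutation increments applied to the stopped martingales provided by Proposition~\ref{Prop_Martingale_f_NON_bornee_n}, using the centering of $\mu$ and of the mutation law exactly as you describe. One small notational caution: the centering identity you invoke is $\int_{\R} h\, m(z,h)\,\dd h = 0$ from Assumption \textbf{(A2)(a)}, which should not be written as $m_{1}(z)=0$ since the paper defines $m_{\ell}(\alpha)$ with $|h|^{\ell}$.
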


\begin{proof} The approximation proposed is obtained in a similar way to the proof of Lemma \ref{Cor_moment_ordre_2} \qedhere 
\end{proof}

\subsection{Proof of Lemma \ref{Lem_PB_Mg_Polynomes}\label{Sous_Section_6_2_PB_Mg_polynomes}}
From Proposition \ref{Prop_Generateur_Couple_Lent_Rapide}, for the choice of test functions $\Phi(z, \mu) := P_{f,n}(\mu)$ given by (\ref{Eq_fonctions_test_F_f_n}) and noting that for all $x, y \in \R$, \[\left\langle f, \left(\tau_{-\frac{y-x}{K}} \, \sharp \, \left[ \mu - \frac{\delta_{x}}{K} + \frac{\delta_{y}}{K}\right] \right)^{n}\right\rangle = \left\langle f \circ \tau_{-\frac{y-x}{K}}, \left[\mu - \frac{\delta_{x}}{K} + \frac{\delta_{y}}{K} \right]^{n} \right\rangle,\]
we obtain that $\LL^{K}P_{f, n}(z,\mu) =  \frac{K}{K{\color{black}\sigma_{K}^{2}}}\left(\rm{\textbf{\rm{\textbf{(A)}}}}^{K} + {\rm{\textbf{\rm{\textbf{(B)}}}}}^{K}\right)$ where
\begin{align*}
{\rm{\textbf{\rm{\textbf{(A)}}}}}^{K} & := \int_{\R}^{}{\mu\left(\dd x \right)\int_{\R}^{}{\mu\left(\dd y \right)b\left({\color{black}\sigma_{K}} \sqrt{K} x + z, {\color{black}\sigma_{K}} \sqrt{K} y + z \right)} {\rm{\textbf{\rm{\textbf{(C)}}}}}_{x,y}^{K}}, \\
{\rm{\textbf{\rm{\textbf{(B)}}}}}^{K} & := \int_{\R}^{}{\mu\left(\dd x \right)\int_{\R}^{}{\theta\left({\color{black}\sigma_{K}}\sqrt{K} x + z \right)m\left(z + {\color{black}\sigma_{K}}\sqrt{K}x, h \right) }{\rm{\textbf{\rm{\textbf{(C)}}}}}_{x,x+h/\sqrt{K}}^{K} \,\dd h  }
\end{align*}
where ${\rm{\textbf{\rm{\textbf{(C)}}}}}_{x,y}^{K} := \left\langle f \circ \tau_{-\frac{y-x}{K}}, \left[\mu - \frac{\delta_{x}}{K} + \frac{\delta_{y}}{K} \right]^{n} \right\rangle - \left\langle f, \mu^{n} \right\rangle$. Note that, from \textsc{Taylor}'s formula, we obtain that 
\begin{align*}
{\rm{\textbf{\rm{\textbf{(C)}}}}}_{x,y}^{K} & = \sum\limits_{k\, = \, 1}^{n}{\binom{n}{k} \left\langle f, \mu^{n-k}\left[-\frac{\delta_{x}}{K} + \frac{\delta_{y}}{K} \right]^{k} \right\rangle} - \frac{y-x}{K}\sum\limits_{i \, = \, 1}^{n}{\left\langle \partial_{i}f, \left[\mu - \frac{\delta_{x}}{K} + \frac{\delta_{y}}{K}\right]^{n} \right\rangle}  \\
& \qquad  + \frac{(y-x)^{2}}{2K^{2}}\sum\limits_{i\, = \, 1}^{n}{\sum\limits_{j\, = \, 1}^{n}{\left\langle \partial^{2}_{ij}f, \left[\mu - \frac{\delta_{x}}{K} + \frac{\delta_{y}}{K}\right]^{n} \right\rangle}}  + O\left(\frac{\left|y-x \right|^{3}}{K^{3}} \right).
\end{align*}

By abuse of notation, we do not indicate the orders of the products of measure $\mu^{n-k}\left[-\frac{\delta_{x}}{K} + \frac{\delta_{y}}{K}\right]^{k}$. \\ 

\textit{Step 1. Decomposition and study of ${\rm{\textbf{\rm{\textbf{(A)}}}}}^{K}$.} Note that ${\rm{\textbf{\rm{\textbf{(A)}}}}}^{K} = \sum_{i\, = \, 1}^{5}{{\rm{\textbf{\rm{\textbf{(A)}}}}}_{i}^{K}}$ where
\begin{align*}
{\rm{\textbf{\rm{\textbf{(A)}}}}}_{1}^{K} & := b(z,z) \sum\limits_{k\, = \, 1}^{n}{\binom{n}{k} \int_{\R}^{}{\mu(\dd x)\int_{\R}^{}{\mu(\dd y)\left\langle f, \mu^{n-k}\left[-\frac{\delta_{x}}{K} + \frac{\delta_{y}}{K} \right]^{k} \right\rangle}}}, \\
{\rm{\textbf{\rm{\textbf{(A)}}}}}_{2}^{K} & := -\frac{b(z,z)}{K} \sum\limits_{i\, = \, 1}^{n}{\sum_{k\, = \, 0}^{n}{\binom{n}{k}\int_{\R}^{}{\mu(\dd x)\int_{\R}^{}{\mu(\dd y) (y-x) \left\langle \partial_{i}f, \mu^{n-k}\left[-\frac{\delta_{x}}{K} + \frac{\delta_{y}}{K} \right]^{k} \right\rangle}}}}, \\
{\rm{\textbf{\rm{\textbf{(A)}}}}}_{3}^{K} & := \frac{ b(z,z)}{2K^{2}} \sum\limits_{i, j\, = \, 1}^{n}{\sum_{k\, = \, 0}^{n}{\binom{n}{k}\int_{\R}^{}{\mu(\dd x)\int_{\R}^{}{\mu(\dd y) (y-x)^{2} \left\langle \partial_{ij}^{2}f, \mu^{n-k}\left[-\frac{\delta_{x}}{K} + \frac{\delta_{y}}{K} \right]^{k} \right\rangle}}}}, \\
{\rm{\textbf{\rm{\textbf{(A)}}}}}_{4}^{K} & := O\left(\frac{M_{3}\left(\mu\right)}{K^{3}}\right), \\ 
{\rm{\textbf{\rm{\textbf{(A)}}}}}_{5}^{K} & :=  \int_{\R}^{}{\mu(\dd x)\int_{\R}^{}{\mu(\dd y)}\left[b\left({\color{black}\sigma_{K}}\sqrt{K} x + z, {\color{black}\sigma_{K}}\sqrt{K} y + z \right) - b(z,z) \right]{\rm{\textbf{\rm{\textbf{(C)}}}}}_{x,y}^{K}}.
\end{align*}
We first study precisely the ${\rm{\textbf{\rm{\textbf{(A)}}}}}_{2}^{K}$ term by explaining the case $k = 0$, $k = 1$ and $k \geqslant 2$. We will just give the result for the other ${\rm{\textbf{\rm{\textbf{(A)}}}}}_{i}^{K}$, $i\in\{1,3,5\}$: the approach remains the same. By the centered assumption of $\mu$, note that for all $i\in \{1, \cdots, n \}$,  \[\int_{\R}^{}{\mu(\dd x)\int_{\R}^{}{\mu(\dd y)(y-x)\left\langle \partial_{i}f, \mu^{n} \right\rangle}} = 0.\] Then,  using again the centered assumption of $\mu$, we obtain that
\begin{align*}
& \binom{n}{1}\sum_{i\, = \, 1}^{n}{\int_{\R}^{}{\mu(\dd x)\int_{\R}^{}{\mu(\dd y)(y-x)\left\langle \partial_{i}f, \mu^{n-1}\left[-\frac{\delta_{x}}{K} + \frac{\delta_{y}}{K} \right] \right\rangle}}} \\
& \hspace{0.0cm} = \frac{1}{K}\sum_{i \, = \, 1}^{n}{\sum_{j \, = \, 1}^{n}{\int_{\R}^{}{\cdots\int_{\R}^{}{\int_{\R}^{}{\mu(\dd x) \int_{\R}^{}{\mu(\dd y)}} }} }} \\
& \hspace{0.25cm} \times \left[x \partial_{i}f\left(x_{1}, \cdots, x_{j-1}, x, x_{j+1} \cdots, x_{n} \right) - x \partial_{i}f\left(x_{1}, \cdots, x_{j-1}, y, x_{j+1} \cdots, x_{n} \right) \phantom{\frac{1}{K}} \right. \\
& \hspace{-0.35cm} \left. \phantom{\frac{1}{K}} - y \partial_{i}f\left(x_{1}, \cdots, x_{j-1}, x, x_{j+1} \cdots, x_{n} \right)+ y \partial_{i}f\left(x_{1}, \cdots, x_{j-1}, y, x_{j+1} \cdots, x_{n} \right) \right]\prod_{\substack{\ell \, = \, 1 \\ \ell \, \neq \, j}}^{n}{\mu(\dd x_{\ell})} \\
& \hspace{0.0cm} = \frac{2}{K}\sum_{i \, = \, 1}^{n}{\sum_{j \, = \, 1}^{n}{\int_{\R}^{}{\cdots \int_{\R}^{}{\int_{\R}^{}{\mu(\dd x)x\partial_{i}f\left(x_{1}, \cdots, x_{j-1}, x, x_{j+1}, \cdots, x_{n} \right)\prod_{\substack{\ell \, = \, 1 \\ \ell \, \neq \, j}}^{n}{\mu\left(\dd x_{\ell} \right)}}}} }} \\
& \hspace{0.0cm} =  \frac{2}{K}\left\langle \left(\nabla f \cdot \bm{1} \right)\left(\bullet \cdot \bm{1} \right), \mu^{n} \right\rangle.
\end{align*}
As $\nabla f$ is bounded, we deduce that for all $k \geqslant 2$, for all $i\in \{1, \cdots, n\}$, \[\int_{\R}^{}{\mu(\dd x)\int_{\R}^{}{\mu(\dd y)(y-x)\left\langle \partial_{i}f, \mu^{n-k}\left[-\frac{\delta_{x}}{K} + \frac{\delta_{y}}{K} \right]^{k} \right\rangle}} = O\left(\frac{M_{1}(\mu)}{K^{2}} \right).\]
Therefore, we deduce that 
\[{\rm{\textbf{\rm{\textbf{(A)}}}}}_{2}^{K} = -\frac{2b(z,z)}{K^{2}}\left\langle \left(\nabla f \cdot \bm{1} \right)\left(\bullet \cdot \bm{1} \right), \mu^{n} \right\rangle + O\left(\frac{M_{1}(\mu)}{K^{3}} \right).\]
In similar way and using Assumptions \textbf{(A)} we have that
\begin{align*}
{\rm{\textbf{\rm{\textbf{(A)}}}}}_{1}^{K} & = \frac{b(z,z)}{K^{2}}\sum_{i\, = \, 1}^{n}{\sum\limits_{\substack{j \, = \, 1 \\ j \, \neq \, i}}^{n}{\left[\left\langle \Phi_{i,j}f, \mu^{n-1} \right\rangle - \left\langle f, \mu^{n} \right\rangle \right]}} + O\left(\frac{1}{K^{3}} \right), \\
 {\rm{\textbf{\rm{\textbf{(A)}}}}}_{3}^{K} & = \frac{b(z,z)}{K^{2}}\sum_{i\, = \, 1}^{n}{\sum_{j \, = \, 1}^{n}{\left\langle K_{i,j}f, \mu^{n+1} \right\rangle} } + O\left(\frac{M_{2}\left(\mu \right)}{K^{3}} \right), \\
 {\rm{\textbf{\rm{\textbf{(A)}}}}}_{5}^{K} &= O\left(\frac{{\color{black}\sigma_{K}}\sqrt{K}}{K} \left[M_{1}\left(\mu \right) + M_{2}\left(\mu \right) \right] \right).
\end{align*}

\textit{Step 2. Decomposition and study of ${\rm{\textbf{\rm{\textbf{(B)}}}}}^{K}$.} For all $\alpha, h \in \R$, we denote $\theta_{m}\left(\alpha, h \right) := \theta\left(\alpha \right)m\left(\alpha, h \right)$. Note that ${\rm{\textbf{\rm{\textbf{(B)}}}}}^{K} = \sum_{i\, = \, 1}^{5}{{\rm{\textbf{\rm{\textbf{(B)}}}}}_{i}^{K}}$ where
\begin{align*}
{\rm{\textbf{\rm{\textbf{(B)}}}}}_{1}^{K} & := \sum\limits_{k\, = \, 1}^{n}{\binom{n}{k} \int_{\R}^{}{\mu(\dd x)\int_{\R}^{}{\theta_{m}(z, h)\left\langle f, \mu^{n-k}\left[-\frac{\delta_{x}}{K} + \frac{\delta_{x + \frac{h}{\sqrt{K}}}}{K} \right]^{k} \right\rangle \dd h}}}, \\
{\rm{\textbf{\rm{\textbf{(B)}}}}}_{2}^{K} & := -\frac{1}{K^{\frac{3}{2}}} \sum\limits_{i\, = \, 1}^{n}{\sum_{k\, = \, 0}^{n}{\binom{n}{k}\int_{\R}^{}{\mu(\dd x)\int_{\R}^{}{ \theta_{m}(z,h)h \left\langle \partial_{i}f, \mu^{n-k}\left[-\frac{\delta_{x}}{K} + \frac{\delta_{x+ \frac{h}{\sqrt{K}}}}{K} \right]^{k} \right\rangle \dd h}}}}, \\
{\rm{\textbf{\rm{\textbf{(B)}}}}}_{3}^{K} & := \frac{1}{2K^{3}} \sum\limits_{i, j\, = \, 1}^{n}{\sum_{k\, = \, 0}^{n}{\binom{n}{k}\int_{\R}^{}{\mu(\dd x)\int_{\R}^{}{ \theta_{m}(z,h)h^{2} \left\langle \partial_{ij}^{2}f, \mu^{n-k}\left[-\frac{\delta_{x}}{K} + \frac{\delta_{x + \frac{h}{\sqrt{K}}}}{K} \right]^{k} \right\rangle \dd h}}}}, \\
{\rm{\textbf{\rm{\textbf{(B)}}}}}_{4}^{K} & := O\left(\frac{1}{K^{\frac{9}{2}}} \int_{\R}^{}{\mu(\dd x)\int_{\R}^{}{\theta_{m}(z,h) \left|h\right|^{3} \dd h}}\right), \\ 
{\rm{\textbf{\rm{\textbf{(B)}}}}}_{5}^{K} & :=  \int_{\R}^{}{\mu(\dd x)\int_{\R}^{}{\left[\theta_{m}\left({\color{black}\sigma_{K}}\sqrt{K} x + z, h \right) - \theta_{m}(z,h) \right]{\rm{\textbf{\rm{\textbf{(C)}}}}}_{x,x + h/\sqrt{K}}^{K}}\, \dd h}.
\end{align*}
Note that, from \textsc{Taylor}'s formula and Assumptions \textbf{(A)}, we have that
\begin{align*}
\hspace{0.75cm}&  \binom{n}{1}\int_{\R}^{}{\mu(\dd x)\int_{\R}^{}{\theta_{m}(z,h)\left\langle f, \mu^{n-1}\left[-\frac{\delta_{x}}{K} + \frac{\delta_{x + \frac{h}{\sqrt{K}}}}{K} \right] \right\rangle\dd h}}  \\
& \hspace{1.5cm} = \frac{1}{K} \sum_{i\, = \, 1}^{n}{\int_{\R}^{}{\cdots \int_{\R}^{}{\int_{\R}^{}{\mu(\dd x_{i})\int_{\R}^{}{\theta_{m}(z,h)\left[\frac{h}{\sqrt{K}}\partial_{i}f\left(x_{1}, \cdots, x_{i}, \cdots, x_{n} \right) \right.}}}}}\\ 
& \hspace{2cm} \left. + \frac{h^{2}}{2K}\partial_{ii}^{2}f\left(x_{1}, \cdots, x_{i}, \cdots, x_{n} \right) + O\left(\frac{\left|h \right|^{3}}{K^{\frac{3}{2}}} \right) \right]\prod_{\substack{\ell \, = \, 1 \\ \ell \, \neq \, i}}^{n}{\mu\left(\dd x_{\ell}\right)} \\
& \hspace{1.5cm} =  \frac{\theta(z)m_{2}(z)}{K^{2}}\left\langle \frac{\Delta f}{2}, \mu^{n} \right\rangle + O\left(\frac{1}{K^{\frac{5}{2}}} \right). \hspace{8.6cm}
\end{align*}
In similar way to Step 1, we obtain that 
\[ {\rm{\textbf{\rm{\textbf{(B)}}}}}_{1}^{K}  = \frac{\theta(z)m_{2}(z)}{K^{2}}\left\langle \frac{\Delta f}{2}, \mu^{n} \right\rangle + O\left(\frac{1}{K^{\frac{5}{2}}} \right), \qquad {\rm{\textbf{\rm{\textbf{(B)}}}}}_{2}^{K}  =  O\left(\frac{1}{K^{3}} \right),\]
\[{\rm{\textbf{\rm{\textbf{(B)}}}}}_{3}^{K} =  O\left(\frac{1}{K^{3}} \right), \quad \  {\rm{\textbf{\rm{\textbf{(B)}}}}}_{4}^{K} = O\left(\frac{1}{K^{\frac{9}{2}}}\right) \quad \ {\rm{and}} \quad \ {\rm{\textbf{\rm{\textbf{(B)}}}}}_{5}^{K} = O\left(\frac{{\color{black}\sigma_{K}}\sqrt{K}}{K^{\frac{3}{2}}} M_{1}\left(\mu \right)  \right).\]
and using Assumptions (\ref{Hypothese_Gamme_sigma}) the announced result follows. \\

\textit{Step 3. Martingality.} By classical arguments, since $M^{K, P_{f, n}}$ is bounded, we obtain that $M^{K, P_{f, n}}$ is a square integrable martingale started at $0$.

\section{Moments estimates\label{Section_4_Estimees_de_moments}}
Lemmas \ref{Cor_moment_ordre_2} and \ref{Cor_moment_ordre_6} lead us to look for moment estimates. In Lemma \ref{Cor_moment_ordre_2} we have given the \textsc{Doob} semi-martingale decomposition of the second-order moment and note that involves the third-order moment in the error term. The presence of the higher order moment generates a difficulty to obtain a fine control of the second order moment. This difficulty is {\color{black} overcome} by introducing the stopping time $\check{\tau}^{K}$, given by (\ref{Temps_arret_check}). In Section \ref{Sous_Section_4_1_new_Moments_ordre_6}, we give estimates of the moment of order $6$ and some corollaries useful for Sections \ref{Section_5_Tension_LENT-RAPIDE} and \ref{Section_6_Caract_Gamma_Limite}. In Section \ref{Sous_Section_4_3_Inegalite_M2} we establish estimates in expectation and probability of the second order moment up to the stopping time $\check{\tau}^{K}$ and  we also control the martingale bracket of the \textsc{Doob} decomposition given in Lemma \ref{Cor_moment_ordre_2}. In Sections \ref{Sous_Section_4_4_Convergence_tau} and \ref{Sous_section_4_5_Sortie_domaine}, we prove that the stopping time $\tau^{K} := \check{\tau}^{K} \wedge \widehat{\tau}^{K}$, given by (\ref{Temps_arret_check_chapeau}), converges in probability to $+\infty$ when $K\to + \infty$ using coupling arguments between $M_{2}\left(\mu^{K} \right)$ and a biased random walk on $\N^{\star}$, reflected in $1$, and large deviations estimates for this random walk. 

\subsection{Estimates of the moment of order $6$\label{Sous_Section_4_1_new_Moments_ordre_6}}
{\color{black} In this section, we establish a technical lemma useful for Lemma \ref{Lem_Controle_M6_et_M4M2} below. The idea is to obtain an explicit upper bound of the moments of order $6$.

\begin{Lem} Given any continuous function $y(t)$ satisfying 
\begin{equation}
y(t) \leqslant y(s) + \int_{s}^{t}{F\left(y(r) \right)\dd r}, \qquad \forall s \leqslant t
\label{Eq_Nicolas_1}
\end{equation}
for some \textsc{Lipschitz} function $F : \R \to \R$, then $y(s) \leqslant z(s)$ for all $s \geqslant 0$, where $z(t)$ is the unique solution of 
\begin{equation}
z(t) = z(s) + \int_{s}^{t}{F\left(z(r) \right)\dd r}, \qquad \forall s \leqslant t
\label{Eq_Nicolas_2}
\end{equation}
such that $z(0) = y(0)$.
\label{Lem_Nicolas}
\end{Lem}

\begin{proof} First, it is sufficient to prove that, for all $\varepsilon >0$, $y(s) \leqslant z_{\varepsilon}(s)$ for all $s \geqslant 0$, where $z_{\varepsilon}$ is the unique solution of 
\begin{equation}
z_{\varepsilon}(t) = z_{\varepsilon}(s) + \int_{s}^{t}{\left[F\left(z_{\varepsilon}(r) \right) + \varepsilon \right]\dd r}, \qquad \forall s \leqslant t
\label{Eq_Nicolas_3}
\end{equation}
such that $z_{\varepsilon}(0) = y(0) + \varepsilon$. Indeed, it follows from (\ref{Eq_Nicolas_2}) and (\ref{Eq_Nicolas_3}) that, for all $t\geqslant 0$ and $\varepsilon >0$
\[\left|z(t) - z_{\varepsilon}(t) \right| \leqslant \varepsilon(1+t) + C_{\rm Lip}\int_{0}^{t}{\left|z(s) - z_{\varepsilon}(s) \right|\dd s}  \]
where $C_{\rm Lip}$ if the \textsc{Lipschitz} constant of $F$. \textsc{Gronwall}'s lemma then entails
\[\left|z(t) - z_{\varepsilon}(t) \right| \leqslant \varepsilon (1+t)\exp\left(C_{\rm Lip}t \right) \xrightarrow[ \varepsilon \to 0 ]{} 0\]
so that $y(t) \leqslant z_{\varepsilon}(t)$, for all $\varepsilon > 0$ indeed implies $y(t) \leqslant z(t)$. \\
\indent So, let $\varepsilon >0$ be fixed and let us prove that $y(t) \leqslant z_{\varepsilon}(t)$ for all $t\geqslant 0$. Define 
\[\vartheta_{\varepsilon} := \inf\left\{t\geqslant 0 \left| \phantom{1^{1^{1}}} \hspace{-0.55cm}  \right. y(t) \geqslant z_{\varepsilon}(t)  \right\}.\]
Our goal is to prove that $\vartheta_{\varepsilon} = + \infty$. \\
\indent By continuity of $y$, $\vartheta_{\varepsilon} >0$ and, if $\vartheta_{\varepsilon} < \infty$, $y\left(\vartheta_{\varepsilon}\right) = z_{\varepsilon}\left(\vartheta_{\varepsilon} \right)$. Assume by contradiction that $\vartheta_{\varepsilon} < \infty$. Then, using (\ref{Eq_Nicolas_1}) and (\ref{Eq_Nicolas_3}), for all $\delta \in \left(0, \vartheta_{\varepsilon} \right)$, \[\frac{z_{\varepsilon}\left(\vartheta_{\varepsilon}\right) - y\left(\vartheta_{\varepsilon} \right) - \left(z_{\varepsilon}\left(\vartheta_{\varepsilon} - \delta\right) - y\left(\vartheta_{\varepsilon}  - \delta\right) \right)}{{\color{black} \delta}} \geqslant \varepsilon + \frac{1}{\delta}\int_{{\color{black}\vartheta_{\varepsilon}}-\delta}^{{\color{black}\vartheta_{\varepsilon}}}{\left[F\left(z_{\varepsilon}(r) \right) - F(y(r))  \right]\dd r}. \]
By continuity of $z_{\varepsilon}$ and $y$, the last term of the right-hand side converges to $0$ when $\delta \to 0$. Hence the left-hand side is positive for all $\delta$ small enough. Since $z_{\varepsilon}\left(\vartheta_{\varepsilon}\right) = y\left(\vartheta_{\varepsilon}\right)$, this implies that 
\[z_{\varepsilon}\left(\vartheta_{\varepsilon} - \delta\right) - y\left(\vartheta_{\varepsilon}  - \delta\right) < 0 \]
for such $\delta$. This contradicts the definition of $\vartheta_{\varepsilon}$ as the first time $t$ such that $z_{\varepsilon}(t) \leqslant y(t)$. \qedhere
\end{proof}
}

\begin{Lem} There exists $K_{0} \in \N^{\star}$ large enough and two constants $C_{1}, C_{2}>0$ such that for all $K \geqslant K_{0}$ and $t\geqslant 0$, 
\begin{align*}
\E\left(M_{6}\left(\mu_{t}^{K} \right)\II_{t\leqslant \check{\tau}^{K}} \right) \leqslant C_{1}\exp\left(-\frac{9\underline{b}}{28K^{2}{\color{black}\sigma_{K}^{2}}}t \right) + C_{2}
\end{align*}
\label{Lem_Controle_M6_et_M4M2}
\end{Lem}

{\color{black}
\begin{proof} \textit{Step 1. Pathwise inequality.} Note that for all $t \geqslant 0$, 
\begin{equation}
M_{7}\left(\mu_{t\wedge \check{\tau}^{K}}^{K} \right) \leqslant \Diam\left(\Supp{\mu_{t\wedge \check{\tau}^{K}}^{K}}\right) M_{6}\left(\mu_{t\wedge \check{\tau}^{K}}^{K} \right) \leqslant \frac{ M_{6}\left(\mu_{t\wedge \check{\tau}^{K}}^{K} \right)}{{\color{black}\sigma_{K}} K^{\frac{3+\varepsilon}{2}}}.
\label{Eq_M7}
\end{equation}
Introducing for all $t\geqslant 0$, $A_{t}:= \frac{3}{4}M_{6}\left(\mu_{t}^{K} \right) + 7M_{4}\left( \mu_{t}^{K}\right)M_{2}\left( \mu_{t}^{K}\right) - 4\widetilde{M}_{3}^{2}\left(\mu_{t}^{K} \right) + \frac{3}{2}M_{2}^{3}\left(\mu_{t}^{K} \right)$, we obtain from Lemma \ref{Cor_moment_ordre_6}, (\ref{Eq_M7}) and Assumptions \textbf{(A)} that there exists a constant $C>0$ such that for all $t > s \geqslant 0$, 
\begin{align*}
A_{t\wedge \check{\tau}^{K}} & \leqslant A_{s\wedge \check{\tau}^{K}} - \frac{1}{K^{2}{\color{black}\sigma_{K}^{2}}}\int_{s\wedge \check{\tau}^{K}}^{t\wedge \check{\tau}^{K}}{\left\{\left(\underline{b} - \frac{C}{K^{\frac{\varepsilon}{2}}} \right)\left[3 M_{6}\left(\mu_{r}^{K} \right) + \frac{9}{2}M_{4}\left(\mu_{r}^{K} \right)M_{2}\left(\mu_{r}^{K} \right) + 6 M_{2}^{3}\left(\mu_{r}^{K} \right) \right] \right.} \\
& \hspace{2.5cm} - \left. \overline{\theta}\overline{m}_{2}\left[\frac{91}{4}M_{4}\left(\mu_{r}^{K} \right) + 42M_{2}^{2}\left(\mu_{r}^{K} \right) \right]\dd r\right\} + {\rm Mart}_{t\wedge \check{\tau}^{K}} - {\rm Mart}_{s\wedge \check{\tau}^{K}} 
\end{align*}
where $\left({\rm Mart}_{t} \right)_{t\geqslant 0}$ is a martingale. Note that, choosing $K_{0}$ large enough, for all $K \geqslant K_{0}$, $\underline{b} - CK^{-\varepsilon/2} > \underline{b}/2$. 
From the inequalities $M_{2}^{2}(\mu) \leqslant M_{4}(\mu)$ and $M_{4}(\mu) \leqslant M_{6}^{2/3}\left(\mu \right) \leqslant \alpha M_{6}(\mu) + 1/\alpha^{2}$ with $\alpha := 2 \underline{b}/259 \overline{\theta}\overline{m}_{2}$, we deduce that 
\[\frac{3\underline{b}}{2} M_{6}\left(\mu\right) - \overline{\theta}\overline{m}_{2}\left[\frac{91}{4}M_{4}\left(\mu \right) + 42M_{2}^{2}\left(\mu\right) \right] \geqslant \frac{3\underline{b}}{2} M_{6}\left(\mu\right) -  \frac{259\overline{\theta}\overline{m}_{2}}{4}M_{4}(\mu) \geqslant \underline{b} M_{6}\left(\mu\right) - \frac{259^{3}\overline{\theta}^{3}\overline{m}_{2}^{3}}{16 \underline{b}^{2}}. \]
Thus, for all $t > s \geqslant 0$, 
\begin{align*}
A_{t\wedge \check{\tau}^{K}} & \leqslant A_{s \wedge \check{\tau}^{K}} + {\rm Mart}_{t\wedge \check{\tau}^{K}} - {\rm Mart}_{s\wedge \check{\tau}^{K}} \\
&  \hspace{0.5cm} -\frac{\underline{b}}{2K^{2}{\color{black}\sigma_{K}^{2}}}\int_{s\wedge \check{\tau}^{K}}^{t\wedge\check{\tau}^{K}}{\left[2 M_{6}\left(\mu_{r}^{K} \right) + \frac{9}{2} M_{4}\left(\mu_{r}^{K} \right)M_{2}\left(\mu_{r}^{K} \right) + 6 M_{2}^{3}\left(\mu_{r}^{K} \right) - \left(\frac{259\overline{\theta}\overline{m}_{2}}{2\underline{b}}\right)^{3} \right] \dd r} \\
& \leqslant  A_{s \wedge \check{\tau}^{K}} - \frac{9\underline{b}}{28 K^{2}{\color{black}\sigma_{K}^{2}}}\int_{s\wedge \check{\tau}^{K}}^{t\wedge \check{\tau}^{K}}{\left(A_{r} - \frac{14}{9}\left(\frac{259\overline{\theta}\overline{m}_{2}}{2\underline{b}}\right)^{3}\right)\dd r} + {\rm Mart}_{t\wedge \check{\tau}^{K}} - {\rm Mart}_{s\wedge \check{\tau}^{K}} 
\end{align*}

\textit{Step 2. Conclusion.} Let us consider the stochastic process $\left(N\left(\mu_{t}^{K} \right) \right)_{t\geqslant 0}  $ defined by 
 \[N\left(\mu_{t}^{K} \right) := \left\{ \begin{array}{lll}
A_{t} & {\rm if} & s< t \leqslant \check{\tau}^{K}  \vspace{0.15cm} \\
\left(A_{\check{\tau}^{K}} - C_{0} \right)\exp\left(-\frac{3\underline{b}}{4K^{2}{\color{black}\sigma_{K}^{2}}}\left(t - \check{\tau}^{K} \right) \right) + C_{0} & {\rm if} & s < \check{\tau}^{K} < t \vspace{0.15cm} \\
0 & {\rm if} & \check{\tau}^{K} < s < t
\end{array}  \right. \] 
where $C_{0} := \frac{14}{9}\left(\frac{259\overline{\theta}\overline{m}_{2}}{2\underline{b}}\right)^{3}$. From Step 1, for all $t > s \geqslant 0$,  
\[ N\left(\mu_{t}^{K} \right) \leqslant N\left(\mu_{s}^{K} \right)  - \frac{9\underline{b}}{28K^{2}{\color{black}\sigma_{K}^{2}}}\int_{s}^{t}{\left[N\left(\mu_{r}^{K} \right) - C_{0} \right]\dd r} + {\rm Mart}_{t} - {\rm Mart}_{s}\]
which entails
\[y(t) \leqslant y(s) - \frac{9\underline{b}}{28K^{2}{\color{black}\sigma_{K}^{2}}}\int_{s}^{t}{\left(y(r) - C_{0} \right)\dd r} \]
where $y(t) := \E\left(N\left(\mu_{t}^{K} \right) \right)$.  From Lemma \ref{Lem_Nicolas}, we deduce that for all $t\geqslant 0$, $y(t) \leqslant z(t)$ where $z(t)$ is the unique solution of 
\[z(t) = z(s) - \frac{9\underline{b}}{28K^{2}{\color{black}\sigma_{K}^{2}}}\int_{s}^{t}{\left(z(r) - C_{0} \right)\dd r} \]
such that $z(0) = A_{0}$, i.e. $z(t) = (z(0) - C_{0})\exp\left(-\frac{9\underline{b}}{28K^{2}{\color{black}\sigma_{K}^{2}}}t \right) + C_{0}$.
 From (\ref{Def_distribution_recentre_dilate}) then (\ref{Hypothese_C_etoile}), note that 
\begin{equation*}
M_{6}\left(\mu_{0}^{K} \right) = \left\langle \id^{6}, \mu_{0}^{K} \right\rangle = \frac{1}{K^{3}{\color{black}\sigma_{K}^{6}}}\left\langle \left(\id - x_{0} \right)^{6}, \nu_{0}^{K} \right\rangle \leqslant C_{6}^{\star}.
\label{Eq_M_6_C_6_etoile}
\end{equation*}  
From \textsc{Cauchy-Schwarz}'s inequality $\widetilde{M}_{3}^{2}(\mu) \leqslant M_{4}(\mu)M_{2}(\mu)$, note that $z(0) \leqslant 21C_{6}^{\star}/4$ and so, for all $t\geqslant 0$, 
\begin{align*}
\E\left(M_{6}\left(\mu_{t}^{K} \right)\II_{t\leqslant \check{\tau}^{K}} \right)&\leqslant \frac{4}{3}\E\left(\left[\frac{3}{4}M_{6}\left(\mu_{t}^{K} \right) + 3M_{4}\left(\mu_{t}^{K} \right)M_{2}\left(\mu_{t}^{K} \right) + \frac{3}{2}M_{2}^{3}\left(\mu_{t}^{K} \right) \right]\II_{t\leqslant \check{\tau}^{K}} \right) \\
& \leqslant \frac{4}{3}z(t) \leqslant \frac{4}{3}\left(\frac{21}{4}\left(C_{6}^{\star} - C_{0} \right)\exp\left(-\frac{9\underline{b}}{28K^{2}{\color{black}\sigma_{K}^{2}}}t \right)  + C_{0} \right)
\end{align*}
and the announced result follows. \qedhere
\end{proof}
}

The following lemma will be useful in the proof of Theorem \ref{Thm_Kurtz_adapte}. 

\begin{Lem} 
For all $t \geqslant 0$, 
\[\sup_{K\in \N^{\star}}{\E\left(\int_{0}^{t\wedge \check{\tau}^{K}}{M_{6}\left(\mu_{s}^{K} \right)\dd s} \right)} < \infty.\]
\label{Lem_Controle_Integrale_Temps_M2}
\end{Lem}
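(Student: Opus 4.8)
The plan is to reuse the pathwise estimate established in Step~1 of the proof of Lemma~\ref{Lem_Controle_M6_et_M4M2}, which, after bounding the initial data by $\tfrac{37}{4}C_{6}^{\star}$ (using $M_{6}(\mu_{0}^{K})\leqslant C_{6}^{\star}$ together with the H\"older inequalities $M_{2}M_{4}\leqslant M_{6}$ and $M_{2}^{3}\leqslant M_{6}$), reads: for $K\geqslant K_{0}$ and all $t\geqslant 0$,
\begin{align*}
\tfrac34 M_{6}\bigl(\mu_{t\wedge\check{\tau}^{K}}^{K}\bigr)&+3M_{4}\bigl(\mu_{t\wedge\check{\tau}^{K}}^{K}\bigr)M_{2}\bigl(\mu_{t\wedge\check{\tau}^{K}}^{K}\bigr)+\tfrac32 M_{2}^{3}\bigl(\mu_{t\wedge\check{\tau}^{K}}^{K}\bigr)\\
&\leqslant \tfrac{37}{4}C_{6}^{\star}+{\rm Mart}_{t\wedge\check{\tau}^{K}}\\
&\quad-\frac{3\underline{b}}{4K^{2}\sigma^{2}}\int_{0}^{t\wedge\check{\tau}^{K}}\Bigl[\tfrac34 M_{6}(\mu_{s}^{K})+3M_{4}(\mu_{s}^{K})M_{2}(\mu_{s}^{K})+\tfrac32 M_{2}^{3}(\mu_{s}^{K})-C\Bigr]\dd s,
\end{align*}
where $C:=\tfrac43\bigl(259\overline{\theta}\,\overline{m}_{2}/(2\underline{b})\bigr)^{2}$ and $({\rm Mart}_{t\wedge\check{\tau}^{K}})_{t\geqslant0}$ is a martingale started at $0$. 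I would first take expectations, which is legitimate because up to time $\check{\tau}^{K}$ the diameter control~(\ref{Temps_arret_check}) bounds every moment entering ${\rm Mart}$, so the stopped process is a bounded martingale with zero mean.

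Next I would drop the nonnegative left-hand side, keep only the $M_{6}$ part of the integrand on the right (using $\tfrac34 M_{6}+3M_{4}M_{2}+\tfrac32 M_{2}^{3}\geqslant\tfrac34 M_{6}$), and use $\E[t\wedge\check{\tau}^{K}]\leqslant t$, to get
\[
\frac{9\underline{b}}{16K^{2}\sigma^{2}}\,\E\!\int_{0}^{t\wedge\check{\tau}^{K}}\!\!M_{6}(\mu_{s}^{K})\,\dd s\;\leqslant\;\frac{37}{4}C_{6}^{\star}+\frac{3\underline{b}C}{4K^{2}\sigma^{2}}\,t .
\]
Multiplying through by $16K^{2}\sigma^{2}/(9\underline{b})$ yields $\E\int_{0}^{t\wedge\check{\tau}^{K}}M_{6}(\mu_{s}^{K})\,\dd s\leqslant \tfrac{148}{9\underline{b}}C_{6}^{\star}\,K^{2}\sigma^{2}+\tfrac43 C\,t$. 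Since Assumption~(\ref{Hypothese_Gamme_sigma}) forces $\sigma\ll K^{-(2+\varepsilon)}$, we have $K^{2}\sigma^{2}\to0$ as $K\to\infty$, hence $\sup_{K\geqslant K_{0}}K^{2}\sigma^{2}<\infty$ and the right-hand side is bounded uniformly over $K\geqslant K_{0}$.

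Finally I would deal with the finitely many indices $K<K_{0}$ separately: for each of them $\sigma=\sigma(K)>0$ is fixed, and since $\mu_{s}^{K}$ is centered with $\Diam(\Supp\mu_{s}^{K})\leqslant 1/(\sigma K^{(3+\varepsilon)/2})$ for $s<\check{\tau}^{K}$, all its atoms lie in an interval of that length containing $0$, whence $M_{6}(\mu_{s}^{K})\leqslant(\sigma K^{(3+\varepsilon)/2})^{-6}$ and $\E\int_{0}^{t\wedge\check{\tau}^{K}}M_{6}(\mu_{s}^{K})\,\dd s\leqslant t\,(\sigma K^{(3+\varepsilon)/2})^{-6}<\infty$; the maximum of this finite family is finite. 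Combining the two regimes gives $\sup_{K\in\N^{\star}}\E\int_{0}^{t\wedge\check{\tau}^{K}}M_{6}(\mu_{s}^{K})\,\dd s<\infty$, as claimed. The only (minor) obstacle is the bookkeeping: one must check that the stopped drift term is genuinely centered and notice that the prefactor $1/K^{2}\sigma^{2}$ cancels between the two sides of the inequality, so that the $K$-uniform bound is exactly a consequence of $K^{2}\sigma^{2}$ staying bounded under~(\ref{Hypothese_Gamme_sigma}); no estimate beyond Lemma~\ref{Lem_Controle_M6_et_M4M2} is needed.
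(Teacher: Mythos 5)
Your argument is correct. It is, however, not quite the route the paper takes: the paper uses the \emph{conclusion} of Lemma \ref{Lem_Controle_M6_et_M4M2} and simply integrates the time-marginal expectation bound, writing $\E\int_{0}^{t\wedge\check{\tau}^{K}}M_{6}(\mu_{s}^{K})\,\dd s \leqslant \frac43\int_{0}^{t}\E\bigl[(\tfrac34 M_{6}+3M_{4}M_{2}+\tfrac32 M_{2}^{3})(\mu_{s}^{K})\II_{s\leqslant\check{\tau}^{K}}\bigr]\dd s$ by Fubini and then bounding the integrand by $(\tfrac{37}{4}C_{6}^{\star}-C)e^{-3\underline{b}s/4K^{2}\sigma^{2}}+C$, which gives a bound of order $t$ uniformly in $K$ with no use of the range of $\sigma(K)$. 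You instead go back to the pathwise semimartingale inequality of Step~1 of that lemma, take expectations there (legitimately: the stopped martingales of Proposition \ref{Prop_Martingale_f_NON_bornee_n} are bounded, hence centered), and read the occupation integral off the drift term; the price is the prefactor $1/K^{2}\sigma^{2}$, which you remove by multiplying through and invoking $K^{2}\sigma^{2}\to 0$ under (\ref{Hypothese_Gamme_sigma}). What each approach buys: yours skips the Gronwall/comparison construction of Step~2 of Lemma \ref{Lem_Controle_M6_et_M4M2} and is arithmetically very direct, but it genuinely needs $K^{2}\sigma^{2}$ bounded, so it would not survive a relaxation of the scaling such as the conjectured $\sigma\ll K^{-(3/2+\varepsilon)}$ only if one still has $K^{2}\sigma^{2}$ under control (it does not hold uniformly there), whereas the paper's Fubini argument is insensitive to the relation between $K$ and $\sigma$. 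Your separate treatment of the finitely many $K<K_{0}$ via the diameter bound $M_{6}(\mu_{s}^{K})\leqslant(\sigma K^{(3+\varepsilon)/2})^{-6}$ for $s<\check{\tau}^{K}$ is a point the paper leaves implicit (Lemma \ref{Lem_Controle_M6_et_M4M2} is only stated for $K\geqslant K_{0}$), and it is handled correctly.
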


\begin{proof} Let $t\geqslant 0$ be fixed. 
By \textsc{Fubini}'s theorem and Lemma \ref{Lem_Controle_M6_et_M4M2}, there exists two constants $C_{1}, C_{2}>0$ such that 
{\color{black}
\[\E\left(\int_{0}^{t}{M_{6}\left(\mu_{s}^{K} \right)\II_{s\wedge \check{\tau}^{K}}\dd s} \right) \leqslant C_{1}\int_{0}^{t}{\exp\left(-\frac{9\underline{b}}{28K^{2}{\color{black}\sigma_{K}^{2}}}s \right)\dd s} + C_{2}t \leqslant (C_{1} + C_{2})t \]
}
which ends the proof. \qedhere
\end{proof}

The last lemma has the following consequence, used in Sections \ref{Section_5_Tension_LENT-RAPIDE} and \ref{Section_6_Caract_Gamma_Limite}. 
\begin{Cor}
For all $T >0$, the family \[\left(\int_{0}^{t\wedge \check{\tau}^{K}}{M_{5}\left(\mu_{s}^{K} \right)\dd s} \right)_{t\in[0, T], K \in \N^{\star}}\]
is uniformly integrable.
\label{Cor_Integrale_temps_M2_UI}
\end{Cor}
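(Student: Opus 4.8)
The plan is to derive the statement from the $L^{1}$-bound on the time-integral of $M_{6}$ established in Lemma~\ref{Lem_Controle_Integrale_Temps_M2}, using two elementary inequalities together with the fact that the exponent $5/6$ is strictly subcritical. First, since for each fixed $K$ the map $t\mapsto\int_{0}^{t\wedge\check{\tau}^{K}}M_{5}(\mu_{s}^{K})\,\dd s$ is nondecreasing, for $t\in[0,T]$ it is dominated by $Z_{K}:=\int_{0}^{T\wedge\check{\tau}^{K}}M_{5}(\mu_{s}^{K})\,\dd s$; because a family of nonnegative random variables dominated by a uniformly integrable one is itself uniformly integrable, it suffices to show that $(Z_{K})_{K\in\N^{\star}}$ is uniformly integrable.

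Next I would bound $M_{5}$ by $M_{6}$. For $s\le\check{\tau}^{K}$ the measure $\mu_{s}^{K}$ lies in $\MM_{1,K}^{c}(\R)$, hence is a finite atomic probability measure with finite moments of every order, so Jensen's inequality applied to the convex map $u\mapsto u^{6/5}$ gives $M_{5}(\mu_{s}^{K})\le M_{6}(\mu_{s}^{K})^{5/6}$. Applying then H\"older's inequality on $[0,T]$ with conjugate exponents $6/5$ and $6$,
\[
Z_{K}\;\le\;\int_{0}^{T}M_{6}(\mu_{s}^{K})^{5/6}\,\II_{s\le\check{\tau}^{K}}\,\dd s\;\le\;T^{1/6}\left(\int_{0}^{T}M_{6}(\mu_{s}^{K})\,\II_{s\le\check{\tau}^{K}}\,\dd s\right)^{5/6}\;=\;T^{1/6}\,Y_{K}^{5/6},
\]
where $Y_{K}:=\int_{0}^{T\wedge\check{\tau}^{K}}M_{6}(\mu_{s}^{K})\,\dd s$.

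To conclude, I would invoke Lemma~\ref{Lem_Controle_Integrale_Temps_M2}, which provides $C_{T}:=\sup_{K\in\N^{\star}}\E(Y_{K})<\infty$. Since $\left(T^{1/6}Y_{K}^{5/6}\right)^{6/5}=T^{1/5}Y_{K}$, the family $\left(T^{1/6}Y_{K}^{5/6}\right)_{K\in\N^{\star}}$ is bounded in $L^{6/5}(\P)$ with $\sup_{K}\E\!\left(\big(T^{1/6}Y_{K}^{5/6}\big)^{6/5}\right)=T^{1/5}C_{T}$, hence uniformly integrable; indeed, on $\{T^{1/6}Y_{K}^{5/6}>a\}$ one has $T^{1/6}Y_{K}^{5/6}\le a^{-1/5}\big(T^{1/6}Y_{K}^{5/6}\big)^{6/5}$, so $\E\!\left(T^{1/6}Y_{K}^{5/6}\,\II_{T^{1/6}Y_{K}^{5/6}>a}\right)\le a^{-1/5}\,T^{1/5}C_{T}$, which tends to $0$ as $a\to+\infty$ uniformly in $K$. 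The domination $0\le Z_{K}\le T^{1/6}Y_{K}^{5/6}$ then yields the uniform integrability of $(Z_{K})$, hence of the whole family indexed by $(t,K)\in[0,T]\times\N^{\star}$, which is the claim.

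There is no genuine obstacle here: the substantive work is Lemma~\ref{Lem_Controle_Integrale_Temps_M2}, and the only point requiring care is that we only dispose of an $L^{1}$-bound on $\int M_{6}$. This forces us to exploit the strict inequality $M_{5}\le M_{6}^{5/6}$ (or, equivalently, the slack in H\"older's inequality in time) in order to upgrade to an $L^{6/5}$-bound; the cruder estimate $M_{5}\le 1+M_{6}$ would merely reproduce an $L^{1}$-bound and would not be enough to conclude uniform integrability.
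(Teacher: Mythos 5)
Your proof is correct and follows essentially the same route as the paper: both arguments reduce the claim to the uniform $L^{1}$-bound on $\int_{0}^{t\wedge\check{\tau}^{K}}M_{6}(\mu_{s}^{K})\,\dd s$ from Lemma~\ref{Lem_Controle_Integrale_Temps_M2}, exploiting the slack exponent via $M_{5}\leqslant M_{6}^{5/6}$ (equivalently $M_{5}^{6/5}\leqslant M_{6}$) together with H\"older in time, and conclude by the standard $L^{6/5}$-boundedness criterion, which is exactly the indicator estimate the paper writes out directly.
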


\begin{proof} Note that for all $K \in \N^{\star}$, for all $t \in [0, T]$, for all $A \in (0, +\infty)$, \[\II_{\int_{0}^{t}{M_{5}\left(\mu_{s}^{K} \right)\II_{s\leqslant \check{\tau}^{K}}}\dd s \geqslant A}\int_{0}^{t}{M_{5}\left(\mu_{s}^{K} \right)\II_{s\leqslant \check{\tau}^{K}}\dd s} \leqslant \frac{1}{\sqrt[5]{A}}\left(\int_{0}^{t}{M_{5}\left(\mu_{s}^{K} \right)\II_{s\leqslant \check{\tau}^{K}}\dd s}\right)^{6/5},\]
we deduce from \textsc{H\"{o}lder}'inequality that, for all $t \in [0, T]$, for all $A \in (0, + \infty)$, 
\begin{align*}
& \sup_{K\in \N^{\star}}{\E\left(\II_{\int_{0}^{t}{M_{5}\left(\mu_{s}^{K} \right)\II_{s\leqslant \check{\tau}^{K}}}\dd s \geqslant A}\int_{0}^{t}{M_{5}\left(\mu_{s}^{K} \right)\II_{s\leqslant \check{\tau}^{K}}\dd s} \right)} \\
& \hspace{5cm} \leqslant \frac{T}{\sqrt[5]{A}}\sup_{K\in \N^{\star}}{\E\left(\int_{0}^{t}{M_{5}^{6/5}\left(\mu_{s}^{K} \right)\II_{s\leqslant \check{\tau}^{K}}\dd s}\right)} \\
& \hspace{5cm} \leqslant \frac{T}{\sqrt[5]{A}}\sup_{K\in \N^{\star}}{\E\left(\int_{0}^{t}{M_{6}\left(\mu_{s}^{K} \right)\II_{s\leqslant \check{\tau}^{K}}\dd s}\right)}.
\end{align*}
From Lemma \ref{Lem_Controle_Integrale_Temps_M2}, we deduce that the right hand side of the previous inequality goes to $0$ when $A\to + \infty$ and so the announced result. \qedhere 
\end{proof}

\subsection{Some inequalities on the moment of order $2$\label{Sous_Section_4_3_Inegalite_M2}}
\begin{Lem} There exists $K_{0} \in \N^{\star}$ large enough such that for all $K \geqslant K_{0}$, the process $M_{2}\left(\mu^{K} \right)$ satisfies for all $t \geqslant 0$ the following inequalities\!\! :
\begin{equation}
\begin{aligned}
& M_{2}\left(\mu_{{\color{black} s \wedge \check{\tau}^{K}}}^{K} \right) - \frac{5 \overline{b}}{2K^{2}{\color{black}\sigma_{K}^{2}}}\int_{{\color{black} s \wedge \check{\tau}^{K}}}^{t\wedge \check{\tau}^{K}}{\left[M_{2}\left(\mu_{r}^{K} \right) - \frac{2\underline{\theta}\underline{m}_{2}}{5\overline{b}} + \frac{2\overline{\theta}\overline{m}_{2}}{5\overline{b}K} \right]\dd r} + M_{t\wedge \check{\tau}^{K}}^{K, P_{\id^{2}, 1}}  {\color{black} - M_{s\wedge \check{\tau}^{K}}^{K, P_{\id^{2}, 1}}} \\
& \hspace{0.5cm} \leqslant M_{2}\left(\mu_{t\wedge \check{\tau}^{K}}^{K} \right) \leqslant M_{2}\left(\mu_{{\color{black} s \wedge \check{\tau}^{K}}}^{K} \right) - \frac{\underline{b}}{K^{2}{\color{black}\sigma_{K}^{2}}}\int_{{\color{black} s \wedge \check{\tau}^{K}}}^{t\wedge \check{\tau}^{K}}{\left[M_{2}\left(\mu_{r}^{K} \right) - \frac{\overline{\theta}\overline{m}_{2}}{\underline{b}} \right]\dd r} + M_{t\wedge \check{\tau}^{K}}^{K, P_{\id^{2}, 1}} {\color{black} - M_{s\wedge \check{\tau}^{K}}^{K, P_{\id^{2}, 1}}}
\end{aligned}
\label{Inegalites_M2}
\end{equation}
where $M_{t\wedge \check{\tau}^{K}}^{K, P_{\id^{2}, 1}}$ is defined in {\rm Proposition \ref{Prop_Martingale_f_NON_bornee_n}}. Moreover, for $t\geqslant 0$, 
\begin{equation*}
\E\left(M_{2}\left(\mu_{t}^{K} \right)\II_{t\leqslant \check{\tau}^{K}} \right) \leqslant \E\left( M_{2}\left(\mu_{0}^{K} \right) - \frac{\overline{\theta}\overline{m}_{2}}{\underline{b}}\right)\exp\left(-\frac{\underline{b}}{K^{2}{\color{black}\sigma_{K}^{2}}}t \right) + \frac{\overline{\theta}\overline{m}_{2}}{\underline{b}}.
\label{Inegalites_M2_Esperances}
\end{equation*}
\label{Lem_Controle_M2_Trajectoire_Esperance}
\end{Lem}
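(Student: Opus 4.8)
The plan is to start from the Doob semimartingale decomposition of $M_2(\mu^K_t)$ provided by Lemma~\ref{Cor_moment_ordre_2} and sandwich the drift between two affine functions of $M_2$, using Assumption~\textbf{(A)} and the stopping time $\check\tau^K$ to control the error terms. Recall that Lemma~\ref{Cor_moment_ordre_2} gives, for $t\le\check\tau^K$,
\[
M_2(\mu^K_t) = M_2(\mu^K_0) - \frac{1}{K^2\sigma^2}\int_0^{t}\Bigl\{2b(z_s^K,z_s^K)M_2(\mu_s^K) - \theta(z_s^K)m_2(z_s^K)\Bigr\}\dd s + \frac{1}{K^2\sigma^2}\int_0^{t}O\!\left(\tfrac1K + \sigma\sqrt K M_1(\mu_s^K) + \sigma K^{3/2}M_3(\mu_s^K)\right)\dd s + M_t^{K,P_{\id^2,1}}.
\]
First I would bound the remainder: on $\{s\le\check\tau^K\}$ the definition~(\ref{Temps_arret_check}) of $\check\tau^K$ forces $\Diam(\Supp\mu_s^K)\le 1/(\sigma K^{(3+\varepsilon)/2})$, hence $M_3(\mu_s^K)\le \Diam(\Supp\mu_s^K)\,M_2(\mu_s^K)$ and so $\sigma K^{3/2}M_3(\mu_s^K)\le K^{-\varepsilon/2}M_2(\mu_s^K)$, while $\sigma\sqrt K M_1(\mu_s^K)\le \sigma\sqrt K(1+M_2(\mu_s^K))$ is negligible too; thus the whole $O(\cdot)$ term is dominated by $\eta_K(1+M_2(\mu_s^K))$ with $\eta_K\to 0$. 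Then, using $\underline b\le b\le\overline b$ and $\underline\theta\,\underline m_2\le\theta m_2\le\overline\theta\,\overline m_2$, the exact drift $-\{2b M_2-\theta m_2\}$ is squeezed between $-\{2\overline b M_2 - \underline\theta\,\underline m_2\}$ and $-\{2\underline b M_2-\overline\theta\,\overline m_2\}$; absorbing $\eta_K M_2$ into the linear coefficient (replacing $2\underline b$ by something $\ge\underline b$, and $2\overline b$ by something $\le 5\overline b/2$, valid for $K\ge K_0$) and $\eta_K$ plus the $1/K$ term into the constant yields the two pathwise inequalities~(\ref{Inegalites_M2}). One must be a little careful that the error bound is stated with $M_1$ rather than $\sqrt{M_2}$ in Lemma~\ref{Cor_moment_ordre_2}, but Hölder gives $M_1\le\sqrt{M_2}\le 1+M_2$, so this is routine.

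For the expectation bound, I would first argue that $M_t^{K,P_{\id^2,1}}$ stopped at $t\wedge\check\tau^K$ is a true martingale started at $0$ (this is exactly the content of Proposition~\ref{Prop_Martingale_f_NON_bornee_n}, since on $[0,\check\tau^K]$ all moments appearing are bounded), so it has zero expectation. Taking expectations in the upper inequality of~(\ref{Inegalites_M2}) and writing $u_K(t):=\E(M_2(\mu^K_t)\II_{t\le\check\tau^K})$, one gets, with $c:=\overline\theta\,\overline m_2/\underline b$,
\[
u_K(t) \le u_K(0) - \frac{\underline b}{K^2\sigma^2}\int_0^t \bigl(u_K(s) - c\bigr)\,\dd s,
\]
where I would use that $\E(M_2(\mu^K_{t\wedge\check\tau^K}))\ge \E(M_2(\mu^K_t)\II_{t\le\check\tau^K})$ and that $\int_0^{t\wedge\check\tau^K}(M_2(\mu_s^K)-c)\dd s$ relates to $\int_0^t(M_2(\mu_s^K)-c)\II_{s\le\check\tau^K}\dd s$ — this needs the sign of $M_2-c$ to be handled, but after the first crossing $M_2$ can only be pushed down, so the inequality $u_K(t)-c\le (u_K(0)-c)e^{-\underline b t/(K^2\sigma^2)}$ follows from Grönwall's lemma applied to $v_K(t):=u_K(t)-c$, which satisfies $v_K(t)\le v_K(0) - \frac{\underline b}{K^2\sigma^2}\int_0^t v_K(s)\dd s$ up to a harmless nonnegative correction. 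Rearranging gives exactly the claimed bound $\E(M_2(\mu^K_t)\II_{t\le\check\tau^K})\le \E(M_2(\mu^K_0)-c)e^{-\underline b t/(K^2\sigma^2)}+c$.

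The main obstacle is the bookkeeping around the stopping time: the $O(\cdot)$ term in Lemma~\ref{Cor_moment_ordre_2} is only controlled \emph{on the event $\{s\le\check\tau^K\}$}, so one must integrate against $\II_{s\le\check\tau^K}$ throughout and be careful that $\int_0^{t\wedge\check\tau^K}$ and $\int_0^t(\cdots)\II_{s\le\check\tau^K}\dd s$ differ only by the endpoint, which is immaterial. A secondary subtlety is passing from the stopped process $M_2(\mu^K_{t\wedge\check\tau^K})$ to the truncated random variable $M_2(\mu^K_t)\II_{t\le\check\tau^K}$ in the expectation inequality; this is legitimate because $M_2\ge 0$, so dropping the contribution on $\{t>\check\tau^K\}$ only decreases the left-hand side, which is in our favour. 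Everything else — the Taylor estimates, Hölder interpolation between moments, and the explicit choice of $K_0$ making $\underline b - C K^{-\varepsilon/2}>\underline b/2$ — is the same routine computation already carried out for the order-$6$ moment in Lemma~\ref{Lem_Controle_M6_et_M4M2}, and I would simply mimic that argument.
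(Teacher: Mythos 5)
Your derivation of the pathwise sandwich \eqref{Inegalites_M2} is essentially the paper's argument: the decomposition of Lemma \ref{Cor_moment_ordre_2}, the bound $M_{3}\leqslant \Diam(\Supp\mu)\,M_{2}\leqslant M_{2}/(\sigma K^{(3+\varepsilon)/2})$ on $\{s\leqslant\check{\tau}^{K}\}$, the H\"older bound on $M_{1}$, the squeeze $\underline{b}\leqslant b\leqslant\overline{b}$, $\underline{\theta}\,\underline{m}_{2}\leqslant\theta m_{2}\leqslant\overline{\theta}\,\overline{m}_{2}$, and the choice of $K_{0}$ so that $2\underline{b}-\widetilde{C}K^{-\varepsilon/2}>\underline{b}$ and $2\overline{b}+CK^{-\varepsilon/2}<5\overline{b}/2$. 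That part is fine.

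The expectation bound, however, contains a genuine gap. From the stopped upper inequality you only obtain the \emph{from-time-zero} integral inequality $v_{K}(t)\leqslant v_{K}(0)-\lambda\int_{0}^{t}v_{K}(s)\,\dd s$ with $\lambda:=\underline{b}/K^{2}\sigma^{2}$ and $v_{K}:=u_{K}-c$, and Gr\"onwall's lemma does \emph{not} apply to an integral inequality with a negative coefficient in this one-sided form: a function may dip below $-\,$(something) on an initial interval and then exceed $v_{K}(0)e^{-\lambda t}$ later while still satisfying the inequality (e.g.\ $v\equiv -c$ on $(0,T)$ and then $v\approx \lambda cT>0$, which is compatible with $v\geqslant-c$, i.e.\ $u_{K}\geqslant 0$). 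Your remark that ``after the first crossing $M_{2}$ can only be pushed down'' is a heuristic about the random paths, not about the deterministic function $u_{K}$, and does not close this step; nor can you easily upgrade to the needed inequality on every subinterval $[t,t']$ for $u_{K}$ itself, because the truncation $\II_{t\leqslant\check{\tau}^{K}}$ destroys the martingale cancellation at the left endpoint ($\E(M_{2}(\mu^{K}_{t\wedge\check{\tau}^{K}}))\geqslant u_{K}(t)$ is the wrong direction there). The paper avoids this by the device used in Step 2 of the proof of Lemma \ref{Lem_Controle_M6_et_M4M2}, to which it refers: one defines an auxiliary process equal to $M_{2}(\mu^{K}_{t})$ up to $\check{\tau}^{K}$ and continued afterwards by the deterministic relaxation $(M_{2}(\mu^{K}_{\check{\tau}^{K}})-c)\exp(-\lambda(t-\check{\tau}^{K}))+c$, so that the drift inequality holds as a semimartingale inequality for \emph{all} times; taking (conditional) expectations then yields the comparison on every interval, hence the exponential bound for $\E(N(\mu^{K}_{t}))$, and finally $\E(M_{2}(\mu_{t}^{K})\II_{t\leqslant\check{\tau}^{K}})\leqslant\E(N(\mu^{K}_{t}))$ since the continuation is nonnegative. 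You need this (or an equivalent) construction; as written, your last step does not follow.
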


\begin{proof} Note that for all $t\geqslant 0$, 
\begin{equation}
M_{3}\left(\mu_{t\wedge \check{\tau}^{K}}^{K} \right) \leqslant \Diam\left(\Supp{\mu_{t\wedge \check{\tau}^{K}}^{K}}\right) M_{2}\left(\mu_{t\wedge \check{\tau}^{K}}^{K} \right) \leqslant \frac{ M_{2}\left(\mu_{t\wedge \check{\tau}^{K}}^{K} \right)}{{\color{black}\sigma_{K}} K^{\frac{3+\varepsilon}{2}}}.
\label{Eq_M3}
\end{equation}
From Lemma \ref{Cor_moment_ordre_2},  (\ref{Eq_M3}) and Assumptions \textbf{(A)}, there exists two constants $C, \widetilde{C} >0$ such that for all $t > s \geqslant 0$
\begin{align*}
&  M_{2}\left(\mu_{{\color{black} s \wedge \check{\tau}^{K}}}^{K} \right) - \frac{1}{K^{2}{\color{black}\sigma_{K}^{2}}}\int_{{\color{black} s \wedge \check{\tau}^{K}}}^{t\wedge \check{\tau}^{K}}{\left(\left[2\overline{b} + \frac{C}{K^{\frac{\varepsilon}{2}}} \right] M_{2}\left(\mu_{r}^{K } \right) - \underline{\theta} \underline{m}_{2} + \frac{\overline{\theta}\overline{m}_{2}}{K} \right)\dd r} + M_{t\wedge \check{\tau}^{K}}^{K, P_{\id^{2}, 1}} {\color{black} - M_{s\wedge \check{\tau}^{K}}^{K, P_{\id^{2}, 1}}} \\
& \hspace{1.5cm} \leqslant M_{2}\left(\mu_{t\wedge \check{\tau}^{K}}^{K } \right) \\
& \hspace{0.15cm} \leqslant M_{2}\left(\mu_{{\color{black} s \wedge \check{\tau}^{K}}}^{K } \right) - \frac{1}{K^{2}{\color{black}\sigma_{K}^{2}}}\int_{{\color{black} s \wedge \check{\tau}^{K}}}^{\color{black} t\wedge \check{\tau}^{K}}{\left(\left[2\underline{b} - \frac{\widetilde{C}}{K^{\frac{\varepsilon}{2}}} \right]M_{2}\left(\mu_{r}^{K } \right) - \overline{\theta} \overline{m}_{2}  + \frac{\underline{\theta}\underline{m}_{2}}{K}\right)\dd r} + M_{t\wedge \check{\tau}^{K}}^{K, P_{\id^{2}, 1}} {\color{black} - M_{s\wedge \check{\tau}^{K}}^{K, P_{\id^{2}, 1}}}.
\end{align*}
Noting that for all $K \geqslant K_{0}$, $2\underline{b} - \widetilde{C}K^{-\varepsilon/2} > \underline{b}$ and $2\overline{b} + CK^{-\varepsilon/2} < 5\overline{b}/2$, the announced first result follows. In similar way to Step 2 of the proof of Lemma \ref{Lem_Controle_M6_et_M4M2}, we obtain the second part of the announced result. \qedhere
\end{proof}

\begin{Cor}
For all $t \geqslant 0$, $\E\left(M_{2}\left(\mu_{t}^{K} \right)\II_{t\leqslant \check{\tau}^{K}} \right) \leqslant \max\left\{ C_{2}^{\star} , \overline{\theta}\overline{m}_{2}{\color{black}/\underline{b}} \right\}$.
\label{Cor_Esp_M2_max_2_constantes}
\end{Cor}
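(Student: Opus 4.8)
The plan is to obtain the bound directly from the expectation estimate of Lemma~\ref{Lem_Controle_M2_Trajectoire_Esperance}, once the size of the initial second moment has been identified. First I would unfold the definition~(\ref{Def_distribution_recentre_dilate}) of the centered and dilated empirical measure: since $\langle\id,\nu_0^K\rangle = x_0$ by~(\ref{Hypothese_C_etoile}), we have $z_0^K = x_0$, and therefore
\[
M_2\bigl(\mu_0^K\bigr) = \bigl\langle\id^2,\mu_0^K\bigr\rangle = \frac{1}{K\sigma^2}\,\bigl\langle(\id-x_0)^2,\nu_0^K\bigr\rangle \leqslant C_2^\star
\]
by the left-hand bound in~(\ref{Hypothese_C_etoile}) with $\ell=1$; in particular $\E\bigl(M_2(\mu_0^K)\bigr)\leqslant C_2^\star$.

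Next I would invoke the second inequality of Lemma~\ref{Lem_Controle_M2_Trajectoire_Esperance}, namely that for $K$ large enough and every $t\geqslant 0$,
\[
\E\bigl(M_2(\mu_t^K)\,\II_{t\leqslant\check{\tau}^{K}}\bigr) \leqslant \Bigl(\E\bigl(M_2(\mu_0^K)\bigr) - \frac{\overline{\theta}\,\overline{m}_2}{\underline{b}}\Bigr)\exp\!\Bigl(-\frac{\underline{b}}{K^2\sigma^2}\,t\Bigr) + \frac{\overline{\theta}\,\overline{m}_2}{\underline{b}},
\]
and I would conclude by a sign discussion on the prefactor $\E(M_2(\mu_0^K)) - \overline{\theta}\,\overline{m}_2/\underline{b}$. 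If this quantity is nonpositive, the right-hand side is at most $\overline{\theta}\,\overline{m}_2/\underline{b}$; if it is positive, then $\exp(-\underline{b}t/K^2\sigma^2)\leqslant 1$ forces the right-hand side to be at most $\E(M_2(\mu_0^K))\leqslant C_2^\star$. In both cases $\E(M_2(\mu_t^K)\,\II_{t\leqslant\check{\tau}^{K}})\leqslant\max\{C_2^\star,\ \overline{\theta}\,\overline{m}_2/\underline{b}\}$, uniformly in $t\geqslant 0$, which is the announced estimate.

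There is no genuine obstacle here: the statement is a mechanical corollary of Lemma~\ref{Lem_Controle_M2_Trajectoire_Esperance}. The two points to keep in mind are, first, the translation of the hypothesis~(\ref{Hypothese_C_etoile}) on $\nu_0^K$ into the bound $M_2(\mu_0^K)\leqslant C_2^\star$ via the scaling~(\ref{Def_distribution_recentre_dilate}); and second --- already built into Lemma~\ref{Lem_Controle_M2_Trajectoire_Esperance} --- that the extended process used to pass from the pathwise inequality~(\ref{Inegalites_M2}) to the expectation bound (as in Step~2 of the proof of Lemma~\ref{Lem_Controle_M6_et_M4M2}) dominates $M_2(\mu_t^K)\,\II_{t\leqslant\check{\tau}^{K}}$, so that no boundary contribution at $\check{\tau}^{K}$ survives.
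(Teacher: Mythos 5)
Your proof is correct and is essentially the paper's own argument: the scaling (\ref{Def_distribution_recentre_dilate}) together with (\ref{Hypothese_C_etoile}) gives $M_{2}(\mu_{0}^{K})\leqslant C_{2}^{\star}$, and the expectation bound of Lemma \ref{Lem_Controle_M2_Trajectoire_Esperance} then yields the claim by the sign discussion you describe. The only caveat is that this argument (exactly like the paper's one-line proof) produces the constant $\max\{C_{2}^{\star},\overline{\theta}\,\overline{m}_{2}/\underline{b}\}$ rather than the stated $\max\{C_{2}^{\star},\overline{\theta}\,\overline{m}_{2}\}$, so your closing claim that this ``is the announced estimate'' holds literally only if $\underline{b}\geqslant 1$; the discrepancy lies in the corollary's statement rather than in your reasoning, and is harmless since only a bound uniform in $t$ and $K$ is used later.
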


\begin{proof} From (\ref{Def_distribution_recentre_dilate}) and (\ref{Hypothese_C_etoile}), note that \[M_{2}\left(\mu_{0}^{K} \right) =  \left\langle \id^{2}, \mu_{0}^{K} \right\rangle {\color{black} \leqslant C_{2}^{\star}}.\]
 The announced result follows from Lemma \ref{Lem_Controle_M2_Trajectoire_Esperance}. \qedhere
\end{proof}

The goal of the following result is to bound the martingale bracket $\left\langle M^{K, P_{\id^{2}, 1}}\right\rangle_{t\wedge \check{\tau}^{K}}$. To do this, we exploit the martingale approximation of the moment of order 6 in order to control the dominant term $M_{4}\left(\mu^{K }_{s} \right) - M_{2}^{2} \left(\mu^{K }_{s} \right)$ in (\ref{Eq_Variation_Quad_M2}).

\begin{Lem}
There exists a constant $C>0$ such that for all $K \in \N^{\star}$, for all $t\geqslant 0$ the martingale bracket  $\left\langle M^{K, P_{\id^{2}, 1}}\right\rangle_{t}$ satisfies 
\begin{equation*}
\E\left(\left\langle M^{K, P_{\id^{2}, 1}} \right\rangle_{t\wedge\check{\tau}^{K}} \right) \leqslant \frac{C}{K^{2}{\color{black}\sigma_{K}^{2}}}\E\left(t\wedge\check{\tau}^{K} \right).
\end{equation*}
\label{Crochet_martingale_M2}
\end{Lem}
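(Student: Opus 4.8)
The idea is to start from the explicit expression of the quadratic variation $\langle M^{K, P_{\id^2,1}}\rangle_{t\wedge\check\tau^K}$ given in (\ref{Eq_Variation_Quad_M2}) and to bound each term by something whose time-integral has controlled expectation. Write
\[
\langle M^{K, P_{\id^2,1}}\rangle_{t\wedge\check\tau^K} = \frac{2}{K^2\sigma^2}\int_0^{t\wedge\check\tau^K}\!\! b(z_s^K,z_s^K)\bigl\{M_4(\mu_s^K) - M_2^2(\mu_s^K)\bigr\}\,\dd s + \frac{1}{K^2\sigma^2}\int_0^{t\wedge\check\tau^K}\!\! O(\cdots)\,\dd s,
\]
where the error term inside the $O$ is $\frac{\sigma\sqrt K}{K^2} + \frac{M_2(\mu_s^K)}{K} + \frac{\sigma\sqrt K M_3(\mu_s^K)}{K} + \sigma\sqrt K M_5(\mu_s^K)$. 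The dominant contribution is the first integral, so the main work is to bound $\E\bigl(\int_0^{t\wedge\check\tau^K} \{M_4(\mu_s^K) - M_2^2(\mu_s^K)\}\,\dd s\bigr)$ by $C\,\E(t\wedge\check\tau^K)$ for a constant $C$ independent of $K$.

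\textbf{Key steps.} First, I would handle the error terms: since $M_1 \le 1 + M_2$, $M_3 \le M_2/(\sigma K^{(3+\varepsilon)/2})$ up to $\check\tau^K$ by (\ref{Eq_M3}), $M_5 \le M_6$ up to $\check\tau^K$ (via $M_5 \le \Diam(\Supp\mu)^{-1}$-type bounds, or simply $M_5 \le 1 + M_6$), and $M_2 \le 1+M_6$, the integral of the $O(\cdots)$ term is, for $K$ large (using $\sigma \ll K^{-(2+\varepsilon)}$ from (\ref{Hypothese_Gamme_sigma})), bounded by a constant times $\int_0^{t\wedge\check\tau^K}(1 + M_6(\mu_s^K))\,\dd s$, whose expectation is $\le C\,\E(t\wedge\check\tau^K) + C$ by Lemma \ref{Lem_Controle_Integrale_Temps_M2}—but more carefully, Lemma \ref{Lem_Controle_Integrale_Temps_M2} only gives $\sup_K \E(\int_0^{t\wedge\check\tau^K} M_6\,\dd s) < \infty$ for fixed $t$, not a bound linear in $\E(t\wedge\check\tau^K)$. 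So I would instead use the pathwise differential inequality from Step 1 of the proof of Lemma \ref{Lem_Controle_M6_et_M4M2}: up to $\check\tau^K$, the quantity $N_s := \frac34 M_6 + 3M_4 M_2 + \frac32 M_2^3$ satisfies $\dd N_s \le -\frac{3\underline b}{4K^2\sigma^2}(N_s - C)\,\dd s + \dd{\rm Mart}_s$, which upon integrating and taking expectations yields directly $\frac{3\underline b}{4K^2\sigma^2}\E\bigl(\int_0^{t\wedge\check\tau^K} N_s\,\dd s\bigr) \le \E(N_0) - \E(N_{t\wedge\check\tau^K}) + \frac{3\underline b}{4K^2\sigma^2} C\,\E(t\wedge\check\tau^K) \le \E(N_0) + C'\,\E(t\wedge\check\tau^K)/(K^2\sigma^2)$. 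Since $\E(N_0)$ is bounded by a constant (by the moment assumptions (\ref{Hypothese_C_etoile}), as in the proof of Lemma \ref{Lem_Controle_M6_et_M4M2}), and using $M_4 - M_2^2 \le M_4 \le \frac43 N_s$, this gives $\E\bigl(\int_0^{t\wedge\check\tau^K}\{M_4(\mu_s^K) - M_2^2(\mu_s^K)\}\,\dd s\bigr) \le \frac{C}{K^2\sigma^2}\bigl[K^2\sigma^2 \E(N_0) + \E(t\wedge\check\tau^K)\bigr]$. Since $K^2\sigma^2 \to 0$, the first bracket is eventually negligible, but to get a clean statement one should note $\E(t\wedge\check\tau^K)$ could be small; hence I would absorb the $K^2\sigma^2\E(N_0)$ term into the bound $C\,\E(t\wedge\check\tau^K)$ only for $t$ bounded away from $0$, or more robustly keep the full bound $\E(\langle M^{K,P_{\id^2,1}}\rangle_{t\wedge\check\tau^K}) \le \frac{C}{K^2\sigma^2}(\E(t\wedge\check\tau^K) + o(1))$ and observe the stated inequality follows.

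\textbf{Main obstacle.} The delicate point is precisely this: getting the bound to be genuinely linear in $\E(t\wedge\check\tau^K)$ with no additive constant, as the statement claims. The additive piece $\E(N_0)$ is $O(1)$, and $\frac{1}{K^2\sigma^2}\cdot K^2\sigma^2\cdot O(1) = O(1)$ is not of the form $\frac{C}{K^2\sigma^2}\E(t\wedge\check\tau^K)$ unless $\E(t\wedge\check\tau^K)$ is bounded below uniformly in $K$, which it is not at $t=0$. I expect the resolution is that the dominant term $M_4 - M_2^2$ is a \emph{variance}, hence one should not bound it by $M_4$ alone but exploit cancellation: more precisely, the drift in Lemma \ref{Cor_moment_ordre_2} pushes $M_2$ towards its equilibrium value $\frac{\theta m_2}{2b}$, and a Grönwall/Dynkin argument on $M_4 + M_2^2$ (or on the combination appearing in Lemma \ref{Lem_Controle_M6_et_M4M2}) shows the time-integral of $M_4 - M_2^2$ is controlled by the ``dissipated energy'', which is itself $O(K^2\sigma^2 \cdot \E(t\wedge\check\tau^K) \cdot \text{const}) + O(K^2\sigma^2)$, and the latter $O(K^2\sigma^2)$ additive term divided by $K^2\sigma^2$ contributes only an $O(1)$ constant—at which point the honest statement is $\E(\langle M^{K,P_{\id^2,1}}\rangle_{t\wedge\check\tau^K}) \le \frac{C}{K^2\sigma^2}\E(t\wedge\check\tau^K) + C'$, and I would either restate the lemma in that form or note that the additive constant is harmless for all subsequent uses (tightness estimates). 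I would flag this and carry through the Dynkin-plus-Grönwall computation on $N_s$ from Lemma \ref{Lem_Controle_M6_et_M4M2}, which is the cleanest route.
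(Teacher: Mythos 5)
Your proposal is correct in substance and relies on exactly the same ingredients as the paper: the explicit bracket (\ref{Eq_Variation_Quad_M2}) from Lemma \ref{Cor_moment_ordre_2}, \textsc{H\"older} reductions of all error moments to $1+M_{6}$, and the sixth-moment control of Lemma \ref{Lem_Controle_M6_et_M4M2}. The only real difference is how you finish: the paper does not re-run the Dynkin/Gr\"onwall argument on $N_{s}=\frac34 M_{6}+3M_{4}M_{2}+\frac32 M_{2}^{3}$; it simply bounds the bracket pathwise by $\frac{C}{K^{2}\sigma^{2}}\int_{0}^{t\wedge\check\tau^{K}}(1+M_{6}(\mu_{s}^{K}))\,\dd s$ and then applies \textsc{Fubini} together with the \emph{statement} of Lemma \ref{Lem_Controle_M6_et_M4M2}, whose right-hand side is bounded uniformly in $s$ and $K$ (the exponential term is at most $\frac{37}{4}C_{6}^{\star}$ at $s=0$); no additive $K^{2}\sigma^{2}\,\E(N_{0})$ term ever appears, so your detour through the ``dissipated energy'' is unnecessary.

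Your flagged obstacle is nonetheless a fair observation about the literal form of the inequality: taken at face value, the paper's own two-line argument yields $\E\bigl(\langle M^{K,P_{\id^{2},1}}\rangle_{t\wedge\check\tau^{K}}\bigr)\leqslant \frac{C}{K^{2}\sigma^{2}}\,t$ (since \textsc{Fubini} gives $\int_{0}^{t}\E\bigl(M_{6}(\mu_{s}^{K})\II_{s\leqslant\check\tau^{K}}\bigr)\dd s\leqslant C' t$, not $C'\E(t\wedge\check\tau^{K})$), while your route yields $\frac{C}{K^{2}\sigma^{2}}\E(t\wedge\check\tau^{K})+O(1)$. Neither discrepancy matters: the lemma is used only to get (\ref{Eq_Mart_K_+}) and the estimates of Lemma \ref{Lem_One-step_transition} in Section \ref{Sous_section_4_5_Sortie_domaine} (the random-walk coupling and exit-time bounds — not the tightness arguments, as you state), and there a bound of the form $\frac{C}{K^{2}\sigma^{2}}\,t$, evaluated at times $t\lesssim K^{2}\sigma^{2}K^{\varepsilon/2}$ or $t=K\sigma^{2}$, is all that is needed; even your additive $O(1)$ constant would only degrade inessential exponents there. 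So no restatement is required, but the cleanest write-up is the paper's: H\"older, Fubini, and the uniform-in-time expectation bound of Lemma \ref{Lem_Controle_M6_et_M4M2}.
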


\begin{proof} From Lemma \ref{Cor_moment_ordre_2}  and \textsc{H\"{o}lder}'s inequalities $M_{2}^{2}(\mu) \leqslant M_{4}(\mu) \leqslant M_{6}^{2/3} \leqslant 1 + M_{6}(\mu)$ and $M_{5}\left(\mu \right) \leqslant M_{6}\left(\mu \right)^{5/6} \leqslant 1 + M_{6}\left(\mu \right)$, we deduce that there exists a constant $C>0$ such that for all $t \geqslant 0$ 
\begin{align*}
\left\langle M^{K, P_{\id^{2}, 1}}\right\rangle_{t\wedge \check{\tau}^{K}} & \leqslant \frac{C}{K^{2}{\color{black}\sigma_{K}^{2}}}\int_{0}^{t\wedge \check{\tau}^{K}}{\left(1 + M_{6}\left(\mu_{s}^{K} \right)\right) \left(1 + \frac{1}{K} \right) \dd s}. 
\end{align*}
The announced result follows from \textsc{Fubini}'s theorem and Lemma \ref{Lem_Controle_M6_et_M4M2}. \qedhere
\end{proof}

\subsection{Convergence to $+\infty$ of the stopping time $\tau^{K}$ and support concentration property\label{Sous_Section_4_4_Convergence_tau}}

The main result of this section is the following convergence result for the stopping time $\tau^{K}$, defined by (\ref{Temps_arret_check_chapeau}), when $K \to + \infty$. 

\begin{Prop} Under {\rm Assumption (\ref{Hypothese_Gamme_sigma})}, $\tau^{K }$ converges in probability to $+\infty$ when $K \to + \infty$.
\label{Prop_Convergence_tau_et_theta}
\end{Prop}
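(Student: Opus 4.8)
The plan is to prove that $\tau^K = \check\tau^K \wedge \widehat\tau^K \to +\infty$ in probability by first reducing the statement to a claim about $\widehat\tau^K$ alone, and then controlling $\widehat\tau^K$ via a coupling of $M_2(\mu^K)$ with a biased random walk. The reduction step uses exactly the computation sketched in Section~\ref{Sous-sous-section_Perspectives}(a): because $\Diam(\Supp\mu_t^K)^2 \leqslant 4K\,M_2(\mu_t^K)$, on the event $\{t < \widehat\tau^K\}$ one has $M_2(\mu_t^K) < K^\varepsilon$, so $\Diam(\Supp\mu_t^K) < 2\sqrt{K^{1+\varepsilon}}$, and Assumption~(\ref{Hypothese_Gamme_sigma}) ($\sigma \ll K^{-(2+\varepsilon)}$) forces $2\sqrt{K^{1+\varepsilon}} \leqslant (\sigma K^{(3+\varepsilon)/2})^{-1}$ for $K$ large. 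Hence $\P(\check\tau^K < \widehat\tau^K \wedge T) = 0$ for $K$ large, which gives $\P(\tau^K < T) \leqslant \P(\widehat\tau^K \leqslant T \wedge \check\tau^K)$, i.e. it suffices to show $\lim_{K\to+\infty}\P\bigl(\sup_{0\leqslant t\leqslant T\wedge\check\tau^K} M_2(\mu_t^K) \geqslant K^\varepsilon\bigr) = 0$.

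First I would record the drift structure of $M_2(\mu_t^K)$ up to $\check\tau^K$. From Lemma~\ref{Lem_Controle_M2_Trajectoire_Esperance}, on $[0,\check\tau^K]$ the process $M_2(\mu^K)$ behaves (after the time change $t \mapsto t/K^2\sigma^2$, or equivalently by noting that the generator carries the prefactor $1/K^2\sigma^2$) like a process with a strong mean-reverting drift pulling it toward the bounded interval $[\underline\theta\,\underline m_2/(5\overline b),\ \overline\theta\,\overline m_2/\underline b]$, plus a martingale whose bracket is $O\bigl((1+M_6(\mu_s^K))/K^2\sigma^2\bigr)$ by Lemma~\ref{Crochet_martingale_M2}, and with jumps of size $O(1/K)$ (each resampling or mutation event changes one atom, hence moves $M_2$ by $O(1/K)$ when the support diameter is $O((\sigma K^{(3+\varepsilon)/2})^{-1})$, which is $\ll 1$). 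The key point is that $M_2$ can only reach the level $K^\varepsilon$ by a long sequence of upward moves against this restoring drift.

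The heart of the argument, which is the main obstacle, is to quantify this. I would partition the range $[K^{\varepsilon/2}, K^\varepsilon]$ into geometric scales $[3^{\ell-1}K^{\varepsilon/2}, 3^{\ell+1}K^{\varepsilon/2}]$ and study the excursions of $M_2(\mu^K_{\cdot\wedge\check\tau^K})$ between consecutive such bands. Within each band the drift coefficient of $M_2$ is comparable to $-c\cdot(\text{current level})/K^2\sigma^2$ for a positive constant $c$ (once the level exceeds the bounded equilibrium zone), the martingale bracket is controlled using the $M_6$ estimates of Lemma~\ref{Lem_Controle_M6_et_M4M2}, and the jumps are $O(1/K)$. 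Embedding these transitions into a biased random walk on $\N^\star$ reflected at $1$ (each ``step'' corresponding to the net effect of many microscopic events), the probability of climbing from band $\ell$ to band $\ell+1$ before returning to band $\ell-1$ is bounded by a geometric factor, so the probability of ever reaching $K^\varepsilon$ within time $T$ is bounded by the probability that such a biased walk, run for the relevant (polynomially many) number of steps, drifts up by $\Theta(\log K)$ bands against its bias. Standard large-deviations / exit-from-attracting-domain estimates for biased random walks (see \cite{Freidlin_Wentzell_1984, Dembo_Zeitouni_2010}) then give that this probability is at most $K^{-C\log K}$ for some constant $C>0$ depending only on $b,\theta,m$; since by Assumption~(\ref{Hypothese_Gamme_sigma}) we only need $\sigma \gg K^{-C'\log K}$ for this $C'$ tied to the number of steps, this bound tends to $0$. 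The delicate points I expect to fight with are: controlling the number of microscopic events in $[0,T\wedge\check\tau^K]$ (of order $K^2\sigma^2 \cdot T/K^2\sigma^2$... precisely, the time change makes the event rate $\asymp 1/K^2\sigma^2$ in the original scale, so the right count is polynomial in $K$); making the coupling rigorous despite the jumps not being i.i.d. and the drift depending on $z_s^K$ (handled by the uniform bounds $\underline b \leqslant b \leqslant \overline b$, etc., from Assumption~\textbf{(A)}); and ensuring the martingale fluctuations on each excursion are dominated by the geometric bias, which is where the moment-of-order-$6$ control and the stopping at $\check\tau^K$ are essential. Once $\P(\widehat\tau^K \leqslant T\wedge\check\tau^K) \to 0$, the reduction above yields $\P(\tau^K < T) \to 0$ for every $T$, i.e. $\tau^K \to +\infty$ in probability, completing the proof.
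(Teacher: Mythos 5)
Your reduction step is exactly the paper's: the bound $\Diam(\Supp\mu_t^K)^2\leqslant 4K M_2(\mu_t^K)$ together with $\sigma\ll K^{-(2+\varepsilon)}$ gives $\P(\check\tau^K<\widehat\tau^K\wedge T)=0$ for large $K$, hence $\P(\tau^K<T)\leqslant\P(\widehat\tau^K\leqslant T\wedge\check\tau^K)$, and the overall strategy for the remaining claim (geometric bands $3^\ell K^{\varepsilon/2}$, coupling of the band index with a biased random walk reflected at $1$, gambler's-ruin/large-deviation estimates, with the $M_6$ control and the stopping at $\check\tau^K$ taming the fluctuations) is precisely the proof of Lemma \ref{Lem_Proba_sup_M2_K_epsilon} in Section \ref{Sous_section_4_5_Sortie_domaine}. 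However, two quantitative claims in your sketch are false as stated, and they sit exactly where the real work is. First, the jumps of $M_2(\mu^K)$ are \emph{not} $O(1/K)$: under (\ref{Hypothese_Gamme_sigma}) one has $\sigma K^{(3+\varepsilon)/2}\ll 1$, so the diameter threshold $1/(\sigma K^{(3+\varepsilon)/2})$ defining $\check\tau^K$ is $\gg 1$, not $\ll 1$; a single resampling moves an atom onto another atom and can change $M_2$ by up to roughly $M_2$ itself. This is the content of Lemma \ref{Lem_Accroissements_M2} ($\Delta M_2\leqslant M_2+1$), and it is the reason the factor-$3$ bands are chosen so that the coupled walk cannot climb more than one band per transition; an analysis premised on $O(1/K)$ jumps would be unjustified.

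Second, your accounting of how the time horizon $T$ constrains the walk does not hold up: the event rate of $\mu^K$ is of order $1/\sigma^2$ (not $1/(K^2\sigma^2)$, which is only the effective drift scale of $M_2$), and since $\sigma$ may be as small as nearly $K^{-C\log K}$, the number of microscopic events (hence of band transitions) in $[0,T]$ can be super-polynomial in $K$, so ``polynomially many steps'' is wrong. This bookkeeping is precisely where the left-hand bound of (\ref{Hypothese_Gamme_sigma}) must be consumed. A union bound over at most $\asymp T/\sigma^2$ excursions against the per-excursion climbing probability $\approx K^{-c\varepsilon^2\log K}$ can be repaired using the ``for all $C>0$'' form of the assumption, but you would still need the hard one-step estimate. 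The paper's accounting is different and cleaner on this point: Lemma \ref{Lem_One-step_transition} builds a minimum duration $K\sigma^2$ into every downward transition, so $\widehat\tau^K\geqslant K\sigma^2 N_-^K$, and the gambler's-ruin estimate (Lemma \ref{Lem_PGD_lineaire}) shows the walk needs at least $\exp(c\varepsilon^2\log^2 K)$ transitions to reach the top band with high probability; the product diverges under (\ref{Hypothese_Gamme_sigma}), which is Corollary \ref{Cor_PGD_lineaire}. As written, your proposal identifies the right structure but its two key quantitative assertions would fail and must be replaced by these arguments.
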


The proof is based on the next lemma proved in Section \ref{Sous_section_4_5_Sortie_domaine} below.  Thanks to Proposition \ref{Prop_Convergence_tau_et_theta}, we deduce the support concentration property of $\left(\nu_{t/K{\color{black}\sigma_{K}^{2}}}^{K} \right)$ given by (\ref{Eq_Thm_Concentration_Support}). Indeed, with probability which tends to $1$ when $K\to + \infty$, $\Diam\left(\mu_{t}^{K} \right) \leqslant \frac{1}{{\color{black}\sigma_{K}} K^{\frac{3+\varepsilon}{2}}}$ for all $t \in [0, T]$. Hence, \[\Diam\left(\nu_{t/K{\color{black}\sigma_{K}^{2}}}^{K} \right) = {\color{black}\sigma_{K}}\sqrt{K}\Diam\left(\mu_{t}^{K} \right) \leqslant \frac{1}{K^{1+\frac{\varepsilon}{2}}}  \] and (\ref{Eq_Thm_Concentration_Support}) follows.

\begin{Lem} Under {\rm Assumption (\ref{Hypothese_Gamme_sigma})}, for all $T \geqslant 0$, \[\lim_{K\to +\infty}{\P\left(\widehat{\tau}^{K} \leqslant \check{\tau}^{K} \wedge T \right)} = \lim_{K \to + \infty}{\P\left(\sup_{0\leqslant t\leqslant T\wedge \check{\tau}^{K}}{M_{2}\left(\mu_{t}^{K } \right)} \geqslant K^{\varepsilon} \right)} = 0.\]
\label{Lem_Proba_sup_M2_K_epsilon}
\end{Lem}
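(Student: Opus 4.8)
The plan is to prove that $M_2(\mu^K_t)$ stays below the threshold $K^\varepsilon$ for a long time (in fact for superpolynomially long time) by coupling the dynamics of $M_2$ with a suitable biased random walk and invoking large deviation estimates for the exit from an attractive domain. First I would use the pathwise inequalities of Lemma~\ref{Lem_Controle_M2_Trajectoire_Esperance}: on $[0,\check\tau^K]$ the process $M_2(\mu^K_t)$ is sandwiched between two processes whose drift is linear and \emph{restoring} toward the level $\overline\theta\,\overline m_2/\underline b$, with time running at speed $1/K^2\sigma^2$. The key point is that the equilibrium level of the drift is an $O(1)$ constant, whereas the threshold $K^\varepsilon$ is large, so $M_2$ must climb against a negative drift to reach $K^\varepsilon$, an event that should be exponentially unlikely.

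To make this quantitative I would proceed as follows. First, discretise the range $[K^{\varepsilon/2},K^\varepsilon]$ into geometrically spaced levels of the form $3^{\ell}K^{\varepsilon/2}$, as announced in Section~\ref{Sous-sous-section_2_3_2_Difficultes}. When $M_2(\mu^K_t)$ currently sits near such a level $3^\ell K^{\varepsilon/2}$, the negative drift coefficient $-\underline b/K^2\sigma^2$ times the current value dominates the constant source term $\overline\theta\,\overline m_2$ (since $3^\ell K^{\varepsilon/2}\gg 1$), while the martingale part $M^{K,P_{\id^2,1}}$ has bracket controlled by Lemma~\ref{Crochet_martingale_M2}, namely $\E\langle M^{K,P_{\id^2,1}}\rangle_{t\wedge\check\tau^K}\leqslant \frac{C}{K^2\sigma^2}\E(t\wedge\check\tau^K)$. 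Second, I would couple the sequence of ``up-crossings'' of $M_2$ (from level $3^{\ell-1}K^{\varepsilon/2}$ up to $3^{\ell+1}K^{\varepsilon/2}$) with steps of a biased random walk on $\N^\star$, reflected at $1$, whose drift toward $1$ comes precisely from the restoring term; the martingale bracket bound controls the fluctuations so that each such up-crossing has probability at most some $q<1/2$ bounded away from $1/2$ uniformly in $K$. Third, reaching $K^\varepsilon$ from $K^{\varepsilon/2}$ requires of order $\log_3 K^{\varepsilon/2}\asymp \log K$ consecutive favourable excursions, and standard large-deviation / exit-from-an-attractive-domain estimates for biased random walks (see the references cited, e.g.\ \cite{Freidlin_Wentzell_1984, Dembo_Zeitouni_2010}) give that the first time this happens is of order $q^{-c\log K}=K^{c'\log K}$ in probability, hence superpolynomial. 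Comparing with the time horizon $T$ (which is $O(1)$) and the time-change speed $1/K^2\sigma^2$, the left bound $\sigma\gg K^{-C\log K}$ of Assumption~\eqref{Hypothese_Gamme_sigma} guarantees that the real time needed is still $\gg T$ with probability tending to $1$, which gives $\P(\sup_{0\leqslant t\leqslant T\wedge\check\tau^K}M_2(\mu^K_t)\geqslant K^\varepsilon)\to 0$.

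A supporting ingredient is that the initial condition is small: from \eqref{Hypothese_C_etoile} and \eqref{Def_distribution_recentre_dilate}, $M_2(\mu^K_0)=C_2^\star/K\sigma^2$, which by \eqref{Hypothese_Gamme_sigma} is much larger than $K^\varepsilon$ — wait, one must be careful here: $1/K\sigma^2$ with $\sigma\ll K^{-(2+\varepsilon)}$ is actually \emph{huge}. I would therefore first use the restoring-drift inequality together with the expectation bound of Lemma~\ref{Lem_Controle_M2_Trajectoire_Esperance} (and Corollary~\ref{Cor_Esp_M2_max_2_constantes}) to show that $M_2$ relaxes from its large initial value down to an $O(1)$ neighbourhood of $\overline\theta\,\overline m_2/\underline b$ on a time scale much shorter than $T$ (relying on $\check\tau^K$ not having occurred yet, which is handled by the argument of Section~\ref{Sous-sous-section_Perspectives}(a) showing $\P(\check\tau^K<\widehat\tau^K\wedge T)=0$ for $K$ large), and only then launch the geometric-levels coupling from the level $K^{\varepsilon/2}$.

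\textbf{Main obstacle.} The delicate point I expect is the coupling step: one must construct, on the same probability space, the biased random walk so that each of its steps \emph{dominates} (stochastically) the corresponding excursion of $M_2(\mu^K_t\wedge\check\tau^K)$ between consecutive geometric levels, uniformly in $\ell$ and $K$, while only having the \emph{expectation} bound on the martingale bracket at one's disposal (not a pathwise one). This requires carefully converting the $L^2$ control of $M^{K,P_{\id^2,1}}$ into a control of the probability that an excursion reaches the next higher level before returning to the next lower one — presumably via an optional-stopping / supermartingale argument applied to a well-chosen convex function (e.g.\ an exponential or a power of $M_2$) of the sandwiching process, exploiting that the drift is strictly restoring on the relevant range. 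Getting the constants to line up so that the resulting per-step failure probability $q$ is bounded away from $1/2$ \emph{without} extra assumptions on $\sigma$ beyond \eqref{Hypothese_Gamme_sigma} is where the real work lies, and is presumably why the paper isolates this as a separate Lemma proved in Section~\ref{Sous_section_4_5_Sortie_domaine}.
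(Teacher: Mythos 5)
Your overall strategy is the one the paper actually follows (geometric thresholds $3^{\ell}K^{\varepsilon/2}$, coupling of the level process with a biased random walk reflected at $1$, downward transitions lasting at least $K\sigma^{2}$, and comparison with the horizon $T$ through the left bound of \eqref{Hypothese_Gamme_sigma}), and you correctly single out the one-step transition estimate as the crux and even guess the right tool (optional stopping of an exponential supermartingale, which is exactly the \textsc{Girsanov}-type argument of Lemma~\ref{Lem_One-step_transition}). However, there is a genuine quantitative gap in your plan: you only ask that each up-crossing have probability $q<\frac12$ \emph{bounded away from} $\frac12$ uniformly in $K$, and you claim this yields an exit time of order $q^{-c\log K}=K^{c'\log K}$. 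That arithmetic is wrong: with a constant bias, the gambler's-ruin ratio $r=(1-q)/q$ is a constant, so climbing the $\asymp\log K$ levels takes only $r^{c\log K}=K^{c\log r}$ excursions, i.e.\ \emph{polynomially} many. This is not enough, because under \eqref{Hypothese_Gamme_sigma} the time unit $K\sigma^{2}$ may be as small as $K^{1-o(\log K)}$ (e.g.\ $\sigma=K^{-\sqrt{\log K}}$), so a polynomial number of excursions does not beat $T$. One really needs the per-excursion upward probability to vanish polynomially in $K$; this is what the paper obtains ($\eta_K=\frac12-CK^{-\varepsilon/2}$, hence $r_K\sim K^{\varepsilon/2}$ and $N^{K}\gtrsim\exp(c\,\varepsilon^{2}\log^{2}K)$, Lemma~\ref{Lem_PGD_lineaire}), using \textsc{Doob} and \textsc{Markov} inequalities together with the bracket bound of Lemma~\ref{Crochet_martingale_M2} over excursions of duration comparable to $K^{2}\sigma^{2}$. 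Your plan as written would need this strengthening to close.

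A second, smaller error: your claim that $M_{2}(\mu_{0}^{K})=C_{2}^{\star}/K\sigma^{2}$ is ``huge'' is a miscomputation. By \eqref{Hypothese_C_etoile} one has $\langle(\id-x_{0})^{2},\nu_{0}^{K}\rangle\leqslant C_{2}^{\star}K\sigma^{2}$, and after the dilation in \eqref{Def_distribution_recentre_dilate} this gives $M_{2}(\mu_{0}^{K})\leqslant C_{2}^{\star}$, an $O(1)$ quantity (this is precisely why the paper can start the coupling with $L_{0}^{K}=1$). Note that if your reading were correct the lemma itself would fail at $t=0$ (one would have $\widehat{\tau}^{K}=0$), so the proposed preliminary ``relaxation phase'' could not repair it; fortunately it is simply unnecessary.
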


\begin{proof}[Proof of Proposition \ref{Prop_Convergence_tau_et_theta}]
Note that for all $T\geqslant 0$,
\begin{align*}
\P\left(\check{\tau}^{K} < T \wedge \widehat{\tau}^{K} \right) \leqslant \P\left(\exists t < T \wedge \widehat{\tau}^{K}, \Diam\left(\Supp \mu_{t}^{K} \right) > \frac{1}{{\color{black}\sigma_{K}} K^{\frac{3+\varepsilon}{2}}} \right).
\end{align*}
As for all $t \geqslant 0$, for all $K \in \N^{\star}$,  $\Diam \left(\Supp \mu_{t}^{K} \right)^{2} \leqslant \left(2\max{\left\{\left|x \right| \left| \phantom{1^{1^{1}}} \hspace{-0.6cm} \right. x \in \Supp \mu_{t}^{K} \right\}}\right)^{2} \leqslant 4K M_{2}\left(\mu_{t}^{K}\right)$, we deduce that 
\begin{align*}
& \P\left(\exists t < T \wedge \widehat{\tau}^{K},  \Diam\left(\Supp \mu_{t}^{K} \right) > \frac{1}{{\color{black}\sigma_{K}} K^{\frac{3+\varepsilon}{2}}} \right) \\ 
& \hspace{4cm} \leqslant \P\left(\exists t < T \wedge,  \widehat{\tau}^{K} M_{2}\left(\mu_{t}^{K} \right) > \frac{1}{4{\color{black}\sigma_{K}^{2}}K^{4+\varepsilon}}  \right).
\end{align*}
According to (\ref{Hypothese_Gamme_sigma}), we deduce that the following inequality $4{\color{black}\sigma_{K}^{2}}K^{4 + \varepsilon} \leqslant 1/K^{\varepsilon}$ holds true when $K \to + \infty$ and ${\color{black}\sigma_{K}} \to 0$. Hence, we deduce that
\[\P\left(\exists t < T \wedge \widehat{\tau}^{K}, M_{2}\left(\mu_{t}^{K} \right) > \frac{1}{4{\color{black}\sigma_{K}^{2}}K^{4+\varepsilon}} \right) \leqslant \P\left(\exists t < T \wedge \widehat{\tau}^{K}, M_{2}\left(\mu_{t}^{K} \right) > K^{\varepsilon}  \right) = 0. \]
Now, for all $T > 0$, $\left\{\check{\tau}^{K} \geqslant T \wedge \widehat{\tau}^{K} \right\} = \left\{\check{\tau}^{K} \geqslant \widehat{\tau}^{K} \right\} \cup \left\{\check{\tau}^{K} \geqslant T \right\} $, and so
\begin{align*}
\P\left(\left\{\check{\tau}^{K} \geqslant T \wedge \widehat{\tau}^{K} \right\} \cap \left\{\tau^{K} < T \right\} \right) & \leqslant \P\left(\left[\left\{\widehat{\tau}^{K} < T \right\} \cap \left\{\widehat{\tau}^{K} \leqslant \check{\tau}^{K} \right\}\right] \cup \left\{\widehat{\tau}^{K} < T \leqslant \check{\tau}^{K} \right\} \right) \\ 
& \leqslant \P\left(\widehat{\tau}^{K} \leqslant T \wedge \check{\tau}^{K} \right).
\end{align*}
Note that  for all $T > 0$,
\begin{align*}
\P\left(\tau^{K} < T \right) & \leqslant \P\left(\check{\tau}^{K} < T \wedge \widehat{\tau}^{K} \right) + \P\left(\widehat{\tau}^{K} \leqslant T \wedge \check{\tau}^{K} \right).
\end{align*}
Hence, from Lemma \ref{Lem_Proba_sup_M2_K_epsilon}, we have for all $T\geqslant 0$,  $\lim_{K\to +\infty}{\P\left(\tau^{K} < T \right)} = 0$ and the announced result follows. \qedhere
\end{proof}

\subsection{Proof of Lemma \ref{Lem_Proba_sup_M2_K_epsilon}\label{Sous_section_4_5_Sortie_domaine}} 
Note that if $M_{2}\left(\mu_{s}^{K} \right)$ becomes larger than $\overline{\theta}\overline{m}_{2}/\underline{b}$, the drift term in the right-hand side of (\ref{Inegalites_M2}) is then negative preventing $M_{2}\left(\mu_{s}^{K} \right)$ to become excessively large unless the martingale $M^{K, P_{\id^{2}, 1}}$ has an exceptional path with large increments on a small time interval. We then expect to have large deviations estimates and bounds on the time of exit from attractive domains for $M_{2}\left(\mu^{K} \right)$. However, we cannot consider establishing directly a large deviation principle on $M_{2}\left(\mu^{K} \right)$, in particular because of the slow-fast limit. The approach considered below is based on coupling arguments between $M_{2}\left(\mu^{K} \right)$ and a simpler process for which large deviations estimates are known.  \\

Let us introduce $u_{0} := 0$ and for all $\ell \in \N^{\star}$ the real number $u_{\ell} := 3^{\ell} K^{\frac{\varepsilon}{2}}$ and the interval $\III_{\ell} :=\left[u_{\ell - 1}, u_{\ell + 1}\right)$. We will look at the process $M_{2}\left(\mu^{K} \right)$ at successive exit times of $\left(\III_{\ell}\right)_{\ell \in \N^{\star}}$. We set $L_{0}^{K} := 1$ when $M_{2}\left(\mu_{0}^{K} \right) \in [0, 2u_{1} + 1]$ and $L_{0}^{K} := \ell_{0} \in \N^{\star}$ when $M_{2}\left(\mu_{0}^{K} \right) \in \left(2u_{\ell_{0} - 1} + 1, 2u_{\ell_{0}} + 1 \right]$. We also set $T_{0}^{K} :=0$ and by induction on $k\in \N^{\star}$, if $\left(L_{i}^{K} \right)_{1\leqslant i \leqslant k}$ and $\left(T_{i}^{K} \right)_{1\leqslant i \leqslant k}$ are constructed, 
\[T_{k+1}^{K} := \inf\left\{t \geqslant T_{k}^{K} \left| \phantom{1^{1^{1^{1}}}} \hspace{-0.7cm} \right.  M_{2}\left(\mu_{t}^{K} \right) \notin \III_{L_{k}^{K}} \right\} \]
and \[ L_{k+1}^{K} := \min\left\{\ell \in \N \left| \phantom{1^{1^{1^{1}}}} \hspace{-0.7cm} \right. M_{2}\left(\mu_{T_{k+1}^{K}}^{K} \right) \leqslant 2u_{\ell} + 1 \right\}. \]
Note that from (\ref{Hypothese_C_etoile}), for $K$ large enough, $L_{0}^{K} = 1$ because $M_{2}\left(\mu_{0}^{K} \right) \leqslant C_{2}^{\star}$. Since $\left(\mu_{t}^{K} \right)_{t\geqslant 0}$ is a pure jump process, note that $\left(M_{2}\left(\mu_{t}^{K} \right) \right)_{t\geqslant 0}$ is also a pure jump process. The definition of $u_{\ell}$ is motivated by the fact that, for all $k \geqslant 0$, $L_{k+1}^{K} \leqslant L_{k}^{K} + 1$, which follows from the next lemma.
\begin{Lem}
Let $\vartheta_{1}$ be the first jump time of $\mu^{K}$. Then, for $K$ large enough \[M_{2}\left(\mu_{\vartheta_{1}}^{K} \right) \leqslant 2 M_{2}\left(\mu_{0}^{K} \right) + 1.\]
In particular, for all $t >0$, $\Delta M_{2}\left(\mu_{t}^{K} \right) := M_{2}\left(\mu_{t}^{K} \right) - M_{2}\left(\mu_{t^{-}}^{K} \right) \leqslant M_{2}\left(\mu_{t^{-}}^{K} \right) + 1$.
\label{Lem_Accroissements_M2}
\end{Lem}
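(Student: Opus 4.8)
The plan is to examine the effect of a single jump of the measure-valued process $\mu^K$ on the second moment $M_2$ and show that, because jumps are of order $1/K$ in mass and of order $1/\sqrt K$ in the location of the new atom relative to the dilated scale, the relative increment of $M_2$ is at most of multiplicative order $1+o(1)$, hence bounded by $2M_2 + 1$ for $K$ large. First I would recall from Proposition \ref{Prop_Generateur_Couple_Lent_Rapide} the precise form of the two types of transitions of $(z^K,\mu^K)$. At a resampling event an atom at $x$ is replaced by an atom at $y$ (both already in the dilated coordinates) and the whole measure is translated by $-(y-x)/K$ to keep it centered; at a mutation event an atom at $x$ is replaced by an atom at $x+h/\sqrt K$ and the measure is translated by $-h/K^{3/2}$. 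In both cases the support of $\mu^K_{t^-}$ is contained, by definition of $\check\tau^K$ (but actually we only need the trivial bound here, or rather: before $\vartheta_1$ there is nothing to control), in a set for which every atom $x_i$ satisfies $x_i^2 \le K M_2(\mu^K_{t^-})$, since $\mu^K$ has $K$ atoms of mass $1/K$.

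The key computation is then elementary: write $\mu^K_{\vartheta_1} = \tau_{-c}\sharp(\mu^K_0 - \delta_x/K + \delta_{x'}/K)$ where $c$ is the (tiny) recentering shift and $x'$ is the new atom location ($y$ or $x+h/\sqrt K$). Expanding,
\begin{align*}
M_2\left(\mu^K_{\vartheta_1}\right) &= \left\langle (\id - c)^2,\ \mu^K_0 - \tfrac{\delta_x}{K} + \tfrac{\delta_{x'}}{K}\right\rangle \\
&= M_2\left(\mu^K_0\right) + \tfrac{1}{K}\left({x'}^2 - x^2\right) - 2c\left\langle \id, \mu^K_0 - \tfrac{\delta_x}{K} + \tfrac{\delta_{x'}}{K}\right\rangle + c^2,
\end{align*}
where I used $\langle\id,\mu^K_0\rangle = 0$. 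Now $x^2 \le K M_2(\mu^K_0)$; for the mutation case $|x'| \le |x| + A_m/\sqrt K$ by Assumption \textbf{(A2)(a)}, so ${x'}^2 \le 2x^2 + 2A_m^2/K \le 2K M_2(\mu^K_0) + 2A_m^2/K$, whence $\frac1K({x'}^2 - x^2) \le 2M_2(\mu^K_0) + O(1/K^2)$; the recentering shift $c$ is $O(1/K^{3/2})$ and $\langle\id,\mu^K_0 - \delta_x/K + \delta_{x'}/K\rangle = (x'-x)/K = O(1/K^{3/2})$, so the last two terms are $O(1/K^3)$. For the resampling case, $x^2, {x'}^2 \le K M_2(\mu^K_0)$ so $\frac1K({x'}^2 - x^2) \le M_2(\mu^K_0)$, $c = -(y-x)/K = O(\sqrt{M_2(\mu^K_0)}/\sqrt K)$, and the correction terms are negligible. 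In both cases $M_2(\mu^K_{\vartheta_1}) \le 2M_2(\mu^K_0) + 1$ for $K$ large enough, uniformly in the configuration. The last assertion of the lemma is the same statement applied at an arbitrary jump time $t$, using the Markov property (i.e. restarting the process from $\mu^K_{t^-}$): $\Delta M_2(\mu^K_t) = M_2(\mu^K_t) - M_2(\mu^K_{t^-}) \le M_2(\mu^K_{t^-}) + 1$.

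I do not expect a genuine obstacle here; the only mild subtlety is making sure the recentering terms are handled cleanly and that one really uses $M_2$ (not a higher moment) to bound $x^2$, which works precisely because $\mu^K$ is a uniform average of $K$ Dirac masses so $\max_i x_i^2 \le \sum_i x_i^2 = K M_2(\mu^K)$. One should also note that before $\vartheta_1$ the process has not yet been stopped, so no appeal to $\check\tau^K$ is needed; the bound is purely algebraic given the jump structure. The use of Assumption \textbf{(A2)(a)} (compact support of the mutation kernel) is what keeps the new mutant atom from escaping too far in one step, and as remarked after Theorem \ref{Thm_CEAD_Jouet} this could be relaxed to an exponential tail bound.
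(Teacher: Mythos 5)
Your argument is essentially the paper's own proof: bound the new configuration's second moment about the old centre (the recentering can only decrease $M_{2}$, since the shift $c$ equals the new mean so the correction is $-c^{2}\leqslant 0$), use $x^{2}\leqslant K M_{2}\left(\mu_{0}^{K}\right)$ for any atom, and use the compact support $A_{m}$ of the mutation kernel. One slip to fix in the mutation case: as written, your chain ${x'}^{2}\leqslant 2x^{2}+2A_{m}^{2}/K$ followed by $\tfrac{1}{K}\left({x'}^{2}-x^{2}\right)\leqslant 2M_{2}\left(\mu_{0}^{K}\right)+O\left(1/K^{2}\right)$ (i.e. dropping the subtracted $x^{2}$) only yields $M_{2}\left(\mu_{\vartheta_{1}}^{K}\right)\leqslant 3M_{2}\left(\mu_{0}^{K}\right)+O\left(1/K^{2}\right)$, which does not give the stated bound; instead keep the cancellation, ${x'}^{2}-x^{2}=2xh/\sqrt{K}+h^{2}/K\leqslant 2A_{m}\left|x\right|/\sqrt{K}+A_{m}^{2}/K$, so that $\tfrac{1}{K}\left({x'}^{2}-x^{2}\right)\leqslant 2A_{m}\sqrt{M_{2}\left(\mu_{0}^{K}\right)}/K+A_{m}^{2}/K^{2}\leqslant M_{2}\left(\mu_{0}^{K}\right)+1$ uniformly for $K$ large, which is exactly the estimate used in the paper and gives $2M_{2}\left(\mu_{0}^{K}\right)+1$ in both cases.
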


\begin{proof}
Let us consider $\mu_{0}^{K} := \frac{1}{K}\sum_{k \, = \, 1}^{K}{\delta_{x_{k}}}$ and $y = \left(y_{k} \right)_{1\leqslant k \leqslant K}$ defined by  
\[y = \left\{\begin{array}{ll}
\left(x_{1}, \cdots, x_{i-1}, x_{i} + \frac{H}{\sqrt{K}}, x_{i+1}, \cdots, x_{K} \right) & {\rm on} \ E_{1} , \\
\left(x_{1}, \cdots, x_{j-1}, x_{i}, x_{j+1}, \cdots, x_{K}\right) & {\rm on} \ E_{2},
\end{array} \right.  \] where $H$ follows the mutation law $m(x_{i}, \dd h)$, $E_{1}$ is the event ``a mutation occurs at time $\vartheta_{1}$'' and $E_{2}$ is the event ``a resampling between $i$ and $j$ occurs at time $\vartheta_{1}$''. Note that $\mu_{\vartheta_{1}}^{K} = \tau_{-\left\langle \id, \frac{1}{K} \sum_{k\, = \, 1}^{K}{\delta_{y_{k}}} \right\rangle } \sharp \left(\frac{1}{K} \sum_{k\, = \, 1}^{K}{\delta_{y_{k}}}  \right)$. Hence, noting that for all $k \in \{1, \cdots, K \}$, \[\left|x_{k} \right| \leqslant \sqrt{K}\left(1 + M_{2}\left(\mu_{0}^{K} \right) \right)\]  and using Assumption \textbf{(A2)}, for $K$ large enough we have that
\begin{align*}
M_{2}\left(\mu_{\vartheta_{1}}^{K} \right) &  \leqslant M_{2}\left(\frac{1}{K}\sum_{i\, = \, 1}^{K}{\delta_{y_{i}}}  \right)  = \frac{1}{K}\sum_{i\, = \, 1}^{K}{y_{i}^{2}} \\
&  \leqslant  \left\{\begin{array}{ll}
\cfrac{1}{K}{\displaystyle{\sum\limits_{k\, = \, 1}^{K}{x_{k}^{2}}}} + \cfrac{2A_{m}x_{k}}{K^{\frac{3}{2}}} + \cfrac{A_{m}^{2}}{K^{2}} & \quad {\rm{on}} \quad \ E_{1}, \vspace{0.2cm} \\
\cfrac{1}{K}{\displaystyle{\sum_{k\, = \, 1}^{K}{x_{k}^{2}}}} + \cfrac{x_{j}^{2} - x_{i}^{2}}{K} & \quad {\rm{on}} \quad \ E_{2},
\end{array} \right. \\
& \leqslant \left\{\begin{array}{ll}
M_{2}\left(\mu_{0}^{K} \right) + \cfrac{2A_{m}\left(1 + M_{2}\left(\mu_{0}^{K} \right)\right)}{K}  + \cfrac{A_{m}^{2}}{K^{2}} & \quad {\rm{on}} \quad E_{1}, \vspace{0.2cm} \\
2 M_{2}\left(\mu_{0}^{K} \right) & \quad {\rm{on}} \quad E_{2},
\end{array} \right. \\
& \leqslant 2M_{2}\left(\mu_{0}^{K} \right) + 1. 
\end{align*}
\end{proof}

\indent Each of the previous steps will be called \emph{transitions of $M_{2}\left(\mu_{t}^{K} \right)$}. The idea of the proof of Lemma \ref{Lem_Proba_sup_M2_K_epsilon} is to estimate the transition probabilities of the sequence $\left(L_{k}^{K}\right)_{k \in \N}$ in order to construct a coupling between $\left(L_{k}^{K}\right)_{k \in \N}$ and a biased random walk  on $\N^{\star}$ reflected in $1$. \\ 
\indent At time $\widehat{\tau}^{K}$, $M_{2}\left(\mu_{\widehat{\tau}^{K}}^{K} \right) \in \III_{\left\lfloor \frac{\varepsilon}{2\log(3)}\log(K) \right\rfloor}$ where $\lfloor x \rfloor$ is the lower integer part of $x$. So, the number of transitions of $M_{2}\left(\mu_{t}^{K} \right)$ before $\widehat{\tau}^{K}$ is greater than or equal to $T_{k_{0}}^{K}$ where $k_{0}$ is the first integer such that $L_{k_{0}}^{K} = \left\lfloor \frac{\varepsilon}{2\log(3)}\log(K) \right\rfloor$. \\
\indent By using estimates on the number of steps that a biased random walk takes to reach $\left\lfloor \frac{\varepsilon}{2\log(3)}\log(K) \right\rfloor$ and estimates on the durations between two transitions $T_{k+1}^{K}-T_{k}^{K}$, we will deduce a lower bound, exponential in $K$, on $\widehat{\tau}^{K}$. An additional difficulty comes from the fact that the previous coupling argument is only valid up to time $\check{\tau}^{K}$. Hence we will construct a coupling that takes into account the possibility that $\check{\tau}^{K}$ happens during each transition step. The proof is divided into four steps: in Section \ref{Sous-sous-section_4_5_1_One-step_transitions}, we characterise the behaviour of the first transition step; in Section \ref{Sous-sous-section_4_5_2_Construction_of_the_coupling}, the proposed coupling is constructed; in Section \ref{Sous-sous-section_4_5_3_Attractive_domain_exit_estimates_for_random_walks}, we give estimates on the first exit time from an attractive domain for random walks; finally, we conclude in Section \ref{Sous-sous-section_4_5_4_Conclusion}.

\subsubsection{One-step transitions\label{Sous-sous-section_4_5_1_One-step_transitions}}
 In this section we look at only one transition: we suppose that $M_{2}\left(\mu_{0}^{K} \right) \in \left[2u_{\ell - 1} + 1, 2u_{\ell} + 1 \right)$ for $\ell \in \N^{\star}$ fixed and we look for bounds on the first transition probabilities of $M_{2}\left(\mu_{t}^{K} \right)$ and on time $T_{1}^{K}$.  The main result is the following.

 \begin{Lem}  For all $\ell \in \N$, for all $\mu_{0}^{K } \in \MM_{1, K}^{c, 2}(\R)$ such that $$M_{2}\left(\mu_{0}^{K } \right) \in \left[2u_{\ell-1}+1, 2u_{\ell} + 1 \right),$$  there exist a constant $C>0$, $K_{0} \in \N^{\star}$ such that for all $K \geqslant K_{0}$, we have
 \begin{equation}
 \P_{\mu_{0}^{K}}\left(\left[\left\{L_{1}^{K} = \ell-1 \right\} \cap \left\{\check{\tau}^{K} > T_{1}^{K} \geqslant K{\color{black}\sigma_{K}^{2}} \right\}\right] \cup \left\{T_{1}^{K} \geqslant \check{\tau}^{K}  \right\}\right) \geqslant  \frac{1}{2} + \eta_{K},
 \label{Eq_one-step_transition}
 \end{equation}
 where $\eta_{K} := \frac{1}{2} - \frac{C}{K^{\varepsilon/2}}$. 
  \label{Lem_One-step_transition}
\end{Lem}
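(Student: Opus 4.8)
The plan is to analyze the single transition by distinguishing two regimes according to the size of the drift, then to couple the behavior of $M_2(\mu^K)$ up to time $\check\tau^K$ with two independent exponential clocks governing, respectively, the resampling-type events that decrease $M_2$ and the mutation-type events that increase it. Concretely, starting from $\mu_0^K$ with $M_2(\mu_0^K)\in[2u_{\ell-1}+1,2u_\ell+1)$, note first that by the upper inequality in (\ref{Inegalites_M2}) of Lemma \ref{Lem_Controle_M2_Trajectoire_Esperance}, as long as $M_2(\mu_s^K)>\overline\theta\overline m_2/\underline b$ (which holds on the whole interval $\III_\ell$ once $\ell\geqslant 1$ and $K$ is large, since $u_1=3K^{\varepsilon/2}\to+\infty$), the finite-variation part of $M_2(\mu^K_{t\wedge\check\tau^K})$ is strictly decreasing at rate at least $\tfrac{\underline b}{K^2\sigma^2}(M_2(\mu_s^K)-\overline\theta\overline m_2/\underline b)$. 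So the process has a genuine drift pushing it toward $u_{\ell-1}$, and leaving $\III_\ell$ through the upper end $u_{\ell+1}$ requires an exceptional excursion of the martingale $M^{K,P_{\id^2,1}}$.

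First I would make this quantitative by writing $M_2(\mu^K_{t\wedge\check\tau^K}) = M_2(\mu_0^K) + A_{t\wedge\check\tau^K} + M^{K,P_{\id^2,1}}_{t\wedge\check\tau^K}$ with $A$ the (negative, on $\III_\ell$) finite-variation part, and bounding $\E\langle M^{K,P_{\id^2,1}}\rangle_{t\wedge\check\tau^K}\leqslant \tfrac{C}{K^2\sigma^2}\E(t\wedge\check\tau^K)$ from Lemma \ref{Crochet_martingale_M2}. Because the drift slope and the bracket rate are both of order $1/K^2\sigma^2$, it is natural to pass to the rescaled time $t\mapsto K^2\sigma^2 t$: in that clock the drift is $O(1)$ and the quadratic variation grows at rate $O(1)$, so the exit problem from $\III_\ell$ becomes a standard exit-from-an-attractive-interval estimate. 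Using the exponential (Bernstein/Freidlin--Wentzell-type) bound for a martingale with controlled bracket and jumps bounded by $\Delta M_2(\mu_t^K)\leqslant M_2(\mu_{t^-}^K)+1$ from Lemma \ref{Lem_Accroissements_M2}, I would show that the probability of exiting through the upper boundary $u_{\ell+1}$ before exiting through $u_{\ell-1}$, and before $\check\tau^K$, is at most $\tfrac12 - (\tfrac12 - C K^{-\varepsilon/2})$, i.e. the event $\{L_1^K=\ell-1\}$ on $\{\check\tau^K>T_1^K\}$ has probability at least $\tfrac12+\eta_K$ once one also disposes of the events $\{T_1^K\geqslant\check\tau^K\}$ (included in the statement) and the requirement $T_1^K\geqslant K\sigma^2$. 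The latter time-lower-bound is handled separately: the total instantaneous jump rate of $\mu^K$ (and hence of $M_2(\mu^K)$) on the evolutionary time scale $1/K\sigma^2$ is of order $K\cdot\tfrac{1}{K\sigma^2}=\tfrac{1}{\sigma^2}$, so to leave $\III_\ell$ — whose width is $2u_{\ell-1}\cdot 3 = 8\cdot 3^{\ell-1}K^{\varepsilon/2}$ while each jump changes $M_2$ by $O(\text{current value}/K)\wedge(1/\sqrt K)\cdot\dots$ — requires many jumps, and a concentration bound for the number of Poisson jumps up to a deterministic time gives $\P(T_1^K<K\sigma^2)\leqslant CK^{-\varepsilon/2}$; this error is absorbed into $\eta_K$.

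The main obstacle I expect is the careful bookkeeping of constants and error terms so that the single combined lower bound $\tfrac12+\eta_K$ with $\eta_K=\tfrac12 - CK^{-\varepsilon/2}$ comes out uniformly over $\ell$: the drift coefficient, the bracket bound from Lemma \ref{Crochet_martingale_M2}, and the jump-size bound from Lemma \ref{Lem_Accroissements_M2} all depend on the current value of $M_2$, which ranges over all of $\III_\ell$, and one must check that the exponential-inequality estimate degrades only by a factor $K^{-\varepsilon/2}$ and not, say, by a factor depending on $\ell$ (this is exactly why the dyadic-type thresholds $u_\ell=3^\ell K^{\varepsilon/2}$ with ratio $3$ are chosen: the relative width of $\III_\ell$ is bounded, so the drift-to-fluctuation ratio is scale-invariant). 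A secondary technical point is that the martingale $M^{K,P_{\id^2,1}}$ is only defined up to $\check\tau^K$ via Proposition \ref{Prop_Martingale_f_NON_bornee_n}, so every estimate must be written with the stopped process and the event $\{T_1^K\geqslant\check\tau^K\}$ carried along; but since that event is explicitly part of the union in (\ref{Eq_one-step_transition}), it only simplifies matters. Once these estimates are assembled, $L_0^K=1$ for $K$ large by (\ref{Hypothese_C_etoile}), and (\ref{Eq_one-step_transition}) follows.
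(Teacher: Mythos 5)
Your plan for the upper\--exit part is in the right spirit (negative drift plus the bracket bound of Lemma \ref{Crochet_martingale_M2} and an exit-from-an-attractive-interval estimate, which is what the paper does via an exponential \textsc{Girsanov}-type martingale combined with \textsc{Markov}/\textsc{Doob} bounds), but there is a genuine gap in how you handle the requirement $T_{1}^{K}\geqslant K\sigma^{2}$. Your argument is a Poisson jump-counting one: you claim the window $[0,K\sigma^{2}]$ contains too few jumps to cross $\III_{\ell}$. In fact the opposite holds: after the time change the total jump rate of $\mu^{K}$ is of order $1/\sigma^{2}$, so a window of length $K\sigma^{2}$ contains of order $K$ jumps. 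Moreover single jumps of $M_{2}\left(\mu^{K}\right)$ are not uniformly small — Lemma \ref{Lem_Accroissements_M2} only gives $\Delta M_{2}\leqslant M_{2}\left(\mu_{t^{-}}^{K}\right)+1$, and a resampling involving an extreme atom can move $M_{2}$ by a macroscopic amount — so neither "few jumps" nor "many small jumps" closes the argument. The correct mechanism, and the one the paper uses, is martingale cancellation: over $[0,K\sigma^{2}\wedge\check{\tau}^{K}]$ the quadratic variation is only $O(1/K)$ by Lemma \ref{Crochet_martingale_M2}, and the drift contribution is $O\left(u_{\ell+1}/K\right)$, both negligible compared with the distance of order $u_{\ell}\sim 3^{\ell}K^{\varepsilon/2}$ to either boundary; \textsc{Doob}'s maximal inequality then gives $\P\left(T_{1}^{K}<K\sigma^{2},\,T_{1}^{K}<\check{\tau}^{K}\right)=O\left(K^{-(1+\varepsilon)}\right)$.

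A second, related weakness is in the exponential bound you invoke for the upper exit. A \textsc{Bernstein}-type inequality with jump parameter $M_{2}\left(\mu_{t^{-}}^{K}\right)+1$, which on $\III_{\ell}$ is of the same order as the deviation $u_{\ell}$ you need to exclude, only yields a constant bound, not $O\left(K^{-\varepsilon/2}\right)$. You also do not address the possibility that the process lingers in $\III_{\ell}$ for a long time, letting the bracket grow: the paper handles this by truncating at a deterministic time $t_{0}^{K}$ of order $K^{2}\sigma^{2}$, using the downward drift (via \textsc{Markov}'s inequality) to show $\P\left(T_{1}^{K,+}>t_{0}^{K}\right)=O\left(K^{-\varepsilon}\right)$, and by continuing the martingale past $\check{\tau}^{K}$ with an independent Brownian motion (the auxiliary process $M_{2}^{K,+}$) so that optional stopping and \textsc{Doob}'s inequality can be applied cleanly; the remaining bad events (bracket exceeding its expected order, exit before $s_{0}^{K}$) are controlled by second-moment arguments, and it is these terms — not the exponential one — that produce the $C K^{-\varepsilon/2}$ in $\eta_{K}$. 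Finally, your claim that the drift is negative on all of $\III_{\ell}$ fails for $\ell=1$ (where $\III_{1}=[0,u_{2})$ reaches down to $0$); what actually matters, and what you should quantify, is that the upward drift is $O\left(1/K^{2}\sigma^{2}\right)$ per unit time, hence $O(1)$ over $t_{0}^{K}$, negligible against $u_{\ell}$.
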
 

The proof of the previous result is based on Corollary \ref{Cor_Ineg_Trajectoires_M2} which is obtained as a straightforward consequence of Lemma \ref{Lem_Controle_M2_Trajectoire_Esperance}. 

\begin{Cor} Let $\ell \in \N^{\star}$ be fixed. There exists $K_{0} \in \N^{\star}$ large enough such that for all $K \geqslant K_{0}$, for all $\mu_{0}^{K} \in \MM_{1, K}^{c, 2}(\R)$ such that $M_{2}\left(\mu_{0}^{K}\right) \in \left[2u_{\ell - 1}+1, 2u_{\ell} + 1 \right)$, we have for all $t \leqslant T_{1}^{K}$ that
\begin{align*}
& 2u_{\ell-1} +1 - \frac{5\overline{b}u_{\ell + 1} + \underline{\theta}\underline{m}_{2}}{2K^{2}{\color{black}\sigma_{K}^{2}}}t\wedge \check{\tau}^{K} + M_{t\wedge \check{\tau}^{K}}^{K, P_{\id^{2}, 1}} \\ 
& \hspace{4cm} \leqslant M_{2}\left(\mu_{t\wedge \check{\tau}^{K}}^{K} \right) \leqslant 2u_{\ell} + 1 - \frac{\underline{b}u_{\ell - 1}}{K^{2}{\color{black}\sigma_{K}^{2}}}t\wedge \check{\tau}^{K} + M_{t\wedge \check{\tau}^{K}}^{K, P_{\id^{2}, 1}}.
\end{align*}
\label{Cor_Ineg_Trajectoires_M2}
\end{Cor}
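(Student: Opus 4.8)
The plan is to read this off directly from the two‑sided bound of Lemma \ref{Lem_Controle_M2_Trajectoire_Esperance} by exploiting that, before its first exit time $T_1^K$, the process $M_2(\mu^K)$ stays confined in a single dyadic‑type interval $\III_\ell$. So the whole argument is: establish the confinement, then substitute the resulting pointwise bounds on $M_2(\mu_s^K)$ into the drift integral of Lemma \ref{Lem_Controle_M2_Trajectoire_Esperance}, the integrand being monotone in $M_2(\mu_s^K)$.

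Concretely, I would first note that, by construction, $T_1^K$ is the first time $M_2(\mu^K)$ leaves $\III_{L_0^K}$; and the hypothesis $M_2(\mu_0^K)\in[2u_{\ell-1}+1,\,2u_\ell+1)$ places $M_2(\mu_0^K)$ inside $\III_\ell=[u_{\ell-1},u_{\ell+1})$ for $K$ large, since $2u_\ell+1\leqslant 3u_\ell=u_{\ell+1}$ once $u_\ell\geqslant 1$; hence $L_0^K=\ell$ and $u_{\ell-1}\leqslant M_2(\mu_s^K)<u_{\ell+1}$ for every $s<T_1^K$, in particular for all $s\in[0,t\wedge\check\tau^K]$ whenever $t\leqslant T_1^K$. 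Then I plug these into Lemma \ref{Lem_Controle_M2_Trajectoire_Esperance} at time $t\wedge\check\tau^K$: for the right‑hand inequality, bound $M_2(\mu_0^K)\leqslant 2u_\ell+1$ and $M_2(\mu_s^K)\geqslant u_{\ell-1}$ in the drift integral, which (after choosing $K$ large so the fixed constant $\overline\theta\overline m_2/\underline b$ is absorbed) yields the coefficient $-\underline b u_{\ell-1}/(K^2\sigma^2)$; for the left‑hand inequality, bound $M_2(\mu_0^K)\geqslant 2u_{\ell-1}+1$ and $M_2(\mu_s^K)< u_{\ell+1}$ in the drift integral, collecting the $\underline\theta\underline m_2$, $\overline\theta\overline m_2$, $1/K$ contributions into the single coefficient $-(5\overline b u_{\ell+1}+\underline\theta\underline m_2)/(2K^2\sigma^2)$. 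The martingale term $M^{K,P_{\id^2,1}}_{t\wedge\check\tau^K}$ is carried through untouched.

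The only point requiring any care — and it is minor — is the identification $L_0^K=\ell$ (equivalently, that the interval $[2u_{\ell-1}+1,2u_\ell+1)$ is really contained in $\III_\ell$ and not spilling over), together with the ``$K$ large enough'' bookkeeping needed to absorb the $O(1)$ and $O(1/K)$ additive constants appearing in Lemma \ref{Lem_Controle_M2_Trajectoire_Esperance} into the stated coefficients; once this is fixed everything else is a monotone substitution and the claim follows.
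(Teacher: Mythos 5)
Your overall route is the one the paper intends (the paper gives no details, calling the corollary a straightforward consequence of Lemma \ref{Lem_Controle_M2_Trajectoire_Esperance}), and the confinement step and the left-hand inequality are fine: on $[0,T_{1}^{K})$ one has $u_{\ell-1}\leqslant M_{2}(\mu_{s}^{K})<u_{\ell+1}$, and in the lower bound of (\ref{Inegalites_M2}) the only parasitic term is $+\tfrac{2\overline{\theta}\,\overline{m}_{2}}{5\overline{b}K}$, which is indeed absorbed, for $K$ large, by the extra $+\tfrac{\underline{\theta}\,\underline{m}_{2}}{2}$ available in the stated coefficient $\tfrac{5\overline{b}u_{\ell+1}+\underline{\theta}\,\underline{m}_{2}}{2}$.

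The right-hand inequality, however, does not follow by the absorption you describe. Plugging $M_{2}(\mu_{s}^{K})\geqslant u_{\ell-1}$ into the upper bound of (\ref{Inegalites_M2}) yields the drift $-\tfrac{\underline{b}\,u_{\ell-1}-\overline{\theta}\,\overline{m}_{2}}{K^{2}\sigma^{2}}\,(t\wedge\check{\tau}^{K})$, i.e.\ the target plus a surplus $+\tfrac{\overline{\theta}\,\overline{m}_{2}}{K^{2}\sigma^{2}}(t\wedge\check{\tau}^{K})$. This surplus has exactly the same $t$-scaling as the claimed drift term, so no choice of $K_{0}$ (nor the slack in $M_{2}(\mu_{0}^{K})<2u_{\ell}+1$, which is an additive constant) can absorb it: ``choosing $K$ large'' does nothing here. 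To obtain the stated coefficient one must go back to the un-weakened inequality inside the proof of Lemma \ref{Lem_Controle_M2_Trajectoire_Esperance} (equivalently, Lemma \ref{Cor_moment_ordre_2} combined with (\ref{Eq_M3})), where the factor multiplying $M_{2}(\mu_{s}^{K})$ is $2\underline{b}-O(K^{-\varepsilon/2})$ rather than $\underline{b}$; the spare $\approx\underline{b}\,M_{2}(\mu_{s}^{K})\geqslant\underline{b}\,u_{\ell-1}\geqslant 3\underline{b}\,K^{\varepsilon/2}$ then swallows the constant $\overline{\theta}\,\overline{m}_{2}$ for every $\ell\geqslant 2$ and all $K$ large, uniformly in $\ell$. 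Note finally that this fix gives nothing when $\ell=1$, since $u_{0}=0$: there the claimed upper bound has zero drift coefficient and cannot be reached by this route at all, a caveat the paper's terse ``straightforward consequence'' also glosses over; your write-up, which treats the whole issue as bookkeeping of additive constants, misses precisely this point.
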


\begin{proof}[Proof of Lemma \ref{Lem_One-step_transition}] By passing on the complementary of (\ref{Eq_one-step_transition}), it is equivalent to prove that
\[\P_{\mu_{0}^{K}}\left(\left[\left\{L_{1}^{K} = \ell + 1 \right\} \cup \left\{T_{1}^{K} < K{\color{black}\sigma_{K}^{2}} \right\} \right] \cap \left\{T_{1}^{K} < \check{\tau}^{K} \right\} \right)< \frac{1}{2} - \eta_{K}. \]
As for all events $A, B$, $\P\left(A \cup B \right) = \P\left(A\cap B^{c} \right) + \P(B)$, we have 
\begin{equation}
\begin{aligned}
& \P_{\mu_{0}^{K}}\left(\left[\left\{L_{1}^{K} = \ell + 1 \right\} \cup \left\{T_{1}^{K} < K{\color{black}\sigma_{K}^{2}} \right\} \right] \cap \left\{T_{1}^{K} < \check{\tau}^{K} \right\} \right) \\ 
& \hspace{1cm} = \P_{\mu_{0}^{K}}\left(\left\{T_{1}^{K} < \check{\tau}^{K} \right\} \cap \left\{L_{1}^{K} = \ell + 1 \right\} \cap \left\{T_{1}^{K} \geqslant K{\color{black}\sigma_{K}^{2}} \right\}  \right) \\ 
& \hspace{2cm} + \P_{\mu_{0}^{K}}\left(\left\{T_{1}^{K} < \check{\tau}^{K} \right\} \cap \left\{T_{1}^{K} < K{\color{black}\sigma_{K}^{2}} \right\} \right) \\
& \hspace{1cm} \leqslant \P_{\mu_{0}^{K}}\left(\left\{T_{1}^{K} < \check{\tau}^{K} \right\} \cap \left\{L_{1}^{K} = \ell + 1 \right\}   \right) + \P_{\mu_{0}^{K}}\left(\left\{T_{1}^{K} < \check{\tau}^{K} \right\} \cap \left\{T_{1}^{K} < K{\color{black}\sigma_{K}^{2}} \right\} \right).
\end{aligned}
\label{Ineq_One_step}
\end{equation}

\textit{Step 1. Control of the first right-hand term of (\ref{Ineq_One_step}).} Let us consider the martingale $\left({\rm Mart}_{t}^{K, +} \right)_{t\geqslant 0}$ defined by \[{\rm Mart}_{t}^{K, +} := \left\{\begin{array}{lll} 
M_{t}^{K, P_{\id^{2}, 1}} & {\rm{if}} & t\leqslant \check{\tau}^{K} \\
M_{\check{\tau}^{K}}^{K, P_{\id^{2}, 1}} + \frac{1}{K{\color{black}\sigma_{K}}}B_{t-\check{\tau}^{K}} & {\rm{if}} & t> \check{\tau}^{K}
\end{array}
\right. \] 
where $\left(B_{t} \right)_{t\geqslant 0}$ is a standard Brownian motion independent of $\left(\mu_{t}^{K}\right)_{t\geqslant 0}$. Note that from Lemma \ref{Crochet_martingale_M2}, there exists a constant $C>0$ such that for all $t\geqslant 0$ we have 
\begin{equation}
\E\left(\left\langle {\rm Mart}^{K, +} \right\rangle_{t} \right) \leqslant \frac{C}{K^{2}{\color{black}\sigma_{K}^{2}}}t.
\label{Eq_Mart_K_+}
\end{equation}
Let us consider the process $\left(M_{2}^{K, +}(t) \right)_{t\geqslant 0}$ defined by \[M_{2}^{K, +}(t) := \left\{\begin{array}{lll} 
M_{2}\left(\mu_{t}^{K} \right) & {\rm if} & t\leqslant \check{\tau}^{K} \\
M_{2}\left(\mu_{\check{\tau}^{K}}^{K} \right) - \frac{\underline{b}u_{\ell - 1}}{K^{2}{\color{black}\sigma_{K}^{2}}}\left(t - \check{\tau}^{K} \right) + \left({\rm Mart}_{t}^{K, +}  - {\rm Mart}_{\check{\tau}^{K}}^{K, +} \right) &  {\rm{if}} & t >  \check{\tau}^{K}
\end{array}
\right. .\]
Note that from Corollary \ref{Cor_Ineg_Trajectoires_M2}, for all $t \geqslant 0$, 
\begin{equation}
2u_{\ell - 1} + 1 - \frac{5\overline{b}u_{\ell + 1} + \underline{\theta}\underline{m}_{2}}{2K^{2}{\color{black}\sigma_{K}^{2}}}t + {\rm Mart}_{t}^{K, +} \leqslant M_{2}^{K, +}(t) \leqslant 2u_{\ell} + 1 - \frac{\underline{b}u_{\ell -1}}{K^{2}{\color{black}\sigma_{K}^{2}}}t + {\rm Mart}_{t}^{K, +}.
\label{Eq_M2_K_+}
\end{equation}
By \textsc{Girsanov}'s theorem, the process $\left(E_{t}^{K, +} \right)_{t\geqslant 0}$ defined for all $t\geqslant 0$ by \[E_{t}^{K, +} := \exp\left( {\rm Mart}_{t}^{K, +} - \frac{1}{2}\left\langle {\rm Mart}^{K, +} \right\rangle_{t} \right)\]
is a local martingale. Let $t_{0}^{K} := \frac{6 K^{2}{\color{black}\sigma_{K}^{2}}}{\underline{b}}$ be fixed and let us consider 
\begin{align*}
T_{1}^{K, +}  & := \inf\left\{t \geqslant 0 \left| \phantom{1^{1^{1^{1}}}} \hspace{-0.7cm} \right.  M_{2}^{K, +}\left(t \right) \not \in \III_{\ell} \right\}, \\
L_{1}^{K, +}  & := \min\left\{\ell \in \N \left| \phantom{1^{1^{1^{1}}}} \hspace{-0.7cm} \right. M_{2}^{K, +}\left(T_{1}^{K, +}\right)\leqslant 2u_{\ell} - 1  \right\}.
\end{align*}
 From (\ref{Eq_M2_K_+}) and denoting $\Gamma := \left\{\left\langle {\rm Mart}^{K, +}  \right\rangle_{T_{1}^{K, +}\wedge t_{0}^{K}} \leqslant \frac{2\underline{b}u_{\ell - 1}}{K^{2} {\color{black}\sigma_{K}^{2}}}T_{1}^{K,+}\wedge t_{0}^{K} \right\}$, we have 
\begin{align*}
1 & = \E_{\mu_{0}^{K}}\left(E_{T_{1}^{K, +}\wedge t_{0}^{K}}^{K, +} \right)  \geqslant \E_{\mu_{0}^{K}}\left(E_{T_{1}^{K, +}\wedge t_{0}^{K}}^{K, +}\II_{\Gamma} \right) \\
& \geqslant \exp\left(-2 u_{\ell} - 1 \right) \E_{\mu_{0}^{K}}\left(\exp\left(  M_{2}^{K, +}\left(T_{1}^{K, +}\wedge t_{0}^{K} \right) +   \frac{\underline{b}u_{\ell -1}}{K^{2}{\color{black}\sigma_{K}^{2}}}T_{1}^{K, +}\wedge t_{0}^{K} \right. \right. \\
& \hspace{8cm} - \left.\left. \frac{1}{2}\left\langle {\rm Mart}^{K, +} \right\rangle_{T_{1}^{K, +}\wedge t_{0}^{K}} \right)\II_{\Gamma}\right) \\
& \geqslant  \exp\left(-2  u_{\ell} -1 \right) \E_{\mu_{0}^{K}}\left(\exp\left(  M_{2}^{K, +}\left(T_{1}^{K, +} \wedge t_{0}^{K} \right) \right)\II_{\Gamma}\right) \\
& \geqslant \exp\left(-2  u_{\ell} -1 \right)\left[\exp\left(  u_{\ell+1} \right)\P_{\mu_{0}^{K}}\left(\left\{L_{1}^{K, +} = \ell + 1\right\} \cap \Gamma \cap \left\{T_{1}^{K, +} \leqslant t_{0}^{K} \right\}  \right) \right. \\
& \hspace{4cm} +\left. \P_{\mu_{0}^{K}}\left(\left\{L_{1}^{K, +} \leqslant \ell - 1\right\} \cap \Gamma \cap \left\{T_{1}^{K, +} \leqslant t_{0}^{K} \right\}  \right) \right].
\end{align*}
As for all events $A, B, C$, $\P\left(A\cap B \cap C \right) \geqslant \P(A) - \P\left(A \cap B^{c} \right) - \P\left(A \cap C^{c} \right)$, and denoting $q := \P_{\mu_{0}^{K}}\left(L_{1}^{K, +}  = \ell + 1\right) $ we have that
\begin{align*}
1  & \geqslant \exp\left(u_{\ell} -1 \right)\left[q - \P_{\mu_{0}^{K}}\left(\left\{ L_{1}^{K, +}  = \ell + 1 \right\} \cap \Gamma^{c}\right) \right. \\
& \hspace{4cm}-\left. \P_{\mu_{0}^{K}}\left(\left\{ L_{1}^{K, +}  = \ell + 1 \right\} \cap \left\{T_{1}^{K, +} > t_{0}^{K} \right\}\right) \right] \\
& \quad + \exp\left(-2u_{\ell} - 1 \right)\left[1-q - \P_{\mu_{0}^{K}}\left(\left\{ L_{1}^{K, +}  \leqslant \ell - 1 \right\} \cap \Gamma^{c}\right) \right.\\
& \hspace{4cm} \left.-\P_{\mu_{0}^{K}}\left(\left\{ L_{1}^{K, +}  \leqslant \ell - 1 \right\} \cap \left\{T_{1}^{K, +} > t_{0}^{K} \right\}\right) \right] \\
& \geqslant \left(\exp\left(u_{\ell} - 1 \right) - \exp\left(-2u_{\ell} -1 \right) \right)q + \exp\left(-2u_{\ell} -1 \right) \\
& \hspace{0.75cm} - \left( \exp\left(u_{\ell} -1 \right) + \exp\left(-2u_{\ell} -1 \right)\right)\left[\P_{\mu_{0}^{K}}\left(\Gamma^{c} \right)  + \P_{\mu_{0}^{K}}\left(T_{1}^{K, +} > t_{0}^{K} \right) \right].
\end{align*}
Then, considering $s_{0}^{K} := \frac{u_{\ell - 1}K^{2}{\color{black}\sigma_{K}^{2}}}{5\overline{b}u_{\ell + 1} + \underline{\theta}\underline{m}_{2}} < t_{0}^{K}$ be fixed, we deduce that 
\begin{align*}
q & \leqslant \frac{1-\exp\left(-2u_{\ell} - 1 \right)}{\exp\left(u_{\ell} -1 \right) - \exp\left(-2u_{\ell} -1 \right)}  \\
& \hspace{1cm} + \frac{\exp\left(u_{\ell}  \right) +\exp\left(-2u_{\ell}  \right)}{\exp\left(u_{\ell}  \right) - \exp\left(-2u_{\ell} \right)} \left[\P_{\mu_{0}^{K}}\left(\Gamma^{c}  \right) + \P_{\mu_{0}^{K}}\left(T_{1}^{K, +} > t_{0}^{K} \right) \right]\\
& \leqslant \exp\left(-u_{\ell} + 1\right) \times \frac{1 - \exp\left(-2u_{\ell} -1 \right)}{1 - \exp\left(-u_{\ell + 1} \right)} +  \frac{\exp\left(u_{\ell}  \right) + \exp\left(-2u_{\ell} \right)}{\exp\left(u_{\ell} \right) - \exp\left(-2u_{\ell} \right)}\\
& \hspace{1.5cm} \times \left[\P_{\mu_{0}^{K}}\left(\Gamma^{c} \cap \left\{T_{1}^{K, +} \geqslant s_{0}^{K} \right\} \right) + \P_{\mu_{0}^{K}}\left(T_{1}^{K, +} < s_{0}^{K} \right) + \P_{\mu_{0}^{K}}\left(T_{1}^{K, +} > t_{0}^{K} \right) \right].
\end{align*}
Denoting $\lceil a \rceil $ the upper integer part of $a \in \R$, we obtain on the one hand that there exists a constant $C_{1}>0$ such that
\begin{align*}
& \P_{\mu_{0}^{K}}\left(\Gamma^{c}\cap \left\{T_{1}^{K, +} \geqslant s_{0}^{K} \right\} \right) \\
& \hspace{0.0cm}  = \P_{\mu_{0}^{K}}\left(\left\{ \left\langle  {\rm Mart}^{K, +} \right\rangle_{T_{1}^{K,+}\wedge t_{0}^{K}} >  \frac{2 \underline{b}u_{\ell -1}}{K^{2} {\color{black}\sigma_{K}^{2}}} T_{1}^{K,+} \wedge t_{0}^{K} \right\} \cap \left\{ T_{1}^{K, +} \geqslant s_{0}^{K}\right\} \right) \\ 
&  \hspace{0.0cm} = \hspace{-0.2cm} \sum\limits_{ k \, = \, 1}^{\left\lceil \frac{t_{0}^{K}}{s_{0}^{K}} \right\rceil -1}{\hspace{-0.3cm}\P_{\mu_{0}^{K}}\left(\left\{\left\langle {\rm Mart}^{K, +} \right\rangle_{T_{1}^{K,+}\wedge t_{0}^{K}} >  \frac{2 \underline{b}u_{\ell-1}}{ K^{2} {\color{black}\sigma_{K}^{2}}} T_{1}^{K,+} \wedge t_{0}^{K} \right\} \cap \left\{k s_{0}^{K} \leqslant T_{1}^{K,+} \leqslant (k+1)s_{0}^{K} \right\}\right)} \\
& \hspace{1.5cm} +  \P_{\mu_{0}^{K}}\left(\left\{\left\langle {\rm Mart}^{K, +} \right\rangle_{T_{1}^{K,+}\wedge t_{0}^{K}} >  \frac{2 \underline{b}u_{\ell-1}}{ K^{2} {\color{black}\sigma_{K}^{2}}} T_{1}^{K,+} \wedge t_{0}^{K} \right\} \cap \left\{T_{1}^{K, +} \geqslant \lceil t_{0}^{K}/s_{0}^{K} \rceil s_{0}^{K} \right\} \right)  \\
&  \hspace{0.0cm} \leqslant \sum\limits_{k \, = \, 1}^{\left\lceil \frac{t_{0}^{K}}{s_{0}^{K}} \right\rceil -1}{\hspace{-0.3cm}\P_{\mu_{0}^{K}}\left(\left\langle  {\rm Mart}^{K, +} \right\rangle_{(k+1)s_{0}^{K}} >  \frac{2 \underline{b}u_{\ell -1}}{K^{2} {\color{black}\sigma_{K}^{2}}}k s_{0}^{K}\right) + \P_{\mu_{0}^{K}}\left(\left\langle  {\rm Mart}^{K, +} \right\rangle_{t_{0}^{K}} >  \frac{2 \underline{b}u_{\ell -1}}{K^{2} {\color{black}\sigma_{K}^{2}}}t_{0}^{K}\right)} \\
& \hspace{0.0cm} \leqslant K^{2}{\color{black}\sigma_{K}^{2}}\left(\sum\limits_{k \, = \, 1}^{\lceil t_{0}^{K}/s_{0}^{K} \rceil -1}{\frac{\E\left(\left\langle {\rm Mart}^{K, +}\right\rangle_{(k+1)s_{0}^{K}} \right)}{2\underline{b}u_{\ell -1}ks_{0}^{K}} + \frac{\E\left(\left\langle {\rm Mart}^{K, +}\right\rangle_{t_{0}^{K}} \right)}{2\underline{b}u_{\ell -1}t_{0}^{K}}}\right) \\ 
& \hspace{0.0cm} \leqslant \frac{C_{1}}{K^{\frac{\varepsilon}{2}}}
\end{align*}
where we use \textsc{Markov}'s inequality in the second inequality and (\ref{Eq_M2_K_+}) in the last inequality. On the other hand, we control $\P_{\mu_{0}^{K}}\left(T_{1}^{K, +} < s_{0}^{K} \right)$ as follows: 
\begin{align*}
\P_{\mu_{0}^{K}}\left(T_{1}^{K, +} < s_{0}^{K} \right)&  \leqslant \P_{\mu_{0}^{K}}\left(\left\{T_{1}^{K, +} < s_{0}^{K} \right\} \cap \left\{\sup_{0\leqslant t \leqslant s_{0}^{K}}{M_{2}^{K, +}(t)} \geqslant u_{\ell +1} \right\} \right) \\
& \hspace{1cm} + \P_{\mu_{0}^{K}}\left(\left\{T_{1}^{K, +} < s_{0}^{K} \right\} \cap \left\{\inf_{0\leqslant t \leqslant s_{0}^{K}}{M_{2}^{K, +}(t)} < u_{\ell -1} \right\} \right).
\end{align*}

Let us consider $K_{1} \in \N^{\star}$ independent of $\ell$ satisfying $K_{1}^{\varepsilon/2} \geqslant 2$. There exists a constant $C_{1}>0$ such that for all $K \geqslant K_{1}$, we have that 
\begin{align*}
& \P_{\mu_{0}^{K}}\left(\left\{T_{1}^{K, +} < s_{0}^{K} \right\}  \cap \left\{\sup_{0\leqslant t \leqslant s_{0}^{K}}{M_{2}^{K, +}(t)} \geqslant u_{\ell +1} \right\} \right) \\ 
& \hspace{1.5cm} \overset{}{\leqslant} \P_{\mu_{0}^{K}}\left(\left\{T_{1}^{K, +} < s_{0}^{K} \right\}  \cap \left\{\sup_{0\leqslant t \leqslant s_{0}^{K}}{\left\{2 u_{\ell} + 1 - \frac{\underline{b}u_{\ell - 1}}{K^{2}{\color{black}\sigma_{K}^{2}}}t + {\rm Mart}_{t}^{K, +}  \right\} \geqslant u_{\ell +1} } \right\} \right) \\
& \hspace{1.5cm} \leqslant \P_{\mu_{0}^{K}}\left(\left\{T_{1}^{K, +} < s_{0}^{K} \right\}  \cap \left\{\sup_{0\leqslant t \leqslant s_{0}^{K}}{{\rm Mart}_{t}^{K, +}} \geqslant u_{\ell} - 1 \right\} \right) \\
& \hspace{1.5cm} \leqslant \P_{\mu_{0}^{K}}\left(\sup_{0\leqslant t \leqslant s_{0}^{K}}{\left|{\rm Mart}_{t}^{K, +} \right|} \geqslant \frac{3^{\ell}K^{\frac{\varepsilon}{2}}}{2} \right) \\
& \hspace{1.5cm} \leqslant 4 {\color{black}\E_{\mu_{0}^{K}}}\left(\left\langle {\rm Mart}^{K, +} \right\rangle_{s_{0}^{K}} \right) \times \frac{4}{3^{2\ell}K^{\varepsilon}} \\
& \hspace{1.5cm} \leqslant  \frac{C_{1}}{K^{\varepsilon}}
\end{align*}
where we use (\ref{Eq_M2_K_+}) in the first inequality, \textsc{Doob}'s maximal inequality in the fourth inequality and (\ref{Eq_Mart_K_+}) in the last inequality. Now, let us consider $K_{2} \in \N^{\star}$ independent of $\ell$ satisfying $K_{2}^{\varepsilon/2} \geqslant 4$. Using again (\ref{Eq_M2_K_+}), \textsc{Doob}'s maximal inequality, (\ref{Eq_Mart_K_+}), and the definition of $s_{0}^{K}$, we deduce that there exists a constant $C_{2}>0$ such that for all  $K \geqslant K_{2}$, we have that
\begin{align*}
& \P_{\mu_{0}^{K}}\left(\left\{T_{1}^{K, +} < s_{0}^{K} \right\} \cap \left\{\inf_{0\leqslant t \leqslant s_{0}^{K}}{M_{2}^{K, +}(t)} < u_{\ell -1} \right\} \right) \\ 
& \hspace{0.0cm} \leqslant \P_{\mu_{0}^{K}}\left(\left\{T_{1}^{K, +} < s_{0}^{K} \right\} \cap \left\{\inf_{0\leqslant t \leqslant s_{0}^{K}}{\left\{2u_{\ell -1} - 1 - \frac{5\overline{b}u_{\ell + 1} + \underline{\theta}\underline{m}_{2}}{2K^{2}{\color{black}\sigma_{K}^{2}}}t + {\rm Mart}^{K, +}_{t} \right\}} < u_{\ell - 1} \right\} \right) \\
& \hspace{0.0cm} \leqslant \P_{\mu_{0}^{K}}\left(\left\{T_{1}^{K, +} < s_{0}^{K} \right\} \cap \left\{\inf_{0\leqslant t\leqslant s_{0}^{K}}{{\rm Mart}_{t}^{K, +}} < - \frac{u_{\ell - 1}}{2} + 1 \right\} \right) \\
& \hspace{0.0cm} \leqslant \P_{\mu_{0}^{K}}\left(\sup_{0\leqslant t\leqslant s_{0}^{K}}{\left|{\rm Mart}_{t}^{K, +}\right|} >  \frac{u_{\ell - 1}}{2} - 1  \right) \\
& \hspace{0.0cm} \leqslant \frac{C_{2}}{K^{\varepsilon}}.
\end{align*}
Hence, for all $K \geqslant \max\left\{K_{1}, K_{2} \right\}$, $ \P_{\mu_{0}^{K}}\left(T_{1}^{K, +} < s_{0}^{K} \right) \leqslant \frac{C_{1} + C_{2}}{K^{\varepsilon}}$. Finally, from (\ref{Eq_M2_K_+}) and \textsc{Markov}'s inequality and the definition of $t_{0}^{K}$, note that there exists a constant $C_{3} >0$ such that 
\begin{align*}
\P_{\mu_{0}^{K}}\left(T_{1}^{K, +} > t_{0}^{K} \right) & \leqslant \P_{\mu_{0}^{K}}\left(\left\{\sup_{0\leqslant t \leqslant t_{0}^{K}}{M_{2}^{K, +}(t)} < u_{\ell + 1} \right\} \cap \left\{\inf_{0\leqslant t \leqslant t_{0}^{K}}{M_{2}^{K, +}(t)} \geqslant u_{\ell -1} \right\} \right) \\
& \leqslant \P_{\mu_{0}^{K}}\left({\rm Mart}_{t_{0}^{K}}^{K, +} \geqslant u_{\ell - 1} - 2u_{\ell} - 1 + \frac{\underline{b}u_{\ell - 1}}{K^{2}{\color{black}\sigma_{K}^{2}}}t_{0}^{K} \right)  \\
& \leqslant \frac{\E_{\mu_{0}^{K}}\left(\left\langle  {\rm Mart}^{K, +}\right\rangle_{t_{0}^{K}}  \right)}{\left(u_{\ell-1} -1\right)^{2}} \\
& \leqslant \frac{C_{3}}{K^{\varepsilon}}.
\end{align*}
Hence, for all $K \geqslant \max{\left\{K_{1}, K_{2} \right\}}$, we deduce that $q \leqslant \varepsilon_{K}$ where 
\[\varepsilon_{K} := \exp\left(-u_{\ell} + 1\right) \times \frac{1 - \exp\left(-2u_{\ell} -1 \right)}{1 - \exp\left(-u_{\ell + 1} \right)} +  \frac{\exp\left(u_{\ell} \right) + \exp\left(-2u_{\ell}  \right)}{\exp\left(u_{\ell} \right) - \exp\left(-2u_{\ell} \right)} \times \frac{C_{1}}{K^{\frac{\varepsilon}{2}}} \]
and thus, for all $K \geqslant \max{\left\{K_{1}, K_{2} \right\}}$, \[\P_{\mu_{0}^{K}}\left(\left\{ T_{1}^{K} < \check{\tau}^{K} \right\} \cap \left\{L_{1}^{K} = \ell + 1 \right\} \right) = \P_{\mu_{0}^{K}}\left(\left\{ T_{1}^{K,+} < \check{\tau}^{K} \right\} \cap \left\{L_{1}^{K} = \ell + 1 \right\} \right) \leqslant q \leqslant \varepsilon_{K}.  \]

\textit{Step 2. Control of the second right-hand term of (\ref{Ineq_One_step}).} Similarly, let us consider $K_{3} \in \N^{\star}$ independent of $\ell$ satisfying $K_{3}^{\varepsilon/2} \geqslant 2$. Using Corollary \ref{Cor_Ineg_Trajectoires_M2}, \textsc{Doob}'s maximal inequality and Lemma \ref{Crochet_martingale_M2} in similar way to Step 1, there exists a constant $C_{4}>0$ such that for all $K \geqslant K_{3}$ we have that 
\begin{align*}
\P_{\mu_{0}^{K}}\left(\left\{T_{1}^{K} < \check{\tau}^{K} \right\} \cap \left\{\sup_{0\leqslant t \leqslant K{\color{black}\sigma_{K}^{2}} \wedge \check{\tau}^{K}}{M_{2}\left(\mu_{t}^{K} \right)}  \geqslant u_{\ell + 1} \right\} \right) & \leqslant \frac{16\E_{\mu_{0}^{K}}\left(\left\langle M^{K, P_{\id^{2}, 1}}\right\rangle_{K{\color{black}\sigma_{K}^{2}}\wedge \check{\tau}^{K}} \right)}{3^{2\ell}K^{\varepsilon}}   \\
& \leqslant \frac{C_{4}}{K^{1+\varepsilon}}.
\end{align*}
Let us consider $K_{4} \in \N^{\star}$ independent of $\ell$ satisfying the relation  $u_{\ell -1} > 1 + \frac{5\overline{b}u_{\ell + 1} + \underline{\theta}\underline{m}_{2}}{2 K_{4}}$.
As previously, we establish that there exists a constant $C_{5} >0$ such that for all $K \geqslant K_{4}$ we have that 
\begin{align*}
 \P_{\mu_{0}^{K}}\left(\left\{T_{1}^{K} < \check{\tau}^{K} \right\} \cap \left\{\inf_{0\leqslant t \leqslant K{\color{black}\sigma_{K}^{2}} \wedge \check{\tau}^{K}}{M_{2}\left(\mu_{t}^{K} \right)} < u_{\ell - 1} \right\} \right) & \leqslant \frac{4\E_{\mu_{0}^{K}}\left(\left\langle M^{K, P_{\id^{2}, 1}} \right\rangle_{K{\color{black}\sigma_{K}^{2}}\wedge \check{\tau}^{K}} \right)}{\left(u_{\ell -1} - 1 - \frac{5\overline{b}u_{\ell + 1} + \underline{\theta}\underline{m}_{2}}{2K} \right)^{2}} \\
 & \leqslant \frac{C_{5}}{K^{1+\varepsilon}}.
\end{align*}
To conclude this proof, there exists a constant $C >0$ such that for all $K \geqslant K_{0}$ where $K_{0} := \max_{i \in \{1, \cdots, 4 \}}{K_{i}}$, \[\P_{\mu_{0}^{K}}\left(\left[\left\{L_{1}^{K} = \ell + 1 \right\} \cup \left\{T_{1}^{K} < K{\color{black}\sigma_{K}^{2}} \right\} \right] \cap \left\{T_{1}^{K} < \check{\tau}^{K} \right\} \right)< \frac{1}{2} - \eta_{K}\]
for any $\eta_{K} \geqslant \frac{1}{2} - \epsilon_{K} - \frac{C_{4}+C_{5}}{K^{1+\varepsilon}}$. A convenient choice is given by $\eta_{K} := \frac{1}{2} - \frac{C}{K^{\varepsilon/2}}$ for $K$ large enough independent of $\ell$ which completes the proof. \qedhere
\end{proof}

\subsubsection{Construction of the coupling\label{Sous-sous-section_4_5_2_Construction_of_the_coupling}}

The goal of this section is to construct a coupling between $\left(L_{k}^{K} \right)_{k\in \N}$ and a biased random walk on $\N^{\star}$ and reflected in $1$. To do this, we will construct a sequence $\left(\xi_{k}^{K}\right)_{k \in \N}$ of i.i.d. random variables with values in $\{-1, 1\}$ as follows. \\ 

\textit{Step 1.} First of all, thanks to Lemma \ref{Lem_One-step_transition} we can construct the random variable $\xi_{0}^{K} \in \left\{-1, 1\right\}$ such that 
\[\left\{\xi_{0}^{K} = -1 \right\} \subset \left[\left\{L_{1}^{K} = L_{0}^{K} - 1 \right\} \cap \left\{K{\color{black}\sigma_{K}^{2}} \leqslant T_{1}^{K} < \check{\tau}^{K} \right\} \right] \cup \left\{T_{1}^{K} \geqslant \widehat{\tau}^{K} \right\}\]
and $\P\left(\xi_{0}^{K} = -1 \right) = \frac{1}{2} + \eta_{K}$. \\

\textit{Step 2.} Then, thanks to Lemma \ref{Lem_One-step_transition} again and after having applied the \textsc{Markov} property at time $T_{1}^{K}$, we can construct the random variable $\xi_{1}^{K} \in \left\{-1, 1\right\}$ such that, conditionally to $\FF_{T_{1}^{K}} \vee \sigma\left(\xi_{0}^{K} \right)$,
\[\left\{\xi_{1}^{K} = -1 \right\} \subset \left[\left\{L_{2}^{K} = L_{1}^{K} - 1 \right\} \cap \left\{K{\color{black}\sigma_{K}^{2}} + T_{1}^{K} \leqslant T_{2}^{K} < \check{\tau}^{K} \right\} \right] \cup \left\{T_{2}^{K} \geqslant \widehat{\tau}^{K} \right\} \]
and $\P\left(\xi_{1}^{K} = -1 \left| \phantom{1^{1^{1}}} \hspace{-0.55cm} \right. \FF_{T_{1}^{K}} \vee \sigma\left(\xi_{0}^{K} \right) \right) = \frac{1}{2} + \eta_{K}$. Note that 
this implies in particular that $\xi_{1}^{K}$ is independent of $\xi_{0}^{K}$. \\

\textit{Step 3.} By induction on $k\in \N$, if $\left(\xi_{i}^{K} \right)_{0\leqslant i \leqslant k}$ are constructed, then thanks to Lemma \ref{Lem_One-step_transition} again and after having applied the \textsc{Markov} property at time $T_{k+1}^{K}$, we can construct the random variable $\xi_{k+1}^{K} \in \left\{-1, 1\right\}$ such that, conditionally to $\FF_{T_{k+1}^{K}} \vee \sigma\left(\xi_{0}^{K}, \cdots, \xi_{k}^{K} \right)$, 
\[\left\{\xi_{k+1}^{K} = -1 \right\} \subset \left[\left\{L_{k+2}^{K} = L_{k+1}^{K} - 1 \right\} \cap \left\{K{\color{black}\sigma_{K}^{2}} + T_{k+1}^{K} \leqslant T_{k+2}^{K} < \check{\tau}^{K} \right\} \right] \cup \left\{T_{k+2}^{K} \geqslant \widehat{\tau}^{K} \right\} \]
and $\P\left(\xi_{1}^{K} = -1 \left| \phantom{1^{1^{1}}} \hspace{-0.55cm} \right. \FF_{T_{k+1}^{K}} \vee \sigma\left(\xi_{0}^{K}, \cdots, \xi_{k}^{K} \right) \right) = \frac{1}{2} + \eta_{K}$. \\ 

To conclude, we set
\begin{equation}
\forall K \in \N^{\star}, \quad \forall k\in \N, \qquad Z_{k+1}^{K} - Z_{k}^{K} := \xi_{k}^{K} \qquad \quad {\rm and} \qquad \quad Z_{0}^{K} = L_{0}^{K}
\label{Eq_Suite_Z_k_K}
\end{equation}
and the sequence $\left(Z_{k}^{K}\right)_{k\in \N}$ satisfies by construction the following lemma: 
\begin{Lem} The sequence $\left(Z_{k}^{K}\right)_{k\in \N}$, 
given by {\rm(\ref{Eq_Suite_Z_k_K})} is a biased simple random walk on $\N^{\star}$, reflected in $1$ 
 and for all $k \in \N$ as long as $T_{k+1}^{K} < \check{\tau}^{K}$, that 
\begin{align*}
{\rm{\textbf{\rm{\textbf{(1)}}}}} \qquad & Z_{k+1}^{K} - Z_{k}^{K} \geqslant L_{k+1}^{K} - L_{k}^{K} \\
{\rm{\textit{\rm{\textbf{(2)}}}}} \qquad  & T_{k+1}^{K} - T_{k}^{K} \geqslant K{\color{black}\sigma_{K}^{2}} \qquad {\rm when} \qquad Z_{k+1}^{K} - Z_{k}^{K} = -1.
\end{align*}
\label{Lem_Marche_Aleatoire_Z}
\end{Lem}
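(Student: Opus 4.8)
The plan is to verify the three claimed properties directly from the construction of $\left(\xi_{k}^{K}\right)_{k\in\N}$ carried out in Section \ref{Sous-sous-section_4_5_2_Construction_of_the_coupling}. First I would check that $\left(Z_{k}^{K}\right)_{k\in\N}$ is indeed a biased simple random walk on $\N^{\star}$ reflected in $1$: by the inductive construction, conditionally on $\FF_{T_{k+1}^{K}}\vee\sigma\left(\xi_{0}^{K},\dots,\xi_{k}^{K}\right)$ the variable $\xi_{k+1}^{K}$ takes values in $\{-1,1\}$ with $\P\left(\xi_{k+1}^{K}=-1\mid\cdot\right)=\frac12+\eta_{K}$, a deterministic constant not depending on the past; hence the $\left(\xi_{k}^{K}\right)_{k\in\N}$ are i.i.d.\ with this common law, and $Z_{k+1}^{K}=Z_{k}^{K}+\xi_{k}^{K}$ with $Z_{0}^{K}=L_{0}^{K}\in\N^{\star}$ defines a simple random walk biased toward $-1$. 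The reflection at $1$ is a matter of the convention adopted for $L_{k}^{K}$ near the boundary (recall $L_{0}^{K}=1$ when $M_{2}\left(\mu_{0}^{K}\right)\in[0,2u_{1}+1]$, and more generally $L_{k+1}^{K}=\min\{\ell\in\N\,|\,M_{2}(\mu_{T_{k+1}^{K}}^{K})\leqslant 2u_{\ell}+1\}$), so I would note that when the walk would step below $1$ it is set back to $1$, which is consistent with the fact that $M_{2}\geqslant 0$ always lands in $\III_{1}$ or above.

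For property \textbf{(1)}, I would argue on the event $\{T_{k+1}^{K}<\check{\tau}^{K}\}$ as follows. By the inclusion built into Step 3 of the construction, on $\{\xi_{k}^{K}=-1\}\cap\{T_{k+1}^{K}<\check{\tau}^{K}\}$ we have $L_{k+1}^{K}=L_{k}^{K}-1$, so $Z_{k+1}^{K}-Z_{k}^{K}=-1=L_{k+1}^{K}-L_{k}^{K}$, giving equality. On the complementary event $\{\xi_{k}^{K}=+1\}$ we have $Z_{k+1}^{K}-Z_{k}^{K}=+1$, while Lemma \ref{Lem_Accroissements_M2} (via the consequence that for all $k\geqslant 0$, $L_{k+1}^{K}\leqslant L_{k}^{K}+1$, stated just before that lemma) forces $L_{k+1}^{K}-L_{k}^{K}\leqslant 1$; hence $Z_{k+1}^{K}-Z_{k}^{K}=1\geqslant L_{k+1}^{K}-L_{k}^{K}$. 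In both cases $Z_{k+1}^{K}-Z_{k}^{K}\geqslant L_{k+1}^{K}-L_{k}^{K}$, which is \textbf{(1)}.

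Property \textbf{(2)} is again immediate from the construction: on $\{Z_{k+1}^{K}-Z_{k}^{K}=-1\}=\{\xi_{k}^{K}=-1\}$, the defining inclusion in Step 3 places us (on $\{T_{k+1}^{K}<\check{\tau}^{K}\}$) in the event $\{K\sigma^{2}+T_{k}^{K}\leqslant T_{k+1}^{K}<\check{\tau}^{K}\}$, whence $T_{k+1}^{K}-T_{k}^{K}\geqslant K\sigma^{2}$. I expect the only genuinely delicate point to be the careful bookkeeping at the reflecting boundary $\ell=1$ — making sure the one-step estimate of Lemma \ref{Lem_One-step_transition} (proved for $\ell\in\N^{\star}$ with the lower threshold $2u_{\ell-1}+1$, where $u_{0}=0$) still yields $\P(\xi_{k}^{K}=-1\mid\cdot)=\frac12+\eta_{K}$ with a value of $\eta_{K}$ independent of $\ell$, so that the coupling is with a genuinely homogeneous random walk; everything else is a direct unwinding of the inclusions and probabilities imposed in the construction, together with the elementary increment bound $L_{k+1}^{K}\leqslant L_{k}^{K}+1$.
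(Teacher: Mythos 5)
Your proposal is correct and follows essentially the same route as the paper, which treats the lemma as holding ``by construction'' of the coupling in Section \ref{Sous-sous-section_4_5_2_Construction_of_the_coupling}: on $\{\xi_{k}^{K}=-1\}\cap\{T_{k+1}^{K}<\check{\tau}^{K}\}$ the defining inclusion forces $L_{k+1}^{K}=L_{k}^{K}-1$ and $T_{k+1}^{K}-T_{k}^{K}\geqslant K\sigma^{2}$, while on $\{\xi_{k}^{K}=+1\}$ property \textbf{(1)} follows from the increment bound $L_{k+1}^{K}\leqslant L_{k}^{K}+1$ of Lemma \ref{Lem_Accroissements_M2}, exactly as you argue, and the i.i.d.\ structure of the $\xi_{k}^{K}$ comes from the conditional probability $\frac12+\eta_{K}$ being deterministic and independent of $\ell$ (which Lemma \ref{Lem_One-step_transition} indeed guarantees). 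Your unwinding of the inclusions matches the intended reading of the construction (where the residual event should be $\{T_{k+1}^{K}\geqslant\check{\tau}^{K}\}$ as in Lemma \ref{Lem_One-step_transition}), so there is no gap.
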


On the one hand, note that for all $k\in \N$, $L_{k}^{K} \leqslant Z_{k}^{K}$. On the other hand, note that Lemma \ref{Lem_Accroissements_M2} implies for all $k \in \N$, $L_{k+1}^{K} - L_{k}^{K} \in \left\{ \cdots, -3, -2, -1, 1 \right\} $ which justifies (1) of Lemma \ref{Lem_Marche_Aleatoire_Z}. In Figure \ref{Fig_M2_Marche_aleatoire}, we illustrate the coupling between $\left(L_{k}^{K} \right)_{0\leqslant k \leqslant 7}$ and  $\left(Z_{k}^{K}\right)_{0\leqslant k \leqslant 7}$. In this figure, we observe that $\xi_{k}^{K} = L_{k+1}^{K} - L_{k}^{K}$ for all $k \in \left\{0,1,2,4,5,6 \right\}$ and $\xi_{3}^{K} = -1 \geqslant -2 = L_{4}^{K} - L_{3}^{K}$. This illustrates that $Z_{k+1}^{K}-Z_{k}^{K}$ is always $1$ when $L_{k+1}^{K}-L_{k}^{K}$ is equal to $1$. Note that $Z_{k+1}^{K}-Z_{k}^{K}$ can be $1$ when $L_{k+1}^{K}-L_{k}^{K}$ is equal to $-1$. 

\begin{figure}[h]
\flushleft	
\definecolor{Mon_bleu_illustration}{rgb}{0.490,0.490,1} 
\definecolor{ffqqtt}{rgb}{1,0,0.2}
\definecolor{ffqqqq}{rgb}{1,0,0}
\definecolor{ttzzqq}{rgb}{0.2,0.6,0}
\definecolor{xdxdff}{rgb}{0.49019607843137253,0.49019607843137253,1}
\definecolor{ududff}{rgb}{0.30196078431372547,0.30196078431372547,1}
\definecolor{ffwwqq}{rgb}{1,0.4,0}

\caption{For $K$ large enough, coupling between $\left(L_{k}^{K} \right)_{0\leqslant k \leqslant 7}$ and  $\left(Z_{k}^{K}\right)_{0\leqslant k \leqslant 7}$ up to time $T$ and before $\check{\tau}^{K}$ and where $M_{2}\left(\mu_{0}^{K} \right) \in [0, 2u_{1} + 1)$ so that $L_{0}^{K} = 1$ and $\widetilde{Z}_{k}^{K} := u_{Z_{k}^{K}} = 3^{Z_{k}^{K}}K^{\varepsilon/2}$ for all $k \in \N$.}
\label{Fig_M2_Marche_aleatoire}
\end{figure}

\subsubsection{Estimates of exit from an attractive domain for random walks\label{Sous-sous-section_4_5_3_Attractive_domain_exit_estimates_for_random_walks}}
\indent Let us consider $N^{K}$ the number of transitions before reaching $\left\lfloor \frac{\varepsilon}{2\log(3)}\log(K) \right\rfloor$ for the random walk $\left(Z_{k}^{K}\right)_{k\in \N}$. 

\begin{Rem} By {\rm Lemma \ref{Lem_Marche_Aleatoire_Z}}, $N^{K} \leqslant k_{0}$ where $k_{0}$ is  the first integer such that $L_{k_{0}}^{K} = \left\lfloor \frac{\varepsilon}{2\log(3)}\log(K) \right\rfloor$, so $\widehat{\tau}^{K} \geqslant T_{k_{0}}^{K} \geqslant T_{N^{K}}^{K}$. 
\end{Rem}
 
 The following lemma gives an estimate on the problem of exit from a domain for $Z^{K}$. For all $k \in \N^{\star}$, we denote by $\PPP_{k}$ the law of the \textsc{Markov} chain $Z^{K}$ given $Z_{0}^{K} = L_{0}^{K} = k$.

\begin{Lem} We have, \[\lim_{K\to + \infty}{\PPP_{1}\left(N^{K} \geqslant \exp\left(\frac{\varepsilon^{2}}{16\log(3)}\log^{2}(K) \right)  \right)} = 1. \]
\label{Lem_PGD_lineaire}
\end{Lem}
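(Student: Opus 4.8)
\textbf{Proof strategy for Lemma \ref{Lem_PGD_lineaire}.} The plan is to compare the chain $Z^K$ with a fixed (i.e.\ $K$-independent up to a vanishing perturbation) biased nearest-neighbour random walk on $\N^\star$ reflected at $1$, and then invoke a classical lower bound on the exit time of such a walk from the attracting interval $[1, N_K]$ with $N_K := \lfloor \frac{\varepsilon}{2\log 3}\log K\rfloor$. Recall from the construction in Section \ref{Sous-sous-section_4_5_2_Construction_of_the_coupling} that the increments $\xi_k^K = Z_{k+1}^K - Z_k^K$ are i.i.d.\ with $\P(\xi_k^K = -1) = \tfrac12 + \eta_K$ and $\P(\xi_k^K = +1) = \tfrac12 - \eta_K$ where $\eta_K = \tfrac12 - C K^{-\varepsilon/2}$, so the downward drift is $2\eta_K \to 1$; in particular, for $K$ large, $p_K := \tfrac12 - \eta_K = C K^{-\varepsilon/2}$ is a very small up-probability. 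The number of transitions $N^K$ needed to climb from level $1$ to level $N_K$ is exactly the first exit time of this reflected biased walk from $\{1,\dots,N_K-1\}$ through the top.

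First I would reduce to a one-dimensional hitting-time estimate: $N^K$ is the first time the walk $Z^K$ started at $1$ reaches $N_K$. For a nearest-neighbour walk on $\N^\star$ with up-probability $p_K$ and down-probability $q_K = 1-p_K$, reflected at $1$, the standard gambler's-ruin / electrical-network computation gives that the probability of reaching $N_K$ before returning to $1$ starting from $2$ is of order $(p_K/q_K)^{N_K-1}$, hence the expected number of excursions from level $1$ before a successful climb is of order $(q_K/p_K)^{N_K-1}$. Since $q_K/p_K \sim 1/(2 p_K) = \tfrac12 C^{-1} K^{\varepsilon/2}$, this quantity is of order $K^{\varepsilon(N_K-1)/2}$, and with $N_K \sim \frac{\varepsilon}{2\log 3}\log K$ we get a leading term $\exp\!\big(\tfrac{\varepsilon^2}{4\log 3}\log^2 K\big)$ up to lower-order corrections — comfortably larger than the target $\exp\!\big(\tfrac{\varepsilon^2}{16\log 3}\log^2 K\big)$. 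The concrete path: (i) let $R$ be the number of visits to level $1$ before the walk first hits $N_K$; show $R$ is (stochastically bounded below by) a geometric random variable with success probability $r_K \le (p_K/q_K)^{N_K-1}$ times a constant; (ii) each excursion away from $1$ takes at least one step, so $N^K \ge R$; (iii) a Markov/Chebyshev or direct geometric tail bound gives $\PPP_1(R \ge t_K) \to 1$ for any $t_K = o(1/r_K)$, and one checks $\exp(\tfrac{\varepsilon^2}{16\log 3}\log^2 K) = o(K^{\varepsilon(N_K-1)/2})$ by comparing exponents ($\tfrac{1}{16} < \tfrac14$, with room to absorb the $-1$ shift and the $\log 3$ factor). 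Assembling these gives the claim.

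The main obstacle — and the reason for the conservative constant $\tfrac{1}{16}$ rather than $\tfrac14$ — is bookkeeping the precise form of the reflection at $1$ and the fact that $L_0^K$ need not be exactly $1$, together with the non-uniformity of the excursion structure: one must be careful that the reflected walk does not spend so few steps per excursion that $R$ and $N^K$ decouple, and that the inequality $L_k^K \le Z_k^K$ from Lemma \ref{Lem_Marche_Aleatoire_Z} is used in the right direction (we want a lower bound on the \emph{number of transitions} of $M_2(\mu^K)$, and $Z^K$ dominates $L^K$, so the climb of $Z^K$ is slower, which is exactly what we need). A clean way to handle the reflection is to compare with the \emph{unreflected} walk killed at $1$: restart at $1$ each time it is absorbed, count excursions, and use the strong Markov property to make $R$ an honest geometric variable. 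I would also make the exponent comparison fully explicit at the end: writing $N_K - 1 \ge \frac{\varepsilon}{2\log 3}\log K - 2$, one has $\frac{\varepsilon}{2}(N_K-1)\log K \ge \frac{\varepsilon^2}{4\log 3}\log^2 K - \varepsilon \log K$, and since $\frac{\varepsilon^2}{4\log 3} > \frac{\varepsilon^2}{16\log 3}$ with a strictly positive gap, the lower-order term $-\varepsilon\log K$ and any $O(1)$ losses from the excursion constants are absorbed for $K$ large, yielding $\PPP_1(N^K \ge \exp(\tfrac{\varepsilon^2}{16\log 3}\log^2 K)) \to 1$. This large-deviation-type lower bound on exit times is exactly the content of the references \cite{Freidlin_Wentzell_1984, Dembo_Zeitouni_2010} cited in Section \ref{Sous_section_4_5_Sortie_domaine}, specialised here to the elementary discrete setting where everything can be computed in closed form.
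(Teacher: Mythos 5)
Your proposal follows essentially the same route as the paper: a gambler's-ruin computation for the biased reflected walk $Z^K$, a lower bound of $N^K$ by a geometric number of excursions from the bottom level (the paper counts transitions from $2$ to $1$ before hitting $\lfloor \frac{\varepsilon}{2\log 3}\log K\rfloor$, with success parameter $p_K = (r_K-1)r_K^{-(\lfloor\cdot\rfloor-1)}$, $r_K = \frac{1+2\eta_K}{1-2\eta_K}\sim K^{\varepsilon/2}/C$), and a direct geometric tail estimate plus the exponent comparison that makes $\frac{\varepsilon^2}{16\log 3}$ comfortably smaller than the available $\sim\frac{\varepsilon^2}{8\log 3}$ (your $\frac{\varepsilon^2}{4\log 3}$ bookkeeping differs only by polynomial-in-$K$ factors, which are absorbed exactly as you note). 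The argument is correct and matches the paper's proof in substance.
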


\begin{proof} For all $k \in \N^{\star}$, let us consider the stopping time $\TT_{k}^{K} := \inf\left\{ n \in \N \left| \phantom{1^{1^{1^{1}}}} \hspace{-0.7cm} \right. Z_{n}^{K} = k\right\}$ and we set $v_{k}^{K} := \PPP_{k}\left(\TT_{1}^{K} < \TT_{\left\lfloor \frac{\varepsilon}{2\log(3)}\log(K) \right\rfloor}^{K} \right)$. Note that $v_{1}^{K} = 1$. By a classical approach and setting $r_{K} :=  \frac{1+2\eta_{K}}{1-2\eta_{K}} >1$, we prove that 
\begin{align*}
v_{k}^{K} & = \frac{1-r_{K}^{-\left(\left\lfloor \frac{\varepsilon}{2\log(3)}\log(K) \right\rfloor - k\right)}}{1-r_{K}^{-\left(\left\lfloor \frac{\varepsilon}{2\log(3)}\log(K) \right\rfloor - 1\right)}} \\
& = 1 - \left(r_{K}^{k-1} - 1 \right)\exp\left(-\left(\left\lfloor \frac{\varepsilon}{2\log(3)}\log(K) \right\rfloor -1 \right)\log(r_{K}) \right).
\end{align*}
Note that $N^{K}$ is greater than the number of transitions from $2$ to $1$ that occur before $\TT_{\left\lfloor \frac{\varepsilon}{2\log(3)}\log(K) \right\rfloor}^{K}$. Hence, under $\PPP_{1}$, $N^{K} \geqslant X$ where $X$ is random variable with geometric law of parameter ${\color{black}q_{K}}:= \left(r_{K} - 1 \right)\exp\left(-\left(\left\lfloor \frac{\varepsilon}{2\log(3)}\log(K) \right\rfloor -1 \right)\log(r_{K}) \right)$. Since $r_{K} \sim_{K \to + \infty} \frac{K^{\varepsilon/2}}{C}$, with $C>0$ given by Lemma \ref{Lem_One-step_transition}, ${\color{black}q_{K} \to 0}$ when $K \to + \infty$ and using that for all $x >0$ small enough, $\log(1-x) \geqslant -2x$ we deduce that for all $K$ large enough,
\begin{align*}
 & \PPP_{1}\left(X \geqslant  \exp\left(\frac{\left\lfloor \frac{\varepsilon}{2\log(3)}\log(K) \right\rfloor -1}{2}\log\left(r_{K} \right) \right) \right) \\
& \hspace{2cm} = \exp\left(\exp\left(\frac{\left\lfloor \frac{\varepsilon}{2\log(3)}\log(K) \right\rfloor - 1}{2}\log\left(r_{K} \right) \right)\log\left(1-{\color{black}q_{K}} \right)  \right) \\
& \hspace{2cm} \geqslant \exp\left(-2\left(r_{K} - 1 \right)\exp\left(-\frac{\left\lfloor \frac{\varepsilon}{2\log(3)}\log(K) \right\rfloor - 1}{2}\log\left(r_{K} \right) \right)\right).
\end{align*}
Using again that $r_{K} \sim_{K \to + \infty} \frac{K^{\varepsilon/2}}{C}$, we deduce that for $K$ large enough, 
\[\frac{\lfloor \frac{\varepsilon}{2\log(3)} \log(K) \rfloor - 1}{2}\log\left(r_{K}\right) \geqslant \frac{\varepsilon^{2}}{16 \log(3)}\log^{2}\left(K \right) \]
and the announced result follows. \qedhere
\end{proof}

It is in the next Corollary \ref{Cor_PGD_lineaire} that we see the importance of Assumption (\ref{Hypothese_Gamme_sigma}) where ${\color{black}\sigma_{K}}$ must not be too small. 

\begin{Cor} For all $T >0$, 
 \[  \lim_{K \to + \infty}{\P\left(\frac{K{\color{black}\sigma_{K}^{2}}}{2}\left(N^{K} - \frac{\varepsilon}{2\log(3)}\log(K)\right)  < T \right)} = 0.\]
 \label{Cor_PGD_lineaire}
\end{Cor}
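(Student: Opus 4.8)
The plan is to deduce Corollary \ref{Cor_PGD_lineaire} from the combination of Lemma \ref{Lem_PGD_lineaire} and the lower bounds on the transition durations provided by the coupling of Lemma \ref{Lem_Marche_Aleatoire_Z}. The key observation is that each downward step of the random walk $Z^{K}$ (i.e.\ each $k$ with $\xi_{k}^{K} = -1$, equivalently $Z_{k+1}^{K} - Z_{k}^{K} = -1$) forces, by item \textbf{(2)} of Lemma \ref{Lem_Marche_Aleatoire_Z}, a real time lapse $T_{k+1}^{K} - T_{k}^{K} \geqslant K\sigma^{2}$ (as long as $T_{k+1}^{K} < \check{\tau}^{K}$). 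Hence $\widehat{\tau}^{K} \geqslant T_{N^{K}}^{K} \geqslant K\sigma^{2} D^{K}$ where $D^{K}$ is the number of downward steps among the first $N^{K}$ steps of $Z^{K}$. Since $Z^{K}$ starts at $L_{0}^{K} = 1$ (for $K$ large by (\ref{Hypothese_C_etoile})) and must reach $\lfloor \frac{\varepsilon}{2\log(3)}\log(K)\rfloor$ using $N^{K}$ steps, of which there are $U^{K}$ upward and $D^{K}$ downward with $U^{K} + D^{K} = N^{K}$ and $U^{K} - D^{K} = \lfloor \frac{\varepsilon}{2\log(3)}\log(K)\rfloor - 1$ (the walk is reflected at $1$, but reflection steps are upward steps that do not contribute a time lower bound, so one must be slightly careful — reflections at $1$ only increase $U^K$, so the identity $U^K - D^K \geqslant \lfloor\frac{\varepsilon}{2\log(3)}\log(K)\rfloor - 1$ still holds and that suffices). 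This gives $D^{K} = \frac{1}{2}\left(N^{K} - (U^{K} - D^{K})\right) \geqslant \frac{1}{2}\left(N^{K} - \frac{\varepsilon}{2\log(3)}\log(K)\right)$ when $N^{K} \geqslant \frac{\varepsilon}{2\log(3)}\log(K)$.

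First I would make precise, on the event $\{\widehat{\tau}^{K} < \check{\tau}^{K}\}$ where the coupling is valid up to the relevant time, the chain of inequalities
\[
\widehat{\tau}^{K} \;\geqslant\; T_{N^{K}}^{K} \;\geqslant\; K\sigma^{2}\, D^{K} \;\geqslant\; \frac{K\sigma^{2}}{2}\left(N^{K} - \frac{\varepsilon}{2\log(3)}\log(K)\right).
\]
On the complementary event $\{\widehat{\tau}^{K} \geqslant \check{\tau}^{K}\}$ one invokes Proposition \ref{Prop_Convergence_tau_et_theta} (or rather the part of its proof already established, namely that $\P(\check{\tau}^{K} < T\wedge\widehat{\tau}^{K})=0$ for $K$ large under (\ref{Hypothese_Gamme_sigma})), so that this event has vanishing probability, or more simply one observes that on it the time bounds in the coupling hold up to $\check{\tau}^{K}$ anyway. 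Then, for any fixed $T > 0$,
\[
\P\!\left(\frac{K\sigma^{2}}{2}\Bigl(N^{K} - \tfrac{\varepsilon}{2\log(3)}\log(K)\Bigr) < T\right) \;\leqslant\; \P\!\left(N^{K} < \tfrac{\varepsilon}{2\log(3)}\log(K) + \tfrac{2T}{K\sigma^{2}}\right).
\]

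Second I would control the right-hand side using Lemma \ref{Lem_PGD_lineaire}: since $\sigma \gg K^{-C\log(K)}$ for every $C>0$ by (\ref{Hypothese_Gamme_sigma}), we have $K\sigma^{2} \gg K^{1 - 2C\log(K)}$, hence $\frac{2T}{K\sigma^{2}} \ll K^{2C\log(K)}$ for every $C>0$, and in particular, choosing $C$ appropriately (this is exactly the weaker left-bound of (\ref{Hypothese_Gamme_sigma}) mentioned in Remark \textbf{(2)}), $\frac{2T}{K\sigma^{2}} = o\!\left(\exp\bigl(\frac{\varepsilon^{2}}{16\log(3)}\log^{2}(K)\bigr)\right)$. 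Therefore, for $K$ large enough,
\[
\tfrac{\varepsilon}{2\log(3)}\log(K) + \tfrac{2T}{K\sigma^{2}} \;\leqslant\; \exp\!\left(\tfrac{\varepsilon^{2}}{16\log(3)}\log^{2}(K)\right),
\]
so the probability above is bounded by $\PPP_{1}\bigl(N^{K} < \exp(\frac{\varepsilon^{2}}{16\log(3)}\log^{2}(K))\bigr)$, which tends to $0$ by Lemma \ref{Lem_PGD_lineaire}, and the conclusion follows.

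The main obstacle I anticipate is the bookkeeping around the reflection at $1$ and the validity of the coupling only up to $\check{\tau}^{K}$: one must argue carefully that the downward steps counted in $D^{K}$ genuinely occur before $\check{\tau}^{K}$ (so that item \textbf{(2)} of Lemma \ref{Lem_Marche_Aleatoire_Z} applies to each of them), and that the reflection mechanism cannot spoil the counting identity relating $N^{K}$, $U^{K}$, and $D^{K}$. This is handled by noticing that if $\check{\tau}^{K}$ occurs before $\widehat{\tau}^{K}$ then we are already on a negligible event by the argument in the proof of Proposition \ref{Prop_Convergence_tau_et_theta}, while otherwise every one of the first $N^{K}$ transitions satisfies $T_{k+1}^{K} < \check{\tau}^{K}$ so the coupling estimates apply verbatim; the reflection steps only add to $U^{K}$ and so only strengthen the inequality $D^{K} \geqslant \frac12(N^{K} - \frac{\varepsilon}{2\log(3)}\log(K))$. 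The quantitative matching of scales — $2T/(K\sigma^{2})$ against $\exp(\frac{\varepsilon^{2}}{16\log(3)}\log^{2}(K))$ — is routine once the left bound of (\ref{Hypothese_Gamme_sigma}) is written in the form $\sigma \gg K^{-C\log(K)}$ for all $C$, which is precisely where the hypothesis that $\sigma$ not be too small is used.
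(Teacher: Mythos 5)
Your second half is exactly the paper's proof and is correct: the event in question is the same as $\left\{N^{K} < \frac{\varepsilon}{2\log(3)}\log(K) + \frac{2T}{K\sigma^{2}}\right\}$, the left bound of (\ref{Hypothese_Gamme_sigma}) gives $\frac{2T}{K\sigma^{2}} = o\bigl(\exp\bigl(\frac{\varepsilon^{2}}{16\log(3)}\log^{2}(K)\bigr)\bigr)$, Lemma \ref{Lem_PGD_lineaire} then applies, and the passage from $\P$ to $\PPP_{1}$ is justified because $\P\left(L_{0}^{K} > 1\right) \to 0$ by (\ref{Hypothese_C_etoile}) (the paper handles this via \textsc{Markov}'s inequality). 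The whole first part about $\widehat{\tau}^{K} \geqslant K\sigma^{2}D^{K}$, the downward-step count and $\check{\tau}^{K}$ is superfluous for this corollary — the statement concerns the walk $Z^{K}$ alone, and your displayed probability bound is just the event rewritten — and it belongs to the later conclusion step (Section \ref{Sous-sous-section_4_5_4_Conclusion}); it is harmless because unused here, but note that lower-bounding $D^{K}$ requires the upper bound $U^{K} - D^{K} \leqslant \left\lfloor\frac{\varepsilon}{2\log(3)}\log(K)\right\rfloor$, not the reversed inequality you state in passing.
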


\begin{proof} From Lemma \ref{Lem_PGD_lineaire} and Assumption (\ref{Hypothese_Gamme_sigma}) we deduce in a straightfoward manner that \[\lim_{K \to + \infty}{\PPP_{1}\left(\frac{K{\color{black}\sigma_{K}^{2}}}{2}\left(N^{K} - \frac{\varepsilon}{2\log(3)}\log(K)\right)  < T \right)} = 0.\]
As $L_{0}^{K}$ depends on $\mu_{0}^{K}$  note that $L_{0}^{K} = Z_{0}^{K}$, is function of the initial condition $\mu_{0}^{K}$. However, by Assumption (\ref{Hypothese_C_etoile}) and \textsc{Markov}'s inequality, we have that \[\P\left(L_{0}^{K} >1 \right) \leqslant \P\left(M_{2}\left(\mu_{0}^{K} \right) \geqslant K^{\frac{\varepsilon}{2}} \right) \leqslant \frac{C_{2}^{\star}}{K^{\varepsilon}} \]
 which tends to $0$ when $K \to +\infty$ and the announced result follows. \qedhere
\end{proof}

\subsubsection{Conclusion\label{Sous-sous-section_4_5_4_Conclusion}} 
We denote respectively by $N_{+}^{K}$ and $N_{-}^{K}$ the number of upward transitions and downward transitions for $\left(Z_{k}^{K}\right)_{k\in \N}$ before reaching $\left\lfloor \frac{\varepsilon}{2\log(3)}\log(K) \right\rfloor$.  Let $T \geqslant 0$ be fixed. Note that 
 \begin{equation}
 \begin{aligned}
 \P\left(\widehat{\tau}^{K} \leqslant \check{\tau}^{K}\wedge T \right) & \leqslant \P\left(\left\{ \widehat{\tau}^{K} \leqslant \check{\tau}^{K} \wedge T \right\} \cap \left\{\check{\tau}^{K} > T_{N^{K}}^{K} \right\} \right) \\
 & \hspace{1cm} + \P\left(\left\{ \widehat{\tau}^{K} \leqslant  \check{\tau}^{K} \wedge T \right\} \cap \left\{\check{\tau}^{K} \leqslant T_{N^{K}}^{K} \right\} \right).
 \end{aligned}
 \label{Eq_Conclusion_Section_4}
 \end{equation}
 
\textit{Step 1. Control of the first right-hand term of (\ref{Eq_Conclusion_Section_4}).} As $Z_{0}^{K} = 1$ with probability converging to $1$ and $Z_{N^{K}}^{K} = \left\lfloor \frac{\varepsilon}{2\log(3)}\log(K) \right\rfloor$, we have 
 $N_{+}^{K} - N_{-}^{K} \leqslant \left\lfloor \frac{\varepsilon}{2\log(3)}\log(K) \right\rfloor$ and $N_{+}^{K} + N_{-}^{K} = N^{K}$. Thus, \[N_{-}^{K} \geqslant \frac{N^{K} - \left\lfloor \frac{\varepsilon}{2\log(3)}\log(K) \right\rfloor}{2} .\] 
 
 Hence, if $\check{\tau}^{K} > T_{N^{K}}^{K}$, then by Lemma \ref{Lem_Marche_Aleatoire_Z} (2),
 \[\widehat{\tau}^{K}  \geqslant T_{N^{K}}^{K} \geqslant \sum\limits_{i\, = \, 1}^{N^{K}}{\left(T_{i}^{K} - T_{i-1}^{K} \right)} \geqslant N_{-}^{K}K{\color{black}\sigma_{K}^{2}} \geqslant \frac{K{\color{black}\sigma_{K}^{2}}}{2}\left(N^{K} - \frac{\varepsilon}{2\log(3)}\log(K) \right). \]
 Therefore, \[\P\left(\left\{ \widehat{\tau}^{K} \leqslant \check{\tau}^{K} \wedge T \right\} \cap \left\{\check{\tau}^{K} > T_{N^{K}}^{K} \right\} \right) \leqslant \P\left(\frac{K{\color{black}\sigma_{K}^{2}}}{2}\left(N^{K} - \frac{\varepsilon}{2\log(3)}\log(K) \right) \leqslant T \right)\]
which tends to $0$ when $K \to + \infty$ according to Corollary \ref{Cor_PGD_lineaire}. \\

\textit{Step 2. Control of the second right-hand term of (\ref{Eq_Conclusion_Section_4}).}  If $\check{\tau}^{K} \leqslant T_{N^{K}}^{K}$, let us consider $\widetilde{N}^{K}$ the last index $k$ such that $T_{k}^{K} < \check{\tau}^{K}$.  We have $\widetilde{N}^{K} < N^{K}$ and so $\widetilde{N}^{K} +1 \leqslant \widetilde{N}^{K}$ which implies, thanks to Lemma \ref{Lem_Accroissements_M2}, that for all $t \leqslant T_{\widetilde{N}^{K}+1}$ and $K$ large enough, \[M_{2}\left(\mu_{t}^{K} \right) \leqslant 2u_{\lfloor\frac{\varepsilon}{2\log(3)}\log(K) \rfloor} +1 \leqslant 2K^{\frac{\varepsilon}{2}} + 1 < K^{\varepsilon}.\] As the jump times of $\nu^{K}$ are isolated, we deduce that $\widehat{\tau}^{K}>T_{\widetilde{N}^{K} + 1}^{K}$. Now, by definition of $\widetilde{N}^{K}$, we have $T_{\widetilde{N}^{K}+1}^{K} \geqslant \check{\tau}^{K}$ and thus $\widehat{\tau}^{K} > \check{\tau}^{K}$. Hence, we obtain that \[ \P\left(\left\{ \widehat{\tau}^{K} \leqslant \check{\tau}^{K} \wedge T \right\} \cap \left\{\check{\tau}^{K} \leqslant T_{N^{K}}^{K} \right\} \right) = \P\left(\varnothing \right) = 0.\]
Therefore, $\lim_{K\to+\infty}{\P\left(\widehat{\tau}^{K} \leqslant \check{\tau}^{K}\wedge T \right)} = 0$ which concludes the proof of Lemma \ref{Lem_Proba_sup_M2_K_epsilon}. 

\section{Tightness on the torus\label{Section_5_Tension_LENT-RAPIDE}} 

The main result of this section is given by Theorem \ref{Thm_Kurtz_adapte}. This is a stochastic averaging result inspired by \textsc{Kurtz} {\color{blue} \cite[\color{black} Theorem 2.1]{Kurtz}} establishing a tightness result, in the torus case, of the sequence of laws of $\left(\left(z^{K}, \Gamma^{K} \right)\right)_{K \in \N^{\star}}$ where $\Gamma^{K}$ is the occupation measure of the fast component $\mu^{K}$. We will use criteria proposed by \textsc{Ethier-Kurtz} in {\color{blue} \cite[\color{black} Theorems 3.9.1 and 3.9.4]{Ethier_markov_1986}}.  In Sections \ref{Section_6_Caract_Gamma_Limite} and \ref{Section_7_Caract_Composante_LENTE} we identify the limit and we prove its uniqueness. \\  

Let introduce the torus $\T_{R} := \left[x_{0} - 2R, x_{0} + 2R \right]$ of length $4R$ with $R >0$ fixed 
and $x_{0}$ is the value of the mean trait of $\nu_{0}^{K}$ as in Theorem \ref{Thm_CEAD_Jouet}. We define $\MM_{1}\left(\T_{R}\right)$ the set of probability measure on $\T_{R}$. We denote by $\MM_{m}\left(\MM_{1}\left(\R\right)\right)$ the set of measures $\Gamma$ on $\R_{+} \times \MM_{1}\left(\R\right)$ such that for all $t \geqslant 0$, $\Gamma\left([0, t] \times \MM_{1}\left(\R\right) \right) = t$. For any $t \geqslant 0$, we denote by $\MM_{m}^{t}\left(\MM_{1}\left(\R\right) \right)$ the set of measures $\Gamma \in \MM_{m}\left(\MM_{1}\left(\R\right)\right)$ restricted to $[0,t] \times \MM_{1}\left(\R\right)$. For all $T>0$, we denote by $\D\left([0,T], \T_{R}\right)$ the space of {\color{black} c\`{a}d-l\`{a}g} functions on $[0,T]$ with values in $\T_{R}$.
 
Let $b_{R} \in \CCCC^{2}_{b}\left(\T_{R}^{2}, \R\right)$ and $\theta_{R} \in \CCCC^{2}_{b}\left(\T_{R}, \R\right)$ be two functions satisfying $b_{R} = b$ on $[x_{0} - R, x_{0} + R]^{2}$ and $\theta_{R} = \theta$ on $[x_{0} - R, x_{0} + R]$.  Let $T>0$ be fixed. We define on $\T_{R}$, in similar way as in Section \ref{Section_2_IBM}, the processes $\nu^{K, R} := \left(\nu_{t}^{K,R} \right)_{t \in [0, T]}$, $z^{K, R} := \left( z_{t}^{K, R} \right)_{t\in [0, T]}$ and $\mu^{K, R} = \left(\mu_{t}^{K, R} \right)_{t\in [0, T]}$ as follows \[\nu_{t}^{K, R} := \frac{1}{K}\sum_{i\, = \, 1}^{K}{\delta_{x_{i}^{R}(t)}}, \quad  z_{t}^{K, R} := \left\langle \id, \nu_{t/K{\color{black}\sigma_{K}^{2}}}^{K, R}\right\rangle \quad  {\rm and}  \quad \mu_{t}^{K, R} := \left(h_{\frac{1}{{\color{black}\sigma_{K}} \sqrt{K}}} \circ \tau_{-z_{t}^{K}} \right) \sharp \, \nu_{t/K{\color{black}\sigma_{K}^{2}}}^{K} \] where for all $i \in \left\{1, \cdots, K\right\}$, $\nu_{t}^{K, R}$ is defined on $\MM_{1}\left(\T_{R} \right)$ from $b_{R}$ and $\theta_{R}$ as $\nu_{t}^{K}$ was defined from $b$ and $\theta$ and $z_{t}^{K, R} \in \T_{R}$. Note that $\mu_{t}^{K, R}$ takes values in $\MM_{1}\left(\T_{R}^{K} \right)$ where $\T_{R}^{K}$ is the torus corresponding to the interval $\left[\frac{1}{{\color{black}\sigma_{K}}\sqrt{K}}\left(x_{0} - 2R - z_{t}^{K} \right), \frac{1}{{\color{black}\sigma_{K}}\sqrt{K}}\left(x_{0} + 2R - z_{t}^{K} \right) \right]$. However, we will identify in the sequel $\MM_{1}\left(\T_{R}^{K} \right)$ as a subset of $\MM_{1}(\R)$ using the natural embedding  of $\T_{R}^{K}$ in $\R$. So, $\mu_{t}^{K, R} \in \MM_{1}(\R)$. We define also the stopping times 
\begin{align*}
\widehat{\tau}^{K,R} & := \inf\left\{t\geqslant 0 \left| \phantom{1^{1^{1^{1}}}} \hspace{-0.6cm} \right. M_{2}\left(\mu_{t}^{K, R} \right) \geqslant K^{\varepsilon} \right\},  \\
\check{\tau}^{K,R} & := \inf\left\{t\geqslant 0 \left| \phantom{1^{1^{1^{1}}}} \hspace{-0.6cm} \right. \Diam\left(\Supp \nu_{t}^{K, R} \right) > \frac{1}{{\color{black}\sigma_{K}} K^{\frac{3+\varepsilon}{2}}}\right\},
\end{align*}
and $\tau^{K, R} := \check{\tau}^{K,R} \wedge \widehat{\tau}^{K,R}$. We define on $\MM_{m}^{T}\left(\MM_{1}\left(\R \right) \right)$ the sequence of random measures $\left(\Gamma^{K, R} \right)_{K \in \N^{\star}}$ as follows
\begin{equation*}
\Gamma^{K, R}\left(\dd t, \dd \mu\right) = \delta_{\mu_{t}^{K, R}}\left(\dd \mu\right)\dd t.
\label{Gamma_K}
\end{equation*}
In the sequel, we study the limit as $K$ tends to $+\infty$ of the sequence of laws of $$\left(\left(z^{K, R}, \Gamma^{K, R} \right) \right)_{K \in \N^{\star}}$$ in $\MM_{1}\left(\D\left(\left[0, T\right], \T_{R} \right) \times \MM_{m}^{T}\left(\MM_{1}\left(\R\right) \right)\right)$.

\begin{Thm} Let $T, R>0$. The sequence of laws of $\left(\left(z^{K, R}, \Gamma^{K, R} \right) \right)_{K \in \N^{\star}}$ is tight in the set of probability
measures on $\D\left(\left[0, T\right], \T_{R} \right) \times \MM_{m}^{T}\left(\MM_{1}\left(\R\right) \right)$ and for any limiting value $\Q$ of this sequence, the canonical process $\left(\zeta^{R}, \Gamma^{R} \right)$ on $\D\left(\left[0, T\right], \T_{R} \right) \times \MM_{m}^{T}\left(\MM_{1}\left(\R \right) \right)$ satisfies that  for all $f \in \CCCC_{b}^{2}\left(\T_{R}, \T_{R} \right)$, 
\begin{equation}
N_{t}^{f, R} := f\left(\zeta_{t}^{R}\right) - f\left(\zeta_{0}^{R}\right) - \int_{0}^{t}{\int_{\MM_{1}\left(\R\right)}^{}{\LL_{{\rm SLOW}}f\left(\zeta_{s}^{R}, \mu \right)\Gamma^{R}\left(\dd s ,\dd \mu \right)}} = 0
\label{Martingale_N_t_f}
\end{equation}
$\Q-$a.s. Moreover, for all $t \in [0, T]$, 
\begin{equation}
\E_{\Q}\left(\int_{0}^{t}{\int_{\MM_{1}\left(\R\right)}^{}{M_{5}\left(\mu\right)\Gamma^{R}\left(\dd s, \dd \mu \right)}} \right) < \infty,
\label{Controle_E_M2_Gamma}
\end{equation}
so that the definition {\rm(\ref{Martingale_N_t_f})} makes sense.
\label{Thm_Kurtz_adapte}
\end{Thm}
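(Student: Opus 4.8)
The plan is to prove Theorem \ref{Thm_Kurtz_adapte} in three stages, following the Kurtz stochastic averaging scheme but adapting it to the torus setting, where the compact containment condition is automatic.

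\textbf{Stage 1: Tightness.} First I would establish tightness of the sequence of laws of $(z^{K,R},\Gamma^{K,R})$. For the occupation measure component $\Gamma^{K,R}$, tightness in $\MM_m^T(\MM_1(\R))$ is immediate (even automatic in the torus case, since $\MM_1(\T_R)$ is compact and $\Gamma^{K,R}([0,t]\times\MM_1(\R))=t$ for all $t$, so one only needs tightness of the ``$\mu$-marginals'', which follows from Prohorov on a compact space, cf. \cite[Lemma 1.3]{Kurtz}). For the slow component $z^{K,R}$, I would apply the Aldous--Rebolledo criterion \cite[Theorem 3.9.1 and 3.9.4]{Ethier_markov_1986}: the compact containment condition holds trivially because $z_t^{K,R}\in\T_R$ for all $t$. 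It remains to check the fluctuation estimate: using the Doob decomposition $z_t^{K,R}=z_0^{K,R}+\int_0^t \LL^K\id\,(\ldots)\,ds+M_t$ combined with Proposition \ref{Prop_Generateur_Lent_Decomposition} applied to (a smooth $\T_R$-periodic modification of) $f=\id$, the drift is $O(M_2(\mu_s^{K,R}))$ on the time scale and the martingale bracket is $O(\sigma^2 K\,(\cdots))$ by Proposition \ref{Prop_Generateur_Couple_Lent_Rapide}; both are controlled, after stopping at $\check\tau^{K,R}$, by Lemma \ref{Lem_Controle_Integrale_Temps_M2} (moment of order $6$ bound, hence also order $2$) and, crucially, Proposition \ref{Prop_Convergence_tau_et_theta} which gives $\tau^{K,R}\to+\infty$ in probability. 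So the stopped and unstopped processes coincide with probability tending to $1$.

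\textbf{Stage 2: The bound (\ref{Controle_E_M2_Gamma}).} Note that $\int_0^t\int M_5(\mu)\,\Gamma^{K,R}(ds,d\mu)=\int_0^t M_5(\mu_s^{K,R})\,ds$. On the event $\{t\le\check\tau^{K,R}\}$ this is controlled in expectation by Lemma \ref{Lem_Controle_Integrale_Temps_M2} (since $M_5\le 1+M_6$), and Corollary \ref{Cor_Integrale_temps_M2_UI} gives uniform integrability of the family $\big(\int_0^{t\wedge\check\tau^{K,R}}M_5(\mu_s^{K,R})\,ds\big)_{t\le T,K}$. Combined with the convergence $\P(\tau^{K,R}<T)\to 0$ and Fatou's lemma applied along the converging subsequence, this passes the bound to the limit: $\E_\Q\big(\int_0^t\int M_5(\mu)\,\Gamma^R(ds,d\mu)\big)<\infty$. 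This in particular ensures $\int_0^t\int \LL_{\rm SLOW}f(\zeta_s^R,\mu)\Gamma^R(ds,d\mu)$ is well-defined, since $\LL_{\rm SLOW}f(z,\mu)=f'(z)M_2(\mu)\partial_1\Fit(z,z)$ depends on $\mu$ only through $M_2(\mu)\le 1+M_5(\mu)$ and $f',\partial_1\Fit$ are bounded on $\T_R$.

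\textbf{Stage 3: The martingale identity (\ref{Martingale_N_t_f}).} For fixed $f\in\CCCC_b^2(\T_R,\T_R)$, Proposition \ref{Prop_Generateur_Couple_Lent_Rapide} gives that $M_t^{K,f}:=f(z_t^{K,R})-f(z_0^{K,R})-\int_0^t\LL^Kf(z_s^{K,R},\mu_s^{K,R})\,ds$ is a square-integrable martingale, and Proposition \ref{Prop_Generateur_Lent_Decomposition} gives $\LL^Kf(z,\mu)=\LL_{\rm SLOW}f(z,\mu)+O\big(K^{-2}+M_2(\mu)/K+\sigma\sqrt K\,M_3(\mu)\big)$. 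I would write
\begin{align*}
f(z_t^{K,R})-f(z_0^{K,R})-\int_0^t\LL_{\rm SLOW}f(z_s^{K,R},\mu_s^{K,R})\,ds = M_t^{K,f}+\int_0^t O\Big(\tfrac1{K^2}+\tfrac{M_2(\mu_s^{K,R})}{K}+\sigma\sqrt K\,M_3(\mu_s^{K,R})\Big)ds,
\end{align*}
then show that (a) $\E\big[(M_t^{K,f})^2\big]\to 0$ — its bracket is $O(\sigma^2K\cdot\text{bounded})\times\int_0^t(\cdots)$, which, after stopping at $\check\tau^{K,R}$ where $M_3\le M_2/(\sigma K^{(3+\varepsilon)/2})$ and using Corollary \ref{Cor_Esp_M2_max_2_constantes}, tends to $0$ by Assumption (\ref{Hypothese_Gamme_sigma}); and (b) the remainder integral $\to 0$ in $L^1$ up to $\check\tau^{K,R}$: the $M_2/K$ term is handled by Corollary \ref{Cor_Esp_M2_max_2_constantes}, and the $\sigma\sqrt K\,M_3$ term uses $M_3\le M_2/(\sigma K^{(3+\varepsilon)/2})$ again so that $\sigma\sqrt K\,M_3\le M_2/K^{1+\varepsilon/2}$. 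Thus $f(z_t^{K,R})-f(z_0^{K,R})-\int_0^t\LL_{\rm SLOW}f(z_s^{K,R},\mu_s^{K,R})\,ds\to 0$ in probability as $K\to+\infty$, while on the event $\{\check\tau^{K,R}\ge T\}$ (whose probability $\to 1$) this expression equals $f(z_{t\wedge\check\tau^{K,R}}^{K,R})-\cdots$. Finally I would express the middle term as $\int_{[0,t]\times\MM_1(\R)}\LL_{\rm SLOW}f(z_s^{K,R},\mu)\Gamma^{K,R}(ds,d\mu)$ and pass to the limit along the converging subsequence: the map $(\zeta,\Gamma)\mapsto \zeta_t-\zeta_0-\int_{[0,t]\times\MM_1(\R)}\LL_{\rm SLOW}f(\zeta_s,\mu)\Gamma(ds,d\mu)$ is, after truncating $M_2(\mu)$ at level $A$, continuous for the relevant topologies at $\Q$-a.e.\ point (using that $\zeta^R$ has a.s.\ no fixed discontinuities, which one gets from the fact that jumps of $z^{K,R}$ are $O(\sigma/\sqrt K)\to 0$), and the truncation error is controlled uniformly by (\ref{Controle_E_M2_Gamma}) and the uniform integrability from Corollary \ref{Cor_Integrale_temps_M2_UI}. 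Hence $N_t^{f,R}=0$ $\Q$-a.s.

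\textbf{Main obstacle.} The delicate point is Stage 3(b): the remainder term carries the third moment $M_3(\mu_s^{K,R})$, which is \emph{not} uniformly bounded in expectation (this is precisely the difficulty forcing the introduction of $\check\tau^{K}$ throughout the paper). The whole argument must therefore be run with the process stopped at $\check\tau^{K,R}$, using the pathwise bound $M_3\le M_2\,\Diam(\Supp\mu)\le M_2/(\sigma K^{(3+\varepsilon)/2})$ valid up to $\check\tau^{K,R}$, and then re-glued via $\P(\tau^{K,R}<T)\to 0$ from Proposition \ref{Prop_Convergence_tau_et_theta}. One must also be careful that the continuity of the functional used in passing to the limit genuinely holds at the limit point $\Q$; this needs the truncation-in-$A$ argument together with uniform integrability, rather than a naive continuous mapping theorem, since $\mu\mapsto M_2(\mu)$ is not bounded and not continuous for the weak topology on $\MM_1(\R)$.
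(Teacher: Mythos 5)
The one genuine gap is in your Stage 1 treatment of the occupation measure: you declare tightness of $\left(\Gamma^{K,R}\right)_{K}$ ``automatic in the torus case, since $\MM_{1}(\T_{R})$ is compact''. But the fast variable is not $\T_{R}$-valued: $\mu_{t}^{K,R}$ is the centered \emph{and dilated} distribution, which lives on the rescaled torus $\T_{R}^{K,\sigma}$ of length $4R/(\sigma\sqrt{K})$, and under Assumption (\ref{Hypothese_Gamme_sigma}) this length diverges as $K\to+\infty$; the paper identifies all the spaces $\MM_{1}\left(\T_{R}^{K,\sigma}\right)$ as subsets of $\MM_{1}(\R)$, which is not compact, and there is no fixed compact set containing them all. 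Hence tightness of the $\mu$-marginals of $\Gamma^{K,R}$ is not free: it is exactly the content of Lemma \ref{Lemme_criteres_tension}, proved via Markov's inequality with the uniform bound $\E\left(M_{2}\left(\mu_{t\wedge\tau^{K,R}}^{K,R}\right)\right)\leqslant\max\left\{C_{2}^{\star},\overline{\theta}\,\overline{m}_{2}\right\}$ (Corollary \ref{Cor_Esp_M2_max_2_constantes}) and Prohorov's theorem, and one must then verify the expected-mass condition $\E\left(\Gamma^{K,R}\left([0,t]\times D\right)\right)\geqslant t(1-\eta)$ demanded by Kurtz's Lemma 1.3, which uses $\P\left(\tau^{K,R}<t\right)\to 0$ (Proposition \ref{Prop_Convergence_tau_et_theta}). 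You invoke the right lemma of Kurtz but skip its hypothesis; since you use the very same moment bounds elsewhere, the gap is repairable, but as written this step rests on a false premise.

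Apart from this, your argument is sound, and your Stage 3 is a genuinely different route to $N^{f,R}=0$ from the paper's. You kill the prelimit expression directly: the martingale part vanishes in $L^{2}$ because its bracket is of order $K^{-1}\int_{0}^{t\wedge\check{\tau}^{K,R}}M_{2}\left(\mu_{s}^{K,R}\right)\dd s+tK^{-2}$ (your quoted order ``$\sigma^{2}K$'' is not the right bookkeeping, though the conclusion stands), the drift error vanishes in $L^{1}$ up to $\check{\tau}^{K,R}$ via $M_{3}\leqslant M_{2}/\left(\sigma K^{\frac{3+\varepsilon}{2}}\right)$, and you then transfer the identity to the limit by a truncation-plus-uniform-integrability continuity argument; note the cutoff must be placed inside the moment integral, i.e. use $\mu\mapsto\left\langle \id^{2}\wedge A,\mu\right\rangle$, since truncating $M_{2}$ itself does not give weak continuity, and one also needs that $t$ is $\Q$-a.s.\ a continuity point of $\zeta^{R}$ (jumps of $z^{K,R}$ vanish). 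The paper instead first proves that $N^{f,R}$ is a $\Q$-martingale (Step 7, passing to the limit in conditional-expectation identities against bounded Lipschitz functions, with the Skorohod representation and the uniform integrability of Corollary \ref{Cor_Integrale_temps_M2_UI}), and then shows it is null by comparing It\^o's formula for $f^{2}\left(\zeta^{R}\right)$ with the identity for $f^{2}$, which forces $\left\langle N^{f,R}\right\rangle\leqslant 0$ (Step 8); this avoids any quantitative $L^{2}$ estimate on the prelimit martingale. Your route is shorter but places all the weight on the continuity of the averaged functional at the limit point, which you correctly flag as the delicate step; both approaches rely equally on stopping at $\check{\tau}^{K,R}$ and regluing via Proposition \ref{Prop_Convergence_tau_et_theta}, and your Stage 2 for (\ref{Controle_E_M2_Gamma}) coincides in substance with the paper's Step 6.
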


This result is based on Propositions \ref{Prop_Generateur_Couple_Lent_Rapide} and \ref{Prop_Convergence_tau_et_theta}, Lemma \ref{Lem_Controle_Integrale_Temps_M2}. The proof is divided in eight steps. In Step 1, we establish the tightness of the family of laws of the stopped slow component $\left(z_{{\tiny \bullet} \wedge \tau^{K,R}}^{K, R} \right)_{K\in \N^{\star}}$. In Step 2, we establish a compact {\color{black}containment} condition for the stopped \emph{fast} component $\left(\mu_{{\tiny \bullet}\wedge \tau^{K,R}}^{K,R}\right)_{K \in \N^{\star}}$. In Step 3, we prove the tightness of the family of laws of the occupation measure. In Step 4, we deduce the tightness of the family of laws of the couple (slow, occupation measure fast). In Step 5, we prove uniform integrability results for a family $\left(N_{t\wedge \tau^{K,R}}^{f, K, R} \right)_{t\in [0, T], K \in \N^{\star}}$ constructed from $z^{K, R}$ and $\Gamma^{K, R}$ similarly as $N_{t}^{f,R}$ in (\ref{Martingale_N_t_f}). Step 6 is devoted to establish the convergence in distribution of $\left(N_{{\tiny \bullet}\wedge \tau^{K,R}}^{f, K, R} \right)_{K \in \N^{\star}}$ to $N^{f, R}$. In Steps 7 and 8, we prove that the limit $N^{f, R}$ is null $\Q-$a.s. 

The main modification of \textsc{Kurtz}'s setting of {\color{blue} \cite[\color{black} Theorem 2.1]{Kurtz}} is that we have to work with stopped times and need to be careful with moment estimates and uniform integrability properties. This led us to rewrite the proof. 

\begin{proof}[Proof of Theorem \ref{Thm_Kurtz_adapte}]
\textit{Step 1. Tightness of the family of laws of $\left( z_{{\tiny \bullet} \wedge \tau^{K,R}}^{K, R}\right)_{K\in \N^{\star}}$ on $\D\left([0, T], \T_{R} \right)$.}  Let $f \in \CCCC_{b}^{2}(\T_{R}, \T_{R})$. For all $K \in \N^{\star}$ and for all $t \in [0, T]$, let us consider the two processes $Y_{t}^{K, R} := f\left(z_{t\wedge \tau^{K,R}}^{K, R}\right)$ and $Z_{t}^{K, R}$ defined by the relation 
 \[Y_{t}^{K, R} = Y_{0}^{K, R} + \int_{0}^{t}{Z_{s}^{K, R}\dd s} + M_{t\wedge \tau^{K, R}}^{f, K, R}\] where $M_{t\wedge \tau^{K,R}}^{f, K, R}$ is the martingale given by (\ref{PB_Mg_LENT_Approx}) in the torus case. Note that, for all $s > \tau^{K, R}$, $Z_{s}^{K, R} = 0$. From Proposition \ref{Prop_Generateur_Lent_Decomposition}, (\ref{Eq_M3}) and Lemma \ref{Lem_Controle_Integrale_Temps_M2}, note that there exists a constant $C>0$ such that 
 \begin{align*}
 & \sup_{K\in \N^{\star}}{\E\left[\left(\int_{0}^{T}{\left|Z_{t}^{K, R}\right|^{2}\dd t}\right)^{\frac{1}{2}}\right]} \\
  & \hspace{1.5cm} \leqslant \sup_{K\in \N^{\star}}{\E\left[1 + \int_{0}^{T}{\left|Z_{t}^{K, R} \right|^{2}\dd t} \right]} \\
 & \hspace{1.5cm} \leqslant C \left\{ 1 + \sup_{K\in \N^{\star}}{\E\left[\int_{0}^{T\wedge \tau^{K,R}}{\left(M_{2}^{2}\left(\mu_{t}^{K, R} \right) + \frac{1}{K^{2}}\left[1 + M_{2}^{2}\left(\mu_{t}^{K, R} \right) \right] \right)\dd t} \right]}\right\} \\
 & \hspace{1.5cm} < \infty.
 \end{align*}
 Hence, from {\color{blue} \cite[\color{black} Theorem 3.9.4]{Ethier_markov_1986}}, the family of laws of  $\left(Y^{K, R} \right)_{K \in \N^{\star}}$, on $\D\left([0, T], \T_{R} \right)$, is tight. Let us observe that the compact {\color{black}containment} condition is satisfied by the stopped \emph{slow} component $\left(z_{{\tiny \bullet} \wedge \tau^{K,R}}^{K, R} \right)_{K \in \N^{\star}}$ since $\T_{R}$ is compact and \begin{equation}
 \forall T>0, \quad \inf_{K \in \N^{\star}}{\P\left( \sup_{0\leqslant t \leqslant T} z_{t\wedge \tau^{K,R}}^{K, R} \in \T_{R} \right)} = 1.
  \label{Compact_containment_condition_LENTE}
\end{equation}
 
  As $\CCCC_{b}^{2}\left(\T_{R}, \T_{R}\right)$ is a dense subset of $\CCCC_{b}^{0}\left(\T_{R}, \T_{R}\right)$ in the topology of the uniform norm, we deduce from (\ref{Compact_containment_condition_LENTE}) and {\color{blue} \cite[\color{black} Theorem 3.9.1]{Ethier_markov_1986}} that the family of laws of $\left( z_{{\tiny \bullet} \wedge \tau^{K,R}}^{K, R}\right)_{K\in \N^{\star}}$ is tight on $\D\left([0, T], \T_{R} \right) $.  \\
 
 \textit{Step 2. Compact {\color{black}containment} condition.} 
  \begin{Lem} Let $R >0$. For all $T \geqslant 0$, the family of laws of the marginal random variables of the stopped \emph{fast} process $\left(\mu_{t\wedge \tau^{K, R}}^{K, R}  \right)_{t \in [0, T], K \in \N^{\star}}$ is tight on $\MM_{1}\left(\R\right)$ i.e. \[ \forall \eta >0, \ \exists \, D_{{\color{black}\eta}}^{R, T} \subset \MM_{1}\left(\R\right) \ {\rm compact}, \ \forall t \in \left[0,T\right], \ \forall K \in \N^{\star}, \quad \P\left(\mu_{t\wedge \tau^{K,R}}^{K, R} \in D_{{\color{black}\eta}}^{R, T} \right) \geqslant 1-\eta.\]
\label{Lemme_criteres_tension}
\end{Lem}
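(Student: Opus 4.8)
The plan is to prove tightness of the marginal laws of the stopped fast process $\left\{\mu_{t\wedge \tau^{K,R}}^{K,R}\right\}_{t\in[0,T],\,K\in\N^{\star}}$ on $\MM_{1}(\R)$ by exhibiting, for every $\eta>0$, a compact set $D_{\varepsilon,\eta}^{R,T}\subset\MM_{1}(\R)$ that captures all these marginals with probability at least $1-\eta$. Recall that a subset of $\MM_{1}(\R)$ is relatively compact (for the weak topology) if and only if it is tight in the Prokhorov sense, i.e. the mass escaping to infinity is uniformly small. A convenient sufficient condition is a uniform second-moment bound: for $A>0$ the set $\left\{\mu\in\MM_{1}(\R)\mid M_{2}(\mu)\leqslant A\right\}$ is weakly compact, since by Markov's inequality $\mu(\{|x|>m\})\leqslant A/m^{2}$ uniformly over such $\mu$, and $\MM_{1}(\R)$ restricted to a fixed compact is closed. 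So it suffices to control $M_{2}\left(\mu_{t\wedge\tau^{K,R}}^{K,R}\right)$ in probability, uniformly in $t\in[0,T]$ and $K$.

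First I would invoke the definition of the stopping time $\widehat{\tau}^{K,R}$: on the event $\{t<\widehat{\tau}^{K,R}\}$ one has $M_{2}\left(\mu_{t}^{K,R}\right)<K^{\varepsilon}$ deterministically, but this bound blows up with $K$ and is therefore not directly enough. Instead I would use the expectation estimate on $M_{2}$ up to $\check{\tau}^{K,R}$, namely the torus analogue of Lemma \ref{Lem_Controle_M2_Trajectoire_Esperance} (together with Corollary \ref{Cor_Esp_M2_max_2_constantes}), which gives
\[
\E\left(M_{2}\left(\mu_{t\wedge\check{\tau}^{K,R}}^{K,R}\right)\right)\leqslant \max\left\{C_{2}^{\star},\overline{\theta}\,\overline{m}_{2}\right\}=:C_{\star}
\]
for all $t\geqslant 0$ and all $K$, where I have used that $M_{2}\left(\mu_{0}^{K,R}\right)=C_{2}^{\star}/(K\sigma^{2})$ is bounded and the drift in the upper inequality of (\ref{Inegalites_M2}) is mean-reverting. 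Since $\tau^{K,R}=\check{\tau}^{K,R}\wedge\widehat{\tau}^{K,R}\leqslant\check{\tau}^{K,R}$, and since $M_{2}$ evolves by jumps whose upward size is controlled by Lemma \ref{Lem_Accroissements_M2}, one gets $M_{2}\left(\mu_{t\wedge\tau^{K,R}}^{K,R}\right)\leqslant M_{2}\left(\mu_{t\wedge\check{\tau}^{K,R}}^{K,R}\right)$ pathwise when $t\wedge\tau^{K,R}$ coincides with $t\wedge\check{\tau}^{K,R}$, and when $\tau^{K,R}=\widehat{\tau}^{K,R}<\check{\tau}^{K,R}$ the value is still $\leqslant$ what it was at $\check{\tau}^{K,R}\wedge t$ by monotonicity of the stopping. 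In any case one obtains $\E\left(M_{2}\left(\mu_{t\wedge\tau^{K,R}}^{K,R}\right)\right)\leqslant C_{\star}$ uniformly. Then Markov's inequality yields, for $A_{\eta}:=C_{\star}/\eta$,
\[
\P\left(M_{2}\left(\mu_{t\wedge\tau^{K,R}}^{K,R}\right)>A_{\eta}\right)\leqslant\frac{C_{\star}}{A_{\eta}}=\eta,
\]
so setting $D_{\varepsilon,\eta}^{R,T}:=\left\{\mu\in\MM_{1}(\R)\mid M_{2}(\mu)\leqslant A_{\eta}\right\}$, which is weakly compact by the remark above, gives $\P\left(\mu_{t\wedge\tau^{K,R}}^{K,R}\in D_{\varepsilon,\eta}^{R,T}\right)\geqslant 1-\eta$ for every $t\in[0,T]$ and every $K$, as required.

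The main obstacle I anticipate is making rigorous the pathwise comparison that allows us to reduce the bound at the stopping time $\tau^{K,R}$ to the clean expectation bound at $\check{\tau}^{K,R}$; the subtlety is that $\widehat{\tau}^{K,R}$ could in principle occur strictly before $\check{\tau}^{K,R}$, and one has to check that the value of $M_{2}$ at $\tau^{K,R}$ is still dominated by a quantity with a uniform expectation. The cleanest route is probably to note that on $[0,\check{\tau}^{K,R}]$ the process $t\mapsto M_{2}(\mu_{t}^{K,R})$ is a nonnegative semimartingale with mean-reverting drift and a martingale part controlled by Lemma \ref{Crochet_martingale_M2}, so that $\E\left(\sup_{t\leqslant T}M_{2}\left(\mu_{t\wedge\check{\tau}^{K,R}}^{K,R}\right)\right)$ is itself uniformly bounded (via Doob's inequality applied to the martingale part plus the deterministic bound on the finite-variation part); this supremum bound subsumes both stopping-time cases at once and removes any ambiguity. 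A secondary point to be careful about is that the torus embedding makes $\mu_{t}^{K,R}$ a measure on $\R$ with possibly unbounded support (because of the $1/(\sigma\sqrt{K})$ dilation), so one genuinely needs the second-moment argument rather than a trivial compact-support argument; but since all the moment lemmas of Section \ref{Section_4_Estimees_de_moments} transfer verbatim to the torus model (with $b,\theta$ replaced by $b_{R},\theta_{R}$, which satisfy the same Assumptions \textbf{(A)}), this causes no real difficulty.
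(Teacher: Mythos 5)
Your overall strategy (weak compactness of $\left\{\mu : M_{2}(\mu)\leqslant A\right\}$ plus a Markov bound on the second moment) is the same family of argument as the paper's, but the step your proof hinges on --- the uniform bound $\E\bigl(M_{2}\bigl(\mu_{t\wedge\tau^{K,R}}^{K,R}\bigr)\bigr)\leqslant C_{\star}$ --- is not established, and this is a genuine gap. Corollary \ref{Cor_Esp_M2_max_2_constantes} bounds $\E\bigl(M_{2}(\mu_{t}^{K})\,\II_{t\leqslant\check{\tau}^{K}}\bigr)$, i.e.\ the second moment restricted to the event that the stopping time has not yet occurred; it says nothing about the frozen value $M_{2}\bigl(\mu^{K,R}_{\tau^{K,R}}\bigr)$ on $\{\tau^{K,R}\leqslant t\}$. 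That frozen value is large by construction: if $\tau^{K,R}=\widehat{\tau}^{K,R}$ it is at least $K^{\varepsilon}$ by (\ref{Temps_arret_chapeau}), and if $\tau^{K,R}=\check{\tau}^{K,R}$ the diameter condition (\ref{Temps_arret_check}), together with centering and the fact that each atom has mass at least $1/K$, forces $M_{2}$ to be of order $1/(\sigma^{2}K^{4+\varepsilon})\gg K^{\varepsilon}$. Your ``pathwise comparison'' does not repair this: $M_{2}$ is not monotone along paths, so there is no domination of $M_{2}\bigl(\mu_{t\wedge\tau^{K,R}}^{K,R}\bigr)$ by $M_{2}\bigl(\mu_{t\wedge\check{\tau}^{K,R}}^{K,R}\bigr)$, and even $\E\bigl(M_{2}(\mu_{t\wedge\check{\tau}^{K,R}}^{K,R})\bigr)$ is not controlled by the cited corollary. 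Your fallback --- a Doob bound giving $\E\bigl(\sup_{t\leqslant T}M_{2}(\mu_{t\wedge\check{\tau}^{K,R}}^{K,R})\bigr)<\infty$ uniformly --- cannot work either: Lemma \ref{Crochet_martingale_M2} only gives $\E\bigl\langle M^{K,P_{\id^{2},1}}\bigr\rangle_{T\wedge\check{\tau}^{K}}\leqslant CT/(K^{2}\sigma^{2})$, and under (\ref{Hypothese_Gamme_sigma}) one has $K^{2}\sigma^{2}\to 0$, so this bound diverges with $K$; the paper states explicitly (Section \ref{Sous-sous-section_2_3_2_Difficultes}) that no expectation control of the running supremum of $M_{2}$ is available, which is precisely why the stopping times and the coupling/large-deviations analysis of Section \ref{Sous_section_4_5_Sortie_domaine} were introduced.

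The correct organisation, which is what the paper does, avoids any moment estimate at the stopped time: fix $\eta>0$, use Proposition \ref{Prop_Convergence_tau_et_theta} to choose $K_{0}$ such that $\P\bigl(t>\tau^{K,R}\bigr)\leqslant\eta/2$ for all $K\geqslant K_{0}$ and $t\in[0,T]$, and on the complementary event apply Markov's inequality to $M_{2}(\mu_{t}^{K,R})\,\II_{t\leqslant\tau^{K,R}}$, whose expectation is bounded by $\max\{C_{2}^{\star},\overline{\theta}\,\overline{m}_{2}\}$ uniformly in $t$ and $K$ by Corollary \ref{Cor_Esp_M2_max_2_constantes}, since $\tau^{K,R}\leqslant\check{\tau}^{K,R}$. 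With this splitting, your compact set $\{M_{2}\leqslant A_{\eta}\}$ (which is indeed weakly compact, and slightly simpler than the countable-constraint compact $\KK_{\eta}$ used in the paper) does yield $\P\bigl(\mu_{t\wedge\tau^{K,R}}^{K,R}\in D\bigr)\geqslant 1-\eta$ for $K\geqslant K_{0}$, the finitely many $K<K_{0}$ being absorbed by enlarging the compact. So the conclusion is reachable with your compact set, but the probability estimate must be routed through the event $\{t\leqslant\tau^{K,R}\}$ and Proposition \ref{Prop_Convergence_tau_et_theta}, not through an unavailable moment bound at the stopped time.
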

  
\begin{proof}
Let $\eta >0$ be fixed. From Proposition \ref{Prop_Convergence_tau_et_theta}, there exists $K_{0}\in \N^{\star}$ large enough such that for all $K\geqslant K_{0}$, for all $t\in [0, T]$, $\P\left(t > \tau^{K, R} \right) \leqslant \frac{\eta}{2}$.  We consider the $\R-$valued sequence $\left(a_{q, \eta} \right)_{q\in \N}$ satisfying for all $q \in \N, a_{q, \eta} > 1$, and $\sum_{q\in \N^{\star}}^{}{\frac{q}{a_{q, \eta}^{2}}} < \frac{\eta}{2\overline{M}_{2}}$ where $\overline{M}_{2} := \max\left\{C_{2}^{\star}, \overline{\theta}\overline{m}_{2}{\color{black}/\underline{b}} \right\}$ is a uniform upper bound of $\E\left(M_{2}\left(\mu_{t\wedge \tau^{K,R}}^{K, R}  \right) \right)$ given by Corollary \ref{Cor_Esp_M2_max_2_constantes}. Let $\left(\kappa_{q, \eta} \right)_{q\in \N}$ be a sequence of compact {\color{black} intervals} on $\R$, increasing for inclusion, of the form $\left[-a_{q, \eta}, a_{q, \eta}\right]$. Let $t \in [0, T]$.  For all $q \in \N^{\star}$, and $K \geqslant K_{0}$, note that \[M_{2}\left(\mu_{t}^{K, R} \right)\II_{t\leqslant \tau^{K,R}} 
\geqslant \II_{t\leqslant \tau^{K,R}}\int_{\kappa_{q, \eta}^{c}}^{}{x^{2}\mu_{t}^{K, R}\left(\dd x\right)} \geqslant a_{q, \eta}^{2}\mu_{t\wedge \tau^{K, R}}^{K, R} \left(\kappa_{q, \eta}^{c} \right)\II_{t\leqslant \tau^{K, R}}, \]
and we deduce that $\left\{\mu_{t\wedge \tau^{K, R}}^{K, R} \left(\kappa_{q, \eta}^{c} \right) > \frac{1}{q} \right\} \cap \left\{t\leqslant \tau^{K, R} \right\} \subset \left\{M_{2}\left(\mu_{t\wedge \tau^{K, R}}^{K, R}  \right) > \frac{a_{q, \eta}^{2}}{q} \right\} \cap \left\{t\leqslant \tau^{K,R} \right\} $. Hence, 
\begin{align*}
& \P\left(\left\{\exists q \in \N^{\star}, \ \mu_{t\wedge \tau^{K, R}}^{K, R} \left(\kappa_{q, \eta}^{c} \right) > \frac{1}{q}  \right\}\right) \\
 & \hspace{2cm} \leqslant \P\left(\tau^{K,R} < t \right) + \sum_{q\in \N^{\star}}^{}{\P\left(\left\{\mu_{t\wedge \tau^{K, R}}^{K, R} \left(\kappa_{q, \eta}^{c} \right) > \frac{1}{q}\right\} \cap \left\{t \leqslant \tau^{K,R} \right\}  \right)} \\
& \hspace{2cm} \leqslant \P\left(\tau^{K,R} < t \right) + \sum_{q\in \N^{\star}}^{}{\P\left( M_{2}\left(\mu_{t}^{K, R} \right)\II_{t\leqslant \tau^{K,R}} > \frac{a_{q, \eta}^{2}}{q} \right)} \\
& \hspace{2cm} \leqslant \P\left(\tau^{K,R} < t \right) + \sum_{q\in \N^{\star}}^{}{\frac{\E\left(M_{2}\left(\mu_{t}^{K, R} \right) \II_{t\leqslant \tau^{K,R}}\right)}{\frac{a_{q, \eta}^{2}}{q}}} \\
& \hspace{2cm} \leqslant \eta.
\end{align*} 
Therefore, we have proved that 
\[\forall \eta >0, \ \forall t \in [0, T], \ \forall K \geqslant K_{0}, \quad \P\left(\mu_{t\wedge \tau^{K, R}}^{K, R} \in \KK_{\eta} \right)\geqslant 1 - \eta,\]
where $\KK_{\eta} :=\left\{\mu\in \MM_{1}(\R) \left|\phantom{1^{1^{1^{1}}}} \hspace{-0.6cm} \right. \forall q \in \N^{\star}, \mu\left(\kappa_{q, \eta}^{c} \right) \leqslant \frac{1}{q} \right\}$ which is compact by \textsc{Prohorov}'s theorem.\qedhere 
\end{proof}

 \textit{Step 3. Tightness of the family of laws of $\left(\Gamma^{K, R} \right)_{K\in \N^{\star}}$ on $\MM_{m}^{T}\left(\MM_{1}\left(\R\right) \right)$.} Let $\eta > 0$ be fixed. From Proposition \ref{Prop_Convergence_tau_et_theta}, there exists $K_{0} \in \N^{\star}$ such that for all $K \geqslant K_{0}$, for all $t \in [0, T]$, $\P\left(t \geqslant \tau^{K,R}\right) \leqslant \frac{\eta}{2}$. Consider $D_{{\color{black}\frac{\eta}{2}}}^{R, T}$ the compact set in Lemma \ref{Lemme_criteres_tension}. 
  It follows that for all $t \in [0, T]$, $K \geqslant K_{0}$, 
 \begin{align*}
 \E\left(\Gamma^{K,R}\left([0, t] \times D_{{\color{black}\frac{\eta}{2}}}^{R, T} \right) \right) & \geqslant \E\left(\Gamma^{K,R}\left([0, t] \times D_{{\color{black}\frac{\eta}{2}}}^{R, T} \right)\II_{t<\tau^{K,R}} \right) \\
 & = \int_{0}^{t}{\P\left(\left\{\mu_{s\wedge \tau^{K, R}}^{K, R}  \in D_{{\color{black}\frac{\eta}{2}}}^{R, T} \right\} \cap \{t<\tau^{K,R} \} \right)\dd s} \\
 & \geqslant \int_{0}^{t}{\P\left(\mu_{s\wedge \tau^{K, R}}^{K, R} \in D_{{\color{black}\frac{\eta}{2}}}^{R, T} \right)\dd s}  - \int_{0}^{t}{\P\left(t\geqslant\tau^{K,R}\right)\dd s} \\
 & \geqslant t\left(1 - \eta\right).
 \end{align*}
 Therefore, the tightness of the family of laws of $\left(\Gamma^{K, R} \right)_{K\in \N^{\star}}$ follows from {\color{blue} \cite[\color{black} Lemma 1.3]{Kurtz}}. \\ 
 
 \textit{Step 4. Tightness of the family of laws of $\left(\left( z^{K, R}, \Gamma^{K, R}\right) \right)_{K\in \N^{\star}}$ on $\D\left([0, T], \T_{R} \right) \times \MM_{m}^{T}\left(\MM_{1}\left(\R\right)\right)$.} From Steps 1 and 3 and \textsc{Prohorov}'s theorem, we deduce that the family of laws of the couple $\left(\left( z_{{\tiny \bullet} \wedge \tau^{K,R}}^{K, R},  \Gamma^{K, R}\right) \right)_{K\in \N^{\star}}$ is relatively compact in $\MM_{1}\left(\D\left([0, T], \T_{R} \right)\right.  \times$ $\MM_{m}^{T}\left.\left(\MM_{1}\left(\R \right)\right) \right)$. Hence, there exists a probability measure $\Q$ on the canonical space $\D\left([0, T], \T_{R} \right) \times \MM_{m}^{T}\left(\MM_{1}\left(\R\right)\right)$  and an increasing function $n : \N^{\star} \to \N^{\star}$ such that the subsequence of laws of $\left(\left( z_{{\tiny \bullet} \wedge \tau^{n\left(K\right), R}}^{n\left(K\right), R}, \Gamma^{n\left(K\right), R}\right)\right)_{K \in \N^{\star}}$ converges weakly to the limiting value  $\Q$ when $K \to + \infty$.  Thanks to Proposition \ref{Prop_Convergence_tau_et_theta},  we deduce that the family of laws of $\left(\left( z^{n(K), R}, \Gamma^{n(K), R}\right) \right)_{K\in \N^{\star}}$ converges weakly to $\Q$  when $K \to + \infty$ and therefore that the family of laws of $\left(\left( z^{K, R}, \Gamma^{K, R}\right) \right)_{K\in \N^{\star}}$ is relatively compact, thus tight on $\D\left([0, T], \T_{R} \right) \times \MM_{m}^{T}\left(\MM_{1}\left(\R\right)\right) $ by \textsc{Prohorov}'s theorem. \\
 
 \textit{Step 5. Uniform integrability.} For all $f \in \CCCC_{b}^{2}\left(\T_{R}, \T_{R}\right)$, $K \in \N^{\star}$, let us consider $\left(N_{t}^{f, K, R}\right)_{t \in [0, T]}$ the stochastic process defined by 
\begin{equation*}
N_{t}^{f, K, R} := f\left(z_{t}^{K, R} \right) - f\left(z_{0}^{K, R} \right) - \int_{0}^{t}{\int_{\MM_{1}\left(\R\right)}^{}{\LL_{\rm SLOW}f\left(z_{s}^{K, R}, \mu \right)\Gamma^{K, R}\left(\dd s, \dd \mu\right)} }. 
\label{Processus_N_t_f_Def}
\end{equation*} 
 From (\ref{Generateur_LENT}), we have for all $f \in \CCCC_{b}^{2}\left(\T_{R}, \T_{R}\right)$, there exists a constant $C>0$ such that 
 \[ \forall t \in [0, T], \ \forall K \in \N^{\star}, \quad \left|N_{t\wedge \tau^{K,R}}^{f, K, R} \right| \leqslant  C\left( 1 + \int_{0}^{t}{M_{2}\left(\mu_{s}^{K, R} \right)\II_{s\leqslant \tau^{K, R}}\dd s} \right), \]
Hence, the uniform integrability of $\left(N_{{\tiny \bullet}\wedge \tau^{K,R}}^{f, K, R} \right)_{K \in \N^{\star}}$ follows from Corollary \ref{Cor_Integrale_temps_M2_UI}. \\
  
 \textit{Step 6. Proof of (\ref{Controle_E_M2_Gamma}) and almost sure convergence of $\left(\widetilde{N}_{{\tiny \bullet} \wedge \tau^{K,R}}^{f, K, R} \right)_{K\in \N^{\star}}$ to $\widetilde{N}^{f, R}$.} From Step 4, Proposition \ref{Prop_Convergence_tau_et_theta}  and \textsc{Skorohod}'s representation theorem, there exists an increasing function $\bar{n}: \N^{\star} \to \N^{\star}$ and a probability space on which we define, the random variable $\widetilde{\tau}^{\bar{n}\left(K\right), R}$, the families $\left(\widetilde{z}^{\bar{n}\left(K\right), R}\right)_{K\in \N^{\star}}$ and $\left(\widetilde{\Gamma}^{\bar{n}\left(K\right), R} \right)_{K \in \N^{\star}}$ and $\widetilde{\zeta}^{R}, \widetilde{\Gamma}^{R}$ copies of $\left(z^{\bar{n}(K), R}  \right)_{K\in \N^{\star}}$, $\left(\Gamma^{\bar{n}(K), R} \right)_{K\in \N^{\star}}$, $\zeta^{R}$, $\Gamma^{R}$ under $\Q$ such that the sequence $\left(\left(\widetilde{z}^{\bar{n}\left(K\right), R}, \widetilde{\Gamma}^{\bar{n}\left(K\right), R}, \widetilde{\tau}^{\bar{n}\left(K\right), R}\right)\right)_{K\in \N^{\star}}$ converges a.s. to $\left(\widetilde{\zeta}^{R}, \widetilde{\Gamma}^{R}, + \infty\right)$ when $K \to + \infty$. Note that for all $t \in [0, T]$, 
  \[\widetilde{\Gamma}^{\bar{n}\left(K\right), R}\left(\left[0, t\wedge \widetilde{\tau}^{\bar{n}\left(K\right), R} \right] \times \MM_{1}\left(\R\right) \right) = t\wedge\widetilde{\tau}^{\bar{n}\left(K\right), R} \xrightarrow[ K\longrightarrow  + \infty ]{{\rm a.s.}} t = \widetilde{\Gamma}^{\color{black}R}\left([0, t] \times \MM_{1}\left(\R\right) \right).\]
  
From {\color{blue} \cite[\color{black} Lemma 1.5 (b)(c)(d)]{Kurtz}} and Corollary \ref{Cor_Integrale_temps_M2_UI}, we have \[\int_{0}^{t\wedge \widetilde{\tau}^{K, R}}{\int_{\MM_{1}(\R)}^{}{M_{5}\left(\mu \right)\widetilde{\Gamma}^{K, R}\left(\dd s, \dd \mu \right)}}   \xrightarrow[ K\longrightarrow  + \infty ]{{\rm a.s.}} \int_{0}^{t}{\int_{\MM_{1}(\R)}^{}{M_{5}\left(\mu \right)\widetilde{\Gamma}^{\color{black} R}\left(\dd s, \dd \mu \right)}}   \]
and (\ref{Controle_E_M2_Gamma}) follows.  From {\color{blue} \cite[\color{black} Lemma 1.5 (b)(c)(d)]{Kurtz}}, we also deduce that for all $f\in \CCCC_{b}^{2}\left(\T_{R}, \T_{R} \right)$, for all $t \in [0, T]$, a.s. 
 \begin{align*}
& \lim_{K\to + \infty}{\int_{0}^{t\wedge \widetilde{\tau}^{\bar{n}\left(K \right), R}}{\int_{\MM_{1}\left(\R \right)}^{}{\LL_{\rm SLOW}f\left(\widetilde{z}_{s}^{\bar{n}\left(K\right), R}, \mu \right)\widetilde{\Gamma}^{\bar{n}\left(K\right), R}\left(\dd s, \dd \mu \right)}}} \\
& \hspace{3cm} = \int_{0}^{t}{\int_{\MM_{1}\left(\R\right)}^{}{\LL_{\rm SLOW}f\left(\widetilde{\zeta}^{R}, \mu \right)\widetilde{\Gamma}^{R}\left(\dd s, \dd \mu \right)}}
\end{align*}
and thus the sequence $\left(\widetilde{N}_{{\tiny \bullet} \wedge \widetilde{\tau}^{\bar{n}\left(K\right), R}}^{f, \bar{n}\left(K\right), R} \right)_{K\in \N^{\star}}$ converges a.s. to the process $\left(\widetilde{N}_{t}^{f, R} \right)_{t\in[0, T]}$ where $\widetilde{N}_{t}^{f, K, R}$ and $\widetilde{N}_{t}^{f, R}$ are respectively defined for all $K \in \N^{\star}$ by 
\begin{multline*}
\widetilde{N}_{t}^{f, K, R} := f\left(\widetilde{z}_{t}^{K, R}\right) - f\left(\widetilde{z}_{0}^{K, R}\right) - \int_{0}^{t}{\int_{\MM_{1}\left(\R\right)}^{}{\LL_{{\rm SLOW}}f\left(\widetilde{z}_{t}^{K, R}, \mu \right)\widetilde{\Gamma}^{K, R}\left(\dd s ,\dd \mu \right)}}, \\
  t \leqslant T \wedge \widetilde{\tau}^{K, R}
\end{multline*}
  \begin{align*}
  \hspace{-0.35cm}\widetilde{N}_{t}^{f, R} & := f\left(\widetilde{\zeta}_{t}^{R}\right) - f\left(\widetilde{\zeta}_{0}^{R}\right) - \int_{0}^{t}{\int_{\MM_{1}\left(\R\right)}^{}{\LL_{{\rm SLOW}}f\left(\widetilde{\zeta}_{t}^{R}, \mu \right)\widetilde{\Gamma}^{R}\left(\dd s ,\dd \mu \right)}}, \quad t \leqslant T.
  \end{align*}
  
  \textit{Step 7. $N^{f, R}$ is a martingale.} Let us consider the filtration $\left(\widetilde{\FF}_{t}^{K, R} \right)_{t\in [0, T]}$ defined by $\widetilde{\FF}_{t}^{K, R} :=  \sigma\left(\widetilde{z}_{s}^{K, R}, \widetilde{\Gamma}^{K, R}\left([0, s] \times H  \right)\left| \phantom{1^{1^{1^{1}}}} \hspace{-0.7cm} \right.  s\leqslant t, H \in \BB\left(\MM_{1}\left( \R\right) \right) \right)$, $\left(f_{i} \right)_{1\leqslant i \leqslant q}$, $q\in \N^{\star}$ bounded \textsc{Lipschitz} functions from $\T_{R}$ to $\T_{R}$ and $0<t_{1} \leqslant \cdots \leqslant t_{q} \leqslant s < t$. Let us denote for all $K\in \N^{\star}$, for all $t \leqslant \widetilde{\tau}^{K,R} \wedge T$,  $\widetilde{M}_{t}^{f, K, R} := \widetilde{N}_{t}^{f, K, R} + \widetilde{\EE}_{t}^{f, K, R}$  where $\widetilde{M}^{f, K, R}$ is constructed from $\widetilde{z}^{K, R}$ and $\widetilde{\mu}^{K, R}$ as in (\ref{PB_Mg_LENT_Approx}) and $\widetilde{\EE}_{t}^{f, K, R}$ is an error term.  Note that $\widetilde{M}_{t}^{f, K, R}$ is a $\left(\widetilde{\FF}_{t}^{K, R}\right)_{t \in [0, T]}-$martingale as in Proposition \ref{Prop_Generateur_Couple_Lent_Rapide}. Hence, 
  \begin{align*}
  \E\left(\widetilde{M}^{f, \bar{n}\left(K\right), R}_{t\wedge\widetilde{\tau}^{\bar{n}\left(K\right),R}}\prod_{i\, = \, 1}^{q}{f_{i}\left(\widetilde{M}^{f, \bar{n}\left(K\right), R}_{t_{i}\wedge\widetilde{\tau}^{\bar{n}\left(K\right),R}} \right)} \right)  & = \E\left(\widetilde{M}^{f, \bar{n}\left(K\right), R}_{s\wedge\widetilde{\tau}^{\bar{n}\left(K\right), R}}\prod_{i\, = \, 1}^{q}{f_{i}\left(\widetilde{M}^{f, \bar{n}\left(K\right), R}_{t_{i}\wedge\widetilde{\tau}^{\bar{n}\left(K\right), R}} \right)} \right).
\end{align*}   
 From  Proposition \ref{Prop_Generateur_Lent_Decomposition} and (\ref{Eq_M3}), $\widetilde{\EE}_{t\wedge \widetilde{\tau}^{K,R}}^{f, K, R} = \frac{1}{K}O\left(\int_{0}^{t}{\left[1 + M_{2}\left(\widetilde{\mu}_{s}^{K, R} \right) \right]\II_{s\leqslant \widetilde{\tau}^{K, R}}\dd s} \right)$ and then satisfy the condition $\lim_{K\to + \infty}{\E\left(\sup_{0\leqslant t \leqslant T}{\left|\EE_{t\wedge \widetilde{\tau}^{K, R}}^{f, K, R} \right|} \right)} = 0$. As for all $i \in \{ 1, \cdots, q\}$, $f_{i}$ is \textsc{Lipschitz}, there exists a constant $C>0$ such that 
\begin{align*}
& \E\left(\left|f_{i}\left(\widetilde{M}^{f, \bar{n}\left(K\right), R}_{t_{i}\wedge\widetilde{\tau}^{\bar{n}\left(K\right), R}} - \EE_{t_{i}\wedge\tau^{\bar{n}\left(K\right), R}}^{f, \bar{n}\left(K\right), R}   \right) - f_{i}\left(\widetilde{M}^{f, \bar{n}\left(K\right), R}_{t_{i}\wedge\widetilde{\tau}^{\bar{n}\left(K\right), R}} \right) \right| \right) \\
& \hspace{3cm} \leqslant \frac{C}{\bar{n}\left(K\right)}\E\left(\int_{0}^{t_{i}  }{\left[1 + M_{2}\left(\mu_{r}^{\bar{n}\left(K\right), R} \right) \right]\II_{r \leqslant \widetilde{\tau}^{\bar{n}\left(K\right), R}} \dd r} \right)
\end{align*}
where the term of the right hand side of the previous inequality goes to $0$ when $K \to + \infty$ according to Lemma \ref{Lem_Controle_Integrale_Temps_M2}. We deduce that for all $u \in \{s, t\}$, \[\lim_{K\to+\infty}{\hspace{-0.2cm}\E\left(\widetilde{M}^{f, \bar{n}\left(K\right), R}_{u\wedge\widetilde{\tau}^{\bar{n}\left(K\right), R}}\prod_{i\, = \, 1}^{q}{f_{i}\left(\widetilde{M}^{f, \bar{n}\left(K\right), R}_{t_{i}\wedge\widetilde{\tau}^{\bar{n}\left(K\right), R}} \right)} \right)} = \lim_{K\to + \infty}{\hspace{-0.2cm}\E\left(\widetilde{N}^{f, \bar{n}\left(K\right), R}_{u\wedge\widetilde{\tau}^{\bar{n}\left(K\right), R}}\prod_{i\, = \, 1}^{q}{f_{i}\left(\widetilde{N}^{f, \bar{n}\left(K\right), R}_{t_{i}\wedge\widetilde{\tau}^{\bar{n}\left(K\right), R}} \right)} \right)}.\]
From Steps 5 and 6, we deduce that \[\E\left(\widetilde{N}^{f, R}_{t}\prod_{i\, = \, 1}^{q}{f_{i}\left(\widetilde{N}^{f, R}_{t_{i}} \right)} \right) = \E\left(\widetilde{N}^{f, R}_{s}\prod_{i\, = \, 1}^{q}{f_{i}\left(\widetilde{N}^{f, R}_{t_{i}} \right)} \right).\]
Hence, \[\E\left(N{}^{f, R}_{t}\prod_{i\, = \, 1}^{q}{f_{i}\left(N{}^{f, R}_{t_{i}} \right)} \right) = \E\left(N{}^{f, R}_{s}\prod_{i\, = \, 1}^{q}{f_{i}\left(N{}^{f, R}_{t_{i}} \right)} \right) .\]  Since the last property is true for all $q\in \N^{\star}$, $t_{1} \leqslant \cdots \leqslant t_{q} \leqslant s < t$ and for all bounded \textsc{Lipschitz} functions $\left(f_{i} \right)_{1\leqslant i \leqslant q}$, the monotone class theorem ensures us that \[ \E\left(N_{t}^{f, R} \left| \phantom{1^{1^{1^{1}}}} \hspace{-0.7cm} \right. \sigma\left(N_{u}^{f, R} \left| \phantom{1^{1^{1
}}} \hspace{-0.55cm} \right. u\leqslant s \right) \right) = N_{s}^{f, R}.\] Hence the announced result. \\

\textit{Step 8. Nullity of $N^{f, R}$.} On the one hand, from \textsc{Itô}'s formula {\color{blue} \cite[\color{black} Theorem 32 of Chapter II]{Protter}}, for all $f \in \CCCC^{2}_{b}\left(\T_{R}, \T_{R}\right)$ and $t \in [0, T]$
\begin{equation}
\begin{aligned}
f^{2}\left(\zeta_{t}^{R} \right) & = f^{2}\left(\zeta_{0}^{R} \right)  + 2 \int_{0}^{t}{\int_{\MM_{1}(\R)}^{}{f\left(\zeta_{s}^{R} \right)\LL_{\rm SLOW}f\left(\zeta^{R}_{s}, \mu \right)\Gamma^{R}\left(\dd s, \dd \mu \right)}  } \\
& \quad + 2\int_{0}^{t}{f\left(\zeta_{s}^{R} \right)\dd N_{s}^{f, R}} + \left\langle N^{f, R}\right\rangle_{t} + \sum_{0<s\leqslant t}^{}{\left(f\left(\zeta_{s}^{R}\right) -  f\left(\zeta_{s^{-}}^{R}\right) \right)^{2}}.
\end{aligned}
\label{Ito_A}
\end{equation}
On the other hand, applying (\ref{Martingale_N_t_f}) with $f^{2} \in \CCCC_{b}^{2}\left(\T_{R}, \T_{R} \right)$, we obtain that for all $t \in [0, T]$
\begin{equation}
f^{2}\left(\zeta_{t}^{R} \right) = f^{2}\left(\zeta_{0}^{R} \right) + \int_{0}^{t}{\int_{\MM_{1}(\R)}^{}{\LL_{\rm SLOW}f^{2}\left(\zeta_{s}^{R}, \mu \right)\Gamma^{R}\left(\dd s, \dd \mu \right)}} + N_{t}^{f^{2}, R}.
\label{Ito_B}
\end{equation}
Comparing (\ref{Ito_A}) and (\ref{Ito_B}) leads for all $t \in [0, T]$ to \[N_{t}^{f^{2}, R} - 2\int_{0}^{t}{f\left(\zeta_{s}^{R} \right)\dd N_{s}^{f, R}} = \left\langle N^{f, R}\right\rangle_{t} + \sum_{0<s\leqslant t}^{}{\left(f\left(\zeta_{s}^{R}\right) -  f\left(\zeta_{s^{-}}^{R}\right) \right)^{2}} \]
and thus by {\color{blue} \cite[\color{black} Theorem 4.1]{LeGall}} that $\Q-$a.s. for all $t \in [0, T]$, \[\left\langle N^{f, R}\right\rangle_{t} = - \sum_{0<s\leqslant t}^{}{\left(f\left(\zeta_{s}^{R}\right) -  f\left(\zeta_{s^{-}}^{R}\right) \right)^{2}} \leqslant 0 \] 
  so that $\left\langle N^{f, R}\right\rangle_{t} = 0$. Hence, $\Q-$a.s. $N^{f, R} = 0$ which completes the proof. \qedhere
\end{proof}

\section{Characterisation of the occupation measure limit on the torus\label{Section_6_Caract_Gamma_Limite}} 
Consider a probability measure $\Q$ on $\D\left([0, T],\T_{R}\right) \times \MM_{m}^{T}\left(\MM_{1}\left(\R \right) \right)$ and the canonical process $\left(\zeta^{R}, \Gamma^{R} \right)$ as in Theorem \ref{Thm_Kurtz_adapte}. The following lemma gives us a desintegration result of the occupation measure $\Gamma^{R}$ that we characterise below. 

\begin{Lem} Let $T\geqslant 0$ be fixed. With the notations of {\rm Theorem \ref{Thm_Kurtz_adapte}}, there exists a random probability measure-valued process $\left(\gamma_{t}^{R} \right)_{t \in [0, T]}$ that is predictable in $(\omega, t)$ and such that for all bounded measurable function $\psi : [0,T] \times \MM_{1}\left(\R \right) \to \T_{R}$, 
\begin{equation}
\int_{0}^{t}{\int_{\MM_{1}\left(\R \right)}^{}{\psi(s, \mu})\Gamma^{R}\left(\dd s, \dd \mu \right)} = \int_{0}^{t}{\int_{\MM_{1}\left(\R \right)}^{}{\psi(s, \mu)\gamma^{R}_{s}(\dd \mu)\dd s}}.
\label{Eq_Desintegration_Gamma}
\end{equation}
\label{Lem_Desintegration}
\end{Lem}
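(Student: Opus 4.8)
The statement is a disintegration result for the random measure $\Gamma^R \in \MM_m^T(\MM_1(\R))$: we want to write $\Gamma^R(\dd s, \dd\mu) = \gamma_s^R(\dd\mu)\,\dd s$ with $(\gamma_s^R)_{s\in[0,T]}$ predictable. The plan is to invoke a standard measurable-disintegration theorem for kernels on Polish spaces, taking care that the time marginal of $\Gamma^R$ is Lebesgue measure (by definition of $\MM_m^T$) and that $\MM_1(\R)$, hence $\MM_1(\T_R)$ on which $\Gamma^R$ is essentially supported, is Polish so that regular conditional probabilities exist.

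First I would fix a version. Since $\Gamma^R$ is a random element of $\MM_m^T(\MM_1(\R))$ defined on the probability space carrying $\Q$, for each $\omega$ the deterministic measure $\Gamma^R(\omega, \cdot)$ on $[0,T]\times\MM_1(\R)$ has first marginal equal to $\dd s$ on $[0,T]$. By the classical disintegration theorem (see e.g. \textsc{Kurtz} {\color{blue}\cite[Lemma 1.4]{Kurtz}}, which is precisely stated for this purpose, or the general version in \textsc{Ethier-Kurtz} {\color{blue}\cite{Ethier_markov_1986}}), for each $\omega$ there is a measurable family $s\mapsto \gamma_s^R(\omega,\dd\mu)$ of probability measures on $\MM_1(\R)$, unique for Lebesgue-almost every $s$, such that
\begin{equation*}
\Gamma^R(\omega,\dd s,\dd\mu) = \gamma_s^R(\omega,\dd\mu)\,\dd s,
\end{equation*}
and this gives (\ref{Eq_Desintegration_Gamma}) for every bounded measurable $\psi$ by the usual monotone class argument starting from product functions $\psi(s,\mu) = a(s)b(\mu)$.

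The point that requires genuine care is the joint measurability in $(\omega,s)$ and, more precisely, the \emph{predictability} of $(\gamma_s^R)$. Here I would argue as follows. Consider the filtration on the canonical space generated by $(\zeta^R, \Gamma^R)$; $\Gamma^R$ being an $\MM_m^T(\MM_1(\R))$-valued random variable, the process $t\mapsto \Gamma^R([0,t]\times H)$ is adapted and continuous (indeed it equals $t$ for $H = \MM_1(\R)$ and is nondecreasing, left-continuous in general) for each Borel $H$. One can then select the disintegrating kernel by an explicit limiting procedure: for a countable convergence-determining family $(\varphi_k)$ in $\CCCC_b(\R)$ and a countable dense set of rationals, set
\begin{equation*}
\langle \varphi_k, \gamma_t^R\rangle := \lim_{h\downarrow 0}\frac{1}{h}\int_t^{t+h}\!\!\int_{\MM_1(\R)}\varphi_k(\mu)\,\Gamma^R(\dd s,\dd\mu),
\end{equation*}
a Lebesgue-differentiation limit that exists for a.e.\ $t$, is left-continuous in $t$ after choosing the left derivative, and is adapted; this determines $\gamma_t^R$ and yields a predictable version. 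Since $\Gamma^R$ charges only measures supported on $\T_R$ (the fast process $\mu_t^{K,R}$ takes values in the compact torus $\T_R^{K,\sigma}$, so any weak limit does too), the relevant measure space is compact metric, which makes these selection arguments clean. The main obstacle is thus not the existence of the kernel — that is classical — but verifying predictability rather than mere progressive measurability; this is handled precisely by the left-derivative construction above, which is the standard device used in \textsc{Kurtz}'s averaging method and can be quoted from {\color{blue}\cite[Section 1]{Kurtz}}. Finally, extending (\ref{Eq_Desintegration_Gamma}) from $\psi(s,\mu)=a(s)b(\mu)$ with $a$ continuous and $b\in\CCCC_b(\R)$ to all bounded measurable $\psi$ is a routine functional monotone class step, completing the proof.
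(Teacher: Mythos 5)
Your proposal is correct and rests on exactly the same key ingredient as the paper: the paper's proof is a one-line citation of \textsc{Kurtz} {\color{blue}\cite[\color{black} Lemma 1.4]{Kurtz}}, which is precisely the reference you identify for the predictable disintegration. Your additional elaboration (the Lebesgue-differentiation construction of a predictable version and the monotone class extension) is a sound sketch of what lies behind that lemma, but the paper does not spell it out.
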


\begin{proof} The desintegration result of $\Gamma^{R}$ follows directly from {\color{blue} \cite[\color{black} Lemma 1.4]{Kurtz}}. \qedhere 
\end{proof}

\begin{Cor} Let $T\geqslant 0$ be fixed. With the notations of {\rm Theorem \ref{Thm_Kurtz_adapte}}, we have that $\zeta^{R} \in \CCCC^{0}\left([0,T], \T_{R} \right)$ is differentiable of derivative in $L^{1}(\R)$ $\Q-$a.s.
\label{Cor_Continuity}
\end{Cor}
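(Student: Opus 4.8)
The plan is to use the martingale identity (\ref{Martingale_N_t_f}) from Theorem \ref{Thm_Kurtz_adapte} together with the desintegration (\ref{Eq_Desintegration_Gamma}) of Lemma \ref{Lem_Desintegration} in order to write $f(\zeta^R_t)$ as an absolutely continuous function of $t$ for a rich enough class of test functions $f$, and then to upgrade this to differentiability of $\zeta^R$ itself. First I would combine (\ref{Martingale_N_t_f}) and (\ref{Eq_Desintegration_Gamma}): $\Q$-a.s., for every $f\in\CCCC^2_b(\T_R,\T_R)$ and every $t\in[0,T]$,
\[
f(\zeta^R_t) = f(\zeta^R_0) + \int_0^t \int_{\MM_1(\R)} \LL_{\rm SLOW}f(\zeta^R_s,\mu)\,\gamma^R_s(\dd\mu)\,\dd s
= f(\zeta^R_0) + \int_0^t f'(\zeta^R_s)\,\partial_1\Fit(\zeta^R_s,\zeta^R_s)\,\Big(\int_{\MM_1(\R)}M_2(\mu)\,\gamma^R_s(\dd\mu)\Big)\,\dd s,
\]
using the explicit form (\ref{Generateur_LENT}) of $\LL_{\rm SLOW}$. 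Denote $g_s := \int_{\MM_1(\R)}M_2(\mu)\,\gamma^R_s(\dd\mu)$; by (\ref{Controle_E_M2_Gamma}) (which bounds $\E_\Q\int_0^t\int M_5(\mu)\,\Gamma^R(\dd s,\dd\mu)$, hence a fortiori the $M_2$ integral), $s\mapsto g_s$ is in $L^1([0,T])$ $\Q$-a.s., and since $\partial_1\Fit$ is bounded on $\T_R\times\T_R$ and $f'$ is bounded, the integrand $s\mapsto f'(\zeta^R_s)\partial_1\Fit(\zeta^R_s,\zeta^R_s)g_s$ is in $L^1([0,T])$. Thus $t\mapsto f(\zeta^R_t)$ is absolutely continuous. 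Continuity of $\zeta^R$ follows either from the fact that $\zeta^R$ lives in $\D([0,T],\T_R)$ and any jump would produce a jump in $f(\zeta^R_\cdot)$ for a suitable $f$ (take $f$ to be a smooth coordinate-like function separating points on the torus), contradicting absolute continuity; so $\zeta^R\in\CCCC^0([0,T],\T_R)$.

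Next I would deduce differentiability of $\zeta^R$ itself. Choose $f$ to be a smooth function on $\T_R$ whose derivative is bounded away from $0$ on a neighbourhood of the (continuous, hence compact-range) trajectory $\{\zeta^R_s : s\in[0,T]\}$ --- for instance, since the trajectory stays in $\T_R=[x_0-2R,x_0+2R]$, take $f$ to coincide with the identity on $[x_0-2R,x_0+2R]$ and be extended smoothly and $1$-Lipschitz-ly to the whole torus (recall $f$ must map $\T_R$ to $\T_R$); then $f'\equiv 1$ along the path. With this choice the displayed identity reads $\zeta^R_t = \zeta^R_0 + \int_0^t \partial_1\Fit(\zeta^R_s,\zeta^R_s)\,g_s\,\dd s$, exhibiting $\zeta^R$ as the integral of the $L^1$ function $s\mapsto \partial_1\Fit(\zeta^R_s,\zeta^R_s)g_s$. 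By the Lebesgue differentiation theorem $\zeta^R$ is differentiable a.e.\ with derivative $\dot\zeta^R_s = \partial_1\Fit(\zeta^R_s,\zeta^R_s)\,g_s \in L^1(\R)$ (more precisely $L^1([0,T])$), which is exactly the assertion of Corollary \ref{Cor_Continuity}. I would be slightly careful that the subtlety about $f$ mapping into $\T_R$ rather than $\R$ does not cause problems: it suffices that $f$ agrees with the identity on a fixed sub-interval containing the whole range of $\zeta^R$, which is guaranteed because $\zeta^R$ takes values in $[x_0-2R,x_0+2R]$ by construction of the torus-valued process.

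The main obstacle --- though a mild one --- is making rigorous the passage from "$f(\zeta^R_\cdot)$ absolutely continuous for all $f\in\CCCC^2_b$" to "$\zeta^R$ itself absolutely continuous (hence continuous, hence differentiable a.e.)". One must be slightly careful because a priori $\zeta^R$ is only a c\`adl\`ag $\T_R$-valued process and the torus topology is not the real-line topology; the clean way around this is precisely the observation above that the range of $\zeta^R$ is contained in a fixed proper sub-arc $[x_0-2R,x_0+2R]$ of $\T_R$ of length strictly less than the torus length $4R$, on which the torus distance coincides with the Euclidean distance, so one genuinely recovers a real-valued absolutely continuous function. A second point requiring attention is that (\ref{Martingale_N_t_f}) and (\ref{Controle_E_M2_Gamma}) hold $\Q$-a.s.\ and for each fixed $f$ and $t$; to get a single a.s.\ event valid for all $t$ one uses the a.s.\ right-continuity of $\zeta^R$ and the continuity in $t$ of the right-hand integral, and to get it simultaneously for a separating countable family of $f$'s one intersects countably many a.s.\ events --- both routine. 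Everything else (boundedness of $\partial_1\Fit$ on the compact $\T_R^2$, $L^1$-integrability of $g$) is immediate from the hypotheses already recorded in Theorem \ref{Thm_Kurtz_adapte} and Assumptions \textbf{(A)}.
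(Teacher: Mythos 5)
Your proposal is correct and follows essentially the same route as the paper: the paper simply applies (\ref{Martingale_N_t_f}) with $f=\id$ viewed as an element of $\CCCC_{b}^{1}\left(\T_{R}, \T_{R}\right)$, uses (\ref{Generateur_LENT}), the desintegration (\ref{Eq_Desintegration_Gamma}) and the bound (\ref{Controle_E_M2_Gamma}) together with Assumptions \textbf{(A)} to see that the integrand is in $L^{1}$, and concludes by the fundamental theorem of calculus. Your only slip is the claim that the range of $\zeta^{R}$ lies in a \emph{proper} sub-arc of length less than $4R$ (the interval $[x_{0}-2R, x_{0}+2R]$ is the whole torus), but this is immaterial since the identity is admissible directly as a torus-valued test function and differentiability is a local property.
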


\begin{proof}
Applying (\ref{Martingale_N_t_f}) with $f = \id \in \CCCC_{b}^{1}\left(\T_{R}, \T_{R} \right)$, we deduce from Theorem \ref{Thm_Kurtz_adapte}, (\ref{Generateur_LENT}) and (\ref{Eq_Desintegration_Gamma}), $\Q-$a.s., for all $t \in [0, T]$
\begin{align*}
\zeta_{t}^{R} = \zeta_{0}^{R} + \int_{0}^{t}{\left(\int_{\MM_{1}(\R)}^{}{M_{2}\left(\mu \right)\gamma_{s}^{R}\left(\dd \mu \right)}\right)\partial_{1}\Fit\left(\zeta_{s}^{R}, \zeta_{s}^{R} \right)\dd s}.
\end{align*}
From Assumptions \textbf{(A)} and (\ref{Controle_E_M2_Gamma}), the integrand of the previous time integral is in $L^{1}(\R)$ $\Q-$a.s. Hence, the announced result follows from the fundamental theorem of calculus.
\end{proof}

We now want to characterise the limiting value $\Gamma^{R}\left(\dd t, \dd \mu\right) = \gamma_{t}^{R}\left(\dd \mu\right)\dd t$ under $\Q$. 

\begin{Prop}
With the notations of {\rm Theorem \ref{Thm_Kurtz_adapte}}, for a.e. $t \in [0, T]$, $\Q-$a.s., $\gamma_{t}^{R} = \pi^{\lambda\left(\zeta^{R} \right)}$ where $\pi^{\lambda}$ is the unique invariant probability measure of the centered \textsc{Fleming-Viot} process with resampling rate $\lambda$ (see {\color{blue} \rm {\cite[\color{black} Section 4]{Champagnat_Hass_FVr_2022}}}).
\label{Prop_Caract_Gamma_Limite}
\end{Prop}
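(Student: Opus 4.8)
The goal is to show that the disintegrated occupation measure $\gamma_t^R$ coincides, for a.e.\ $t$ and $\Q$-a.s., with the invariant measure $\pi^{\lambda(\zeta_t^R)}$ of the centered Fleming--Viot process. The natural route is: (i) derive from the martingale problem for polynomials (Lemma \ref{Lem_PB_Mg_Polynomes}) and the tightness/time-rescaling a stationarity identity of the form $\int_{\MM_1(\R)} \LL_{\rm FVc}^{\lambda(\zeta_t^R)} P_{f,n}(\mu)\,\gamma_t^R(\dd\mu) = 0$ for a.e.\ $t$, for all polynomial test functions $P_{f,n}$ with $f\in\CCCC_b^3(\R^n,\R)$; then (ii) invoke (an adaptation of) Dawson's uniqueness result \cite[Theorem 2.7.1]{Dawson} to conclude that the only probability measure on $\MM_1(\R)$ annihilating $\LL_{\rm FVc}^{\lambda}$ against all such polynomials is $\pi^\lambda$, using the characterisation of $\pi^\lambda$ from \cite[Section 4]{Champagnat_Hass_FVr_2022}.

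\textbf{Step (i): the stationarity identity.} Fix $n\in\N^\star$ and $f\in\CCCC_b^3(\R^n,\R)$. By Lemma \ref{Lem_PB_Mg_Polynomes}, $M_t^{K,P_{f,n}} = P_{f,n}(\mu_t^K) - P_{f,n}(\mu_0^K) - \int_0^t \LL^K P_{f,n}(z_s^K,\mu_s^K)\,\dd s$ is a martingale, and $\LL^K P_{f,n} = \frac{\theta(z)m_2(z)}{K^2\sigma^2}\LL_{\rm FVc}^{\lambda(z)}P_{f,n}(\mu) + \frac{1}{K^2\sigma^2}O(\tfrac{1}{\sqrt K} + \sigma K^{3/2}M_2(\mu) + \tfrac{M_3(\mu)}{K})$. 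Multiplying by $K^2\sigma^2$, working on the torus and stopping at $\tau^{K,R}$, one gets that $K^2\sigma^2 M_{t\wedge\tau^{K,R}}^{K,P_{f,n}}$ equals $-\int_0^{t\wedge\tau^{K,R}} \theta(z_s^K)m_2(z_s^K)\LL_{\rm FVc}^{\lambda(z_s^K)}P_{f,n}(\mu_s^K)\,\dd s$ plus a term controlled in $L^1$ by the moment estimates of Section \ref{Section_4_Estimees_de_moments} (Lemmas \ref{Lem_Controle_Integrale_Temps_M2}, \ref{Lem_Controle_M6_et_M4M2}) and Assumption (\ref{Hypothese_Gamme_sigma}) — here the error term $\sigma K^{3/2}M_2(\mu)$ is handled precisely via $\check{\tau}^K$ and the bound $M_2(\mu_t^K)\le K^\varepsilon$, plus a martingale whose bracket is $o(1)$ by the same estimates. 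Since $P_{f,n}$ is bounded and $\tau^{K,R}\to+\infty$ in probability (Proposition \ref{Prop_Convergence_tau_et_theta}), $K^2\sigma^2 M_{t\wedge\tau^{K,R}}^{K,P_{f,n}}\to 0$. Passing to the limit $K\to\infty$ along the subsequence, using the a.s.\ convergence of $(\widetilde z^{K,R},\widetilde\Gamma^{K,R})$ from Skorohod representation in Step 6 of the proof of Theorem \ref{Thm_Kurtz_adapte}, one obtains $\Q$-a.s.\ for all $t\in[0,T]$,
\[
\int_0^t \theta(\zeta_s^R)m_2(\zeta_s^R)\int_{\MM_1(\R)}\LL_{\rm FVc}^{\lambda(\zeta_s^R)}P_{f,n}(\mu)\,\gamma_s^R(\dd\mu)\,\dd s = 0.
\]
Since this holds for all $t$, for a countable convergence-determining family of polynomial test functions, and $\theta m_2$ is bounded below, one concludes that for a.e.\ $t\in[0,T]$, $\Q$-a.s., $\int_{\MM_1(\R)}\LL_{\rm FVc}^{\lambda(\zeta_t^R)}P_{f,n}(\mu)\,\gamma_t^R(\dd\mu) = 0$ for all $P_{f,n}$ with $f\in\CCCC_b^3$.

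\textbf{Step (ii): uniqueness of the stationary measure.} Now fix such a $t$ and a realisation; set $\gamma := \gamma_t^R$ and $\lambda := \lambda(\zeta_t^R)$. We must show $\gamma=\pi^\lambda$. As noted in Section \ref{Sous-sous-section_2_3_2_Difficultes}, Dawson's argument \cite[Theorem 2.7.1]{Dawson} relies on a duality relation that only holds weakly (with stopping times) in our setting \cite{Champagnat_Hass_FVr_2022}; the workaround is to first show that $\gamma$ gives full mass to measures with finite fourth moment. This follows from (\ref{Controle_E_M2_Gamma}) together with an analogous bound on $\int_0^t\int M_4(\mu)\,\Gamma^R(\dd s,\dd\mu)$ obtained by passing to the limit in the moment estimate of Lemma \ref{Lem_Controle_M6_et_M4M2} (the quantity $\frac34 M_6 + 3M_4M_2 + \frac32 M_2^3$ dominates $M_4$ up to constants, and is uniformly integrable along the time integral by Lemma \ref{Lem_Controle_Integrale_Temps_M2}). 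Restricted to $\MM_1^{c,4}(\R)$, the weak duality of \cite{Champagnat_Hass_FVr_2022} is enough to run the Dawson-type computation: one tests the stationarity identity against $P_{f,n}$ for $f$ approximating the dual semigroup action, uses the martingale problem (\ref{PB_Mg_FVc_Polynomes}) characterising $\pi^\lambda$ and its uniqueness from \cite[Section 4]{Champagnat_Hass_FVr_2022}, and concludes $\langle P_{f,n},\gamma\rangle = \langle P_{f,n},\pi^\lambda\rangle$ for all polynomial test functions, hence $\gamma=\pi^\lambda$ since polynomials are measure-determining on $\MM_1(\R)$.

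\textbf{Main obstacle.} The delicate point is Step (ii): making the Dawson uniqueness argument work with only the \emph{weak} (stopping-time) duality of \cite{Champagnat_Hass_FVr_2022} rather than a genuine duality. The key enabling fact — that $\gamma$ concentrates on measures with finite fourth moment — must be extracted carefully from the moment estimates, since those are only available up to the stopping time $\check\tau^K$ and in probability, not in expectation uniformly; controlling the passage to the limit of $\int_0^{t\wedge\check\tau^K} M_4(\mu_s^K)\,\dd s$ and upgrading it to a statement about $\gamma_t^R$ for a.e.\ $t$ is where the argument requires the most care. A secondary technical difficulty in Step (i) is verifying that the error and martingale terms, after multiplication by $K^2\sigma^2$, genuinely vanish in $L^1$ — this is exactly where the range restriction $\sigma\ll K^{-(2+\varepsilon)}$ in (\ref{Hypothese_Gamme_sigma}) and the control of $\check\tau^K$ via the coupling of Section \ref{Sous_section_4_5_Sortie_domaine} are used.
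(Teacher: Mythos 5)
Your overall two-step skeleton (a stationarity identity $\int \LL_{\rm FVc}^{\lambda(\zeta_t^R)}P_{f,n}(\mu)\,\gamma_t^R(\dd\mu)=0$, followed by a uniqueness argument adapted from Dawson using finite fourth moments) is the same as the paper's, but the proposal has a genuine gap precisely where the paper does its real work: Step (ii) is left as a hand-wave. Saying that ``the weak duality of \cite{Champagnat_Hass_FVr_2022} is enough to run the Dawson-type computation'' by ``testing the stationarity identity against $P_{f,n}$ for $f$ approximating the dual semigroup action'' does not constitute a proof, and it is exactly the content of Lemma \ref{Lem_proba_invariante} and Section \ref{Sous_Section_6_3_Proba_Invariante_Dawson}: one needs quantitative growth bounds on the dual process (Lemma \ref{Bornes_dual}), a new duality identity stopped at the first dual jump time whose right-hand side is again a \emph{polynomial} in $\mu$ (Lemma \ref{Lem_dualite_new}, obtained by choosing $m$ so that $\vartheta'_{1,m}=\tau_1$), the fact that $P_{\rm FVc}(t)P_{\xi_0,M(0)}(\mu)=P_{V_t,M(0)+1}(\mu)$ with $V_t$ of quadratic growth together with derivative bounds putting $V_t$ in $\CCCC^{2}_{\|\cdot\|}(\R^{M(0)+1},\R)$ (Proposition \ref{Prop_Semi_groupe_T_FVc}), and the forward identity (\ref{Eq_point_cle_lemme_6_3}) which turns the annihilation hypothesis into invariance of $\gamma$ under the semigroup, whence $\gamma=\pi^\lambda$ by uniqueness. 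None of these steps is routine, and your proposal gives no mechanism replacing them.

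A second, related gap: your Step (i) yields the stationarity identity only for \emph{bounded} $f\in\CCCC_b^3(\R^n,\R)$, whereas the uniqueness argument must be fed test functions of quadratic growth (the coefficient $V_s$ produced by the dual/semigroup action is unbounded). The paper therefore extends the identity from compactly supported $f$ to all $f\in\CCCC^{2}_{\|\cdot\|}(\R^n,\R)$ via a countable dense family in the $W_0^{2,\infty}$ norm, truncation by $\chi_q$ and dominated convergence, using $\int_{\MM_1(\R)}M_4(\mu)\,\gamma_t^R(\dd\mu)<\infty$ (which, incidentally, follows directly from (\ref{Controle_E_M2_Gamma}) and the disintegration, since $M_4\leqslant 1+M_5$ -- no need to re-derive a separate $M_4$ bound from Lemma \ref{Lem_Controle_M6_et_M4M2}). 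This extension step is absent from your proposal and without it the hypothesis of Lemma \ref{Lem_proba_invariante} cannot be verified. Finally, in Step (i) the justification ``since $P_{f,n}$ is bounded and $\tau^{K,R}\to+\infty$ in probability, $K^2\sigma^2 M^{K,P_{f,n}}_{t\wedge\tau^{K,R}}\to 0$'' is not valid as written: boundedness of $P_{f,n}$ only kills the increment term $K^2\sigma^2\bigl(P_{f,n}(\mu_t^{K,R})-P_{f,n}(\mu_0^{K,R})\bigr)$, not the martingale itself; to conclude you either need a quadratic-variation estimate for the rescaled martingale, or the paper's argument that the limit of the generator term is a continuous finite-variation $\Q$-martingale and hence null.
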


It is here that we exploit ergodicity properties for the fast limit component. The proof of Proposition \ref{Prop_Caract_Gamma_Limite}, given in Section \ref{Sous_Section_6_1_Proof_Main_result} is based on the following technical lemma giving a characterisation of $\pi^{\lambda}$ and proved in Section \ref{Sous_Section_6_3_Proba_Invariante_Dawson}. To state this lemma, let us first recall from (\ref{Eq_fonctions_test_F_f_n}) the definition of polynomials in $\mu$:
\[P_{f, n}\left(\mu \right) := \left\langle f, \mu^{n} \right\rangle := \int_{\R}^{}{\cdots \int_{\R}^{}{f\left(x_{1}, \cdots, x_{n} \right)\mu\left(\dd x_{1} \right)\cdots \mu\left(\dd x_{n} \right)} } \]
with $n \in \N^{\star}$, $\mu \in \MM_{1}^{c}(\R)$,  $f \in \CCCC^{3}_{b}\left(\R^{n}, \R \right)$. For all $n \in \N^{\star}$, for any function $f : \R^{n} \to \R$ whose second derivatives exist, we denote by $\Hess(f) := \left(\partial_{ij}^{2}f \right)_{1\leqslant i, j \leqslant n}$ the Hessian matrix of $f$. For all $n \in \N^{\star}$, let us denote by $\CCCC^{2}_{\left\|\cdot\right\|}\left(\R^{n}, \R \right)$ the set
 \begin{align*}
& \left\{ f \in \CCCC^{2}\left(\R^{n}, \R\right) \left| \phantom{1^{1^{1^{1}}}} \hspace{-0.7cm} \right. \exists\, C >0, \ \forall x \in \R^{n}, \right.  \\
&  \hspace{4cm} \left.  \left|f(x) \right| + \left\|\nabla f(x) \right\|_{\infty} + \left\|\Hess (f)(x)  \right\|_{\infty} \leqslant C \left(1 + \left\|x \right\|_{\infty}^{2} \right)  \right\}.
 \end{align*}

From (\ref{Def_Gene_FVc_polynome}), we can see that if $f \in \CCCC^{4}_{b}\left(\R^{n}, \R \right)$, $\LL_{\rm{FVc}}^{\lambda}P_{f, n}\left(\mu \right)$ is a polynomial in $\mu$ of the form $P_{J, n+1}\left(\mu\right)$ for some function $J \in \CCCC^{2}_{\left\|\cdot \right\|}\left(\R^{n+1}, \R\right)$. We recall from {\color{blue} \cite[\color{black} Proposition 2.11]{Champagnat_Hass_FVr_2022}} that if $\mu \in \MM_{1}^{c,5}(\R)$, then $\sup_{0\leqslant t\leqslant T}{\E_{\mu}\left(M_{5}\left(X_{t} \right) \right)} < \infty$ where $\left(X_{t} \right)_{t\geqslant0} $ denotes the centered \textsc{Fleming-Viot} process with resampling rate $\lambda$. Since, by (\ref{Def_Gene_FVc_polynome}), for all $\mu \in \MM_{1}^{c, 4}(\R)$, \[\left|\LL_{\rm FVc}P_{J, n+1}(\mu) \right| \leqslant C \left(1 + M_{4}\left(\mu \right) \right),\] for some constant $C>0$, we can apply the martingale problem (\ref{PB_Mg_FVc_Polynomes}) to the function $P_{J, n+1}(\mu)$ using classical localisation techniques to obtain that the process $\left(\widehat{M}_{t}^{P_{J, n+1}} \right)_{t\geqslant 0}$ defined by 
\begin{equation}
\widehat{M}_{t}^{P_{J, n+1}} := P_{J, n+1}\left(X_{t} \right) -  P_{J, n+1}\left(X_{0} \right) - \int_{0}^{t}{\LL_{\rm FVc}^{\lambda}P_{J, n+1}\left(X_{s} \right)\dd s}
\label{PB_Mg_FVc_Polynomes_J}
\end{equation}
is a $\P_{\mu}-$martingale for all $\mu \in \MM_{1}^{c,5}(\R)$.

\begin{Lem}
 {\color{black} Let $\lambda \in \R$ be fixed}. If $\gamma \in \MM_{1}\left(\MM_{1}(\R)\right)$ satisfies $\int_{\MM_{1}(\R)}^{}{M_{4}\left(\mu \right)\gamma\left(\dd \mu \right)} < \infty$ and
\begin{align*}
\forall n \in \N^{\star}, \ \forall f \in \CCCC_{\| \cdot \|}^{2}\left(\R^{n}, \R\right), \quad \int_{\MM_{1}(\R)}^{}{\LL_{\rm FVc}^{\lambda}{P_{f, n}}\left(\mu \right)\gamma\left( \dd \mu\right)} = 0,
\end{align*}
then, $\gamma = \pi^{\lambda}$. 
\label{Lem_proba_invariante}
\end{Lem}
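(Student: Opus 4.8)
\textbf{Proof strategy for Lemma \ref{Lem_proba_invariante}.} The plan is to reduce the statement to the known ergodicity result for the centered \textsc{Fleming-Viot} process established in \cite[Section 4]{Champagnat_Hass_FVr_2022}, along the lines of \textsc{Dawson}'s argument \cite[Theorem 2.7.1]{Dawson}, but working around the fact that only a \emph{weak} duality relation (involving stopping times) is available here instead of the full duality used by \textsc{Dawson}. First I would observe that, since $\gamma$ integrates $M_4$, the hypothesis that $\int \LL_{\rm FVc}^\lambda P_{f,n}\,\dd\gamma = 0$ for all $f \in \CCCC^2_{\|\cdot\|}(\R^n,\R)$ in particular covers all $f \in \CCCC^4_b(\R^n,\R)$, so that $\gamma$ annihilates the operator $\LL_{\rm FVc}^\lambda$ on polynomials; the point of allowing the larger test class $\CCCC^2_{\|\cdot\|}$ is precisely that $\LL_{\rm FVc}^\lambda P_{f,n} = P_{J,n+1}$ with $J \in \CCCC^2_{\|\cdot\|}(\R^{n+1},\R)$ (as recalled just before the statement), so the relation can be iterated. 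The first key step is therefore to show that $\gamma$ is supported on measures with finite fourth (indeed, as many moments as needed) moment, using the pathwise moment bounds for the centered \textsc{Fleming-Viot} process; this is what makes the polynomial martingale problem \eqref{PB_Mg_FVc_Polynomes_J} applicable.

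The second step is to turn the infinitesimal stationarity $\int \LL_{\rm FVc}^\lambda P_{f,n}\,\dd\gamma = 0$ into genuine stationarity of the semigroup. Let $(X_t)_{t\ge0}$ denote the centered \textsc{Fleming-Viot} process with resampling rate $\lambda$ and let $(P_t)_{t\ge0}$ be its semigroup. For a polynomial $P_{f,n}$ with $f \in \CCCC^4_b$, I would write, using the martingale problem \eqref{PB_Mg_FVc_Polynomes_J} and Fubini,
\begin{align*}
\int_{\MM_1(\R)} \E_\mu\!\left(P_{f,n}(X_t)\right) \gamma(\dd\mu) - \int_{\MM_1(\R)} P_{f,n}(\mu)\,\gamma(\dd\mu)
&= \int_0^t \int_{\MM_1(\R)} \E_\mu\!\left(\LL_{\rm FVc}^\lambda P_{f,n}(X_s)\right) \gamma(\dd\mu)\,\dd s.
\end{align*}
Now $\LL_{\rm FVc}^\lambda P_{f,n} = P_{J,n+1}$, and applying the same identity to $P_{J,n+1}$ (legitimate by the moment control on $\gamma$ and on the process) shows that $s \mapsto \int \E_\mu(P_{J,n+1}(X_s))\,\gamma(\dd\mu)$ is differentiable with derivative $\int \E_\mu(\LL_{\rm FVc}^\lambda P_{J,n+1}(X_s))\,\gamma(\dd\mu)$; iterating, the function $u(t) := \int \E_\mu(P_{f,n}(X_t))\,\gamma(\dd\mu)$ is smooth in $t$ with all derivatives at $0$ equal to $\int \LL_{\rm FVc}^{\lambda,(k)} P_{f,n}\,\dd\gamma$. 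The hypothesis kills the first derivative, and by the iteration it kills all of them; combined with an estimate showing $u$ is (real-)analytic, or more simply by a Gronwall/semigroup-continuity argument showing $u' \equiv 0$, one concludes $u(t) = u(0)$ for all $t$, i.e. $\gamma P_t = \gamma$ on polynomials. Since polynomials in $\mu$ with $\CCCC^2_b$ coefficients are convergence determining on $\MM_{1,K}^c(\R)$ (and, by the density argument of \cite[Theorem 3.2.6]{Dawson}, on $\MM_1^{c,2}(\R)$), and $\gamma$ gives full mass to measures with finite moments, this forces $\gamma P_t = \gamma$ as measures, so $\gamma$ is an invariant probability measure for the centered \textsc{Fleming-Viot} process. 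The third and final step then invokes \cite[Section 4]{Champagnat_Hass_FVr_2022}: the centered \textsc{Fleming-Viot} process admits a \emph{unique} invariant probability measure $\pi^\lambda$, whence $\gamma = \pi^\lambda$.

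The main obstacle I anticipate is the rigorous justification of the ``iterate the generator indefinitely'' step, i.e. controlling the higher-order terms $\LL_{\rm FVc}^{\lambda,(k)}P_{f,n}$, which are polynomials of degree $n+k$ whose coefficients lie in $\CCCC^2_{\|\cdot\|}$ with constants growing in $k$. One must check that the $\gamma$-integrals of these are finite (this is where the finiteness of \emph{all} moments of $\gamma$ under the support restriction is used, together with the pathwise moment bounds $\sup_{0\le t\le T}\E_\mu(M_{2p}(X_t)) < \infty$ from \cite{Champagnat_Hass_FVr_2022}) and that the resulting Taylor series for $u(t)$ has positive radius of convergence, or else replace the analyticity argument by a direct bootstrap: from $u \in \CCCC^1$ with $u'(0)=0$ applied at \emph{every} time $t_0$ (using the Markov property to re-center, and the fact that $\gamma P_{t_0}$ also satisfies the hypothesis because $P_{J,n+1}$ is in the admissible class), one gets $u' \equiv 0$ directly. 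The second delicate point, already flagged in Section \ref{Sous-sous-section_2_3_2_Difficultes}, is the restriction of $\gamma$'s support to measures with enough finite moments, which must be done before \eqref{PB_Mg_FVc_Polynomes_J} can even be invoked; here I would test against $P_{\id^{2p},1}(\mu) = M_{2p}(\mu)$ truncated, use the stationarity relation on the truncated functions, and pass to the limit by monotone convergence to conclude $\int M_{2p}(\mu)\,\gamma(\dd\mu) < \infty$ for the relevant $p$.
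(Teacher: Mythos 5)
Your overall plan—upgrade the infinitesimal relation $\int\LL_{\rm FVc}^{\lambda}P_{f,n}\,\dd\gamma=0$ to stationarity of $\gamma$ under the semigroup $P_{\rm FVc}(t)$, then invoke the uniqueness of the invariant probability measure $\pi^{\lambda}$—is indeed the paper's plan, and your last step (polynomials are separating, then \cite[Theorem 4.1]{Champagnat_Hass_FVr_2022}) matches Step 3 of the paper's proof. The genuine gap is in the middle step, and neither of the two mechanisms you propose closes it. Your primary route (all time-derivatives of $u(t):=\int \E_{\mu}(P_{f,n}(X_t))\,\gamma(\dd\mu)$ vanish at $0$, plus analyticity) fails already at the third derivative: $\LL_{\rm FVc}^{\lambda}P_{f,n}=P_{J,n+1}$ with $J\in\CCCC^{2}_{\|\cdot\|}$, so the hypothesis does kill $u''(0)$, but $\LL_{\rm FVc}^{\lambda}P_{J,n+1}=P_{J_2,n+2}$ where $J_2$ contains terms of the form $\partial^{2}_{ij}J(x)\,x_{n+2}^{2}\sim\partial^{4}f\cdot x_{n+1}^{2}x_{n+2}^{2}$, i.e.\ of quartic growth; such $J_2$ is not in $\CCCC^{2}_{\|\cdot\|}$, so the hypothesis no longer applies, and $\int\LL_{\rm FVc}^{\lambda}P_{J_2,n+2}\,\dd\gamma$ would anyway require $\int M_{6}\,\dd\gamma<\infty$ while only $\int M_4\,\dd\gamma<\infty$ is assumed (your truncation bootstrap to higher moments of $\gamma$ is not substantiated, and, as it turns out, not needed). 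Even granting all derivatives vanish at $0$, you would still need real analyticity of $u$, for which no estimate is given.

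Your fallback ("$\gamma P_{t_0}$ also satisfies the hypothesis because $P_{J,n+1}$ is in the admissible class") is unjustified as stated: showing $\int\LL_{\rm FVc}^{\lambda}P_{f,n}\,\dd(\gamma P_{t_0})=\int P_{\rm FVc}(t_0)\bigl(\LL_{\rm FVc}^{\lambda}P_{f,n}\bigr)\dd\gamma=0$ from the hypothesis on $\gamma$ requires (i) commuting the semigroup past the generator, i.e.\ writing the derivative as $\LL_{\rm FVc}^{\lambda}\bigl(P_{\rm FVc}(t_0)P_{f,n}\bigr)$, and (ii) knowing that $P_{\rm FVc}(t_0)P_{f,n}$ is itself a polynomial of degree $n+1$ with coefficient in $\CCCC^{2}_{\|\cdot\|}$; otherwise the argument is circular (it amounts to assuming $\gamma P_{t_0}=\gamma$). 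Point (ii) is exactly the content of the paper's Proposition \ref{Prop_Semi_groupe_T_FVc}, proved via the new duality identity stopped at the first jump of the dual process (Lemmas \ref{Bornes_dual} and \ref{Lem_dualite_new}), which gives $P_{\rm FVc}(t)P_{\xi_0,M(0)}=P_{V_t,M(0)+1}$ with $V_t\in\CCCC^{2}_{\|\cdot\|}(\R^{M(0)+1},\R)$; the key identity (\ref{Eq_point_cle_lemme_6_3}) then yields $\frac{\dd}{\dd s}\int P_{\rm FVc}(s)P_{\xi_0,M(0)}\,\dd\gamma=\int\LL_{\rm FVc}^{\lambda}P_{V_s,M(0)+1}\,\dd\gamma=0$, the hypothesis being applied once, to $V_s$ itself, using only $\int M_4\,\dd\gamma<\infty$. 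This structural fact about the semigroup is the real work of the section—it is precisely what replaces Dawson's full duality—and it is absent from your proposal.
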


\subsection{Proof of Proposition \ref{Prop_Caract_Gamma_Limite}\label{Sous_Section_6_1_Proof_Main_result}}
Let us define for all $\ell \in \N$, $\CCCC_{K}^{\ell}\left(\R^{n}, \R \right)$ the space of real functions of class $\CCCC^{\ell}\left(\R^{n}, \R\right)$ with compact support. From Lemma \ref{Lem_PB_Mg_Polynomes} and (\ref{Temps_arret_check_chapeau}), for all $t\in [0, T]$, for all $n \in \N^{\star}$ we have for all $f \in \CCCC_{K}^{3}\left(\R^{n}, \R \right)$ that 
\begin{align*}
K^{2}{\color{black}\sigma_{K}^{2}}M_{t\wedge\tau^{K, R}}^{K, P_{f, n}} & = \sum_{i \, = \, 1}^{3}{{\rm{\textbf{\rm{\textbf{(A)}}}}}^{K, R}_{i}\left(t\wedge\tau^{K,R}\right)} 
\end{align*}
where for all $t \leqslant \tau^{K,R} \wedge T $
\begin{align*}
{\rm{\textbf{\rm{\textbf{(A)}}}}}^{K, R}_{1}(t) & :=  K^{2}{\color{black}\sigma_{K}^{2}}\left(P_{f, n}\left( \mu_{t}^{K, R} \right) - P_{f, n}\left(\mu_{0}^{K, R} \right) \right), \\
{\rm{\textbf{\rm{\textbf{(A)}}}}}^{K, R}_{2}(t)  & := - \int_{0}^{t}{\int_{\MM_{1}\left(\R\right)}^{}{\theta\left(z_{s}^{K, R} \right)m_{2}\left(z_{s}^{K, R} \right)\LL_{\rm FVc}^{\lambda\left(z_{s}^{K, R} \right)}P_{f, n}\left(\mu \right)\Gamma^{K, R}\left(\dd s, \dd \mu \right)} }, \\
{\rm{\textbf{\rm{\textbf{(A)}}}}}^{K, R}_{3}(t)  & :=  O\left(\frac{1}{\sqrt{K}} + {\color{black}\sigma_{K}} K^{\frac{3}{2}+\varepsilon} + \int_{0}^{t}{\frac{M_{3}\left(\mu_{s}^{K, R} \right)}{K}\, \dd s} \right),
\end{align*} 
is a martingale. Since ${\color{black}\sigma_{K}} K \to 0$ by Assumption (\ref{Hypothese_Gamme_sigma}), $ {\rm{\textbf{\rm{\textbf{(A)}}}}}^{K, R}_{1}\left(t\wedge\tau^{K,R}\right) \to 0$ when $K \to +\infty$. From Corollary \ref{Cor_Integrale_temps_M2_UI}, the sequence $\left({\rm{\textbf{\rm{\textbf{(A)}}}}}^{K, R}_{2}\left(t\wedge\tau^{K} \right) \right)_{t\in [0, T], K\in \N^{\star}}$ is uniformly integrable and converges in law, when $K \to + \infty$, to 
\begin{equation*}
M_{t}^{P_{f, n}} := -\int_{0}^{t}{\theta\left(\zeta_{s}^{R} \right)m_{2}\left(\zeta_{s}^{R} \right)\int_{\MM_{1}(\R)}^{}{\LL_{\rm FVc}^{\lambda\left(\zeta_{s}^{R} \right)}P_{f, n}\left(\mu \right)\Gamma^{R}\left(\dd s, \dd \mu \right)}}.
\label{Notation_Mg_limite_A3}
\end{equation*} 
Note that, from \textsc{Cauchy-Schwarz}'s inequality and 
{\color{black} Lemma \ref{Lem_Controle_Integrale_Temps_M2}}, there exists a constant $C>0$ such that  
\[\E\left(\sup_{0\leqslant t \leqslant T}{\int_{0}^{t\wedge \tau^{K, R}}{\frac{M_{3}\left(\mu_{s}^{K,R} \right)}{K}} }\dd s \right)^{2} \leqslant \frac{T}{K^{2}}\E\left(\int_{0}^{T\wedge \tau^{K, R}}{M_{6}\left(\mu_{s}^{K, R} \right)\dd s}  \right) \leqslant {\color{black}\frac{CT^2}{K^{2}}}. \]
Hence, $\lim_{K\to + \infty}{\sup_{0\leqslant t \leqslant T}{{\rm{\textbf{\rm{\textbf{(A)}}}}}^{K, R}_{3}(t\wedge \tau^{K, R})}} = 0$ in $L^{2}(\R)$. In particular, we deduce that a subsequence of $\left( \sup_{0\leqslant t \leqslant T}{{\rm{\textbf{\rm{\textbf{(A)}}}}}^{K, R}_{3}(t\wedge \tau^{K, R})} \right)_{K\in \N^{\star}}$ converges almost surely to $0$ and the family $\left( \sup_{0\leqslant t \leqslant T}{{\rm{\textbf{\rm{\textbf{(A)}}}}}^{K, R}_{3}(t\wedge \tau^{K, R})} \right)_{K\in \N^{\star}}$ is uniformly integrable along this subsequence {\color{blue} \cite[\color{black} Theorem 13.7]{Williams_1991}}. Using the same method based on \textsc{Skorohod}'s representation theorem as in the proof of Theorem \ref{Thm_Kurtz_adapte}, we deduce that the process $\left( M_{t}^{P_{f, n}}\right)_{t\geqslant 0}$ is a $\Q-$martingale.  As it  is also a continuous and finite variation process by Lemma \ref{Lem_Desintegration}, it must hence be  $\Q-$a.s. null {\color{blue} \cite[\color{black} Theorem 4.1]{LeGall}}. Hence, using Lemma \ref{Lem_Desintegration} again,  we have proved that
\begin{equation*}
\forall f \in \CCCC_{K}^{3}\left(\R^{n}, \R \right), \ \Q-{\rm a.s.}, \ \dd t-{\rm a.e.}, \quad  \int_{\MM_{1}(\R)}^{}{\LL_{\rm FVc}^{\lambda\left(\zeta_{t}^{R} \right)}P_{f, n}\left(\mu \right)\gamma_{t}^{R}\left(\dd \mu \right)} = 0.
\label{Relation_egale_0}
\end{equation*}

The space $\CCCC^{2}_{K}\left(\R^{n}, \R \right)$ equipped with the norm $$\left\|f \right\|_{W_{0}^{2, \infty}} = \left\|f \right\|_{\infty} +  \left\|\nabla f \right\|_{\infty} +  \left\|\Hess(f) \right\|_{\infty}$$ is separable. So, we can choose a dense countable family $\BB \subset \CCCC^{3}_{K}\left(\R^{n}, \R\right)$ such that \[\forall f \in \CCCC^{2}_{K}\left(\R^{n}, \R \right), \ \exists \, \left(f_{q} \right)_{q\in \N^{\star}} \in \BB^{\N^{\star}}, \quad f_{q} \xrightarrow[q\longrightarrow  + \infty ]{\|\cdot\|_{W_{0}^{2, \infty}}} f. \]
Then, \[ \Q-{\rm a.s.}, \ \dd t-{\rm a.e.}, \ \forall q \in \N^{\star},  \quad  \int_{\MM_{1}(\R)}^{}{\LL_{\rm FVc}^{\lambda\left(\zeta_{t}^{R} \right)}P_{f_{q}, n}\left(\mu \right)\gamma_{t}^{R}\left(\dd \mu \right)} = 0.\]
From (\ref{Controle_E_M2_Gamma}) and Lemma \ref{Lem_Desintegration}, we have that  $\int_{\MM_{1}(\R)}^{}{M_{4}\left(\mu \right)\gamma_{t}^{R}(\dd \mu)} < \infty$. As for all $q\in \N^{\star}$, 
\[\LL_{\rm FVc}^{\lambda\left(\zeta_{t}^{R} \right)}P_{f_{q}, n}\left(\mu \right) \leqslant C_{1}\left(1 + M_{2}\left(\mu \right) \right) \left\|f_{q} \right\|_{W_{0}^{2,\infty}}. \]
for some constant $C_{1}>0$, we obtain by the dominated convergence theorem that 
\begin{equation}
\Q-{\rm a.s.}, \ \dd t-{\rm a.e.}, \ \forall f \in \CCCC_{K}^{2}\left(\R^{n}, \R \right),  \quad  \int_{\MM_{1}(\R)}^{}{\LL_{\rm FVc}^{\lambda\left(\zeta_{t}^{R} \right)}P_{f, n}\left(\mu \right)\gamma_{t}^{R}\left(\dd \mu \right)} = 0.
\label{Relation_egale_0_bis}
\end{equation}
Let us consider $f \in \CCCC^{2}_{\|\cdot\|}\left(\R^{n}, \R \right)$ and for all $q \in \N^{\star}$, $x \mapsto \chi_{q}(x) = \exp\left(-\frac{1}{q^{2} - \left\|x \right\|_{\infty}^{2}} \right)\II_{\left\|x \right\|_{\infty} < q}$ of class $\CCCC^{\infty}\left(\R^{n}, \R \right)$ with compact support. Then, for all $q \in \N^{\star}$, $f\chi_{q} \in \CCCC_{K}^{2}\left(\R^{n}, \R\right)$. Noting that, $\left|\int_{\MM_{1}(\R)}^{}{\LL_{\rm FVc}^{\lambda\left(\zeta_{t}^{R} \right)}P_{f\chi_{q}, n}\left(\mu \right)\gamma_{t}^{R}\left(\dd \mu \right)} \right|$ is dominated by $C_{2} \int_{\MM_{1}(\R)}^{}{\left(1 + M_{4}\left(\mu \right) \right)\gamma_{t}^{R}(\dd \mu)}$ for some constant $C_{2}>0$, and since $\int_{\MM_{1}(\R)}^{}{M_{4}\left(\mu \right)\gamma_{t}^{R}(\dd \mu)} < \infty$, we deduce from (\ref{Relation_egale_0_bis}) applied to $f\chi_{q}$, by the dominated convergence theorem, that 
\[\Q-{\rm a.s.}, \ \dd t-{\rm a.e.}, \ \forall f \in \CCCC^{2}_{\|\cdot\|}\left(\R^{n}, \R \right),  \quad  \int_{\MM_{1}(\R)}^{}{\LL_{\rm FVc}^{\lambda\left(\zeta_{t}^{R} \right)}P_{f, n}\left(\mu \right)\gamma_{t}^{R}\left(\dd \mu \right)} = 0. \]
Therefore, it follows from Lemma \ref{Lem_proba_invariante} that $\Q-$a.s, $\dd t-$a.e., $\gamma_{t}^{R}\left(\dd \mu\right) =  \pi^{\lambda\left(\zeta_{t}^{R} \right)}\left(\dd \mu\right)$ which concludes the proof.

\subsection{Proof of Lemma \ref{Lem_proba_invariante}\label{Sous_Section_6_3_Proba_Invariante_Dawson}}
In Section \ref{Sous_sous_section_6.3.1_Extension_FVc_dualite} we extend some duality results for the centered \textsc{Fleming-Viot} process, obtained in {\color{blue} \cite{Champagnat_Hass_FVr_2022}}, which are be useful to prove Lemma \ref{Lem_proba_invariante} in Section \ref{Sous_sous_section_6.3.2_Preuve_effective}. 

\subsubsection{Extension of the duality result for the centered \textsc{Fleming-Viot} process\label{Sous_sous_section_6.3.1_Extension_FVc_dualite}}

Let us recall that the dual process $\left(\xi_{t} \right)_{t\geqslant 0}$ of the centered \textsc{Fleming-Viot} process $\left(X_{t} \right)_{t\geqslant 0}$ with resampling rate $\lambda$, on the state space $\bigcup_{n\in \N^{\star}}{\CCCC^{2}\left(\R^{n}, \R \right)}$, obtained in {\color{blue} \cite[\color{black} Section 3.2]{Champagnat_Hass_FVr_2022}} is defined as below.  \\

Let us consider $M := \left(M(t) \right)_{t\geqslant 0}$ a \textsc{Markov}'s birth and death process in $\N$ whose transition rates $q_{i,j}$ from $i$ to $j$ are given by: 
\[{\rm{\textbf{\rm{\textbf{(1)}}}}} \ \ q_{n, n+1} = \lambda n^{2} \qquad \quad {\rm{\textbf{\rm{\textbf{(2)}}}}} \ \ q_{n,n-1} = \lambda n(n-1) \qquad \quad {\rm{\textbf{\rm{\textbf{(3)}}}}} \ \ q_{i,j} = 0 \ {\rm{otherwise}}.\]

For all $M(0) \in \N^{\star}$, $\xi_{0} \in \CCCC^{2}_{b}\left(\R^{M(0)}, \R \right)$ and $\lambda >0$, we define 
\begin{multline}
\xi_{t}  := T_{\lambda}^{\left(M\left(\tau_{n} \right) \right)}\left(t-\tau_{n}\right)\Lambda_{n} T_{\lambda}^{\left(M\left(\tau_{n-1} \right) \right)}\left(\tau_{n}-\tau_{n-1}\right)\Lambda_{n-1} \cdots  \Lambda_{1}  T_{\lambda}^{\left(M\left(0 \right) \right)}\left(\tau_{1}\right)\xi_{0}, \\
   \tau_{n} \leqslant t < \tau_{n+1}, \ n \in \N , \label{Processus_dual_explicite}
\end{multline}
where $\left(\tau_{n}\right)_{n\in \N}$ is the sequence of jump times of 
$M$ with $\tau_{0} = 0$, $\left(T_{\lambda}^{(n)}(t) \right)_{t\geqslant 0}$ is the semi-group of operator associated to the generator $B^{(n)}_{\lambda}$ given by (\ref{Operateur_B_n}) and where $\left(\Lambda_{n}\right)_{n\in \N}$ is a sequence of random operators.  These are conditionally independent given $M$ and satisfy for all $k \in \N$, $n\geqslant 1$ and $1\leqslant i \neq j \leqslant n$, 
\begin{equation*}
\P\left(\Lambda_{k} = \Phi_{i,j} \left| \phantom{1^{1^{1^{1}}}} \hspace{-0.7cm} \right. \left\{M\left(\tau_{k}^{-} \right) = n , M\left(\tau_{k} \right) = n-1\right\} \right)  = \frac{1}{n(n-1)} 
\label{Saut_Phi_i_j}
\end{equation*}
and for all $n \geqslant 1$ and $1\leqslant i, j \leqslant n$, 
\begin{equation*}
\P\left(\Lambda_{k} = K_{i,j} \left| \phantom{1^{1^{1^{1}}}} \hspace{-0.7cm} \right. \left\{M\left(\tau_{k}^{-} \right) = n , M\left(\tau_{k} \right) = n+1\right\} \right)  = \frac{1}{n^{2}},
 \label{Saut_K_i_j}
\end{equation*} 
where $\Phi_{i,j}$ and $K_{i,j}$ are respectively defined in (\ref{Expression_Phi_ij}) and (\ref{Operateur_K_ij}). Note that if $M(0) = 1$, the dual process can only jump from $\xi_{0}$ to $K_{ij}\xi_{0}$. As for all $t\geqslant 0$, $\xi_{t} \in \CCCC^{2}\left(\R^{M(t)}, \R \right)$, $K_{i,j}\xi_{t}$ is well defined. Moreover, the dual process $\left(\xi_{t} \right)_{t\geqslant 0}$ with initial condition $\xi_{0}$ is constructed on the same probability space and independently of the centered \textsc{Fleming-Viot} process $\left(X_{t} \right)_{t\geqslant 0}$ with resampling rate $\lambda$ and initial condition $\mu \in \MM_{1}^{c,2}(\R)$. We shall denote by $\P_{\left(\mu, \xi_{0} \right)}$, the law of the couple $\left(\left(X_{t}, \xi_{t} \right)\right)_{t\geqslant 0}$ on this probability space. \\

We denote by $S_{t}$ the number of jumps of the process $M$ on $[0,t]$. We start with a result giving bounds on the dual process, which is an extension of similar estimates obtained in {\color{blue} \cite[\color{black} Lemma 6.4]{Champagnat_Hass_FVr_2022}}.

\begin{Lem} For all $\xi_{0} \in \CCCC^{2}_{b}\left(\R^{M(0)}, \R\right)$ there exists $C_{0}: \bigcup_{k \in \N^{\star}}{(0, +\infty)^{k-1} \times \{k \}} \to \R_{+}$, locally bounded,  such that 
\[C_{0}\left(\tau_{1}, 1 \right) := \left\|\xi_{0} \right\|_{\infty} + \frac{3}{M(0)}\left\| \Hess\left(\xi_{0} \right) \right\|_{\infty} \]
and, for all $\left(t_{j} \right)_{j\in \N} \in (0,+\infty)^{\N}$, $k\mapsto C_{0}\left(\left(t_{i} \right)_{0\leqslant i \leqslant k-1}, k \right)$ is non-decreasing and satisfying 
\begin{equation}
\begin{aligned}
& \forall k \in \N^{\star}, \ \forall t \leqslant \tau_{k}, \ \forall x \in \R^{M(t)}, \\
& \hspace{4cm} \left| \xi_{t}(x)\right| \leqslant C_{0}\left(\left(\tau_{i+1} - \tau_{i} \right)_{0\leqslant i \leqslant k-1}, k \right)\left(1 + \left\|x \right\|_{\infty}^{2S_{t}} \right). 
\end{aligned}
\label{Eq_dual_Lemma_6_4_FVc}
\end{equation}
\label{Bornes_dual}
\end{Lem}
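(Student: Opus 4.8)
\textbf{Proof plan for Lemma \ref{Bornes_dual}.}

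The plan is to prove the bound \eqref{Eq_dual_Lemma_6_4_FVc} by induction on the number of jumps $k$ of the Markov chain $M$, carefully tracking how the semigroup $T_\lambda^{(n)}$ and the jump operators $\Phi_{i,j}$, $K_{i,j}$ act on polynomial-type growth bounds. First I would record the behaviour of a single free evolution piece: if a function $g \in \CCCC^2(\R^n,\R)$ satisfies a bound of the form $|g(x)| \leqslant c(1+\|x\|_\infty^{2m})$ with a companion control on $\|\nabla g\|_\infty$ and $\|\Hess(g)\|_\infty$, then because $B_\lambda^{(n)} g = \tfrac12\Delta g - 2\lambda(\nabla g\cdot\bm 1)(x\cdot\bm 1)$ is again of polynomial growth of degree $2m$ (the $x\cdot\bm 1$ factor is compensated by the decay one gets from the heat part, or more simply one keeps the crude bound of degree $2m+1$ and absorbs it), the solution $u(t,x) := T_\lambda^{(n)}(t)g(x)$ of the associated linear PDE stays in the same polynomial class on every finite time interval, with a constant depending continuously (hence locally boundedly) on $t$. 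This is where I would invoke the explicit construction of $T_\lambda^{(n)}$ and the Feynman--Kac / Ornstein--Uhlenbeck-type representation already used in \cite{Champagnat_Hass_FVr_2022}; I would not redo those estimates but cite Lemma 6.4 there and extend its scope from bounded $\xi_0$ to the growth needed here.

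Next I would handle the two jump mechanisms. The operator $\Phi_{i,j}$ merely deletes a variable (replaces $x_j$ by $x_i$), so it does not increase $\|x\|_\infty$ and does not increase the polynomial degree; it changes $n$ to $n-1$ but leaves $S_t$ unchanged in the sense that it contributes no new factor of $\|x\|_\infty^2$ — in fact $S_t$ counts \emph{all} jumps, so I must be slightly generous and simply note that $\|\Phi_{i,j}f\|_\infty \leqslant \|f\|_\infty$ with the analogous inequalities for the derivatives, so a $\Phi$-jump can only help. The operator $K_{i,j}$ multiplies by $x_{n+1}^2$, which raises the polynomial degree by exactly $2$ and raises $n$ by $1$; this is precisely the source of the exponent $2S_t$ in \eqref{Eq_dual_Lemma_6_4_FVc}, since each of the $S_t$ jumps is worst-case a $K$-jump contributing one extra factor $\|x\|_\infty^2$. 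Crucially I must also track the derivative bounds through $K_{i,j}$: $\nabla(x_{n+1}^2 h) $ and $\Hess(x_{n+1}^2 h)$ pick up terms like $2x_{n+1}\partial h$ and $2h$, so the companion controls on $\|\nabla(\cdot)\|_\infty$, $\|\Hess(\cdot)\|_\infty$ also grow polynomially, which is exactly why I phrase the induction hypothesis not just for $|\xi_t|$ but for a triple of bounds (value, gradient, Hessian) all in the same polynomial class — mirroring the definition of $C_0(\tau_1,1)$ which already contains $\|\xi_0\|_\infty$ and $\|\Hess(\xi_0)\|_\infty$.

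Then I would assemble the induction. The initial step $k=1$ (no jump on $[0,\tau_1)$): apply the single-piece estimate of the first paragraph to $\xi_0$, getting $|\xi_t(x)| \leqslant C(\tau_1)(1+\|x\|_\infty^2)$ on $t<\tau_1$ with $C(\tau_1)$ locally bounded and $\geqslant C_0(\tau_1,1)$, so $S_t=1$ matches the exponent $2S_t=2$. For the inductive step, given the bound up to $\tau_k$ with constant $C_0((\tau_{i+1}-\tau_i)_{0\leqslant i\leqslant k-1},k)$ and polynomial degree $2S_{\tau_k^-}$, I apply the jump operator $\Lambda_k$ (either $\Phi$ or $K$) — which either preserves or raises the degree by $2$ and the $n$-index accordingly, adjusting the constant by a universal multiplicative factor depending on the current dimension — and then evolve by $T_\lambda^{(M(\tau_k))}(t-\tau_k)$ over the next interval, which by the first paragraph preserves the polynomial class and multiplies the constant by a factor depending continuously on $t-\tau_k$ and on the dimension $M(\tau_k)$. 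Taking $C_0((\tau_{i+1}-\tau_i)_{0\leqslant i\leqslant k},k+1)$ to be the resulting constant, and noting that each step only multiplies by a positive factor $\geqslant 1$ and by continuous functions of the inter-jump times, gives monotonicity in $k$ and local boundedness in the $\tau_i$'s, which is the claimed structure. The main obstacle I anticipate is bookkeeping: keeping the dimension index $M(t)$, the degree $2S_t$, and the triple of norms all synchronised through an arbitrary finite word of $\Phi$- and $K$-operators interleaved with semigroup evolutions, and checking that the degree increment is genuinely at most $2$ per jump (so $2S_t$ is an honest upper bound, not $2S_t$ only in expectation). A secondary technical point is making precise the polynomial-growth estimate for $T_\lambda^{(n)}$ in the presence of the linear drift term $-2\lambda(\nabla f\cdot\bm 1)(x\cdot\bm 1)$; here I would lean on the corresponding estimate in \cite[Section 6]{Champagnat_Hass_FVr_2022} and merely verify that its proof goes through verbatim when the terminal condition has quadratic rather than bounded growth.
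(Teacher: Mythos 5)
Your overall skeleton (induction on the jumps of $M$, with $\Phi_{i,j}$ harmless, each $K_{i,j}$ raising the polynomial degree by $2$, and the semigroup $T^{(n)}_\lambda$ preserving polynomial classes between jumps) is the same as the paper's. But the mechanism you propose for closing the induction has a genuine gap. Recall that $K_{i,j}f(x_1,\dots,x_{n+1}) = \partial^2_{ij}f(x_1,\dots,x_n)\,x_{n+1}^2$: it is not mere multiplication by $x_{n+1}^2$ but consumes two derivatives of the current dual function. Hence your inductive hypothesis --- a triple of polynomial bounds on the value, gradient and Hessian of $\xi_t$ --- does not propagate: bounding the value of $K_{i,j}g$ uses the Hessian of $g$, while bounding the gradient and Hessian of $K_{i,j}g$ would require third and fourth derivatives of $g$, which the triple does not control; iterating, $k$ successive $K$-jumps would call for $2k$ derivatives of $\xi_0$, whereas $\xi_0$ is only $\CCCC^2_b$. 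The way out --- and the paper's actual argument --- is never to differentiate the dual function after time $0$: using the explicit Gaussian representation $T^{(n)}_\lambda(t)f(x) = \left(f\ast g^{\lambda}_{t,0}\right)\left(m^{\lambda}_{t,x}\right)$, the two derivatives created by a $K_{i,j}$-jump are made to fall on $\xi_0$ at the first jump (this yields the prescribed constant $\left\|\xi_0\right\|_\infty + \frac{3}{M(0)}\left\|\Hess(\xi_0)\right\|_\infty$, via the explicit bound $\left|\left(\partial_{x_j}m^{\lambda}_{\tau_1,\widetilde{x}}\right)^{t}\partial_{x_i}m^{\lambda}_{\tau_1,\widetilde{x}}\right| \leqslant 3/M(0)$) and on the Gaussian kernel $g^{\lambda}_{t,0}$ at every later jump (Corollary 6.2(2) of \cite{Champagnat_Hass_FVr_2022}), so that only value bounds of the form $(\QQ_k)$ need to be carried along, with constants that are locally bounded functions of the inter-jump times. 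You gesture at these estimates, but the induction as you state it would already fail at the second $K$-jump.

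Two further points. First, your base case does not match the statement: for $t<\tau_1$ one has $S_t=0$, so (\ref{Eq_dual_Lemma_6_4_FVc}) demands a uniform bound there (indeed $\left|\xi_t\right|\leqslant\left\|\xi_0\right\|_\infty$ since $\xi_t=T^{(M(0))}_\lambda(t)\xi_0$), not a quadratic one; the quadratic factor only appears at $\tau_1$. Second, the lemma \emph{prescribes} the value of $C_0(\tau_1,1)$, and in particular its independence of $\tau_1$; this is not cosmetic, since in the proof of Proposition \ref{Prop_Semi_groupe_T_FVc} this constant is pulled out of an expectation over $\tau_1$. Producing ``some locally bounded constant $\geqslant C_0(\tau_1,1)$'', as you suggest, is therefore insufficient (and a first-jump bound obtained by putting the derivatives on the Gaussian would in general degenerate as $\tau_1\to 0$); you must place the Hessian on $\xi_0$ at the first jump and carry out the explicit $3/M(0)$ computation.
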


\begin{proof}  By induction on $k\in \N^{\star}$, we prove the property 
\begin{align*}
(\QQ_{k}) : \quad & \forall x \in \R^{M\left(\tau_{k} \right)}, \quad \left|\xi_{\tau_{k}}(x) \right| \leqslant C_{0}(\left(\tau_{i+1} - \tau_{i}\right)_{0\leqslant i\leqslant k-1}, k) \left(1 + \left\|x \right\|_{\infty}^{2k}  \right).
\end{align*}
\textit{Step 1. Initial case: computation of $C_{0}\left(\tau_{1}, 1 \right)$.}  Let us recall some notations of {\color{blue} \cite[\color{black} Theorem 6.1]{Champagnat_Hass_FVr_2022}}. For all $n\in \N^{\star}$, we denote by $\bm{1} \in \R^{n}$, the vector whose coordinates are all $1$.  For all $n \in \N^{\star}$, $t \geqslant 0$,  $x \in \R^{n}$ and $\lambda >0$, let us consider $g_{t,x}^{\lambda}$ the density of the Gaussian distribution $\NN^{(n)}\left(m_{t, x}^{\lambda}, \Sigma_{t}^{\lambda} \right)$ where $m_{t,x}^{\lambda} := x - \frac{\left(1 - \exp\left(-2\lambda n t \right) \right)}{n}(x \cdot \bm{1})\bm{1}$ and $\Sigma_{t}^{\lambda} := P\sigma_{t}^{\lambda}P^{-1}$ with
\[\sigma_{t}^{\lambda} := \begin{pmatrix}
\frac{1-\exp\left(-4\lambda nt\right)}{4\lambda n} & 0 & \dots  & \dots & 0 \vspace{0.1cm} \\
0 & t & 0 & \dots & 0 \\
\vdots & 0 & \ddots & \ddots & \vdots \\
\vdots & \vdots & \ddots & \ddots & 0 \\
0 & 0 & \dots & 0 & t
\end{pmatrix} \quad {\rm and} \quad P := \begin{pmatrix}
\frac{1}{\sqrt{n}} & \frac{1}{\sqrt{2}} & \cdots & \cdots & \frac{1}{\sqrt{n(n-1)}} \\
\vdots & - \sqrt{\frac{1}{2}} & \ddots & \cdots & \frac{1}{\sqrt{n(n-1)}} \\
\vdots & 0 & \ddots & \ddots & \vdots \\
\vdots & \vdots & \ddots & \ddots & \frac{1}{\sqrt{n(n-1)}} \\
\frac{1}{\sqrt{n}} & 0 & \cdots & 0 & - \sqrt{\frac{n-1}{n}}
\end{pmatrix}.\] 
Note that for all $i,j \in \{1, \cdots, n \}$, $\partial_{x_{i}}m_{t,x}^{\lambda} = \epsilon_{i} -  \frac{1 - \exp\left(-2\lambda n t \right)}{n}\bm{1}  $ and $\partial_{x_{i}x_{j}}^{2}m_{t,x}^{\lambda} = 0$  where $\left(\epsilon_{1},\cdots, \epsilon_{n}\right)$ is the canonical basis of $\R^{n}$. For all $\lambda >0$, the key identity for the sequel is  
\begin{equation}
\forall f \in L^{\infty}\left(\R^{n} \right), \ \forall t \geqslant 0, \ \forall x \in \R^{n}, \quad   T_{\lambda}^{(M(0))}(t)f(x) = \left(f \ast g_{t,0}^{\lambda}\right)\left(m_{t,x}^{\lambda} \right)
\label{Eq_clef_Semi_groupe}
\end{equation}
where $\ast$ stands for the convolution product (see (46) in {\color{blue} \cite[\color{black} Theorem 6.1]{Champagnat_Hass_FVr_2022}}). \\ 

On the one hand, note that for all $t<\tau_{1}$, $M(t) = M(0)$. In this case, for all $x \in \R^{M(0)}$, $\lambda >0$, $\xi_{t}(x) = T_{\lambda}^{\left(M(0)\right)}(t)\xi_{0}(x)$ and so, from {\color{blue} \cite[\color{black} Theomem 6.1]{Champagnat_Hass_FVr_2022}}, $\left|\xi_{t}(x) \right| \leqslant \left\|\xi_{0} \right\|_{\infty}$. On the other hand, at time $\tau_{1}$, we make a partition of cases according to whether the dual process loses or gains a variable. Let $i, j \in \left\{1, \cdots, M(0) \right\}$ be fixed.  

\begin{itemize}
\item \textit{Case $\Lambda_{1} = \Phi_{i,j}$.} In this case, $M(\tau_{1}) = M(0) - 1$ and we deduce from 
(\ref{Processus_dual_explicite}) that  \[\forall x \in \R^{M(0)-1}, \qquad \xi_{\tau_{1}}(x) = \Phi_{i,j}T_{\lambda}^{\left(M(0) \right)}\left(\tau_{1} \right)\xi_{0}(x).\] By (\ref{Expression_Phi_ij}), we deduce that for all $x \in \R^{M(0)-1}$, $\left|\xi_{\tau_{1}}(x) \right| \leqslant \left\|\xi_{0} \right\|_{\infty}$.  \\
\item \textit{Case $\Lambda_{1} = K_{i,j}$.} In this case, $M(\tau_{1}) = M(0) + 1$ and we deduce from 
(\ref{Processus_dual_explicite}) and (\ref{Operateur_K_ij}) then from {\color{blue} \cite[\color{black} Theorem 6.1 \textbf{(3)}]{Champagnat_Hass_FVr_2022}} and properties of the convolution product that for all $x \in \R^{M(0) + 1}$,
\begin{align*}
\xi_{\tau_{1}}(x) &  = K_{i,j}T_{\lambda}^{\left(M(0) \right)}\left(\tau_{1} \right)\xi_{0}(x) \\ 
& = \partial_{x_{i}x_{j}}T_{\lambda}^{\left(M(0) \right)}\left(\tau_{1} \right)\xi_{0}\left(\widetilde{x}\right)x_{M(0)+1}^{2} \\
& = \left(\partial_{x_{j}}m_{\tau_{1},\widetilde{x}}^{\lambda} \right)^{t}\left[\left(\Hess\left(\xi_{0}\right) \ast g_{\tau_{1},0}^{\lambda}\right)\left(m_{\tau_{1},\widetilde{x}}^{\lambda}\right)\partial_{x_{i}}m_{t,\widetilde{x}}^{\lambda}\right]x_{M(0)+1}^{2}
\end{align*}
where $\widetilde{x} = \left(x_{1}, \cdots, x_{M(0)} \right) \in \R^{M(0)}$. As, \[\left(\Hess\left(\xi_{0}\right) \ast g_{\tau_{1},0}^{\lambda}\right)\left(m_{\tau_{1},\widetilde{x}}^{\lambda}\right) = \left(\int_{\R^{M(0)}}^{}{\partial_{ij}^{2}\xi_{0}(u)g_{\tau_{1}, 0}^{\lambda}\left(m_{\tau_{1}, \widetilde{x}}^{\lambda} - u \right)\dd u} \right)_{1\leqslant i, j \leqslant M(0)}\]
we obtain that $\left\|\left(\Hess\left(\xi_{0}\right) \ast g_{\tau_{1},0}^{\lambda}\right)\left(m_{\tau_{1},\widetilde{x}}^{\lambda}\right) \right\|_{\infty} \leqslant \left\|\partial_{ij}^{2}\xi_{0} \right\|_{\infty}$. Noting that $$\left|\left(\partial_{x_{j}}m_{\tau_{1},\widetilde{x}}^{\lambda} \right)^{t}\partial_{x_{i}}m_{t,\widetilde{x}}^{\lambda} \right| \leqslant \frac{3}{M(0)},$$ we deduce that for all $ x \in \R^{M(0) + 1}$,
\[\left|\xi_{\tau_{1}}(x) \right| \leqslant \frac{3}{M(0)}\sup_{1\leqslant i,j \leqslant M(0)}{\left\|\partial_{ij}^{2}\xi_{0} \right\|_{\infty}}x_{M(0)+1}^{2} \leqslant \frac{3}{M(0)}\left\|\Hess(\xi_{0}) \right\|_{\infty}\left\|x \right\|_{\infty}^{2} \]
and $(\QQ_{1})$ follows. 
\end{itemize}

\textit{Step 2. Inductive Step.}  We assume that, for $k \in \N \, \backslash \, \{0,1 \}$, $(\QQ_{k-1})$ is satisfied and prove that $(\QQ_{k})$ is also. We make again a partition of cases according to whether the dual process loses or gains a variable. Let $i,j \in \left\{1, \cdots, M\left(\tau_{k-1} \right) \right\}$ be fixed.
\begin{itemize}
 \item \textit{Case $\Lambda_{k} = \Phi_{i,j}$ at the $k^{\rm{th}}$ jump.} In this case, $M(\tau_{k}) = M(\tau_{k-1}) - 1$ and we deduce from (\ref{Processus_dual_explicite}) that for all $x \in \R^{M\left(\tau_{k-1} \right)-1}$, $\xi_{\tau_{k}}(x) = \Phi_{i,j}T_{\lambda}^{\left(M(\tau_{k-1})\right)}(\tau_{k}-\tau_{k-1})\xi_{\tau_{k-1}}(x)$. 
By using (\ref{Expression_Phi_ij}) and  $\left(\QQ_{k-1} \right)$, we deduce from {\color{blue} \cite[\color{black} Corollary 6.2 \textbf{(1)}]{Champagnat_Hass_FVr_2022}} that for all $x \in \R^{M\left(\tau_{k-1} \right)-1}$, 
\begin{equation*}
\begin{aligned}
& \left|\Phi_{i,j}T_{\lambda}^{\left(M\left(\tau_{k-1} \right) \right)}\left(\tau_{k} - \tau_{k-1} \right)\xi_{\tau_{k-1}}(x) \right| \\
& \hspace{1cm} \leqslant C_{2}\left(\tau_{k} - \tau_{k-1}, M\left(\tau_{k-1}\right) \right){C}_{0}\left(\left(\tau_{i+1} - \tau_{i} \right)_{0\leqslant i \leqslant k-1}, k \right)\left(1 + \left\|x \right\|_{\infty}^{2(k-1)} \right),
\end{aligned}
\label{Ineq_Phi_ij_dual_Lemme_6_4}
\end{equation*}
where $C_{2}{C}_{0}$ is locally bounded. As $t\mapsto C_{2}\left(t, M\left(\tau_{k-1} \right)\right)$ is non-decreasing, we deduce $\left(\QQ_{k} \right)$ in that case. \\
\item \textit{Case $\Lambda_{k} = K_{i,j}$ at the $k^{\rm{th}}$ jump.}
In this case, $M(\tau_{k}) = M(\tau_{k-1}) + 1$.  From (\ref{Processus_dual_explicite}),  (\ref{Operateur_K_ij}) and {\color{blue} \cite[\color{black} Theorem 6.1 \textbf{(3)}]{Champagnat_Hass_FVr_2022}}, we have for all $x \in \R^{M\left(\tau_{k-1} \right) + 1}$,
\begin{align*}
\left|\xi_{\tau_{k}}(x) \right| & = \left|K_{i,j}T_{\lambda}^{\left(M\left(\tau_{k-1} \right) + 1 \right)}\left(\tau_{k} - \tau_{k-1} \right)\xi_{\tau_{k-1}}(x) \right| \\
& = \left|\left(\partial_{x_{j}}m^{\lambda}_{\tau_{k} - \tau_{k-1},\widetilde{x}} \right)^{t}\left[\left(\xi_{\tau_{k-1}} \ast \Hess\left(g_{\tau_{k} - \tau_{k-1}, 0}^{\lambda}\right) \right)(m^{\lambda}_{\tau_{k} - \tau_{k-1}, \widetilde{x}})\partial_{x_{i}}m^{\lambda}_{\tau_{k} - \tau_{k-1}, \widetilde{x}}\right] \right| \\ 
& \hspace{1cm} \times x_{M\left(\tau_{k-1} \right) + 1}^{2},
\end{align*}
where $\widetilde{x} = \left(x_{1}, \cdots, x_{M\left(\tau_{k-1} \right)} \right)^{t} \in \R^{M\left(\tau_{k-1} \right)}$. From $\left(\QQ_{k-1} \right)$ and {\color{blue} \cite[\color{black} Corollary 6.2 \textbf{(2)}]{Champagnat_Hass_FVr_2022}}, we deduce that 
\begin{equation*}
\begin{aligned}
& \left|K_{i,j}T_{\lambda}^{\left(M\left(\tau_{k-1} \right) + 1 \right)}\left(\tau_{k} - \tau_{k-1} \right)\xi_{\tau_{k-1}}(x) \right| \\ 
& \hspace{1cm} \leqslant C_{3}\left(\tau_{k} - \tau_{k-1}, M\left(\tau_{k-1}\right) \right) C_{0}\left(\left(\tau_{i+1} - \tau_{i} \right)_{0\leqslant i \leqslant k-1}, k \right)   \left(1 + \left\|x\right\|_{\infty}^{2k} \right),
\end{aligned}
\label{Ineq_K_ij_dual_Lemme_6_4}
\end{equation*} 
where $C_{3}{C}_{0}$ is locally bounded and $\left(\QQ_{k} \right)$ follows in that case. 
\end{itemize}
We conclude by the principle of induction.  \\

\textit{Step 3. Proof of (\ref{Eq_dual_Lemma_6_4_FVc}) for $t < \tau_{k}$.} Note that from (\ref{Processus_dual_explicite}), for all $k \in \N$ and $t \in \, \left(\tau_{k}, \tau_{k+1} \right)$,   \[\forall x \in \R^{M\left(t \right)}, \qquad \xi_{t}(x) = T_{\lambda}^{\left(M\left(\tau_{k} \right) \right)}\left(t - \tau_{k} \right)\xi_{\tau_{k}}(x), \]
so the announced result follows from {\color{blue} \cite[\color{black} Corollary 6.2 \textbf{(1)}]{Champagnat_Hass_FVr_2022}}. \qedhere
\end{proof}

 Let us consider for all $k \in \N^{\star}$, $\ell, m \in \N$ the stopping times 
\begin{align*}
\vartheta_{k, \ell} & := \inf{\left\{ t \geqslant 0\left. \phantom{1^{1^{1^{1}}}} \hspace{-0.6cm} \right| S_{t}  \geqslant k \quad {\rm{or}} \quad \exists s \in [0,t], \ \left\langle  \xi_{s}, X_{t-s}^{M(s)}  \right\rangle \geqslant \ell \right\}}, \\
\vartheta_{k, m}'  & := \inf{\left\{ t \geqslant 0\left. \phantom{1^{1^{1^{1}}}} \hspace{-0.6cm} \right| S_{t}  \geqslant k \quad {\rm{or}} \quad C_{0}\left(\left(\tau_{i+1} - \tau_{i} \right)_{0\leqslant i \leqslant k-1}, k \right) \geqslant m \right\}}, \\
\vartheta_{k, \ell, m}' & := \inf{\left\{ t \geqslant 0\left. \phantom{1^{1^{1^{1}}}} \hspace{-0.6cm} \right| S_{t}  \geqslant k \quad {\rm{or}} \quad \exists s \in [0,t], \ \left\langle  \xi_{s}, X_{t-s}^{M(s)}  \right\rangle \geqslant \ell \right. } \\
& \hspace{2.95cm} \left. \phantom{1^{1^{1^{1}}}}  \quad {\rm{or}} \quad C_{0}\left(\left(\tau_{i+1} - \tau_{i} \right)_{0\leqslant i \leqslant k-1}, k \right) \geqslant m \right\}.
\end{align*}

As $\left(M(t), \xi_{t} \right)_{t\geqslant 0}$ is independent of $\left(X_{t} \right)_{t\geqslant 0}$, note that for all $k \in \N^{\star}$, $m\in \N$, $\vartheta_{k, m}'$ is independent of $\left(X_{t} \right)_{t\geqslant 0}$. Recall that for all $t\geqslant 0$, $\xi_{t}\in \CCCC^{2}\left(\R^{M(t)}, \R \right)$. For all $n \geqslant M(t)$, for all $x \in \R^{n}$, we denote by $\xi_{t}^{(n)}(x) = \xi_{t}^{(n)}\left(x_{1}, \cdots, x_{n} \right) := \xi_{t}\left(x_{1}, \cdots, x_{M(t)}\right)$ 
so that for all $\mu \in \MM_{1}^{c, 2}(\R)$, \[\left\langle \xi_{t}^{(n)}, \mu^{n} \right\rangle = \left\langle \xi_{t}, \mu^{M\left(t \right)} \right\rangle.\] 

 The next lemma is an extension of the duality identity proved in {\color{blue} \cite{Champagnat_Hass_FVr_2022}}.

\begin{Lem} Given $\left(X_{t}\right)_{t\geqslant 0}$, $\left(\xi_{t}\right)_{t\geqslant 0}$ as above with $X_{0} := \mu \in \MM_{1}^{c, 2k}(\R)$, $k \in \N^{\star}$ and $\xi_{0} \in \CCCC_{b}^{2}\left(\R^{M(0)}, \R\right)$, 
we have that $\P_{\left(\mu, \xi_{0}\right)}-$a.s., for all $t \geqslant 0$, for all $m \in \N$, 
\begin{equation}
\begin{aligned}
\E_{\left(\mu, \xi_{0}\right)}\left(\left\langle \xi_{0}, X_{t\wedge \vartheta_{k, m}'}^{M(0)} \right\rangle \right) = \left\langle \E_{\xi_{0}}\left(\xi_{t\wedge \vartheta_{k, m}'}^{\left(M(0) + k\right)}\exp\left(\lambda \int_{0}^{t\wedge \vartheta_{k, m}'}{M^{2}(u)\dd u} \right) \right), \mu^{M(0) + k}\right\rangle.
\end{aligned}
\label{Eq_Lem_dualite_new}
\end{equation}
\label{Lem_dualite_new}
\end{Lem}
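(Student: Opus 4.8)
The identity (\ref{Eq_Lem_dualite_new}) is a stopped-time version of the duality relation between the centered \textsc{Fleming-Viot} process and its dual $\left(M(t), \xi_t\right)_{t\geqslant 0}$ established in {\color{blue} \cite{Champagnat_Hass_FVr_2022}}. The plan is to derive it from the basic (unstopped) generator-duality computation by a localisation argument, using the bounds on the dual process from Lemma \ref{Bornes_dual} to justify all the required integrability. The quantity $\left\langle \xi_{t}, X_{t-s}^{M(s)} \right\rangle$ appearing in the stopping times $\vartheta_{k,\ell}$, $\vartheta'_{k,\ell,m}$ is exactly the object whose $\P_{(\mu,\xi_0)}$-expectation is constant in $t$ by duality; the purpose of the extra stopping at $S_t\geqslant k$ is to freeze the number of jumps of $M$ (hence the number of variables of $\xi_t$, which is $M(0)+S_t\leqslant M(0)+k$), and the stopping at $C_{0}\bigl(\left(\tau_{i+1}-\tau_i\right)_{0\leqslant i\leqslant k-1},k\bigr)\geqslant m$ together with (\ref{Eq_dual_Lemma_6_4_FVc}) turns the polynomial growth of $\xi_t$ into a genuinely integrable bound against the moments $M_{2S_t}(X_{u})$, which are controlled by {\color{blue} \cite[Proposition 2.11]{Champagnat_Hass_FVr_2022}} since $\mu\in\MM_1^{c,2k}(\R)$.

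\emph{First step.} I would first recall the function-valued duality from {\color{blue} \cite[Section 3.2]{Champagnat_Hass_FVr_2022}}: for $\xi_0\in\CCCC_b^2(\R^{M(0)},\R)$ and any test measure, the process
\[
t \longmapsto \bigl\langle \xi_t, X^{M(t)}_{\cdot}\bigr\rangle\exp\Bigl(\lambda\int_0^{t} M^2(u)\,\dd u\Bigr)
\]
satisfies, after applying the martingale problem (\ref{PB_Mg_FVc_Polynomes}) for polynomials jointly to $\left(X_t\right)$ and the pure-jump evolution of $\left(M(t),\xi_t\right)$, the identity
\[
\E_{(\mu,\xi_0)}\!\left(\Bigl\langle \xi_t^{(M(0)+S_t)}, X_0^{M(0)+S_t}\Bigr\rangle\exp\Bigl(\lambda\!\int_0^{t}\! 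M^2(u)\,\dd u\Bigr)\right) = \E_{(\mu,\xi_0)}\!\left(\bigl\langle \xi_0, X_t^{M(0)}\bigr\rangle\right),
\]
the key algebraic input being that the operator $B^{(n)}_\lambda$ of (\ref{Operateur_B_n}) is the adjoint, under the polynomial pairing, of the diffusive part of $\LL^\lambda_{\rm FVc}$, and that the jump operators $\Phi_{i,j}$, $K_{i,j}$ of (\ref{Expression_Phi_ij})--(\ref{Operateur_K_ij}) reproduce the resampling and moment terms with the correct rates $\lambda n(n-1)$ and $\lambda n^2$. This is precisely the computation already performed in {\color{blue} \cite{Champagnat_Hass_FVr_2022}} up to the localisation issue.

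\emph{Second step: localisation.} The identity above is a priori only formal because $\xi_t$ has unbounded polynomial growth and $M(t)$ can be arbitrarily large. I would fix $k\in\N^\star$ and $m\in\N$ and apply the martingale argument to the process stopped at $\vartheta'_{k,m}$: on $\{t<\vartheta'_{k,m}\}$ one has $S_t<k$, so $M(u)\in\{M(0),\dots,M(0)+k-1\}$ for $u\leqslant t$, hence $\exp(\lambda\int_0^{t\wedge\vartheta'_{k,m}}M^2(u)\,\dd u)\leqslant\exp(\lambda t (M(0)+k)^2)$ is bounded, and $\vartheta'_{k,m}$ also forces $C_0\bigl(\left(\tau_{i+1}-\tau_i\right)_{0\leqslant i\leqslant k-1},k\bigr)<m$, so Lemma \ref{Bornes_dual} gives $|\xi_{t\wedge\vartheta'_{k,m}}(x)|\leqslant m(1+\|x\|_\infty^{2S_{t\wedge\vartheta'_{k,m}}})\leqslant m(1+\|x\|_\infty^{2k})$. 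Against $\mu^{M(0)+k}$ this is dominated by $m(1+M_{2k}(\mu))<\infty$ since $\mu\in\MM_1^{c,2k}(\R)$, and against $X_t$ it is dominated by $m(1+\sup_{u\leqslant t}\E_\mu(M_{2k}(X_u)))<\infty$ by {\color{blue} \cite[Proposition 2.11]{Champagnat_Hass_FVr_2022}}. Independence of $\left(M(t),\xi_t\right)_{t\geqslant 0}$ from $\left(X_t\right)_{t\geqslant 0}$ lets me take the $X$-expectation inside on the right, yielding the pairing $\langle\,\cdot\,,\mu^{M(0)+k}\rangle$ of (\ref{Eq_Lem_dualite_new}) and the localisation of the exponential integral to $[0,t\wedge\vartheta'_{k,m}]$. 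All stopped integrals are finite by the same bounds, so the stopped martingale is a true martingale and its expectation is constant, giving (\ref{Eq_Lem_dualite_new}).

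\emph{Main obstacle.} The delicate point is not the generator computation — that is inherited from {\color{blue} \cite{Champagnat_Hass_FVr_2022}} — but checking that the stopped objects are genuinely integrable and that stopping commutes correctly with the two sources of unboundedness (the growth of $\xi_t$ in its spatial variables and the growth of $M(t)$ in the exponential weight). Here the precise form of $\vartheta'_{k,m}$ matters: it simultaneously caps $S_t$ (controlling both the exponential weight and the polynomial degree $2S_t$ of $\xi_t$) and caps the locally-bounded prefactor $C_0$ from Lemma \ref{Bornes_dual}, which is exactly the combination needed so that the right-hand side of (\ref{Eq_dual_Lemma_6_4_FVc}) is dominated by a fixed polynomial in $\|x\|_\infty$ with a deterministic constant. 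Once this domination is in place, the dominated convergence / optional stopping steps are routine, and the independence of $\vartheta'_{k,m}$ from $\left(X_t\right)_{t\geqslant 0}$ (noted just before the statement) is what permits conditioning on the dual path and integrating the $X$-marginal separately.
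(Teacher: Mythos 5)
Your localisation ingredients are the right ones and coincide with those used in the paper: on the event $\{t\leqslant \vartheta'_{k,m}\}$ one has $S_{t}\leqslant k$, so the exponential weight is bounded by $\exp\left(\lambda t\left(M(0)+k\right)^{2}\right)$, Lemma \ref{Bornes_dual} gives $\left|\xi_{t\wedge\vartheta'_{k,m}}(x)\right|\leqslant m\left(1+\left\|x\right\|_{\infty}^{2k}\right)$, the hypothesis $\mu\in\MM_{1}^{c,2k}(\R)$ makes this integrable against $\mu^{M(0)+k}$, \textsc{Fubini} applies, and the independence of $\left(M,\xi\right)$ from $X$ turns $\E_{\left(\mu,\xi_{0}\right)}$ into $\E_{\xi_{0}}$ inside the pairing.

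The gap is in your first step. You invoke an \emph{unstopped} duality identity and attribute it to {\color{blue}\cite{Champagnat_Hass_FVr_2022}}, but that reference only establishes a \emph{weak} duality relation stopped at times of the form $\vartheta_{k,\ell}$, which truncate the pairing $\left\langle \xi_{s}, X_{t-s}^{M(s)}\right\rangle$ at level $\ell$ — an $X$-dependent cap (this is precisely the difficulty emphasised in Section \ref{Sous-sous-section_2_3_2_Difficultes}). The unstopped identity is not available (the weight $\exp\left(\lambda\int_{0}^{t}M^{2}(u)\dd u\right)$, with $M$ having quadratic birth rates, is exactly why only a stopped version is proved), and it cannot be recovered by ``applying optional stopping at $\vartheta'_{k,m}$'': the duality relation is not an optional-stopping statement about a single martingale in a single filtration, and $\vartheta'_{k,m}$ is not of the form covered by the weak duality theorem since it puts no control on the pairing with $X$. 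Your appeal to moment bounds on $\E_{\mu}\left(M_{2k}\left(X_{u}\right)\right)$ is a symptom of this: you are pairing the growing dual against $X_{t}$, which is the very quantity whose lack of a priori control forces the $\ell$-truncation in the first place (and which the final identity (\ref{Eq_Lem_dualite_new}) avoids, its right-hand side being a pairing against $\mu^{M(0)+k}$ only). What the proof actually requires — and what the paper does — is to apply the weak duality at $\vartheta'_{k,\ell,m}\leqslant\vartheta_{k,\ell}$, use your dominations to rewrite the right-hand side via \textsc{Fubini} as a pairing against $\mu^{M(0)+k}$, and then remove the cap by letting $\ell\to+\infty$, using that $\vartheta'_{k,\ell,m}\uparrow\vartheta'_{k,m}$ a.s., the continuity of the paths of $X$ for the weak topology on the left-hand side, and dominated convergence on both sides. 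This passage from the $X$-dependent stopping to the purely dual stopping time is the actual content of the lemma, and it is missing from your argument.
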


Compared to the duality identity of {\color{blue} \cite{Champagnat_Hass_FVr_2022}}, the key point of this lemma is that the right-hand side of the last equation, is a polynomial in $\mu$. 

\begin{proof} Let us consider $t\geqslant 0$, $k \in \N^{\star}$, $\ell, m \in \N$. As $\vartheta_{k, \ell, m}' \leqslant \vartheta_{k, \ell}$, the weak duality identity of {\color{blue} \cite[\color{black} Theorem 3.4]{Champagnat_Hass_FVr_2022}} implies that: 
\begin{equation*}
\begin{aligned}
& \E_{\left(\mu, \xi_{0}\right)}\left(\left\langle \xi_{0}, X_{t\wedge \vartheta_{k, \ell, m}'}^{M(0)} \right\rangle \right) \\ 
& \hspace{1.5cm} = \E_{\left(\mu, \xi_{0}\right)}\left(\left\langle \xi_{t\wedge \vartheta_{k, \ell, m}'}, \mu^{M\left(t\wedge \vartheta_{k, \ell, m}' \right)} \right\rangle \exp\left(\lambda \int_{0}^{t\wedge \vartheta_{k, \ell, m}'}{M^{2}(u)\dd u} \right) \right) \\
& \hspace{1.5cm} = \E_{\left(\mu, \xi_{0}\right)}\left(\left\langle \xi_{t\wedge \vartheta_{k, \ell, m}'}^{\left(M(0) + k\right)}, \mu^{M(0) + k} \right\rangle \exp\left(\lambda \int_{0}^{t\wedge \vartheta_{k, \ell, m}'}{M^{2}(u)\dd u} \right) \right). 
\end{aligned}
\end{equation*}
Now, from Lemma \ref{Bornes_dual} and the definition of $\vartheta_{k, \ell, m}'$, note that for all $x \in \R^{M(0) + k}$, 
\begin{equation}
\begin{aligned}
& \E_{\left(\mu, \xi_{0}\right)}\left( \xi_{t\wedge \vartheta_{k, \ell, m}'}^{\left(M(0) + k\right)}(x) \exp\left(\lambda \int_{0}^{t\wedge \vartheta_{k, \ell, m}'}{M^{2}(u)\dd u} \right) \right) \\
& \hspace{5cm} \leqslant m\left(1 + \left\|x \right\|_{\infty}^{2k} \right)\exp\left(\lambda t \left(M(0) + k \right)^{2} \right).
\end{aligned}
\label{Eq_theta_k_l_m_domination}
\end{equation}
Since $\mu \in \MM_{1}^{c, 2k}(\R)$, we deduce from \textsc{Fubini}'s theorem that 
\begin{align*}
& \E_{\left(\mu, \xi_{0}\right)}\left(\left\langle \xi_{0}, X_{t\wedge \vartheta_{k, \ell, m}'}^{M(0)} \right\rangle \right) \\
& \hspace{2cm} = \left\langle \E_{\left(\mu, \xi_{0}\right)}\left( \xi_{t\wedge \vartheta_{k, \ell, m}'}^{\left(M(0) + k\right)} \exp\left(\lambda \int_{0}^{t\wedge \vartheta_{k, \ell, m}'}{M^{2}(u)\dd u} \right) \right), \mu^{M(0) + k}  \right\rangle. 
\end{align*}

On the one hand, from {\color{blue} \cite[\color{black} Section 6.2.1]{Champagnat_Hass_FVr_2022}}, $\lim_{\ell \to + \infty}{\vartheta_{k, \ell, m}'} = \vartheta_{k, m}'$ $\P_{\left(\mu, \xi_{0} \right)}-$a.s., and since $\left(X_{t}\right)_{t\geqslant 0}$ has continous paths for the topology of weak convergence, we have $\P_{\left(\mu, \xi_{0} \right)}-$a.s.
\[\lim_{\ell \to + \infty}{\left\langle \xi_{0}, X_{t\wedge \vartheta_{k, \ell, m}'}^{M(0)} \right\rangle} =  \left\langle \xi_{0}, X_{t\wedge \vartheta_{k, m}'}^{M(0)} \right\rangle.\]
Therefore, we deduce from the \textsc{Lebesgue} dominated convergence theorem, that for all $\xi_{0} \in \CCCC^{2}_{b}\left(\R^{M(0)}, \R\right)$, 
\[\lim_{\ell \to + \infty}{\E_{\left(\mu, \xi_{0}\right)}\left(\left\langle \xi_{0}, X_{t\wedge \vartheta_{k, \ell, m}'}^{M(0)} \right\rangle \right)} = \E_{\left(\mu, \xi_{0}\right)}\left(\left\langle \xi_{0}, X_{t\wedge \vartheta_{k, m}'}^{M(0)} \right\rangle \right).\]
On the other hand, Lemma \ref{Bornes_dual} and the dominated convergence theorem imply that for all $x \in \R^{M(0) + k}$,
\begin{align*}
& \lim_{\ell \to + \infty}{\E_{\left(\mu, \xi_{0}\right)}\left( \xi_{t\wedge \vartheta_{k, \ell, m}'}^{\left(M(0) + k\right)}(x) \exp\left(\lambda \int_{0}^{t\wedge \vartheta_{k, \ell, m}'}{M^{2}(u)\dd u} \right) \right)} \\
& \hspace{2cm}  = \E_{\left(\mu, \xi_{0}\right)}\left( \xi_{t\wedge \vartheta_{k, m}'}^{\left(M(0) + k\right)}(x) \exp\left(\lambda \int_{0}^{t\wedge \vartheta_{k, m}'}{M^{2}(u)\dd u} \right) \right)
\end{align*} 
and the limit satisfies the inequality (\ref{Eq_theta_k_l_m_domination}).
As $\mu \in \MM_{1}^{c, 2k}(\R)$, we deduce again from the dominated convergence theorem that 
\begin{align*}
& \lim_{\ell \to + \infty}{\left\langle \E_{\left(\mu, \xi_{0}\right)}\left(\xi_{t\wedge \vartheta_{k, \ell, m}'}^{\left(M(0) + k\right)}\exp\left(\lambda \int_{0}^{t\wedge \vartheta_{k, \ell, m}'}{M^{2}(u)\dd u} \right) \right),\mu^{M(0) + k}\right\rangle} \\ 
& \hspace{2cm} = \left\langle \E_{\left(\mu, \xi_{0}\right)}\left(\xi_{t\wedge \vartheta_{k, m}'}^{\left(M(0) + k\right)}\exp\left(\lambda \int_{0}^{t\wedge \vartheta_{k, m}'}{M^{2}(u)\dd u} \right) \right), \mu^{M(0) +k}\right\rangle
\end{align*}
and the announced result follows from the fact that $\vartheta_{k, m}'$ is independent of $\left(X_{t} \right)_{t\geqslant 0}$, which implies 
\begin{align*}
& \E_{\left(\mu, \xi_{0}\right)}\left(\xi_{t\wedge \vartheta_{k, m}'}^{\left(M(0) + k\right)}(x)\exp\left(\lambda \int_{0}^{t\wedge \vartheta_{k, m}'}{M^{2}(u)\dd u} \right) \right) \\ 
& \hspace{3cm} = \E_{\xi_{0}}\left(\xi_{t\wedge \vartheta_{k, m}'}^{\left(M(0) + k\right)}(x)\exp\left(\lambda \int_{0}^{t\wedge \vartheta_{k, m}'}{M^{2}(u)\dd u} \right) \right).
\end{align*}
\end{proof}
From Lemma \ref{Bornes_dual}, we deduce that we can choose $m \in \N$ large enough such that $\vartheta'_{1, m} = \tau_{1}$.
In this case, the function inside the brackets in the left-hand side of (\ref{Eq_Lem_dualite_new}) is defined, for all $t\geqslant 0$ and for all $x \in \R^{M(0) + 1}$, by
\begin{equation*}
\psi_{t}(x):= \E_{\left(\mu, \xi_{0} \right)}\left(\xi_{t\wedge\tau_{1}}^{\left(M(0)+ 1 \right)}(x)\exp\left(\lambda\int_{0}^{t\wedge \tau_{1}}{M^{2}(u)\dd u} \right) \right). \label{Eq_Def_Psi_t}
\end{equation*}
For all $t \geqslant 0$, $n \in \N^{\star}$, $f \in \CCCC^{2}_{b}\left(\R^{n}, \R\right)$ and $\mu \in \MM_{1}^{c, 2}(\R)$ let us denote \[P_{\rm{FVc}}(t)P_{f, n}\left(\mu\right) := \E_{\mu}\left(P_{f, n}\left(X_{t}\right) \right)\] the semi-group of the centered \textsc{Fleming-Viot} process $(X_{t})_{t\geqslant 0}$ with resampling rate $\lambda$. \\ 

The following result is the main result of this section. It gives an extension of Lemma \ref{Bornes_dual} to a function which appears naturally in the proof of Lemma \ref{Lem_proba_invariante}. 
\begin{Prop}
For all $t \geqslant 0$, for all $\mu \in \MM_{1}^{c, 2}(\R)$, for all $\xi_{0} \in \CCCC^{2}_{b}\left(\R^{M(0)}, \R \right)$ \[P_{\rm{FVc}}(t)P_{\xi_{0}, M(0)}(\mu) = P_{V_{t}, M(0) + 1}\left(\mu \right) := \left\langle V_{t}, \mu^{M(0) + 1} \right\rangle \]
where 
\begin{equation}
V_{t} := \exp\left(\alpha_{0} t \right)\psi_{t} - \alpha_{0}\int_{0}^{t}{\exp\left(\alpha_{0} s \right)\psi_{s}\dd s}
\label{Eq_Def_V_t}
\end{equation}
and $\alpha_{0} := \lambda M(0)\left[2M(0)-1\right]$. In addition, for all $t\geqslant 0$, there exist a constant $C_{t}>0$ such that for all $x \in \R^{M(0) + 1}$, 
\begin{equation*}
\left|V_{t}(x)\right| \leqslant C_{t}\left( 1 + \|x \|^{2}_{\infty}\right).
\label{Eq_V_t_inf_1_plus_norme_x}
\end{equation*}
\label{Prop_Semi_groupe_T_FVc}
\end{Prop}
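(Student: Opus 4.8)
The plan is to derive the identity for $V_t$ by combining the duality identity of Lemma~\ref{Lem_dualite_new} with a Dynkin-type argument that converts the exponential weight $\exp(\lambda\int_0^{t\wedge\tau_1} M^2(u)\,\dd u)$ into an integral term. First I would fix $\mu\in\MM_1^{c,2}(\R)$ and $\xi_0\in\CCCC^2_b(\R^{M(0)},\R)$, choose $m\in\N$ large enough that $\vartheta'_{1,m}=\tau_1$ (possible by Lemma~\ref{Bornes_dual}, since the local boundedness of $C_0$ on $(0,+\infty)^0\times\{1\}$ means the threshold $C_0(\tau_1,1)\geq m$ is never crossed before the first jump for $m$ large — here one must be slightly careful and note that $C_0(\tau_1,1)$ is a fixed deterministic number depending only on $\xi_0$, so the choice of $m$ is uniform), and then read off from Lemma~\ref{Lem_dualite_new} with $k=1$ that
\[
P_{\rm FVc}(t)P_{\xi_0,M(0)}(\mu)=\E_\mu\big(\langle\xi_0,X_t^{M(0)}\rangle\big)=\big\langle\psi_t,\mu^{M(0)+1}\big\rangle,
\]
where $\psi_t(x)=\E_{(\mu,\xi_0)}\big(\xi_{t\wedge\tau_1}^{(M(0)+1)}(x)\exp(\lambda\int_0^{t\wedge\tau_1}M^2(u)\,\dd u)\big)$. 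The subtlety is that $P_{\rm FVc}(t)P_{\xi_0,M(0)}$ should be a polynomial of degree $M(0)+1$ in $\mu$ with a \emph{bounded-by-quadratic} kernel, whereas $\psi_t$ as written carries the exponential factor and hence could a priori blow up; the role of the formula~(\ref{Eq_Def_V_t}) is precisely to absorb this factor.

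The key computation is to differentiate in $t$. Before the first jump $M(u)=M(0)$ for $u<\tau_1$, so on $\{t<\tau_1\}$ the exponential weight is simply $\exp(\lambda M(0)^2 t)$; moreover on this event $\xi_{t\wedge\tau_1}=\xi_t=T_\lambda^{(M(0))}(t)\xi_0$. I would therefore split $\psi_t(x)=\psi_t(x)\II_{t<\tau_1}+\psi_t(x)\II_{t\geq\tau_1}$ and use that $\tau_1$ is exponential with parameter $q_{M(0),M(0)+1}+q_{M(0),M(0)-1}=\lambda M(0)^2+\lambda M(0)(M(0)-1)=\lambda M(0)(2M(0)-1)=\alpha_0$, independent of $X$ and of the post-jump evolution. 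Conditioning on $\tau_1$ and on the jump type, a Duhamel/renewal decomposition gives an integral equation for $\psi_t$: writing $g(t):=\exp(\alpha_0 t)\psi_t$, the exponential weight and the $\exp(-\alpha_0 t)$ density of $\tau_1$ combine so that $g$ satisfies $g'(t)=\alpha_0\psi_t+(\text{contribution from the semigroup generator acting before }\tau_1)$, and one checks that the semigroup-generator contribution is exactly what makes $V_t=g(t)-\alpha_0\int_0^t g(s)\,\dd s$ equal to $\E_\mu(P_{\xi_0,M(0)}(X_t))$'s kernel. Concretely, I expect the cleanest route is to verify that both $t\mapsto P_{\rm FVc}(t)P_{\xi_0,M(0)}(\mu)$ and $t\mapsto\langle V_t,\mu^{M(0)+1}\rangle$ solve the same linear ODE (in $t$) with the same initial value $P_{\xi_0,M(0)}(\mu)$ at $t=0$ — using on the one hand the martingale problem~(\ref{PB_Mg_FVc_Polynomes_J}) for the FVc semigroup applied to the polynomial $P_{\xi_0,M(0)}$, and on the other hand a direct differentiation of~(\ref{Eq_Def_V_t}) together with the forward equation for $\psi_t$ coming from the dual dynamics — and then invoke uniqueness of solutions to this ODE.

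For the pointwise bound $|V_t(x)|\leq C_t(1+\|x\|_\infty^2)$, I would argue as follows: on $\{t<\tau_1\}$ we have $\xi_{t\wedge\tau_1}=T_\lambda^{(M(0))}(t)\xi_0$, whose $K_{i,j}$-image (which is what appears after embedding into $\R^{M(0)+1}$, recall $M(t\wedge\tau_1)$ may equal $M(0)$ or, at $\tau_1$, $M(0)\pm1$) is bounded by a constant times $\|x\|_\infty^2$ by exactly the Step~1 estimate in the proof of Lemma~\ref{Bornes_dual} (the case $\Lambda_1=K_{i,j}$), while on $\{t\geq\tau_1\}$ the stopping at $\tau_1=\vartheta'_{1,m}$ and the choice of $m$ force $S_{t\wedge\tau_1}\leq 1$ and $C_0(\tau_1,1)\leq m$, so Lemma~\ref{Bornes_dual} gives $|\xi_{t\wedge\tau_1}^{(M(0)+1)}(x)|\leq m(1+\|x\|_\infty^2)$; in all cases the exponential weight is at most $\exp(\lambda t(M(0)+1)^2)$, so $|\psi_s(x)|\leq C'_s(1+\|x\|_\infty^2)$ with $C'_s$ locally bounded, and then~(\ref{Eq_Def_V_t}) together with $\int_0^t$ of a locally bounded function yields the claim with $C_t:=\exp(\alpha_0 t)\sup_{s\leq t}C'_s+\alpha_0\int_0^t\exp(\alpha_0 s)\sup_{u\leq s}C'_u\,\dd s<\infty$.

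The main obstacle I anticipate is not any single estimate but the bookkeeping in the Duhamel decomposition: one must carefully track how the exponential functional $\exp(\lambda\int_0^{t\wedge\tau_1}M^2(u)\,\dd u)$ interacts with the renewal structure at $\tau_1$ and with the fact that $\psi_t$ is defined through a \emph{stopped} dual process (stopped at $\tau_1$, not run forever), so that the "forward equation for $\psi_t$" is genuinely the equation for the semigroup killed/frozen at the first jump rather than the full FVc generator. Getting the constant $\alpha_0=\lambda M(0)(2M(0)-1)$ to come out correctly is the concrete checkpoint that the decomposition has been done right, since it is precisely the total jump rate out of the initial state $M(0)$ for the chain $M$, and it is the compensator rate that must be subtracted in passing from the exponentially-weighted stopped quantity $g(t)=\exp(\alpha_0 t)\psi_t$ to the unweighted semigroup value.
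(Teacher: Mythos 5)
Your quadratic bound on $V_{t}$ is essentially the paper's argument and is fine, but the identification $P_{\rm FVc}(t)P_{\xi_{0}, M(0)}(\mu) = \left\langle V_{t}, \mu^{M(0)+1}\right\rangle$ rests on a false starting identity. Lemma \ref{Lem_dualite_new} with $k=1$ and $m$ large enough that $\vartheta'_{1,m} = \tau_{1}$ gives
\begin{equation*}
\E_{\left(\mu, \xi_{0}\right)}\left(\left\langle \xi_{0}, X_{t\wedge \tau_{1}}^{M(0)} \right\rangle\right) = \left\langle \psi_{t}, \mu^{M(0)+1}\right\rangle,
\end{equation*}
whose left-hand side is $\E\left(P_{\rm FVc}\left(t\wedge\tau_{1}\right)P_{\xi_{0}, M(0)}(\mu)\right)$, i.e.\ the semigroup evaluated at the \emph{random stopped time} $t\wedge\tau_{1}$, not $P_{\rm FVc}(t)P_{\xi_{0}, M(0)}(\mu) = \E_{\mu}\left(\left\langle\xi_{0}, X_{t}^{M(0)}\right\rangle\right)$ as your first display asserts. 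If your display were true, the proposition would simply read $P_{\rm FVc}(t)P_{\xi_{0}, M(0)} = P_{\psi_{t}, M(0)+1}$, making the correction in (\ref{Eq_Def_V_t}) superfluous and in fact contradictory, since $V_{t} \neq \psi_{t}$ for $t>0$. Your diagnosis that $V_{t}$ serves to ``absorb the exponential factor'' is also off: before $\tau_{1}$ one has $M(u) = M(0)$, so the exponential weight is just $e^{\lambda M(0)^{2}t}$, harmless for fixed $t$; the role of $V_{t}$ is to invert the averaging over the exponential time $\tau_{1}$.

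The missing step is precisely the consequence of the independence of $\tau_{1}$ (exponential of parameter $\alpha_{0}$) from $\left(X_{t}\right)_{t\geqslant 0}$: writing $u(s) := P_{\rm FVc}(s)P_{\xi_{0}, M(0)}(\mu)$,
\begin{equation*}
\left\langle \psi_{t}, \mu^{M(0)+1}\right\rangle = \E\left(u\left(t\wedge\tau_{1}\right)\right) = e^{-\alpha_{0}t}u(t) + \alpha_{0}\int_{0}^{t}{e^{-\alpha_{0}s}u(s)\,\dd s}.
\end{equation*}
Once this is written, no differentiation, no forward equation for $\psi_{t}$ and no appeal to the martingale problem (\ref{PB_Mg_FVc_Polynomes_J}) are needed: interchanging $\left\langle\cdot, \mu^{M(0)+1}\right\rangle$ with the time integral by \textsc{Fubini} (licit thanks to the quadratic bound on $\psi_{s}$ and $M_{2}(\mu)<\infty$) and a short computation on the double time integral give $\alpha_{0}\int_{0}^{t}{e^{\alpha_{0}s}\left\langle\psi_{s}, \mu^{M(0)+1}\right\rangle \dd s} = \alpha_{0}e^{\alpha_{0}t}\int_{0}^{t}{e^{-\alpha_{0}s}u(s)\dd s}$, whence $\left\langle V_{t}, \mu^{M(0)+1}\right\rangle = u(t)$; this is exactly the paper's proof. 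Your alternative ``common ODE plus uniqueness'' sketch does not repair the gap: $u$ satisfies no closed scalar ODE (its derivative is $\E_{\mu}\left(\LL_{\rm FVc}^{\lambda}P_{\xi_{0}, M(0)}\left(X_{t}\right)\right)$, not a function of $u(t)$), differentiating $u$ through the martingale problem requires moment and regularity hypotheses ($\mu \in \MM_{1}^{c,4}(\R)$, $\xi_{0} \in \CCCC_{b}^{4}$) that go beyond the assumption $\mu \in \MM_{1}^{c,2}(\R)$ of the statement, and the identity $\E_{\mu}\left(\LL_{\rm FVc}^{\lambda}P_{\xi_{0}, M(0)}\left(X_{t}\right)\right) = \LL_{\rm FVc}^{\lambda}P_{V_{t}, M(0)+1}(\mu)$ that would close such an argument is proved in the paper only in the subsequent lemma, \emph{using} the present proposition, so this route risks circularity.
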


\begin{proof}
Let $t \geqslant 0$, $\mu \in \MM_{1}^{c, 2}(\R)$, $\xi_{0} \in \CCCC_{b}^{2}\left(\R^{M(0)}, \R \right)$ be fixed. Note that from Lemma \ref{Bornes_dual}, there exists a constant $\widetilde{C}_{t}$, depending only on $\xi_{0}$ and $t$ such that for all $x \in \R^{M(0)+1}$, 
\begin{equation}
\left|\psi_{t}(x) \right| \leqslant C_{0}\left(\tau_{1},1\right)\left(1+ \left\|x \right\|_{\infty}^{2} \right)\E_{\left(\mu, \xi_{0} \right)}\left(\exp\left(\lambda t M^{2}(0) \right) \right) \leqslant \widetilde{C}_{t}\left(1 + \left\|x \right\|_{\infty}^{2} \right). 
\label{Eq_Borne_phi_t}
\end{equation}

Now, from Lemma \ref{Lem_dualite_new} and the fact that $\tau_{1}$ is independent of $\left(X_{t} \right)_{t \geqslant 0}$, we obtain that 
\begin{align*}
\left\langle \psi_{t}, \mu^{M(0) + 1} \right\rangle = \E_{\left(\mu, \xi_{0} \right)}\left(P_{\xi_{0}, M(0)}\left(X_{t\wedge \tau_{1}} \right) \right) & = \E_{\xi_{0}}\left(P_{\rm{FVc}}\left(t\wedge \tau_{1} \right)P_{\xi_{0}, M(0)}\left(\mu \right) \right).
\end{align*}
As $\tau_{1}$ follows an exponential law of parameter $\alpha_{0}$ we have that 
\begin{equation}
\begin{aligned}
& \left\langle \psi_{t}, \mu^{M(0) + 1} \right\rangle  = P_{\rm{FVc}}\left(t\right)P_{\xi_{0}, M(0)}\left(\mu \right)\exp\left(-\alpha_{0} t \right) \\ 
& \hspace{4cm} + \alpha_{0}\int_{0}^{t}{P_{\rm{FVc}}\left(s\right)P_{\xi_{0}, M(0)}\left(\mu \right) \exp\left(-\alpha_{0} s \right)\dd s}.
\end{aligned}
\label{Eq_psi_t_Lem_6.6}
\end{equation}

Thanks to (\ref{Eq_Borne_phi_t}) and since $\mu$ has its moment of order 2 finite, the \textsc{Fubini} theorem ensures us that \[\left\langle  \int_{0}^{t}{\exp\left(\alpha_{0} s \right) \psi_{s} \dd s}, \mu^{M(0) + 1} \right\rangle = \int_{0}^{t}{\exp\left(\alpha_{0} s \right)\left\langle \psi_{s}, \mu^{M(0) + 1}  \right\rangle \dd s}. \]
From (\ref{Eq_psi_t_Lem_6.6}) we deduce that 
 \begin{equation}
 \begin{aligned}
& \left\langle  \int_{0}^{t}{\exp\left(\alpha_{0} s \right) \psi_{s} \dd s}, \mu^{M(0) + 1} \right\rangle \\
& \hspace{0.25cm} = \alpha_{0} \int_{0}^{t}{\exp\left(\alpha_{0} s \right)\int_{0}^{s}{\exp\left(-\alpha_{0} u \right)P_{\rm{FVc}}(u)P_{\xi_{0}, M(0)}\dd u}\dd s } + \int_{0}^{t}{P_{\rm{FVc}}(s)P_{\xi_{0}, M(0)}(\mu)\dd s} \\
& \hspace{0.25cm} = \exp\left(\alpha_{0}t \right) \int_{0}^{t}{\exp\left(-\alpha_{0} u \right)P_{\rm{FVc}}(u)P_{\xi_{0}, M(0)}(\mu)\dd u}.
 \end{aligned}
 \label{Eq_2_psi_t_Lem_6.6}
 \end{equation}
 
\noindent Then, the first announced result follows from (\ref{Eq_psi_t_Lem_6.6}) and (\ref{Eq_2_psi_t_Lem_6.6}) and the second one from (\ref{Eq_Borne_phi_t}). \qedhere
\end{proof}

\subsubsection{Proof of Lemma \ref{Lem_proba_invariante}\label{Sous_sous_section_6.3.2_Preuve_effective}}

The key point of this proof is to establish that for all $\xi_{0} \in \CCCC_{b}^{4}\left(\R^{M(0)}, \R\right)$,  $t \geqslant 0$ and $\mu \in \MM_{1}^{c, 4}(\R)$,  
\begin{equation}
\begin{aligned}
P_{\rm FVc}(t)P_{\xi_{0}, M(0)}(\mu) & = \left\langle \xi_{0}, \mu^{M(0)} \right\rangle + \int_{0}^{t}{\LL_{\rm FVc}^{\lambda}P_{\rm FVc}(s)P_{\xi_{0}, M(0)}(\mu)\dd s} \\
& = P_{\xi_{0}, M(0)}(\mu) + \int_{0}^{t}{\LL_{\rm FVc}^{\lambda}P_{V_{s}, M(0) + 1}(\mu)\dd s}.
\end{aligned}
\label{Eq_point_cle_lemme_6_3}
\end{equation}
where we use Proposition \ref{Prop_Semi_groupe_T_FVc} in the second equality. To do this, we will first  prove that for all $t \geqslant 0$, there exists a constant $C_{t} >0$ such that for all $x \in \R^{M(0) + 1}$, 
$\left\|\Hess\left(V_{t}\right)(x) \right\|_{\infty} \leqslant C_{t}\left(1 + \left\|x \right\|_{\infty}^{2} \right)$, so that $V_{t} \in \CCCC^{2}_{\|\cdot\|}\left(\R^{M(0)+1}, \R \right)$. \\

For all $n \in \N^{\star}$, for any function $f : \R^{n} \to \R$,  $k \in \N$ times differentiable, we denote by $D^{k}f$ the differential of order $k$ of $f$. Let 
$t \geqslant 0$, $\lambda >0$, $\mu \in \MM_{1}^{c, 2}(\R)$ and $\xi_{0} \in \CCCC^{4}_{b}\left(\R^{M(0)}, \R \right)$ be fixed. \\

\textit{Step 1. Preliminary bounds.} Let $x \in \R^{M(0)}$. It follows from (\ref{Eq_clef_Semi_groupe}) that 
\begin{align*}
& \partial_{x_{i}x_{j}x_{k}x_{\ell}}^{4}T^{\left(M(0) \right)}(t)\xi_{0}(x) \\
& \hspace{0.25cm} = \sum_{m, p, q, r \, = \, 1}^{M(0)}\partial_{x_{i}}\left(m_{t,x}^{\lambda} \right)_{m}\partial_{x_{j}}\left(m_{t,x}^{\lambda} \right)_{p}\partial_{x_{k}}\left(m_{t,x}^{\lambda} \right)_{q}\partial_{x_{\ell}}\left(m_{t,x}^{\lambda} \right)_{r}\left(g_{t, 0}^{\lambda} \ast \partial_{y_{m}y_{p}y_{q}y_{r}}^{4}{\xi_{0}}\right)\left(m_{t, x}^{\lambda} \right) 
\end{align*}
where $\left(m_{t,x}^{\lambda} \right)_{p}$ designates the $p^{\rm th}$ component of $m_{t, x}^{\lambda}$. Then, for all $k \in \left\{0, \cdots, 4 \right\}$, there exists a constant $C_{1}>0$ independent of $x$ and $t$ such that  \[\left\|D^{k}T^{\left(M(0) \right)}(t)\xi_{0}(x) \right\|_{\infty} \leqslant C_{1}\left(1 + \left\|D^{k}\xi_{0} \right\|_{\infty} \right). \]
From (\ref{Processus_dual_explicite}), (\ref{Expression_Phi_ij}) and (\ref{Operateur_K_ij}), we deduce that there exists a constant $C_{2} >0$ independent of $t$ such that for all $x \in \R^{M(0) + 1}$, for all $k \in \{0, 1, 2 \}$, 
\begin{align*}
\left\|D^{k}\xi_{t\wedge \tau_{1}}^{\left(M(0) + 1\right)}(x) \right\|_{\infty} & \leqslant C_{1}\left(1 + \left\|D^{k}\xi_{0} \right\|_{\infty} \right) + \sum_{i, j \, = \, 1}^{n}{\left\|D^{k}K_{i,j}T_{\lambda}^{\left(M(0) \right)}\left( \tau_{1}\right)\xi_{0}(x) \right\|_{\infty}}  \\ 
& \hspace{1cm} + \sum_{\substack{i, j \, = \, 1 \\ i \, \neq \, j}}^{n}{\left\|D^{k}\Phi_{i,j}T_{\lambda}^{\left(M(0) \right)}\left(\tau_{1} \right)\xi_{0}\left(x_{1}, \cdots, x_{M(0) - 1}\right) \right\|_{\infty}} \\
& \leqslant  C_{2}\left(1 + \left\|D^{k}\xi_{0} \right\|_{\infty}  + \left\|D^{k+2}\xi_{0} \right\|_{\infty}x_{M(0) + 1}^{2} \right).
\end{align*}
By the theorem of differentiation under the integral sign, for all $k \in \{0,1,2 \}$ and $x \in \R^{M(0)+1}$, we deduce that
\begin{align*}
\left\|D^{k}\psi_{t}(x) \right\|_{\infty} &  \leqslant \exp\left(\lambda M(0)^{2}t \right)\E_{\left(\mu, \xi_{0} \right)}\left(\left\|D^{k}\xi_{t\wedge \tau_{1}}^{\left(M(0) + 1\right)}(x) \right\|_{\infty}\right)  \\ 
& \leqslant C_{2}\exp\left(\lambda M(0)^{2}t \right)\left(1 + \left\|x \right\|_{\infty}^{2} \right).
\end{align*}
Then, from the definition of $V_{t}$ in (\ref{Eq_Def_V_t}) we deduce that there exists a constant $C_{t}$ such that for all $k \in \{0,1,2\}$, for all $x \in \R^{M(0)+1}$, $\left\|D^{k}V_{t}(x) \right\|_{\infty} \leqslant C_{t}\left(1 + \left\|x \right\|_{\infty}^{2} \right)$, so that $V_{t} \in \CCCC^{2}_{\|\cdot\|}\left(\R^{M(0)+1}, \R \right)$.  \\

\textit{Step 2. Proof of the key point (\ref{Eq_point_cle_lemme_6_3}).} Thanks to (\ref{PB_Mg_FVc_Polynomes}), for all $t \geqslant 0$,
\[P_{\rm FVc}(t)P_{\xi_{0}, M(0)}(\mu) = P_{\xi_{0}, M(0)}(\mu) + \E_{\mu}\left(\int_{0}^{t}{\LL_{\rm FV c}^{\lambda}P_{\xi_{0}, M(0)}\left(X_{s} \right)\dd s}  \right). \]
From (\ref{Def_Gene_FVc_polynome}), there exists a constant $C_{3}>0$ such that for all $t \geqslant 0$, $\E_{\mu}\left(\left|\LL_{\rm FV c}^{\lambda}P_{\xi_{0}, M(0)}\left(X_{t} \right) \right|\right) \leqslant C_{3}\left(1 + \E_{\mu}\left(M_{2}\left(X_{t}\right) \right)\right)$ which is finite from {\color{blue} \cite[\color{black} Proposition 2.11]{Champagnat_Hass_FVr_2022}} since $\mu \in \MM_{1}^{c, 4}(\R)$. Hence, from \textsc{Fubini}'s theorem we obtain that 
\begin{equation}
P_{\rm FVc}(t)P_{\xi_{0}, M(0)}(\mu) = P_{\xi_{0}, M(0)}(\mu) + \int_{0}^{t}{\E_{\mu}\left(\LL_{\rm FV c}^{\lambda}P_{\xi_{0}, M(0)}\left(X_{s} \right)\right)\dd s}.
\label{Eq_P_FVc_Fubini}
\end{equation}
As $\LL_{\rm FVc}^{\lambda}P_{\xi_{0}, M(0)}(\mu)$ is a polynomial in $\mu$ and since $$\left|\LL_{\rm FVc}^{\lambda}\left(\LL_{\rm FVc}^{\lambda}P_{\xi_{0}, M(0)}(\mu) \right) \right| \leqslant C_{4} \left(1 + M_{4}\left(\mu \right) \right)$$ for some constant $C_{4}>0$, we deduce as in (\ref{PB_Mg_FVc_Polynomes_J}) that
\begin{align*}
& \E_{\mu}\left(\LL_{\rm FVc}^{\lambda}P_{\xi_{0}, M(0)}\left(X_{t}\right) \right) - \E_{\mu}\left(\LL_{\rm FVc}^{\lambda}P_{\xi_{0}, M(0)}\left(X_{0}\right) \right) \\
& \hspace{5cm} = \E_{\mu}\left(\int_{0}^{t}{\LL_{\rm FV c}^{\lambda}\left(\LL_{\rm FV c}^{\lambda}P_{\xi_{0}, M(0)}\right)\left(X_{s} \right)\dd s} \right).
\end{align*}
In particular, $t\mapsto \E_{\mu}\left(\LL_{\rm FVc}P_{\xi_{0}, M(0)}\left(X_{t} \right) \right)$ is continuous, so
\begin{align*}
 \E_{\mu}\left(\LL_{\rm FVc}^{\lambda}P_{\xi_{0}, M(0)}\left(X_{t}\right) \right) & = \lim_{h \to 0}{\frac{1}{h}\E_{\mu}\left(\int_{t}^{t + h}{\LL_{\rm FV c}P_{\xi_{0}, M(0)}\left(X_{s} \right)\dd s} \right)} \\
&  = \lim_{h \to 0}\frac{1}{h}\E_{\mu}\left(P_{\xi_{0}, M(0)}\left(X_{t + h} \right) - P_{\xi_{0}, M(0)}\left(X_{t} \right) \right) \\
&  = \lim_{h \to 0}{\frac{1}{h}}\E_{\mu}\left(P_{\rm{FVc}}\left(t \right)P_{\xi_{0}, M(0)}\left(X_{h} \right) \right) - P_{\rm{FVc}}\left(t \right)P_{\xi_{0}, M(0)}(\mu) \\
& = \lim_{h \to 0}\frac{1}{h}\E_{\mu}\left(P_{V_{t}, M(0) + 1}\left(X_{h} \right) - P_{V_{t}, M(0) + 1}\left(\mu \right)  \right)
\end{align*}
where we used \textsc{Markov}'s property in the third equality and Proposition \ref{Prop_Semi_groupe_T_FVc} in the last equality. From Step 1, we deduce that there exists a constant $C_{5} > 0$ such that 
\begin{equation}
\left|\LL_{\rm FVc}^{\lambda}P_{V_{t}, M(0) + 1}(\mu) \right| \leqslant C_{5} \left(1 + M_{4}\left(\mu \right) \right).
\label{Eq_Majoration_L_FVc_V_t}
\end{equation}
Thus, from the martingale problem (\ref{PB_Mg_FVc_Polynomes_J}) and the continuity of $t\mapsto \E_{\mu}\left(\LL_{\rm FVc}P_{V_{t}, M(0)}\left(\mu \right) \right)$, we have that
\begin{align*}
\lim_{h \to 0}\frac{1}{h}\E_{\mu}\left(P_{V_{t}, M(0) + 1}\left(X_{h} \right) - P_{V_{t}, M(0) + 1}\left(\mu \right)  \right) & = \lim_{h\to 0}{\frac{1}{h} \E_{\mu}\left(\int_{0}^{h}{\LL_{\rm FVc}^{\lambda}P_{V_{t}, M(0)+1}\left(X_{s} \right)\dd s} \right)} \\
& = \LL_{\rm FVc}^{\lambda}{P_{V_{t}, M(0) + 1}\left(\mu \right)}.
\end{align*}
Therefore, we have proved that for all $t \geqslant 0$, for all $\mu \in \MM_{1}^{c, 4}(\R)$,  
\begin{align*}
 \E_{\mu}\left(\LL_{\rm FVc}^{\lambda}P_{\xi_{0}, M(0)}\left(X_{t}\right) \right)  = \LL_{\rm FVc}^{\lambda}P_{V_{t}, M(0) + 1}(\mu),
\end{align*}
and so, by (\ref{Eq_P_FVc_Fubini}), we obtain (\ref{Eq_point_cle_lemme_6_3}). \\ 

\textit{Step 3. Conclusion.} From (\ref{Eq_point_cle_lemme_6_3}) and (\ref{Eq_Majoration_L_FVc_V_t}) and since $\int_{\MM_{1}(\R)}^{}{M_{4}\left(\mu \right)\gamma(\dd \mu)} < \infty$, we deduce that 
\begin{align*}
&\int_{\MM_{1}(\R)}^{}{P_{\rm FVc}(t)P_{\xi_{0}, M(0)}(\mu) \gamma(\dd \mu)} \\
& \hspace{1.5cm} = \int_{\MM_{1}\left(\R \right)}^{}{P_{\xi_{0}, M(0)}(\mu) \gamma(\dd \mu)} + \int_{0}^{t}{\int_{\MM_{1}(\R)}^{}{\LL_{\rm FVc}^{\lambda}{P_{V_{s}, M(0) + 1}\left(\mu \right)}\gamma(\dd \mu)\dd s}} \\
& \hspace{1.5cm} = \int_{\MM_{1}\left(\R \right)}^{}{P_{\xi_{0}, M(0)}(\mu) \gamma(\dd \mu)},
\end{align*}
where we used the assumption of Lemma \ref{Lem_proba_invariante} in the last equality. \\

As the set of test functions $\left\{P_{f,n} \left|\phantom{1^{1^{1}}} \hspace{-0.5cm} \right.  f \in \CCCC_{b}^{4}(\R^{n}, \R), n\in \N^{\star} \right\}$ is $\MM_{1}\left(\MM_{1}(\R) \right)-$ convergence determining {\color{blue} \cite[\color{black} Lemma 2.1.2]{Dawson}}, it is $\MM_{1}\left(\MM_{1}(\R) \right)-$separating {\color{blue} \cite[\color{black} Chapter 3, Section 4, p.112]{Ethier_markov_1986}}, so we have for any bounded continuous function $\phi$ from $\MM_{1}(\R)$ to $\R$ that \[\int_{\MM_{1}(\R)}^{}{P_{\rm FVc}(t)\phi(\mu)\gamma(\dd \mu)} = \int_{\MM_{1}(\R)}^{}{\phi(\mu)\gamma(\dd \mu)}. \]  Hence $\gamma$  is an invariant probability measure
for the centered \textsc{Fleming-Viot} process with resampling rate $\lambda$. Now, from {\color{blue}\cite[\color{black} Theorem 4.1]{Champagnat_Hass_FVr_2022}}, $\pi^{\lambda}$ is its  unique invariant probability measure which ends the proof of Lemma~\ref{Lem_proba_invariante}.

\section{Characterisation of the limiting values of the slow component\label{Section_7_Caract_Composante_LENTE}}
Combining the results of Sections \ref{Section_5_Tension_LENT-RAPIDE} and \ref{Section_6_Caract_Gamma_Limite}, we have proved that for all $R >0$, the sequence of laws of $\left(\left(z^{K, R}, \Gamma^{K, R} \right)\right)_{K\in \N^{\star}}$ is tight in $\MM_{1}\left( \D\left([0, T], \T_{R} \right) \times \MM_{m}^{T}\left(\MM_{1}(\R) \right)\right)$ and for any limiting value $\Q$ of this sequence, the canonical process $\left(\zeta^{R}, \Gamma^{R} \right)$ on $\CCCC^{0}\left([0, T], \T_{R} \right) \times \MM_{m}^{T}\left(\MM_{1}(\R) \right)$ satisfies for all $f \in \CCCC^{1}_{b}\left(\T_{R}, \T_{R} \right)$, for all $t \in [0, T]$, $\Q-$a.s., 
\begin{equation}
f\left(\zeta_{t}^{R} \right) = f\left(\zeta_{0}^{R} \right) + \int_{0}^{t}{\int_{\MM_{1}(\R)}^{}{\LL_{\rm SLOW}f\left(\zeta_{s}^{R}, \mu \right)\pi^{\lambda\left(\zeta_{s}^{R} \right)}\left(\dd \mu \right)\dd s}}. \label{Eq_CEAD_f}
\end{equation}
From now on, we will establish in Section  \ref{Sous_section_7_1} with Lemma \ref{Lem_Unicite_ODE} that the sequence of laws of $\left(z^{K, R} \right)_{K \in \N^{\star}}$ converges weakly in $\D\left( [0, T], \T_{R}\right)$ to the solution of an ODE in the torus. Finally,  Section \ref{Sous_section_7_2} allows us to get away from the torus and to prove Theorem \ref{Thm_CEAD_Jouet}.

\subsection{Convergence of the slow component on the torus\label{Sous_section_7_1}}

\begin{Lem} The sequence $\left(z^{K, R} \right)_{K\in \N^{\star}}$ converges in law in $\D\left([0, T], \T_{R} \right)$ to the unique solution of
\begin{equation}
\forall t \in [0, T], \quad z_{t} = x_{0} + \frac{1}{2}\int_{0}^{t}{\partial_{1}\Fit\left(z_{s}, z_{s}\right)\beta\left(z_{s}\right)m_{2}\left(z_{s} \right)\dd s}
\label{Eq_CEAD_bis}
\end{equation}
on the torus $\T_{R}$.
\label{Lem_Unicite_ODE} 
\end{Lem}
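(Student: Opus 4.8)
The plan is to combine the characterisation of the limiting values obtained in Section~\ref{Section_6_Caract_Gamma_Limite} with a uniqueness argument for the resulting ODE. From \eqref{Eq_CEAD_f} and the explicit form \eqref{Generateur_LENT} of $\LL_{\rm SLOW}$, any limiting value $\Q$ of the sequence of laws of $\left(z^{K, R}, \Gamma^{K, R} \right)_{K \in \N^{\star}}$ concentrates on processes $\left(\zeta^{R}, \Gamma^{R} \right)$ such that, $\Q$-a.s., for all $f \in \CCCC^{1}_{b}\left(\T_{R}, \T_{R} \right)$ and $t \in [0, T]$,
\[f\left(\zeta_{t}^{R} \right) = f\left(\zeta_{0}^{R} \right) + \int_{0}^{t}{f'\left(\zeta_{s}^{R} \right)\partial_{1}\Fit\left(\zeta_{s}^{R}, \zeta_{s}^{R}\right)\left(\int_{\MM_{1}(\R)}^{}{M_{2}(\mu)\pi^{\lambda\left(\zeta_{s}^{R} \right)}(\dd \mu)} \right)\dd s}.\]
By \cite[Corollary 4.16]{Champagnat_Hass_FVr_2022}, the inner integral equals $1/\lambda\left(\zeta_{s}^{R}\right) = \beta\left(\zeta_{s}^{R}\right)m_{2}\left(\zeta_{s}^{R}\right)$; taking $f = \id$ (which is admissible on the torus after identifying $\T_R$ with $\R/4R\Z$, or by a localisation/truncation argument) yields that $\zeta^{R}$ satisfies \eqref{Eq_CEAD_bis} pointwise, and Corollary~\ref{Cor_Continuity} already gives $\zeta^{R} \in \CCCC^{0}\left([0,T], \T_{R}\right)$ with an $L^1$ derivative, so $\zeta^{R}$ is a genuine (indeed $\CCCC^1$) solution of the ODE with initial condition $x_0$ (here we use $z_0^{K,R} = x_0$ and the convergence in law of the initial values, which are deterministic).

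Next I would establish uniqueness of the solution of \eqref{Eq_CEAD_bis} on $\T_R$. The right-hand side $G(z) := \partial_{1}\Fit(z,z)\beta(z)m_2(z)$ is built from $b \in \CCCC^2_b$, $\theta \in \CCCC^2_b$ bounded below by $\underline\theta$, and $m_2$ Lipschitz and bounded below by $\underline m_2$ (Assumptions \textbf{(A1)}, \textbf{(A2)(b)}); hence $\beta = \theta/b(\cdot,\cdot)$ is Lipschitz, and $\partial_1\Fit(z,z) = \partial_1 b(z,z) - \partial_2 b(z,z)$ is $\CCCC^1$, so $G$ is Lipschitz on the compact torus $\T_R$. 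The Cauchy--Lipschitz theorem then gives existence and uniqueness of the solution $\left(z_t\right)_{t \in [0,T]}$ in $\CCCC^1\left([0,T], \T_R\right)$. Consequently every limiting value $\Q$ puts full mass on the single deterministic path $z$, so the marginal law of $\zeta^R$ under $\Q$ is the Dirac mass at $z$. Since the sequence of laws of $\left(z^{K,R}, \Gamma^{K,R}\right)_{K}$ is tight (Theorem~\ref{Thm_Kurtz_adapte}) and all its limiting values have the same first marginal $\delta_z$, the sequence of laws of $\left(z^{K,R}\right)_{K \in \N^\star}$ converges in law in $\D\left([0,T], \T_R\right)$ to $z$, which is the assertion of the lemma.

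The main obstacle here is a minor but genuine one: justifying that $f = \id$ may be used in \eqref{Eq_CEAD_f} even though $\id \notin \CCCC^1_b\left(\T_R, \T_R\right)$ in the naive sense (the identity does not map into $\T_R$ nor is it periodic). I would handle this either by noting that $z^{K,R}_{\bullet}$ lives in the fixed bounded set $\T_R$, so one may freely replace $\id$ by a $\CCCC^1_b$ function agreeing with the identity on $\T_R$ and constant outside a slightly larger interval, and then pass to the limit; or, more directly, by applying \eqref{Eq_CEAD_f} to a sequence $f_p \in \CCCC^1_b$ with $f_p \to \id$ and $f_p' \to 1$ uniformly on $\T_R$, using the uniform bound on $G$ on $\T_R$ and dominated convergence to pass to the limit $p \to \infty$ in each term. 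Everything else — the Lipschitz estimate for $G$, the Cauchy--Lipschitz argument, and the standard fact that tightness plus uniqueness of the limit law gives convergence — is routine, so I would keep those parts brief.
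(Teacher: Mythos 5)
Your proposal is correct and follows essentially the same route as the paper: apply (\ref{Eq_CEAD_f}) with $f=\id$, evaluate $\int_{\MM_{1}(\R)}M_{2}(\mu)\,\pi^{\lambda(\zeta_{s}^{R})}(\dd\mu)=\beta(\zeta_{s}^{R})m_{2}(\zeta_{s}^{R})$ via \cite[Corollary 4.16]{Champagnat_Hass_FVr_2022}, and conclude by uniqueness of the solution of (\ref{Eq_CEAD_bis}) together with tightness; the paper simply regards $\id$ as an element of $\CCCC_{b}^{2}(\T_{R},\T_{R})$ and leaves the \textsc{Cauchy--Lipschitz} uniqueness implicit, which you spell out (correctly, since $G$ is Lipschitz on the compact torus under \textbf{(A)}). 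Only a cosmetic slip: $\partial_{1}\Fit(z,z)=\partial_{2}b(z,z)-\partial_{1}b(z,z)$, not the opposite sign, which is immaterial for your regularity argument.
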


\begin{proof}
For any limiting value $\Q$ of the sequence of laws of $\left(z^{K, R} \right)_{K\in \N^{\star}}$,
from (\ref{Generateur_LENT}) and applying (\ref{Eq_CEAD_f}) with $f = \id \in \CCCC_{b}^{2}\left(\T_{R}, \T_{R} \right)$, we deduce that $\Q-$a.s., \[\forall t \in [0, T], \quad \zeta_{t}^{R} = \zeta_{0}^{R} + \int_{0}^{t}{\left(\int_{\MM_{1}(\R)}^{}{M_{2}\left( \mu\right)\pi^{\lambda\left(\zeta_{s}^{R} \right)}(\dd \mu)}\right)\partial_{1}\Fit\left(\zeta_{s}^{R}, \zeta_{s}^{R} \right)\dd s}.\] From {\color{blue} \cite[\color{black} Corollary 4.16]{Champagnat_Hass_FVr_2022}}, for all $s \in [0, t]$, \[\int_{\MM_{1}\left(\R \right)}^{}{M_{2}\left(\mu \right)\pi^{\lambda\left(\zeta_{s}^{R} \right)}\left(\dd \mu\right)} = \frac{1}{2\lambda\left(\zeta_{s}^{R} \right)} =  \frac{\beta\left(\zeta_{s}^{R} \right)m_{2}\left(\zeta_{s}^{R} \right)}{2},\] so $\zeta^{R}$ is solution of (\ref{Eq_CEAD_bis}) $\Q-$a.s. By uniqueness of the solution of (\ref{Eq_CEAD_bis}), the announced result follows. \qedhere
\end{proof}

\subsection{End of the proof of Theorem \ref{Thm_CEAD_Jouet}\label{Sous_section_7_2}}

Let $T >0$ be fixed and recall that $x_{0}$ is the mean trait value of $\nu_{0}^{K}$. From Lemma \ref{Lem_Unicite_ODE} and since the ODE (\ref{Eq_CEAD_bis}) is non-explosive, we can choose $R >0$ large enough such that $$\lim_{K\to + \infty}{\P\left(z_{t}^{K, R} \in \left[x_{0} -\frac{R}{2}, x_{0} + \frac{R}{2} \right], \forall t \in [0, T] \right)} = 1.$$ From Proposition \ref{Prop_Convergence_tau_et_theta} and (\ref{Def_distribution_recentre_dilate}), we have that, 
\[\lim_{K\to + \infty}{\P\left(\Diam\left(\Supp \nu_{t}^{K, R}  \right) \leqslant \frac{1}{K^{1+\varepsilon/2}}, \forall t \in [0, T] \right)} = 1.\] Hence,
\[\lim_{K\to + \infty}{\P\left(\Supp \nu_{t/K{\color{black}\sigma_{K}^{2}}}^{K, R} \subset \left[x_{0} - R, x_{0} + R \right], \forall t \in [0, T] \right)} = 1. \]
Now, on the event $\left\{\Supp \nu_{t/K{\color{black}\sigma_{K}^{2}}}^{K, R} \subset \left[x_{0} - R , x_{0} + R \right], \forall t \in [0, T] \right\}$,
 $\nu_{t/K{\color{black}\sigma_{K}^{2}}}^{K, R} = \nu_{t/K{\color{black}\sigma_{K}^{2}}}^{K}$ for all $t\in [0, T]$, identifying $x \in \T_{R}$ with its unique representant in $\left[x_{0} - 2R, x_{0} + 2R \right]$. In particular, \[\forall t \in [0, T], \quad z_{t}^{K, R} = z_{t}^{K}.\]
Theorem \ref{Thm_CEAD_Jouet} then follows from Lemma \ref{Lem_Unicite_ODE}. \\

\appendix
\makeatletter
\def\@seccntformat#1{Annexe~\csname the#1\endcsname:\quad}
\@addtoreset{equation}{section}
  \renewcommand\theequation{\thesection.\arabic{equation}}
\makeatother

\textbf{Funding.}
This work was partially funded by the Chair ``Modélisation Mathématique et Biodiversité'' of VEOLIA-Ecole Polytechnique-MNHN-F.X. N.C. was partially funded by the European Union (ERC, SINGER, 101054787).
Views and opinions expressed are however those of the author(s) only and do not necessarily reflect those of the European Union or the European Research Council. Neither the European Union nor the granting authority can be held responsible for them. \\

{\color{black} \textbf{Acknowledgments.} We thank the two anonymous referees for their useful comments.}

\bibliographystyle{plain} 
\small \bibliography{These_biblio} \normalsize
\end{document}